\documentclass[a4paper, 10pt]{amsart}
\usepackage[leqno]{amsmath}
\usepackage{amssymb,amsthm,MnSymbol, mathtools}
\usepackage[all]{xy}
\usepackage{mathrsfs}
\usepackage{tikz}
\usepackage{tikz-cd}
\usepackage{enumitem}
\usepackage[divide={2.5cm,*,2.5cm}]{geometry}
\usepackage{xcolor}
\usepackage{upgreek}
\usepackage{hyperref}
\usepackage{cleveref}

\numberwithin{equation}{section}

\theoremstyle{plain}
\newtheorem{thm}{Theorem}[section]
\newtheorem{thmx}{Theorem}
\newtheorem{prop}[thm]{Proposition}
\newtheorem{lem}[thm]{Lemma}
\newtheorem{cor}[thm]{Corollary}

\theoremstyle{definition}
\newtheorem{defn}[thm]{Definition}

\newtheorem{rem}[thm]{Remark}
\newtheorem{ex}[thm]{Example}

\newcommand{\A}{{\mathcal{A}}}
\newcommand{\Af}{{\mathbb{A}}}
\newcommand{\B}{{\mathcal{B}}}
\newcommand{\C}{{\mathcal{C}}}

\newcommand{\D}{{\mathcal{D}}}
\newcommand{\E}{{\mathcal{E}}}
\newcommand{\M}{{\mathcal{M}}}

\newcommand{\Pro}{{\mathbb{P}}}
\newcommand{\He}{{\mathcal{H}}}
\newcommand{\Hea}{{\mathcal{H}_\aff}}
\newcommand{\F}{{\mathbb{F}}}

\newcommand{\Ff}{{\mathfrak{F}}}
\newcommand{\G}{{\mathbf{G}}}
\newcommand{\Gm}{{\mathbb{G}_m}}
\newcommand{\GL}{{\mathrm{GL}}}
\newcommand{\PGL}{{\mathrm{PGL}}}
\newcommand{\SL}{{\mathrm{SL}}}
\newcommand{\R}{{\mathcal{R}}}

\newcommand{\LM}{{\mathcal{L}}}

\newcommand{\m}{{\mathfrak{m}_{\mathfrak{F}}}}
\newcommand{\OF}{{\mathcal{O}_{\mathfrak{F}}}}
\newcommand{\T}{{\mathcal{T}}}

\newcommand{\XX}{{\mathbb{X}}}
\newcommand{\Z}{{\mathbb{Z}}}
\newcommand{\Zq}{{\tilde{\mathcal{Z}}}}

\newcommand{\ac}{{\operatorname{ac}}}
\newcommand{\Ch}{{\operatorname{Ch}}}
\newcommand{\cind}{{\operatorname{c-ind}}}

\newcommand{\Coh}{{\operatorname{Coh}}}
\newcommand{\QCoh}{{\operatorname{QCoh}}}
\newcommand{\coker}{{\operatorname{coker}}}

\newcommand{\End}{{\operatorname{End}}}
\newcommand{\Ext}{{\operatorname{Ext}}}
\newcommand{\GP}{{\operatorname{GP}}}
\newcommand{\Gal}{{\operatorname{Gal}}}
\newcommand{\Ho}{{\operatorname{Ho}}}
\newcommand{\Hom}{{\operatorname{Hom}}}
\newcommand{\id}{{\operatorname{id}}}
\newcommand{\im}{{\operatorname{Im}}}
\newcommand{\ind}{{\operatorname{ind}}}
\newcommand{\Ind}{{\operatorname{Ind}}}
\newcommand{\Ker}{{\operatorname{Ker}}}
\newcommand{\Inj}{{\operatorname{Inj}}}

\newcommand{\Mod}{{\operatorname{Mod}}}

\newcommand{\pd}{{\operatorname{pd}}}
\newcommand{\pr}{{\operatorname{pr}}}
\newcommand{\Proj}{{\operatorname{Proj}}}
\newcommand{\Rep}{{\operatorname{Rep}}}
\newcommand{\reg}{{\operatorname{reg}}}
\newcommand{\Res}{{\operatorname{Res}}}
\newcommand{\Sing}{{\operatorname{Sing}}}

\newcommand{\Spec}{{\operatorname{Spec}}}
\newcommand{\Supp}{{\operatorname{Supp}}}
\newcommand{\triv}{{\operatorname{triv}}}

\newcommand{\aff}{{\operatorname{aff}}}

\title{Pro-$p$ Iwahori-Hecke modules in semisimple rank one and singularity categories}
\date{}
\author{Nicolas Dupr\'e}
\address{Bergische Universit\"at Wuppertal\\
Gau{\ss}stra{\ss}e 20\\
D--42119 Wuppertal, Germany}
\email{dupre@uni-wuppertal.de}

\begin{document}

\begin{abstract}
Let $\Ff$ be a non-archimedean local field of residue characteristic $p$ and $G$ be one of the groups $\GL_2(\Ff)$, $\SL_2(\Ff)$ or $\PGL_2(\Ff)$. Let $\He_G$ denote the pro-$p$ Iwahori-Hecke algebra of $G$ over $\overline{\F}_p$. We study the homotopy category $\Ho(\He_G)$ of Hovey's Gorenstein projective model structure on the category of $\He_G$-modules and relate it to the singularity category $\Sing(X_{q,\G})$ of an explicit scheme. When $G=\GL_2(\Ff)$, this scheme was first introduced by Dotto-Emerton-Gee \cite{DEG22}. We obtain in that case an equivalence $\Ho(\He_{\GL_2})\simeq \Sing(X_{q,\GL_2})$ and recover from this Gro{\ss}e-Kl{\"o}nne's mod-$p$ Langlands correspondence for Hecke modules \cite{GK20}, building on work of P{\'e}pin-Schmidt \cite{PeSch25_2}. We furthermore describe $\Ho(\He_G)$ completely explicitly when $G=\SL_2(\Ff)$ or $\PGL_2(\Ff)$, and make additional computations in the $\GL_2$ case.
\end{abstract}

\maketitle
\thispagestyle{empty}
\footnotetext{{\it 2020 Mathematics Subject Classification}. Primary 20C08, 18N40, 16G50. Secondary 16E45.}

\setcounter{tocdepth}{1}
\tableofcontents

\section{Introduction}

Let $p$ be a prime and $\Ff$ be a non-archimedean local field with residue field $\F_q$ of characteristic $p$. Let $\G$ be a split reductive group over the field $\Ff$ and let $G=\G(\Ff)$. Let $I$ be a pro-$p$ Iwahori subgroup of $G$ and let $\He_G=\overline{\F}_p[I\backslash G/I]$ be the corresponding \emph{pro-$p$ Iwahori-Hecke algebra} of $G$, i.e.\ the convolution algebra of $I$-biinvariant compactly supported functions on $G$. Write $\Mod(\He_G)$ for the category of left $\He_G$-modules.

The Hecke algebra $\He_G$ plays an important role in the mod-$p$ Langlands program. For instance, by work of Gro{\ss}e-Kl{\"o}nne \cite{GK20} it is known when $\G=\GL_n$ and $\Ff$ is an extension of $\mathbb{Q}_p$ that there is a functorial bijection $M\mapsto V(M)$ between the isomorphism classes of $n$-dimensional simple supersingular $\He_G$-modules and the isomorphism classes of irreducible, continuous representations $\Gal(\overline{\Ff}/\Ff)\to \GL_n(\overline{\F}_p)$. Furthermore, when $\G=\GL_2$ there is a scheme with connected components given by chains of projective lines, introduced by Dotto-Emerton-Gee \cite{DEG22} and P{\'e}pin-Schmidt \cite{PeSch25_2}, whose $\overline{\F}_p$-rational points are in canonical bijection with the isomorphism classes of semisimple 2-dimensional Galois representations. P{\'e}pin-Schmidt show that this scheme is a certain quotient of the centre $Z(\He_{\GL_2})$ of the Hecke algebra and use this relationship to reconstruct geometrically Gro{\ss}e-Kl{\"o}nne's bijection in that case. A more general link between $Z(\He_{\GL_n})$ and Galois representations was also obtained in the recent preprint \cite{Sch26}.

The algebra $\He_G$ is in general known to be a Gorenstein ring \cite{OS14} and it follows from work of Hovey \cite{Hov2} that $\Mod(\He_G)$ has a so-called Gorenstein projective model structure. In earlier work \cite{Dup25} we investigated the associated homotopy category $\Ho(\He_G)$, which is the localisation of $\Mod(\He_G)$ obtained after trivialising all modules of finite projective dimension. In particular, we showed that distinct simple supersingular $\He_G$-modules remain generically non-isomorphic in $\Ho(\He_G)$. The goal of the present work is to describe the category $\Ho(\He_G)$ in more details when $\G$ has semisimple rank one, and to relate it to Galois representations via the aforementioned constructions.

From now on, we assume that $\G=\mathrm{GL_2}$, $\mathrm{SL_2}$ or $\mathrm{PGL_2}$. The first part of our paper aims to explicitly describe the category $\Ho(\He_G)$, which we recall is by construction triangulated. Our first result achieves this when $\G$ is semisimple.

\begin{thmx}[{\Cref{description_PGL2_SL2}}]\label{thmA}
There is an equivalence
$$
\Ho(\He_{\SL_2})\simeq \C\times \prod_{\text{$\gamma$ regular}} \left(\Mod(k^2)_{2,\triv}\times \Mod(k^2)_{2,\triv}\right),
$$
of triangulated categories, where $\C=0$ if $p=2$ and $\C=\Mod(k)_\triv\times \Mod(k)_\triv$ if $p>2$. Assuming $p>2$, there is also an equivalence
$$
\Ho(\He_{\PGL_2})\simeq \prod_{\text{$\gamma$ regular}} \Mod(k^2)_{2,\triv}
$$
of triangulated categories.
\end{thmx}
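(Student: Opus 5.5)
The plan is to decompose $\He_G$ along its centre and treat each block separately. For $G=\SL_2$ or $\PGL_2$, the pro-$p$ Iwahori-Hecke algebra splits as a product $\He_G=\prod_\gamma \He_\gamma$ over orbits $\gamma$ of characters of the finite torus $T(\F_q)$ under the finite Weyl group. Here $\gamma$ is \emph{regular} if it has two elements and non-regular otherwise. Since $\Ho$ of a product of Gorenstein rings is the product of the $\Ho$'s, we get $\Ho(\He_G)\simeq\prod_\gamma \Ho(\He_\gamma)$. Each $\Ho(\He_\gamma)$ is the stable category of Gorenstein projective $\He_\gamma$-modules, which is equivalent to the singularity category $D^b(\mathrm{mod}\,\He_\gamma)/\mathrm{Perf}$. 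Because $\He_\gamma$ is finite over its centre $Z_\gamma$, it suffices to understand each block as a $Z_\gamma$-algebra and localise at its singular points.

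Next I would write each block explicitly via the Vignéras presentation, using generators $T_{s_0},T_{s_1}$ (together with $T_\omega$ for $\PGL_2$), the idempotents $e_\lambda$ and the quadratic relations. In a regular block for $\SL_2$, $\He_\gamma$ is (Morita equivalent to) a path algebra of a two-vertex quiver with arrows in both directions. The relations force the centre to be $k[\zeta]$ with $\zeta=(T_{s_0}+\cdots)(T_{s_1}+\cdots)$, or $k[x,y]/(xy)$. The block is a free module over the centre and is regular away from the node. Localising at the singular point should identify $\He_\gamma$, up to Morita equivalence and Buchweitz/Knörrer-type reduction, with an algebra of the form $k[x,y]/(xy)$ or a matrix order over it. The singularity category of $k[x,y]/(xy)$ is $2$-periodic with two indecomposables $k[x,y]/(x)$ and $k[x,y]/(y)$, swapped by the shift. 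I would then identify it with $\Mod(k^2)_{2,\triv}$, the category of $\Z/2$-graded $k^2$-modules (sums of copies) with shift swapping the factors and trivial triangulated structure. For $\SL_2$ each regular $\gamma$ contributes two such nodes, since $T_{s_0}$ and $T_{s_1}$ play symmetric roles, which gives the factor $\Mod(k^2)_{2,\triv}\times\Mod(k^2)_{2,\triv}$. For $\PGL_2$ the element $T_\omega$ interchanges the two nodes, so the factor is halved.

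For the non-regular blocks (where $\gamma=\{\lambda\}$ is fixed by $s$), $\He_\gamma$ is the affine Hecke algebra at $q=0$ with $T_{s_i}^2=-T_{s_i}$ (for the trivial or sign-like parameter). I would check whether this algebra has finite global dimension. For $\PGL_2$ with $p>2$, and for $\SL_2$ with $p=2$ where there is only the trivial character, I expect the block to be regular, so it contributes $0$. For $\SL_2$ with $p>2$ there is an additional non-regular character, namely the quadratic one $\lambda$ with $\lambda=\lambda^{-1}$. Its block should have an $A_1$-type singularity $k[x,y]/(xy)$ whose singularity category is the trivially triangulated $\Mod(k)$, with shift acting trivially, once for each of the two supersingular characters. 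This would produce $\C=\Mod(k)_\triv\times\Mod(k)_\triv$.

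The main obstacle is the local identification: computing the completed or localised block at each singular central point and proving it is Morita equivalent to a node. The node has period-two matrix factorisations, and I would also need to determine the induced triangulated structure, in particular that the shift is as claimed and that the triangles are split (trivial). I would first try to exhibit explicit Gorenstein projective modules (the supersingular characters, via $\Omega$ of simple modules) and compute $\underline{\End}$ and $\Omega^2\cong\id$ directly. Then I would use that the stable category is semisimple with finitely many indecomposables to conclude.
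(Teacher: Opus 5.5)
\textbf{There is a genuine gap.} The mechanism you propose is to study each block through its centre, localise at the singular points of the centre, and reduce by Morita/Kn\"orrer-type arguments to a node. It works for $\PGL_2$, where in fact $e_\gamma\He_{\PGL_2}\cong M_2(A)$ is Morita equivalent to the node $A=k[X_1,X_2]/(X_1X_2)$; you would still have to prove that isomorphism. It cannot produce the stated answer for $\SL_2$.

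For regular $\gamma$, the block $e_\gamma\He_{\SL_2}$ is the order $\begin{psmallmatrix}A_e & A_o\\ A_o & A_e\end{psmallmatrix}\subseteq M_2(A)$. Its centre $A_e\cong A$ has a \emph{single} node. The block is not Morita equivalent to its centre, and it is not even free over it: as an $A_e$-module, $A_o\cong A_e/(X_2^2)\oplus A_e/(X_1^2)$. Counting singular points of the centre therefore predicts one copy of $\Mod(k^2)_{2,\triv}$, not two. Your explanation of the second copy (``$T_{s_0}$ and $T_{s_1}$ play symmetric roles'') is an intuition with no argument attached.

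The $\sigma$-block is worse. We have $e_\sigma\He_{\SL_2}\cong k\langle T_0,T_1\rangle/(T_0^2,T_1^2)$, whose centre is the polynomial ring $k[T_0T_1+T_1T_0]$. So there is no singular point of the centre to localise at, yet $\Ho(e_\sigma\He_{\SL_2})\neq 0$. Your claim that this block carries a node $k[x,y]/(xy)$ is false. Even if it were true, a node would give $\Mod(k^2)_{2,\triv}$ (shift swapping two objects), not $\Mod(k)_\triv$, which contradicts your own computation for the node. This block also contains only one supersingular character, so the two factors of $\C$ are not indexed by supersingular characters.

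Two further points are left open. You never say where $p>2$ is needed for $\PGL_2$. And you work with $D^b(\mathrm{mod})/\mathrm{Perf}$, whereas $\Ho$ contains all Gorenstein projective modules, including infinitely generated ones. Reaching $\Mod(k^2)$ rather than just its finite-dimensional part needs either a classification of all modules or a compact-generation argument.

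\textbf{The missing idea is to reduce to the vertices of the tree rather than to the centre.} The Ollivier--Schneider resolution $0\to\He(\epsilon_C)\otimes_{\He_C^\dagger}M\to\bigoplus_{x}\He\otimes_{\He_x^\dagger}M\to M\to 0$ shows that restriction to the vertex algebras is a Quillen equivalence whenever $\He_C^\dagger$ has finite global dimension. For $\PGL_2$ one has $\He_C^\dagger=k[T(\F_q)\rtimes\Z/2\Z]$, and this is exactly where $p>2$ enters.

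This gives $\Ho(\He_{\SL_2})\simeq\Ho(\He_{x_0})\times\Ho(\He_{x_1})$, since $\SL_2$ has two orbits of vertices. That is the real source of the doubling in the regular blocks and of the two factors of $\C$; your $T_{s_0}$/$T_{s_1}$ intuition is a shadow of it. Likewise $\Ho(\He_{\PGL_2})\simeq\Ho(\He_{x_0})$, because $\omega$ swaps $x_0$ and $x_1$.

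The blocks of $\He_{x_0}$ are finite-dimensional and self-injective:
\begin{itemize}
\item $k\times k$ for characters trivial on $\alpha^\vee(\F_q^\times)$, contributing $0$;
\item the $4$-dimensional algebra $R$ with $T^2=0$ and $e_iT=Te_{3-i}$, for regular $\gamma$;
\item $k[T]/(T^2)$ for $\sigma$. This is the zero-dimensional singularity you were presumably thinking of, and it occurs once per vertex.
\end{itemize}
Over $R$, every module, finitely generated or not, is a direct sum of copies of $\chi_1,\chi_2,Re_1,Re_2$; over $k[T]/(T^2)$, of copies of $\chi$ and $k[T]/(T^2)$. The periodic projective resolutions give that $[\chi_i,\chi_j]_R$ is $k$ for $i=j$ and $0$ otherwise, with $\Sigma\chi_1\cong\chi_2$; for $k[T]/(T^2)$ one gets $\Sigma\chi\cong\chi$. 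This yields $\Mod(k^2)_{2,\triv}$ and $\Mod(k)_\triv$ directly, and it also takes care of the infinitely generated objects.
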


The notation $\Mod(k^2)_{2,\triv}$ and $\Mod(k)_\triv$ in the above statement refers to certain rather simple triangulated structures on these categories, cf.\ \Cref{triangle_modk2}. The products in the statement of the above theorem are indexed by the so-called regular Weyl group orbits in the character group of the finite torus $\mathbf{T}(\F_q)$. The reason for the extra factor $\C$ when $p>2$ and $\G=\SL_2$ is that, in that case, the algebra $\He_{\SL_2}$ has an additional component corresponding to the `sign' character $\sigma$ of $\mathbf{T}(\F_q)$ which sends a generator to $-1$. The proof of \Cref{thmA} uses Bruhat-Tits theory and the canonical resolutions of Ollivier-Schneider \cite{OS14} to reduce the question to describing the homotopy category $\Ho(\He_{x_0})$ of the finite Hecke algebra associated to the standard maximal compact subgroup of $G$, cf.\ \Cref{Delta_equiv}. The latter homotopy category can then be computed directly, by describing the module categories rather explicitly (cf.\ \Cref{classification_all} and \Cref{htpy_cat_Hx}). We note that this reduction argument to $\Ho(\He_{x_0})$ has an analogue on the side of smooth $G$-representations, cf.\ \Cref{Delta_equiv_reps}.

We then aim to study $\Ho(\He_G)$ in terms of singularity categories. We recall that the singularity category of a Noetherian separated scheme $X$, as defined by Krause \cite{Kra05}, is the triangulated category of chain complexes of injective quasi coherent sheaves on $X$ up to chain homotopy. These categories also feature in the geometric Langlands program under the more general definition $\Ind (\Coh(X))/\QCoh(X)$, see \Cref{geom_Lang} and the references therein. Our main result relates $\Ho(\He_G)$ to the singularity category $\Sing(X_{q,\G})$ of a projective scheme. When $\G\neq\SL_2$, the scheme $X_{q,\G}$ is constructed very explicitly as a quotient of $\Spec(Z(\He_G))$ and parametrises suitable Galois representations. In particular, $X_{q,\GL_2}$ is the aforementioned scheme of Dotto-Emerton-Gee and P{\'e}pin-Schmidt. The case $\G=\SL_2$ is slightly different because $\He_{\SL_2}$ has in a suitable sense more singularities than its centre. We will elaborate further on this below. We construct $X_{q,\SL_2}$ instead as a quotient of $\Spec(\Zq)$, for a certain explicit central subalgebra $\Zq$ of the affine Hecke algebra $\He_{\GL_2,\aff}$ which contains $Z(\He_{\SL_2})$.

\begin{thmx}[{\Cref{Langlands_singularities}, \Cref{Langlands_PGL2} \& \Cref{Langlands_SL2}}]\label{thmB}
Let $\G$ be as above and assume that $p>2$. There is a triangulated functor $\mathscr{L}_{\G,*}:\Ho(\He_{\G})\to \Sing(X_{q,\G})$, which is an equivalence when $\G=\GL_2$ or $\PGL_2$. Furthermore, there is a well-defined induced map
\[
\left\{\substack{\text{simple supersingular} \\ \text{$\He_G$-modules of} \\ \text{infinite projective dimension}}\right\}_{/\cong} \longrightarrow \left\{ \substack{\text{singular $\overline{\F}_p$-points}\\ \text{of $X_{q,\G}$}}\right\}
\]
given by $M\mapsto \Supp(\mathscr{L}_{\G,*}(M))$. This map satisfies the following:
\begin{enumerate}
    \item it is a bijection if $\G=\GL_2$ or $\PGL_2$ and agrees with Gro{\ss}e-Kl{\"o}nne's map, in other words $\Supp(\mathscr{L}_{\G,*}(M))=V(M)$, when $\Ff$ is an extension of $\mathbb{Q}_p$; and
    \item it is surjective if $\G=\SL_2$, with fibers equal to the $L$-packets of simple supersingular modules as defined in \cite{Koz2}.
\end{enumerate}
\end{thmx}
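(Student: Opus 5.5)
Since $\Ho(\He_G)$ is only made explicit through \Cref{thmA} and the reduction of \Cref{Delta_equiv}, I plan to build $\mathscr{L}_{\G,*}$ geometrically and then compare both sides block by block. The first step is to realise $X_{q,\G}$ as the quotient of $\Spec(Z(\He_G))$, or of $\Spec(\Zq)$ when $\G=\SL_2$, by the explicit action gluing the affine charts. This should make $X_{q,\G}$ a projective scheme covered by two affine opens $U_0,U_1$, whose coordinate rings are localisations of the central subalgebra.

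On the module side, $\He_G$ is finite over its centre (over $\Zq$ for $\SL_2$). This gives a quasi-coherent sheaf of algebras $\mathcal{A}$ on $X_{q,\G}$ together with a pushforward functor $\Mod(\He_G)\to\QCoh(\mathcal{A})\to\QCoh(X_{q,\G})$. Gorenstein projective modules should go to maximal Cohen--Macaulay sheaves: for $\GL_2$ and $\PGL_2$ the algebra is free over its centre, and for $\SL_2$ it is free over $\Zq$. Modules of finite projective dimension should go to perfect complexes. The functor therefore descends to a triangulated functor $\Ho(\He_G)\to\Sing(X_{q,\G})$. I would use Krause's description of $\Sing$ via the stable category of MCM sheaves, restricted to compact objects and then extended using compact generation. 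Both categories are compactly generated, with compact generators the simple supersingular modules and skyscrapers at singular points, respectively.

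For the equivalence when $\G=\GL_2$ or $\PGL_2$, I would use that the singular locus of $X_{q,\G}$ is a finite set of nodes, where the projective lines in the chains meet. So $\Sing(X_{q,\G})$ splits as a product over singular points of the local singularity categories of $k[[x,y]]/(xy)$. For a node, that local category has two indecomposable MCM modules $R/(x)$ and $R/(y)$, each the other's syzygy. Via Knörrer-type $2$-periodicity, this matches the $\Mod(k^2)_{2,\triv}$ factors of \Cref{thmA} and its $\GL_2$ analogue. The proof then reduces to a computation at each block. Simple supersingular modules of infinite projective dimension are sent to the skyscraper at the node indexed by the same regular orbit $\gamma$, together with a choice of branch. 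On morphisms, $\Hom$ and $\Ext^1$ computed in $\Ho(\He_{x_0})$ via \Cref{htpy_cat_Hx} are compared with the local stable category. Fully faithfulness on generators, plus essential surjectivity on generators, gives the equivalence.

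The support map then follows. The image of a simple supersingular module is a nonzero object supported at a single singular point, and isomorphic modules have equal supports, so the map is well-defined. Bijectivity for $\GL_2$ and $\PGL_2$ comes from the block matching above. The identification with Große-Klönne's $V(M)$ reduces to Pépin-Schmidt's result that the points of $X_{q,\GL_2}$ recover the semisimple Galois representations compatibly with the centre action. Here one checks that the central character of $M$ is the image of $\Supp$. For $\SL_2$ the functor is not an equivalence: the nodes of $X_{q,\SL_2}$ correspond to regular orbits, while \Cref{thmA} shows each orbit carries two copies $\Mod(k^2)_{2,\triv}\times\Mod(k^2)_{2,\triv}$. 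The simples in one orbit all map to the same node, and one checks these fibres are Koziol's $L$-packets. I expect the main obstacle to be the definition and compatibility of the functor: showing that the pushforward sends Gorenstein projectives to MCM sheaves and respects triangulated structure, especially in the $\SL_2$ case over $\Zq$ rather than the centre.
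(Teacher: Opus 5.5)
You have the support map and the reduction of the Gro{\ss}e-Kl{\"o}nne comparison to P{\'e}pin-Schmidt right, but there are genuine gaps in the equivalence and in the construction of the functor.

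\textbf{The $\GL_2$ equivalence.} Your argument assumes the singular locus of $X_{q,\G}$ is finite, and for $\G=\GL_2$ it is not. Each $X_{n,q}=C(l)\times\Gm$, so its singular locus is $\{\text{nodes of }C(l)\}\times\Gm$, which is one-dimensional. Consequently:
\begin{enumerate}
\item $\Sing(X_{q,\GL_2})$ is not a product of local singularity categories of nodes.
\item There is no $\GL_2$ analogue of \Cref{description_PGL2_SL2} of the form $\prod\Mod(k^2)_{2,\triv}$. This is why the paper turns to DGAs in its last section.
\item The simple supersingular modules do not generate $\Ho(\He_{\GL_2})$. Take $\Ho(e_\gamma\He)\simeq\Ho(S)$ with $S=R[Z^{\pm1}]$, and $N=\chi_1\otimes_k k(Z)$. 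Then $N$ is non-zero, since its restriction to $R$ is a non-zero sum of copies of $\chi_1$. But $Z$ is central, $Z-\lambda$ kills $\chi_{j,\lambda}$ and acts invertibly on $N$. Hence $[\Sigma^m\chi_{j,\lambda},N]_S=0$ for all $j,m,\lambda$.
\end{enumerate}
So checking full faithfulness and essential surjectivity on the simples cannot prove the $\GL_2$ equivalence. A finite-node picture also cannot produce the bijection in (i): its source is the family $M_{\gamma,\lambda}$, $\lambda\in k^\times$, which matches the points of $\{\text{nodes}\}\times\Gm$.

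The paper never compares objects. It passes to the centre via the spherical module (\Cref{Quillen_equiv_GL2}) and uses $\Ho(Z(\He))\simeq\Sing(Z(\He))$ (\Cref{GPmodel_sing}). It then proves $\Sing(L_*)$ is an equivalence geometrically:
\begin{enumerate}
\item On the open-embedding charts it uses \Cref{sheaf_property} (covers with regular pairwise overlaps) and \Cref{Orlov_equiv}.
\item For odd $n$, on the end components $L_n$ is the generically $2:1$ map $t\mapsto t+t^{-1}$. This is neither an open embedding nor a localisation, so your description of $X_{q,\G}$ via localisations of the centre misses it. \Cref{localisation_lemma} handles it: the flat map $A\to A_f$ sends the generators $A/(X_i)$ to $A_f/(X_i)$, and $Z^{\pm1}$ is then adjoined via \Cref{Laurent_extn}.
\end{enumerate}
None of this needs a finite singular locus.

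\textbf{The functor.} Forgetting down to the centre and pushing forward is not an equivalence even for $\PGL_2$. On a regular block $e_\gamma\He_{\PGL_2}\cong M_2(A)$, restriction to $A$ is $N\mapsto N^{\oplus 2}$ after Morita. On $\Ho(A)\simeq\Mod(k^2)$ this is $(V_1,V_2)\mapsto(V_1^{\oplus2},V_2^{\oplus2})$, which is not full. You need $\Hom_{\He}(\M,-)$ for the spherical module $\M$. Relatedly, a simple supersingular goes to the residue field at a node. Since $\mathfrak{m}=X_1A\oplus X_2A$, that is $\M_1\oplus\M_2$ in $\Ho(A)$, not a skyscraper with a choice of branch; branches correspond to the characters $\chi_i$ of $\He_{x_0}$.

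For $\SL_2$ the problem is worse.
\begin{enumerate}
\item $e_\gamma\He_{\SL_2}\cong\begin{psmallmatrix}A_e&A_o\\A_o&A_e\end{psmallmatrix}$ is not free over its centre $A_e\cong A$, because $A_o\cong A/(X_1)\oplus A/(X_2)$. So restricting to the centre sends the projective module $e_\gamma\He_{\SL_2}$ to a module of infinite projective dimension, and does not descend to homotopy categories.
\item On the $\sigma$-block, $\Zq$ does not even act on $\He_{\SL_2}$: $e_1T_0T_1+e_2T_1T_0$ is not in the subalgebra generated by $T_0,T_1$.
\end{enumerate}
The paper instead uses the bimodule $\M_{\SL_2}$, which is projective over $\He_{\SL_2}$ and free over $\Zq$ (\Cref{adjun_SL_2}), and computes $\Ho(\R_{\M_{\SL_2}})$ explicitly in \Cref{Quillen_equiv_SL2}.

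Finally, the nodes of $X_{q,\SL_2}$ are indexed by all non-trivial orbits, not just the regular ones. This includes $\sigma$, whose $\Zq$-component $A$ lands on the end $\Pro^1$. The fibre over that node is the singleton packet $\{\chi_{\frac{q-1}{2}}\}$. Your count ``nodes $\leftrightarrow$ regular orbits'' misses it, so neither surjectivity nor the description of the fibres is established.
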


Part (i) above makes sense because of the canonical bijection between $X_{q,\GL_2}(\overline{\F}_p)$ and the set of isomorphism classes of semsimiple 2-dimensional Galois representations from \cite{PeSch25_2} and because $X_{q,\PGL_2}$ is by definition a closed subscheme of $X_{q,\GL_2}$. The notion of support in the statement was defined by Krause in \cite{Kra05} and can be characterised in terms of injective dimensions in the affine case, see \Cref{support_lem}. By construction, it is here always a subset of the singular locus of $X_{q,\G}$. We also point out that the restriction to simple supersingulars of infinite projective dimension is rather mild by a result of Koziol \cite{Koz}, see \Cref{recollections_Hecke} for the details.

The functor $\mathscr{L}_{\G,*}$ is constructed by utilising a certain \emph{spherical} Hecke module, which was introduced in \cite{PeSch23} for the generic pro-$p$ Iwahori-Hecke algebra of $\GL_2$. We define analogues of this construction for $\PGL_2$ and $\SL_2$ which fit our purposes. The proof of \Cref{thmB} then mainly consists in showing that pushing forward along the quotient morphism $\Spec(Z(\He_{\GL_2}))\to X_{q,\GL_2}$ from \cite{PeSch25_2} induces an equivalence of singularity categories. We also give an explicit description of $\mathscr{L}_{\SL_2,*}$ via \Cref{thmA}, cf.\ \Cref{Quillen_equiv_SL2}. Heuristically, the above says that our functor $\mathscr{L}_{\G,*}$ lifts the known mod-$p$ Langlands correspondences for Hecke modules to a nice functor between singularity categories. In particular, the fact that $\mathscr{L}_{\SL_2,*}$ is not an equivalence may be viewed as analogous to the mod-$p$ Langlands correspondence not being a bijection in that case.

We also note that Ardakov-Schneider constructed a scheme $\tilde{X}_{q,\SL_2}$ in \cite{ArdSch24}, given by a disjoint union of two chains of projective lines, using the action of $Z(\He_{\SL_2})$ on the Hecke-Ext algebra. Our scheme $X_{q,\SL_2}$ is obtained by gluing one more $\Pro^1$ at the end of one of their two chains, and that end point corresponds exactly to the aforementioned $\sigma$-component of $\He_{\SL_2}$. This component has a non-zero contribution to $\Ho(\He_{\SL_2})$ as seen in \Cref{thmA}, but its centre is a polynomial algebra and thus has no singularities. Our result says that, morally, that end point of $\tilde{X}_{q,\SL_2}$ should be a singularity in order to fully capture $\Ho(\He_{\SL_2})$. We note that an interpretation of the $\overline{\F}_p$-rational points of $\tilde{X}_{q,\SL_2}$ in terms of Galois representations is expected to appear in \cite{Yin26}, and that the sheaves in $\Sing(X_{q,\SL_2})$ are supported on $\tilde{X}_{q,\SL_2}$.

At the end of our paper, we do some additional computations in the case $\G=\GL_2$. Namely, we show that $\Ho(\He_{\GL_2})$ is equivalent to the derived category of an explicit DGA by singling out a generator, cf.\ \Cref{generator_Ho} and \Cref{computation_DGA}. This generator is closely related to the aforementioned spherical module. Finally, we show that the endomorphism rings of the simple supersingular modules in $\Ho(\He_{\GL_2})$ are isomorphic to the regular component of the finite Hecke algebra $\He_{x_0}$, cf.\ \Cref{endom_comp}.

\subsection*{Acknowledgments} The genesis of this work came from a question of Vytautas Pa{\v s}k{\=u}nas on the relationship between $\Ho(\He_{\GL_2})$ and the constructions of Dotto-Emerton-Gee. We thank him for his insightful question. We also thank Jan Kohlhaase and Ziqian Yin for useful conversations and for their comments on this work, and Martin Kalck for pointing out useful references.

\subsection*{Conventions and notation} We fix a prime $p$ and a nonarchimedean local field $\Ff$ of residue characteristic $p$. We write $\OF$ for the valuation ring of $\Ff$ with maximal ideal $\m$, uniformiser $\varpi$, and residue field $\F_q$. Throughout this paper we fix $k\coloneqq \overline{\F}_p$ and an embedding $\F_q\subseteq k$. For any group $H$, the trivial $H$-representation on $k$ will be denoted by $\mathbf{1}$.

If we denote an adjunction by $F:\C\rightleftarrows\D:U$ then $F$ is always assumed to be left adjoint to $U$. The unit (resp.\ the counit) of an adjunction will always be denoted by $\eta$ (resp.\ $\varepsilon$). We also say that $F$ {\it preserves} (resp.\ {\it reflects}) a property (P) if $F*$ (resp.\ $*$) has property (P) whenever $*$ (resp.\ $F*$) does. All rings considered will be assumed to be unital and associative. For such a ring $R$ we will denote by $\Mod(R)$ the category of left $R$-modules. When working with chain complexes of $R$-modules, our convention is that differentials have degree $+1$.

\section{Preliminaries}

\subsection{Assumptions and notation for $\G$} We let $\G$ be a split reductive group over $\Ff$ and set $G=\G(\Ff)$. We fix a maximal split $\Ff$-torus $\mathbf{T}$ of $\G$. The torus $T=\mathbf{T}(\Ff)$ of $G$ has a maximal compact subgroup $T^0$, and we let $T^1$ be its maximal pro-$p$ subgroup. The quotient $T^0/T^1$ is then a finite group which we denote by $T(\F_q)$.

We let $\mathscr{X}$ denote the semisimple Bruhat-Tits building. We fix a chamber $C$ in the standard apartment of $\mathscr{X}$ and a hyperspecial vertex $x_0$ in $\overline{C}$. Given a face $F$ in $\mathscr{X}$, we let $P_F$ denote the corresponding parahoric subgroup of $G$ and $P_F^\dagger$ be the stabiliser subgroup of $F$ in $G$. We now let $I$ be the pro-$p$ Iwahori subgroup corresponding to our choice of central chamber, i.e.\ the pro-$p$ radical of $P_C$.

Note that $T(\F_q)$ is a normal subgroup of $\widetilde{W}\coloneqq N_G(T)/T^1$ with quotient $W=N_G(T)/T^0$. The choice of $C$ determines a set of positive roots and hence a basis in the root system of $(\G, \mathbf{T})$. This in turn gives length function $\ell$ on the finite Weyl group $N_G(T)/T$, which extends to both $W$ and $\widetilde{W}$. There is an equality $W=\Omega\rtimes W_\aff$, where $\Omega=\{w\in W\,:\,\ell(w)=0\}$ and $W_\aff$ denotes the affine Weyl group generated by the set of simple affine reflections fixing the walls of $C$. The group $\Omega$ is finitely generated abelian and we write $\Omega_{\text{tor}}$ for its (finite) torsion subgroup. We will also denote by $\widetilde{\Omega}$ the pre-image of $\Omega$ in $\widetilde{W}$.

We will be primarily be interested in the case where $\G=\mathrm{SL_2}$, $\mathrm{PGL_2}$ or $\mathrm{GL_2}$. In that case, we will always choose $\mathbf{T}$ to be the standard torus of diagonal matrices, and moreover $C$ and the central vertex $x_0$ are fixed so that $I=\begin{psmallmatrix}
1+\m & \OF\\
\m & 1+\m
\end{psmallmatrix}$ and $P_{x_0}=\SL_2(\OF)$, $\PGL_2(\OF)$ or $\GL_2(\OF)$ as is usual. When $\G=\mathrm{PGL_2}$ or $\mathrm{GL_2}$, we write $\omega$ for (the image of) the matrix $\begin{psmallmatrix}
    0 & \varpi\\
    1 & 0
\end{psmallmatrix}$ in $G$. By abuse of notation, we will also denote by $\omega$ the image of that matrix in $W$ and $\widetilde{W}$. It then holds that $\Omega=\langle \omega\rangle$, so that $\Omega\cong \Z$ for $\G=\GL_2$ and $\Omega\cong \Z/2\Z$ for $\G=\PGL_2$. In the case $\G=\SL_2$ we have $\Omega=1$. We let $x_1=\omega\cdot x_0$ and note that $C$ is the chamber/edge joining $x_0$ and $x_1$. We also write $\alpha$ for the unique simple root in the root system, and $s_0\coloneqq s_\alpha$ and $s_1$ for the two simple affine reflections. The group $\Omega$ acts on the affine Weyl group by conjugation and we note that $\omega s_0\omega^{-1}=s_1$ and that $\omega^2$ acts trivially.

\subsection{Hecke algebras and supersingular modules}\label{recollections_Hecke} The pro-$p$ Iwahori Hecke algebra is the convolution algebra $\He_G=k[I\backslash G/I]$ of $I$-biinvariant compactly supported functions on $G$. We will sometimes omit the subscript $G$ when dealing with general $G$ or when the group is clear from context. The set $I\backslash G/I$ is in bijection with $\widetilde{W}$ and there is an associated Iwahori-Matsumoto basis $\{T_w\mid w\in \widetilde{W}\}$ of $\He$. For each face $F$ of $\mathscr{X}$, we have corresponding convolution algebras $\He_F^\dagger=k[I\backslash P_F^\dagger/I]$ and $\He_F=k[I\backslash P_F/I]$, and there are canonical inclusions $\He_F\subseteq \He_F^\dagger\subseteq \He$.

Specialising to our rank one situation, recall that the affine Hecke algebra $\Hea$ of $G$ is the $k$-algebra generated by $T_t$ ($t\in T(\F_q)$), $T_{s_0}$ and $T_{s_1}$, satisfying the braid relations and the quadratic relations
$$
T_{s_i}^2=T_{s_i}\cdot \left(|\mu_\alpha|\sum_{t\in\alpha^\vee(\F_q^\times)}T_{t}\right)
$$
for $i=0,1$, where $\alpha^\vee$ denotes the unique positive coroot of $\G$ and $\mu_\alpha\cong \ker(\alpha^\vee|_{\F_q^\times})$ has order one for $\G\neq \PGL_2$ or two otherwise, cf.\ \cite[\S 2.2, eqs.\ (27)-(28), \& \S 2.1.6, eq.\ (13)]{OS19}. From the quadratic relations, any character $\chi$ of $\Hea$ automatically satisfies $\chi(T_{s_i})\in\{0, -1\}$ for $i=0,1$. We say that $\chi$ is a \emph{supersingular character} in either one of the following two situations:
\begin{enumerate}
    \item the restriction $\chi|_{\alpha^\vee(\F_q^\times)}$ is non-trivial, in which case we automatically have $\chi(T_{s_i})=0$ for $i=0,1$ by the quadratic relations; or
    \item the restriction $\chi|_{\alpha^\vee(\F_q^\times)}$ is trivial and $\chi(T_{s_0})\neq \chi(T_{s_1})$.
\end{enumerate}
By Koziol's result \cite[Theorem 7.7]{Koz}, we have that a supersingular character $\chi$ has finite projective dimension if and only if it is given as in (ii) above.

We then have $\He_{\SL_2}=\He_{\SL_2,\aff}$. A supersingular simple $\He_{\SL_2}$-module will then just mean a supersingular character as above. In the other two cases, $\He$ is generated over $\Hea$ by $T_\omega^{\pm 1}$. This satisfies the relations $T_\omega T_{s_0} T_\omega^{-1}=T_{s_1}$ and $T_\omega T_t T_\omega^{-1}=T_{s_0\cdot t}$ for all $t\in T(\F_q)$. In particular, $T_\omega$ normalises $\Hea$. We also note that $\omega^2=\begin{psmallmatrix}
    \varpi & 0\\
    0 & \varpi
\end{psmallmatrix}$, and consequently we have that $T_\omega^2=1$, resp.\ $T_\omega^2$ is central, in $\He_{\PGL_2}$, resp.\ in $\He_{\GL_2}$. A simple \emph{supersingular} $\He$-module is in these two cases then a module $M$ defined as follows. We have
$$
M=ke_0\oplus ke_1
$$
with
$$
h\cdot e_0=\chi(h)e_0 \quad\text{and}\quad h\cdot e_1=\chi^\omega(h)e_1\coloneqq \chi(T_{\omega^{-1}} h T_{\omega})e_1
$$
for all $h\in \Hea$, where $\chi$ is a supersingular character of $\Hea$ as above. Furthermore, $T_\omega e_0=e_1$ and $T_\omega e_1=\lambda e_0$ for some fixed scalar $\lambda\in k^\times$. When $\G=\mathrm{PGL_2}$, we necessarily have $\lambda=1$.

\begin{rem}
Concretely, the relationships between the Hecke algebras of all three groups is as follows. We have that $\He_{\SL_2}$ is a subalgebra of $\He_{\GL_2, \aff}$ and $\He_{\GL_2, \aff}=k[T_{\GL_2}(\F_q)]\otimes_{k[T_{\SL_2}(\F_q)]}\He_{\SL_2}$ as $(k[T_{\GL_2}(\F_q)], \He_{\SL_2})$-bimodules. Meanwhile, we have
$$
\He_{\PGL_2}=\He_{\GL_2}/(T_\omega^2-1, T_t-1 : t\in \F_q^\times \cdot \id).
$$
so that $\He_{\PGL_2}$ is a quotient of $\He_{\GL_2}$.
\end{rem}

Additionally to the above, we will consider the subalgebras of $\He$ corresponding to the vertices $x_0$ and $x_1$ and the chamber $C$. Namely, $\He_{x_i}$ ($i=0,1$) is the subalgebra generated by $T(\F_q)$ and $T_{s_i}$ and $\He_C$ is the group algebra $k[T(\F_q)]$, viewed as a subalgebra of $\He$. Note that, being semisimple, $\He_C$ has global dimension 0.

If $\G=\SL_2$, we then have $\He_C^\dagger=\He_C$ and $\He_{x_i}^\dagger=\He_{x_i}$ ($i=0,1$). When $\G=\PGL_2$ or $\GL_2$, we instead have that $\He_C^\dagger=k[\widetilde{\Omega}]$ is the subalgebra of $\He$ generated by $\He_C$ and $T_\omega$, and $\He_{x_i}^\dagger$ ($i=0,1$) is the subalgebra of $\He$ generated by $\He_{x_i}$ and $T_\omega^2$. For $\G=\PGL_2$, $\He_C^\dagger$ is the group algebra $k[T(\F_q)\rtimes \Z/2\Z]$ while for $\G=\GL_2$ it is a skew-Laurent polynomial algebra over $\He_C$, and hence necessarily of global dimension 1 in the latter case. Finally, if $\G=\PGL_2$ then $\He_{x_i}^\dagger=\He_{x_i}$ ($i=0,1$), while for $\G=\GL_2$ we have that $\He_{x_i}^\dagger$ is isomorphic to the Laurent polynomial ring $\He_{x_i}[Z^{\pm1}]$.

\subsection{Gorenstein rings and homotopy categories} We recall that a ring $R$ is called \emph{Gorenstein} if it is left and right Noetherian and has finite selfinjective dimension both as a left and as a right module. In this case, the left and right selfinjective dimensions of $R$ have equal value $n$, say, and an $R$-module has finite projective dimension if and only if it has finite injective dimension, in which case both of these dimensions are bounded above by $n$, cf.\ \cite[9.1.8 \& 9.1.10]{EnJeBook1}. We will call such an $R$-module \emph{trivial}. The relevance of this notion to us is that the $k$-algebras $\He$, $\He_F^\dagger$ are all Gorenstein, cf.\ \cite[Theorem 0.1 \& Proposition 5.5]{OS14}, and furthermore the algebras $\He_F$ $(F\subseteq \overline{C})$ are even self-injective, cf.\ \cite[Example 4.1(ii)]{KD24}.

\subsubsection{Hovey's model structures on $\Mod(R)$} We recall that if $R$ is a Gorenstein ring then a module $M\in\Mod(R)$ is called \emph{Gorenstein projective}, resp.\ \emph{Gorenstein injective}, if there is an isomorphism $M\cong Z_0(E_\bullet)$, where $E_\bullet$ is an acyclic chain complex of $R$-modules which is termwise projective, respectively termwise injective. Such a chain complex will then be called a \emph{complete resolution} for $M$. Furthermore, we recall that a Gorenstein projective module, resp.\ Gorenstein injective module, is trivial if and only if it is projective, resp.\ injective, cf.\ \cite[Corollary 8.5]{Hov2}. We also recall as a special case that when $R$ is self-injective then the classes of injective and projective modules coincide, and hence every $R$-module has a complete resolution by combining an injective and a projective resolution. Thus every module is both Gorenstein projective and Gorenstein injective in that case.

\begin{ex}\label{princ_example}
We provide an example of a non-trivial Gorenstein projective module for the ring $A=k[X_1, X_2]/(X_1X_2)$, as this will play a role for us later. Note that this ring is Gorenstein by \cite[0AWX \& 0DWA]{stack} but is not self-injective. We consider the module $M\coloneqq A/X_1A\cong X_2A$. This occurs as the zero-th cycles of the complete resolution
\begin{equation*}
\cdots\xrightarrow{X_1\cdot}A\xrightarrow{X_2\cdot}A\xrightarrow{X_1\cdot}A\xrightarrow{X_2\cdot}\cdots
\end{equation*}
and thus is Gorenstein projective. Truncating this complex gives a projective resolution for $M$, and using this one computes that
$$
\Ext^j_A(M,M)=\begin{cases}
M & \text{if $j=0$}\\
M/X_2M\cong k &\text{if $j\geq 2$ is even}\\
0 & \text{if $j$ is odd}
\end{cases}.
$$
In particular, $M$ has infinite projective dimension. A completely analogous computation holds for the module $A/X_2A$, which is thus also Gorenstein projective of infinite projective dimension.
\end{ex}

Using these notions, Hovey constructed the so-called Gorenstein projective and Goreinstein injective model structures on $\Mod(R)$, cf.\ \cite[Theorem 8.6]{Hov2}. We do not recall here the formalism of (abelian) model categories, we refer the reader instead to our previous works \cite[\S 1]{KD24} and \cite[\S 2.4]{Dup25} as well as the references therein. In this paper we will exclusively work with the aforementioned model structures on $\Mod(R)$, which are defined as follows. In the Gorenstein projective model structure, the fibrations are the epimorphisms and the cofibrations are the monomorphisms with Gorenstein projective cokernel. Dually, the Gorenstein injective model structure has fibrations the epimorphisms with Gorenstein injective kernel and cofibrations the monomorphisms. In both cases, a fibration, resp.\ cofibration, is a weak equivalence if and only if its kernel, resp.\ cokernel, is trivial. Furthermore, the factorisation axioms and \cite[Lemma 5.8]{Hov2} imply that the weak equivalences in both of these model structures are the same, namely they are given by the smallest class of morphisms closed under composition and containing all epimorphisms with trivial kernels and all monomorphisms with trivial cokernel. Consequently, the two model structures have the same homotopy category, which we will denote by $\Ho(R)$ (see also \cite[Remark 2.12]{Dup25}).

If $M, N\in\Mod(R)$ are two (left) $R$-modules then we will set $[M,N]_R\coloneqq \Hom_{\Ho(R)}(M,N)$. By \cite[Proposition 9.1]{Hov2}, whenever $M$ is Gorenstein projective we have
\begin{equation}\label{Hom_Ho1}
[M,N]_R=\Hom_R(M,N)/\sim
\end{equation}
where the homotopy relation $\sim$ trivialises those maps which factor through a projective $R$-module. 
We next recall that $\Ho(R)$ is a triangulated category with arbitrary direct sums, where direct sums are computed by taking the image in $\Ho(R)$ of direct sums in $\Mod(R)$, cf.\ \cite[Theorem 6.34 \& Corollary 8.9(i)]{GilBook}. The triangulated structure is given as follows:
\begin{itemize}
    \item For $M\in \Mod(R)$, its shift $\Sigma(M)$ is given as the cokernel of any injection into a module of finite projective dimension. This is well-defined up to isomorphism in $\Ho(R)$.
    \item Given a short exact sequence
    $$
    0\to M_1\to M_2\to M_3\to 0
    $$
    in $\Mod(R)$, there is a canonical way of constructing a morphism $M_3\to \Sigma M_1$ in $\Ho(R)$ so that the resulting triangle $M_1\to M_2\to M_3\to \Sigma M_1$ is distinguished (see e.g.\ \cite[eq.\ (2.11)]{Dup25}, applied to the Gorenstein injevctive model structure). Furthermore, any distinguished triangle in $\Ho(R)$ is up to shift isomorphic to one of this form.
\end{itemize}
Finally, by \cite[Theorem 9.3]{Hov2} we have for a Gorenstein projective module $M$, any $N\in\Mod(R)$, and any $i\geq 1$ that
\begin{equation}\label{Hom_Ho3}
[\Sigma^{-i}M,N]_R\cong[M,\Sigma^i N]_R\cong \Ext_R^i(M,N).
\end{equation}

\subsubsection{Tensor-hom adjunctions between homotopy categories}\label{tensor_hom_section} Let $F:\C\rightleftarrows\D: U$ be an adjunction between the underlying categories of two model categories. We recall that this adjunction is called \emph{Quillen} when $F$ preserves cofibrations and trivial cofiabrations (or equivalently, if $U$ preserves fibrations and trivial fibrations). In that case the functor $F$, resp.\ $U$, is called \emph{left Quillen}, resp.\ \emph{right Quillen}. We will then denote by $\mathbf{L}F:\Ho(\C)\rightleftarrows\Ho(\D):\mathbf{R}U$ the induced total derived adjunction between the homotopy categories.

Suppose now that $R$ and $S$ are two Gorenstein rings, and let $X$ be an $(S,R)$-bimodule. Then we write $\LM_X$ and $\R_X$ for the adjoint functors $X\otimes_R(-)$ and $\Hom_S(X,-)$ respectively. We will need criteria to determine when this adjunction is Quillen.

\begin{lem}\label{tensor_hom_lemma}
Suppose that $X$ is projective as a left $S$-module and flat as a right $R$-module. Then the adjunction
$$
\LM_X:\Mod(R)\rightleftarrows\Mod(S):\R_X
$$
is Quillen for the Gorenstein projective model structures. Moreover, $\LM_X$ and $\R_X$ preserve weak equivalences so that there are natural isomorphisms $\mathbf{L}\LM_X(M)\cong \LM_X(M)$ and $\mathbf{R}\R_X(N)\cong \R_X(N)$ in $\Ho(R)$ and $\Ho(S)$ respectively, for all $M\in\Mod(R)$ and all $N\in\Mod(S)$, i.e.\ the derived adjunction
\begin{equation*}
\mathbf{L}\LM_X:\Ho(R)\rightleftarrows\Ho(S):\mathbf{R}\R_X
\end{equation*}
is computed without the need for any replacements. 
\end{lem}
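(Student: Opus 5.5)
We will verify the Quillen condition through the right adjoint $\R_X=\Hom_S(X,-)$, working in the Gorenstein projective model structures, where every object is fibrant, the fibrations are the epimorphisms, and the trivial fibrations are the epimorphisms with trivial kernel. Then we will show separately that both functors preserve weak equivalences. For weak equivalences, recall that they are generated under composition by epimorphisms with trivial kernel and monomorphisms with trivial cokernel. So it suffices, for each functor, to check two things: that it is exact, and that it sends modules of finite projective dimension to modules of finite projective dimension. Exactness is immediate from the hypotheses. Since $X$ is flat as a right $R$-module, $\LM_X$ is exact. Since $X$ is projective as a left $S$-module, $\R_X$ is exact.

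For the right adjoint, suppose $N$ is a trivial $S$-module. Because $S$ is Gorenstein, $N$ then has finite injective dimension, so we may take a finite injective resolution $0\to N\to I^0\to\cdots\to I^n\to 0$. The functor $\R_X$ is exact, and it preserves injectives since its left adjoint $\LM_X$ is exact. Applying $\R_X$ therefore gives a finite injective resolution of $\R_X(N)$, so $\R_X(N)$ has finite injective dimension. As $R$ is Gorenstein, this means $\R_X(N)$ is trivial.

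From this the Quillen condition follows. Since $\R_X$ is exact, it sends an epimorphism to an epimorphism. Since it is also left exact, it sends the kernel of a map to the kernel of the image map. Together with the previous paragraph, this shows that $\R_X$ preserves fibrations and trivial fibrations, so the adjunction is Quillen.

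For the left adjoint, the dual argument uses projective resolutions. Suppose $M$ is a trivial $R$-module, and take a finite projective resolution $0\to P_n\to\cdots\to P_0\to M\to 0$. Applying $\LM_X$ preserves exactness, by flatness of $X$ over $R$. It also sends $R$ to $X$, which is $S$-projective, and hence sends every projective $R$-module, being a summand of a free one, to a projective $S$-module. The result is a finite projective resolution of $\LM_X(M)$, so $\LM_X(M)$ is trivial. Combining exactness with preservation of trivial modules, both functors send the generating weak equivalences to weak equivalences, and hence preserve all weak equivalences.

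Finally, we identify the derived functors. The derived functor $\mathbf{L}\LM_X$ is computed by applying $\LM_X$ to a cofibrant replacement $QM\to M$, which is a weak equivalence. Because $\LM_X$ preserves weak equivalences, $\LM_X(QM)\to\LM_X(M)$ is also a weak equivalence, giving the natural isomorphism $\mathbf{L}\LM_X(M)\cong\LM_X(M)$. For $\mathbf{R}\R_X$ the statement is immediate, since every object is fibrant and no replacement is needed. None of these steps poses a serious obstacle. The only point requiring care is that "trivial" must be read as finite injective dimension on the $\R_X$ side and finite projective dimension on the $\LM_X$ side, which is legitimate because $R$ and $S$ are both Gorenstein.
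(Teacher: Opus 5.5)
Your proof is correct, but you verify the Quillen condition from the opposite side to the paper. The paper checks that $\LM_X$ is left Quillen. Being exact and sending projectives to projectives, $\LM_X$ preserves Gorenstein projective modules, since it carries complete resolutions to complete resolutions. It also preserves modules of finite projective dimension, hence cofibrations and trivial cofibrations. The paper then obtains for free that $\R_X$ preserves trivial objects: these are exactly the trivially fibrant objects, and a right Quillen functor preserves trivial fibrations. Finally it quotes \cite[Lemma 2.8]{Dup25}: an exact functor preserving trivial objects preserves weak equivalences.

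You instead check the right adjoint directly. You use that $\R_X$ is exact and preserves injectives (because $\LM_X$ is exact), so it preserves finite injective dimension, which over a Gorenstein ring is the same as triviality. This never mentions Gorenstein projectives or cofibrations, only that fibrations are epimorphisms, so it is slightly more elementary. The price is a separate, easy projective-resolution argument showing $\LM_X$ preserves trivial modules, where the paper gets one of the two trivial-preservation statements formally. You also need the Gorenstein identification of finite projective and finite injective dimension on both rings.

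Your inline derivation of weak-equivalence preservation, from the description of weak equivalences as composites of epimorphisms with trivial kernel and monomorphisms with trivial cokernel, is exactly the content of the cited lemma, so nothing is lost there. Likewise, the paper's observation that $\LM_X$ preserves Gorenstein projectives (used later, e.g.\ for the Gorenstein projective spherical module) also follows from your version, since left Quillen functors preserve cofibrant objects.
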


\begin{proof}
By our assumptions on $X$, the functor $\LM_X$ is exact and preserves projectives. Therefore, $\LM_X$ preserves both Gorenstein projectives and modules of finite projective dimension. It follows that $\LM_X$ preserves both monomorphisms with Gorenstein projective cokernels (i.e.\ cofibrations) and monomorphisms with projective cokernel (i.e.\ trivial cofibrations) and so is left Quillen. By what we've shown, we therefore  have that $\LM_X$ is exact and preserves trivial objects. Furthermore, we also have that $\R_X$ is exact and that it preserves trivial objects since those are trivially fibrant and $\R_X$ is right Quillen. By applying \cite[Lemma 2.8]{Dup25}, we now immediately get that $\LM_X$ and $\R_X$ preserve all weak equivalences and that the derived adjunction is of the claimed form.
\end{proof}

In order to test whether a Quillen adjunction like in the above can be a Quillen equivalence, the following result will be useful.

\begin{lem}\label{Test_quillen_equiv} Suppose that $F:\C \rightleftarrows\D:G$ is a Quillen adjunction between model categories. If $F$ reflects and preserves weak equivalences, then the adjunction is a Quillen equivalence if and only if the counit $\varepsilon_M:FG(M)\to M$ is a weak equivalence for all fibrant objects $M$ in $\D$. Dually, if $G$ reflects and preserves weak equivalences, then the adjunction is a Quillen equivalence if and only if the unit $\eta_N: N\to GF(N)$ is a weak equivalence for all cofibrant objects $N$ in $\C$.
\end{lem}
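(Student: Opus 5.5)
We prove the first assertion; the dual one follows by the same argument with the roles of $F,G$, cofibrant/fibrant and unit/counit swapped. The strategy is to work with the standard characterisation of Quillen equivalences: a Quillen adjunction is a Quillen equivalence if and only if, for every cofibrant $N\in\C$ and fibrant $M\in\D$, a morphism $f:F(N)\to M$ is a weak equivalence exactly when its adjoint $f^\flat:N\to G(M)$ is one. Equivalently, the derived unit and counit are isomorphisms in the homotopy categories. Under the hypothesis that $F$ preserves weak equivalences, the left derived functor $\mathbf{L}F$ is simply induced by $F$ with no cofibrant replacement, and $\mathbf{L}F$ reflects isomorphisms because $F$ reflects weak equivalences.

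\emph{Forward direction.} Assume the adjunction is a Quillen equivalence and let $M$ be fibrant. Choose a cofibrant replacement $q:QG(M)\to G(M)$, a trivial fibration. The derived counit at $M$ is the composite $F(QG(M))\xrightarrow{F(q)} FG(M)\xrightarrow{\varepsilon_M} M$, which is the adjoint of $q$. Since $q$ is a weak equivalence, this composite is a weak equivalence by the characterisation above. Since $F$ preserves weak equivalences, $F(q)$ is a weak equivalence, and 2-out-of-3 then shows that $\varepsilon_M$ is a weak equivalence.

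\emph{Converse.} Assume $\varepsilon_M$ is a weak equivalence for every fibrant $M$. Take cofibrant $N$, fibrant $M$, and a morphism $f:F(N)\to M$ with adjoint $f^\flat:N\to G(M)$. Then $f=\varepsilon_M\circ F(f^\flat)$. Because $\varepsilon_M$ is a weak equivalence, 2-out-of-3 shows that $f$ is a weak equivalence if and only if $F(f^\flat)$ is one. Since $F$ both preserves and reflects weak equivalences, this holds if and only if $f^\flat$ is a weak equivalence. This is exactly the Quillen equivalence criterion.

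The only delicate point is the identification used in the forward direction: that the derived counit is the adjoint of the cofibrant replacement map, and that a Quillen equivalence makes it a weak equivalence. I would cite this standard fact from Hovey's book on model categories (Prop.\ 1.3.13) rather than reprove it. For the dual statement, one uses a fibrant replacement $F(N)\to RF(N)$ and the factorisation $f^\flat=G(f)\circ\eta_N$.
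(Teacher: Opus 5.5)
Your proof is correct. The paper gives no argument of its own: it cites \cite[Lemma 1.6]{KD24}, which is stated there for transferred model structures, and notes that the proof only uses that $F$ (resp.\ $G$) preserves and reflects weak equivalences. Your reduction to Hovey's characterisation of Quillen equivalences, via 2-out-of-3 applied to $f=\varepsilon_M\circ F(f^\flat)$ (resp.\ $f^\flat=G(f)\circ\eta_N$) and to the adjoint $\varepsilon_M\circ F(q)$ of a cofibrant replacement, is exactly the standard argument behind that citation; the step you flag as delicate is in fact immediate from the definition.
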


\begin{proof}
This is just \cite[Lemma 1.6]{KD24} and its dual, whose statement assumes that the model structure on one side is right or left transferred from the other side, but whose proof applies verbatim with our assumptions.
\end{proof}

We will later need to consider Laurent polynomial rings over certain Gorenstein algebras. Such algebras are themselves automatically Gorenstein, but since we did not find an explicit reference we record it here.

\begin{lem}\label{Laurent_Gorenstein}
Suppose that $R$ is a Gorenstein ring. Then:
\begin{enumerate}
    \item the polynomial ring $R[Z]$ is Gorenstein; and
    \item any Ore localisation $\mathrm{S}^{-1}R$ at a central multiplicative set $\mathrm{S}$ is Gorenstein.
\end{enumerate}
In particular, the Laurent polynomial ring $R[Z^{\pm 1}]$ is Gorenstein.
\end{lem}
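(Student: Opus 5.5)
Both parts reduce to two facts. Noetherianity passes to $R[Z]$ and to central localisations. Injective dimension is controlled by a standard base-change argument. By symmetry (left/right) it suffices to treat the left side throughout, since the hypotheses and constructions are symmetric.

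\emph{(i)} $R[Z]$ is left and right Noetherian by the (noncommutative) Hilbert basis theorem. For the self-injective dimension, let $n=\mathrm{injdim}_R R$. I will use the standard fact that if $E$ is an injective left $R$-module, then $\Hom_R(R[Z],E)$ is an injective left $R[Z]$-module; this follows from the adjunction with the exact forgetful functor. The module $\Hom_R(R[Z],E)\cong \prod_{i\ge0}E$ can be identified with $E[[Z^{-1}]]$-type modules.

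More efficiently: since $R[Z]$ is free as a right $R$-module, $R[Z]\otimes_R-$ is exact. Take an injective resolution $0\to R\to E^0\to\cdots\to E^n\to 0$ and apply $R[Z]\otimes_R -$. This gives a resolution of $R[Z]$ by modules $E^j[Z]=R[Z]\otimes_R E^j$. Each such module has injective dimension at most $1$ over $R[Z]$, by the exact sequence
\[
0\to E[Z]\xrightarrow{}\Hom_R(R[Z],E)\to \Hom_R(R[Z],E)\to 0.
\]
Here the first map is the usual embedding of polynomials into ``Laurent tails'', and the second is $\phi\mapsto Z\phi-\phi Z$-type shift. This sequence is the classical Northcott-type argument and is exact because $E$ is injective, hence $E[Z^{-1}]$-inverse polynomial modules behave as expected. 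Dimension shifting then gives $\mathrm{injdim}_{R[Z]}R[Z]\le n+1$. Alternatively, I would simply cite the result that for Noetherian $R$ one has $\mathrm{injdim}\,M[Z]\le \mathrm{injdim}\,M+1$. I expect verifying exactness of the displayed sequence to be the only real work.

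\emph{(ii)} Since $\mathrm S$ is central, $\mathrm S^{-1}R=R\otimes_{Z}\mathrm S^{-1}Z$ is flat on both sides over $R$, and $\mathrm S^{-1}R\otimes_R-$ is exact. Noetherianity follows because left ideals of $\mathrm S^{-1}R$ are localisations of their contractions. For injective dimension, the key claim is that if $E$ is an injective left $R$-module over left Noetherian $R$, then $\mathrm S^{-1}E$ is an injective $\mathrm S^{-1}R$-module. To see this, I use Baer's criterion. Every left ideal of $\mathrm S^{-1}R$ is $\mathrm S^{-1}J$ with $J$ finitely presented, so
\[
\Hom_{\mathrm S^{-1}R}(\mathrm S^{-1}J,\mathrm S^{-1}E)\cong \mathrm S^{-1}\Hom_R(J,E),
\]
using finite presentation and centrality of $\mathrm S$. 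Surjectivity of restriction from $R$ to $J$ then localises. Localising an injective resolution of $R$ therefore gives an injective resolution of $\mathrm S^{-1}R$ of length at most $n$.

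Finally, $R[Z^{\pm1}]$ is the localisation of $R[Z]$ at the central set $\{Z^m\}$, so it is Gorenstein by combining (i) and (ii).
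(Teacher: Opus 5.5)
\textbf{Gap in (i).} The argument you give for the key step of (i) fails. No short exact sequence $0\to E[Z]\to\Hom_R(R[Z],E)\to\Hom_R(R[Z],E)\to 0$ exists in general, whatever the maps. Take $R=E=k$, and note that $k[Z]^*=\Hom_k(k[Z],k)$ is $k[Z]$-injective and $\Hom_{k[Z]}(k[Z]/(Z),k[Z]^*)\cong\Hom_k(k,k)=k$. Applying $\Hom_{k[Z]}(k[Z]/(Z),-)$ to such a sequence would then give an exact sequence $0\to k\to k\to\Ext^1_{k[Z]}(k[Z]/(Z),k[Z])\cong k\to 0$, which is impossible by counting dimensions.

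The maps you describe do not work either. Under $\phi\mapsto(\phi(Z^i))_i$, the variable $Z$ acts on $\Hom_R(R[Z],E)\cong\prod_i E$ by $(e_0,e_1,\dots)\mapsto(e_1,e_2,\dots)$. On $E[Z]=\bigoplus_i Z^iE$ it acts by $(e_0,e_1,\dots)\mapsto(0,e_0,e_1,\dots)$. So the inclusion $\bigoplus\subseteq\prod$ is not $R[Z]$-linear, and $\phi\mapsto Z\phi-\phi Z$ is zero because $Z$ is central. The step you flagged as ``the only real work'' therefore cannot be carried out. The claim $\mathrm{injdim}_{R[Z]}E[Z]\le 1$ is true, but it needs a different argument. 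Tensoring the resolution of $R$ and dimension shifting are fine once the claim holds.

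\textbf{A repair.} For any left $R[Z]$-module $N$ there is an exact sequence $0\to R[Z]\otimes_R N\xrightarrow{\delta}R[Z]\otimes_R N\to N\to 0$. Here $\delta(f\otimes n)=fZ\otimes n-f\otimes Zn$ and the second map is the action. Since $R[Z]$ is free over $R$, you have $\Ext^i_{R[Z]}(R[Z]\otimes_R N,M)\cong\Ext^i_R(N,M)$. The long exact sequence then gives $\mathrm{injdim}_{R[Z]}M\le\mathrm{injdim}_R M+1$ for every $R[Z]$-module $M$.

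Apply this to $M=R[Z]\cong\bigoplus_{i\ge 0}R$ as left $R$-modules. Since $R$ is left Noetherian, $\Ext^j_R(R/I,-)$ commutes with direct sums, so $\mathrm{injdim}_R R[Z]\le n$ and hence $\mathrm{injdim}_{R[Z]}R[Z]\le n+1$. Taking $M=E[Z]$ instead recovers your claim. This is the content the paper obtains by simply citing \cite[Corollary 2.2]{EEI08}, so your fallback of citing the bound matches the paper.

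\textbf{Part (ii).} This part is correct but takes a different route. The paper observes that $\mathrm{S}^{-1}R$ is flat over $R$, hence of finite injective dimension over the Gorenstein ring $R$ by \cite[Theorem 8.2]{Hov2}. It then uses that, for $\mathrm{S}^{-1}R$-modules, finite injective dimension over $R$ and over $\mathrm{S}^{-1}R$ are equivalent. Your Baer-criterion proof that central localisation preserves injectives avoids the Gorenstein machinery and yields the explicit bound $n$. The paper's route is shorter given that citation. The Laurent deduction agrees with the paper's.
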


\begin{proof}
It is clear in both cases that the rings are left and right Noetherian, noting for (ii) that $\mathrm{S}^{-1}R$ is both a left and a right Ore localisation since $\mathrm{S}$ is central. The fact that $R[Z]$ has finite self-injective dimension follows immediately from \cite[Corollary 2.2]{EEI08}. For the localisation, the finite self-injective dimension property is equivalent to $\mathrm{S}^{-1}R$ having finite injective dimension as a (left and right) $R$-module. But this is immediate from the fact that $\mathrm{S}^{-1}R$ is flat over $R$ (cf.\ \cite[Theorem 8.2]{Hov2}).
\end{proof}

In the Laurent polynomial case, the next couple of results will be useful to us later on when trying to establish Quillen equivalences.

\begin{lem}\label{trivial_res_lemma}
Suppose that $S=R[Z^{\pm 1}]$ is a Laurent polynomial ring over $R$ and that $X=S$. Then the functors $\R_S=\Res^S_R$ and $\LM_S$ each preserve and reflect weak equivalences.
\end{lem}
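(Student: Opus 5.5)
First I will record that both functors are exact. Restriction $\R_S=\Res^S_R$ is obviously exact. Since $S=R[Z^{\pm1}]$ is free as a right $R$-module, $\LM_S=S\otimes_R(-)$ is exact as well. In addition, $S$ is free (hence projective) as a left $S$-module. So \Cref{tensor_hom_lemma} applies: the adjunction is Quillen, and both functors preserve weak equivalences. Strictly, that lemma requires $S$ to be Gorenstein, which is \Cref{Laurent_Gorenstein}.

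For reflection I will use the description of weak equivalences as composites of epimorphisms with trivial kernel and monomorphisms with trivial cokernel. What I actually need is the standard characterisation that, for exact functors between these abelian model categories, a map $f$ is a weak equivalence if and only if it is an isomorphism in $\Ho$. Equivalently, if we factor $f$ as a monomorphism followed by a trivial fibration, the cokernel of the mono part is trivial. A cleaner route is the following criterion: $f$ is a weak equivalence exactly when its kernel and cokernel are trivial. This holds because in $\Ho(R)$ a short exact sequence gives a triangle, so $f$ is an isomorphism there if and only if $\ker f$ and $\coker f$ vanish in $\Ho$, i.e.\ are trivial. Since both functors are exact, they commute with kernels and cokernels. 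Reflection of weak equivalences therefore reduces to reflection of trivial objects: I must show that if $\Res^S_R M$ (resp.\ $S\otimes_R N$) has finite projective dimension, then so does $M$ (resp.\ $N$).

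For $\LM_S$, restricting $S\otimes_R N$ to $R$ gives $\bigoplus_{n\in\Z}Z^n\otimes N\cong N^{(\Z)}$. The restriction functor preserves projectives (since $S$ is free over $R$) and hence finite projective dimension. So $N^{(\Z)}$ has finite projective dimension, and $N$ is a direct summand of it, hence is trivial.

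For $\Res^S_R$, let $M$ be an $S$-module whose restriction has finite projective dimension. I will use the standard exact sequence of $S$-modules
\[0\to S\otimes_R M\xrightarrow{\,Z\otimes 1-1\otimes Z\,} S\otimes_R M\to M\to 0,\]
which is the Laurent analogue of the usual sequence for polynomial rings; exactness is checked by comparing graded pieces. Its first two terms are $\LM_S(\Res M)$, which has finite projective dimension because $\LM_S$ preserves trivial objects. Hence $M$ is trivial.

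I expect the main obstacle to be the kernel/cokernel characterisation of weak equivalences. I will justify it via triangles in $\Ho$, using that a short exact sequence yields a distinguished triangle. This requires care when $f$ is neither mono nor epi: I will split $f$ as $M\twoheadrightarrow \im f\hookrightarrow N$ and apply the triangle argument to each piece.
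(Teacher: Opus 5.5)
Your proof has a genuine gap in the reduction of reflection to trivial objects; the rest is sound and follows the paper. Preservation via \Cref{tensor_hom_lemma} and the direct-summand argument for $\LM_S$ match the paper. For $\Res^S_R$, your exact sequence $0\to S\otimes_R M\to S\otimes_R M\to M\to 0$ is the standard proof of the bound $\pd_S(M)\leq \pd_R(M)+1$, which the paper quotes from \cite[Proposition 7.5.2]{MCR01}.

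The criterion you commit to as the "cleaner route" is false: a map $f$ with trivial kernel and cokernel is a weak equivalence, but a weak equivalence need not have trivial kernel. Reflection needs exactly this false direction. Take the self-injective ring $R=k[x]/(x^2)$, where trivial means projective. The map $x\cdot\colon R\to R$ factors as $R\xrightarrow{(\id,\,x\cdot)}R\oplus R\xrightarrow{\pr_2}R$. The first map is a monomorphism with cokernel $\cong R$, and the second is an epimorphism with kernel $R$, so $x\cdot$ is a weak equivalence. Yet its kernel $xR\cong k$ has infinite projective dimension. The same happens over every Gorenstein ring of infinite global dimension: take a differential in a complete resolution of a non-projective Gorenstein projective module. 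Your triangle argument fails at this point. That $f$ is an isomorphism in $\Ho$ does not force $M\twoheadrightarrow \im f$ and $\im f\hookrightarrow N$ to be isomorphisms separately; in the example they are maps between $0$ and $k\neq 0$ in $\Ho(R)$.

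The fix is the criterion you mentioned first and then dropped. Factor $f\colon M\to N$ as $M\xrightarrow{(f,j)}N\oplus E\xrightarrow{\pr}N$, with $j\colon M\hookrightarrow E$ an embedding into an injective module. Then $\pr$ is an epimorphism with trivial kernel, hence a weak equivalence. In the Gorenstein injective structure, which has the same weak equivalences, every monomorphism is a cofibration, and a cofibration is a weak equivalence iff its cokernel is trivial. Hence $f$ is a weak equivalence iff $\coker(f,j)$ is trivial.

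Now let $F$ be either of your two functors; it is exact and preserves trivial objects. Then $F(\pr)$ is an epimorphism with trivial kernel $F(E)$, and $F((f,j))$ is a monomorphism with cokernel $F(\coker(f,j))$. If $F(f)$ is a weak equivalence, so is $F((f,j))$ by two-out-of-three. So $F(\coker(f,j))$ is trivial, and reflection of trivial objects makes $f$ a weak equivalence. This is the content of \cite[Lemma 2.8(ii)]{Dup25}, which the paper invokes. Note that it uses preservation of trivial objects, not just exactness. With this replacement your proof is complete.
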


\begin{proof}
By \Cref{tensor_hom_lemma} the two functors preserve weak equivalences. For reflection, it suffices to show that they reflect modules of finite projective dimension by \cite[Lemma 2.8(ii)]{Dup25}. For this, let $M\in\Mod(R)$ and $N\in\Mod(S)$ and assume that both $\LM_S(M)$ and $\R_S(N)$ have finite projective dimension. We have that $M$ is an $R$-linear direct summand of $\LM_S(M)=M[Z^{\pm 1}]$, which gives the statement for $\LM_S$. For $\R_S$, we apply \cite[Proposition 7.5.2]{MCR01} to get that $\pd_S(N)\leq \pd_R(N)+1<\infty$ as claimed.
\end{proof}

\begin{prop}\label{Laurent_extn}
Suppose that $X$ is an $(S,R)$-bimodule which is projective as a left $S$-module and flat as a right $R$-module. Furthermore, consider the Laurent polynomial rings $R'=R[Z^{\pm 1}]$ and $S'=S[Z^{\pm 1}]$. Let $X'\coloneqq X[Z^{\pm 1}]$ and assume that $\LM_X$ is a left Quillen equivalence. Then $\LM_{X'}:\Mod(R')\to \Mod(S')$ is also a left Quillen equivalence.
\end{prop}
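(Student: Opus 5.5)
The plan is to reduce the statement for $X'$ to the known one for $X$ by restricting along $R\subseteq R'$ and $S\subseteq S'$, using \Cref{trivial_res_lemma} and \Cref{Test_quillen_equiv}.

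\emph{Hypotheses.} Note that $X'\cong S'\otimes_S X\cong X\otimes_R R'$ as bimodules. Hence $X'$ is projective as a left $S'$-module and flat as a right $R'$-module. By \Cref{Laurent_Gorenstein}, $R'$ and $S'$ are Gorenstein. So \Cref{tensor_hom_lemma} applies: $\LM_{X'}\dashv\R_{X'}$ is Quillen for the Gorenstein projective model structures, and both functors preserve weak equivalences.

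\emph{Compatibility with restriction.} For $M\in\Mod(R')$ we have the $S$-linear natural isomorphism
\[
\Res^{S'}_S\LM_{X'}(M)=X[Z^{\pm1}]\otimes_{R[Z^{\pm1}]}M\cong X\otimes_R\Res^{R'}_R M=\LM_X(\Res^{R'}_R M).
\]
Similarly, for $N\in\Mod(S')$,
\[
\Res^{R'}_R\R_{X'}(N)=\Hom_{S'}(X\otimes_S S',N)\cong\Hom_S(X,N)=\R_X(\Res^{S'}_S N).
\]
One checks that under these identifications the unit and counit of $\LM_{X'}\dashv\R_{X'}$ restrict to the unit and counit of $\LM_X\dashv\R_X$.

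\emph{Reflection of weak equivalences.} Suppose $f$ is a morphism in $\Mod(R')$ with $\LM_{X'}(f)$ a weak equivalence. Since $\Res^{S'}_S$ preserves weak equivalences by \Cref{trivial_res_lemma}, $\LM_X(\Res f)$ is a weak equivalence. A left Quillen equivalence that preserves all weak equivalences (as $\LM_X$ does) induces an equivalence on homotopy categories equal to the underlying functor, so it reflects weak equivalences; thus $\Res f$ is a weak equivalence. Finally $\Res^{R'}_R$ reflects weak equivalences by \Cref{trivial_res_lemma}, so $f$ is a weak equivalence. Hence $\LM_{X'}$ preserves and reflects weak equivalences.

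\emph{Counit criterion.} By \Cref{Test_quillen_equiv}, it now suffices to show that the counit $\varepsilon_N:\LM_{X'}\R_{X'}N\to N$ is a weak equivalence for every $N\in\Mod(S')$, since all objects are fibrant. Because $\Res^{S'}_S$ reflects weak equivalences, it is enough to know this after restriction. After restriction, $\varepsilon_N$ becomes the counit of $\LM_X\dashv\R_X$ at $\Res N$. That counit is a weak equivalence: $\mathbf{L}\LM_X$ is an equivalence and is computed without replacements, so the counit $\LM_X\R_X\to\id$ is an isomorphism in $\Ho(S)$. A morphism of modules that becomes an isomorphism in $\Ho$ is a weak equivalence, since model category weak equivalences are saturated.

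\emph{Main obstacle.} The delicate step is verifying that the restricted counit really agrees with the counit for $X$. This amounts to tracing the isomorphisms
\[
\Hom_{S'}(X\otimes_S S',N)\cong\Hom_S(X,N)
\qquad\text{and}\qquad
X'\otimes_{R'}M\cong X\otimes_R M,
\]
and the evaluation maps through them. If the direct check becomes messy, I would instead argue via the triangle of derived functors: restriction intertwines $\mathbf{L}\LM_{X'}$ and $\mathbf{L}\LM_X$, and the counit is compared in $\Ho$ up to natural isomorphism.
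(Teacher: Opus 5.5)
Your proposal is correct and follows essentially the same route as the paper: you restrict along $R\subseteq R'$ and $S\subseteq S'$, use \Cref{trivial_res_lemma} to show that $\LM_{X'}$ preserves and reflects weak equivalences, and then reduce the counit condition of \Cref{Test_quillen_equiv} to the counit of $\LM_X\dashv\R_X$. The only difference is cosmetic. You justify that $\LM_X$ reflects weak equivalences, and that its counit is a weak equivalence, via the derived equivalence together with saturation. The paper instead cites \cite[Lemma 2.8]{Dup25} and the converse direction of \Cref{Test_quillen_equiv}.
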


\begin{proof}
First, we note that $X'\cong X\otimes_R R'$ as an $(S, R')$-bimodule and $X'\cong S'\otimes_S X$ as an $(S', R)$-bimodule. Consequently, $X'$ is flat as a right $R'$-module (cf.\ \cite[00HI]{stack}) and projective as a left $S'$-module. Hence by \Cref{tensor_hom_lemma} it follows that $\LM_{X'}$ is left Quillen. Next, by the above we have
$$
\Res^{S'}_S(\LM_{X'}(M))=\Res^{S'}_S(X'\otimes_{R'} M)\cong X\otimes_R M
$$
for all $M\in\Mod(R')$. In other words, the diagram of functors
\[
\begin{tikzcd}
\Mod(R')\arrow{r}{\LM_{X'}}\arrow[swap]{d}{\Res^{R'}_R} & \Mod(S')\arrow{d}{\Res^{S'}_S}\\
\Mod(R)\arrow{r}{\LM_X} & \Mod(S)
\end{tikzcd}
\]
commutes up to natural isomorphism. Note that our Quillen equivalence assumption and \Cref{tensor_hom_lemma} imply that $\LM_X$ both preserves and reflects trivial objects and thus weak equivalences by \cite[Lemma 2.8]{Dup25}. But the two restriction functors also both preserve and reflect weak equivalences by \Cref{trivial_res_lemma}, hence so does $\LM_{X'}$ from the commutativity of the diagram. \Cref{Test_quillen_equiv} therefore applies, so that we must check that the counit $\varepsilon_M:\LM_{X'}\R_{X'}(N)\to N$ is a weak equivalence for all $N\in\Mod(S')$.

For this, note that by the tensor-hom adjunction there is a natural isomorphism $\Hom_S(X,\Res_S^{S'}(-))\cong \Hom_{S'}(X',-)$. Viewing both sides as functors valued in $\Mod(R)$, this says that there is a natural isomorphism $\Res^{R'}_R\circ\R_{X'}\to\R_X\circ \Res^{S'}_S$, and hence that we have a commuting square as above for the right adjoints as well. It follows that
$$
\Res^{S'}_S(\LM_{X'}\R_{S'}(N))\cong \LM_X\R_X(\Res^{S'}_S(N))
$$
for all $N\in \Mod(S')$. By chasing down these identifications we obtain that the restriction to $S$ of $\varepsilon_M$ is the counit of the adjunction $\LM_X\vdash\R_X$. But by assumption and \Cref{Test_quillen_equiv}, the latter is a weak equivalence in $\Mod(S)$. Since $\Res^{S'}_S$ reflects weak equivalences by \Cref{trivial_res_lemma}, this implies that $\varepsilon_M$ is a weak equivalence as well and we are done.
\end{proof}

\subsubsection{Decomposition of $\Ho(R)$ by a central idempotent} Suppose that we are given a non-trivial central idempotent $e\in R$. This situation will be relevant for when considering Hecke algebras, cf.\ \eqref{decomp_H}. Then the decomposition $R=eR\times (1-e)R$ of rings induces a decomposition $\Mod(R)=\Mod(eR)\times \Mod((1-e)R)$. Since $\Hom$ sets and thus $\Ext$ groups decompose similarly into a product, we deduce that $eR$ and $(1-e)R$ are both Gorenstein algebras (of self-injective dimension bounded above by that of $R$) and that we have a decomposition
\begin{equation}\label{idem_decomp}
\Ho(R)=\Ho(eR)\times \Ho((1-e)R).
\end{equation}
We will say that an $R$-module \emph{factors through} $eR$ if it is annihilated by $1-e$. If $M, N\in\Mod(R)$ factor through $eR$, we see from \eqref{idem_decomp} that $[M,N]_R=[M,N]_{eR}$.

\section{The category $\Ho(\He)$ in rank one}

In this section, we prove that $\Ho(\He)$ is equivalent to the homotopy category of some explicit commutative Gorenstein rings. We also describe these categories completely explicitly when $\G=\SL_2$ or $\PGL_2$.

\subsection{Homotopy categories in terms of facets in the Bruhat-Tits building} We begin with a general discussion that applies for $\G$ an arbitrary split connected reductive group. As this will play a role in the sequel, we first recall that, given an adjunction $F:\C\rightleftarrows\D:U$ between the underlying categories of two model categories, then we say that the model structure on $\C$ is the \emph{left transfer} along $F$ of the model structure on $\D$ when a morphism $f$ in $\C$ is a cofibration, resp.\ weak equivalence, if and only if $Ff$ is a cofibration, resp.\ weak equivalence, in $\D$. Dually, we say that the model structure on $\D$ is the \emph{right transfer} along $U$ of the model structure on $\C$ when a morphism $f$ in $\D$ is a fibration, resp.\ weak equivalence, if and only if $Uf$ is a fibration, resp.\ weak equivalence, in $\C$.

For each $0\leq i\leq d=r_{\text{ss}}$, let $\mathcal{F}_{(i)}$ denote a set of representatives for the $G$-orbits of the $i$-dimensional faces contained in $\overline{C}$ in $\mathscr{X}$. Recall that the Ollivier-Schneider resolution of a Hecke module $M\in\Mod(\He)$ is of the form
\begin{equation}\label{OllSch_res_gen}
0\longrightarrow \bigoplus_{F\in\mathcal{F}_{(d)}} \He(\epsilon_F)\otimes_{\He^\dagger_F}M\longrightarrow\cdots\longrightarrow \bigoplus_{F\in\mathcal{F}_{(0)}} \He(\epsilon_F)\otimes_{\He^\dagger_F}M\stackrel{\varepsilon}{\longrightarrow} M\longrightarrow 0,
\end{equation}
where $\varepsilon$ is the map induced by $h\otimes m\mapsto h\cdot m$, cf.\ \cite[Theorem 3.12]{OS14}.

In what follows, we will view $\He$ as an $(\He, \He_F^\dagger)$-bimodule for each $F\subseteq\overline{C}$ and will write $\LM^F_{\He}$ for the tensor functor $\He\otimes_{\He_F^\dagger}(-):\Mod(\He_F^\dagger)\to \Mod(\He)$ in order to distinguish between faces. Using that $\He$ is free over $\He_F^\dagger$ for any $F\subseteq \overline{C}$, cf.\ \cite[Proposition 4.21]{OS14}, we see from \Cref{tensor_hom_lemma} that $\LM^F_{\He}$ is left Quillen and that $\LM^F_{\He}$ and $\Res^\He_{\He_F^\dagger}$ each both preserve and reflect weak equivalences. In fact, something stronger holds as we now see.

\begin{lem}\label{right_transf} We let $\mathcal{F}\coloneqq \bigcup_{0\leq i\leq d-1}\mathcal{F}_{(i)}$ and $\mathcal{F}_C=\mathcal{F}\cup\{C\}$. The Gorenstein projective model structure on $\Mod(\He)$ is the right transfer along the `diagonal functor'
\begin{equation}
\Delta_C\coloneqq \left(\Res^\He_{\He_F^\dagger}\right)_{F\in\mathcal{F}_C}\colon\Mod(\He)\to \prod_{F\in\mathcal{F}_C} \Mod(\He_F^\dagger)
\end{equation}
of the product Gorenstein projective model structures on the right hand side (i.e.\ with all classes of morphisms defined termwise). Furthermore, the left adjoint functor
$$
L\colon\left(M_F\right)_{F\in\mathcal{F}_C}\longmapsto\bigoplus_{F\in\mathcal{F}_C} \He\otimes_{\He_F^\dagger}M_F
$$
both preserves and reflects weak equivalences. Assuming that $p\nmid |\Omega_{\text{tor}}|$, theses statements hold as well for the functor $\Delta\coloneqq \left(\Res^\He_{\He_F^\dagger}\right)_{F\in\mathcal{F}}$ and its left adjoint. 
\end{lem}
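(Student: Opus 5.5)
The plan is to reduce everything to statements about trivial objects, that is, modules of finite projective dimension, and then to use \cite[Lemma 2.8]{Dup25}. That lemma says an exact functor which preserves and reflects trivial objects also preserves and reflects weak equivalences. All functors involved are exact: restrictions obviously, and each $\LM^F_{\He}$ because $\He$ is free over $\He_F^\dagger$ \cite[Proposition 4.21]{OS14}. Hence $L$ is exact as well. By \Cref{tensor_hom_lemma}, $\Delta_C$ and $L$ form a Quillen adjunction for the product model structure, since the product structure is defined termwise.

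\emph{Fibrations.} A map $f$ in $\Mod(\He)$ is a fibration iff it is surjective. This holds iff every restriction $\Res^{\He}_{\He_F^\dagger}f$ is surjective, which is immediate since restriction does not change underlying sets.

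\emph{Weak equivalences.} Preservation is already known. For reflection it suffices to show that $\Delta_C$ reflects trivial objects. So let $M\in\Mod(\He)$ with $\Res^{\He}_{\He_F^\dagger}M$ of finite projective dimension for every $F\in\mathcal{F}_C$, and consider the Ollivier–Schneider resolution \eqref{OllSch_res_gen}. Each term $\He(\epsilon_F)\otimes_{\He_F^\dagger}M$ is $\LM^F_{\He}$ applied to the twist of $\Res M$ by the character $\epsilon_F$; the twist again has finite projective dimension. Since $\LM^F_{\He}$ is exact and preserves projectives, each term of the resolution has finite projective dimension over $\He$. The faces occurring run over $\mathcal{F}_{(0)},\dots,\mathcal{F}_{(d)}$, i.e.\ over $\mathcal{F}_C$, since $C$ represents the unique orbit of $d$-dimensional faces in $\overline{C}$. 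The resolution is a finite exact complex, so splicing short exact sequences gives $\pd_{\He}M<\infty$. Hence $\Delta_C$ reflects trivial objects, and thus weak equivalences, which proves the right transfer statement.

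\emph{The left adjoint $L$.} It preserves trivial objects termwise. For reflection, suppose $L((M_F)_F)$ is trivial. Each $\He\otimes_{\He_F^\dagger}M_F$ is a direct summand, hence trivial. Since $\He$ is free over $\He_F^\dagger$ with $\He_F^\dagger$ as a direct summand of the basis decomposition, $M_F$ is an $\He_F^\dagger$-direct summand of $\Res(\He\otimes_{\He_F^\dagger}M_F)$. This restriction has finite projective dimension because restriction along a free extension preserves projectives. So $M_F$ is trivial, and \cite[Lemma 2.8]{Dup25} gives the claim for weak equivalences.

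\emph{Dropping $C$ when $p\nmid|\Omega_{\text{tor}}|$.} The only new point is reflection: we must show $\Res^{\He}_{\He_C^\dagger}M$ is automatically trivial. This is the step I expect to need the most care. My approach is to show that $\He_C^\dagger=k[\widetilde{\Omega}]$ has finite global dimension, so that every module is trivial. The group $\widetilde{\Omega}$ is an extension of $\Omega\cong\Z^r\times\Omega_{\text{tor}}$ by the finite group $T(\F_q)$, whose order is prime to $p$. Pass to the finite-index subgroup $\widetilde{\Omega}_0$, the preimage of $\Z^r$; its group algebra $k[\widetilde\Omega_0]$ is a crossed-product Laurent extension of a semisimple algebra, of global dimension $r$. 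Its index is $|\Omega_{\text{tor}}|$, which is invertible in $k$. An averaging (Maschke-type) argument then shows that every $k[\widetilde{\Omega}]$-module is a direct summand of the module induced from its restriction to $\widetilde{\Omega}_0$. Hence $\mathrm{gldim}\,k[\widetilde{\Omega}]\le r<\infty$. With this in hand, the OS-resolution argument above applies verbatim to $\Delta$, and the statement for its left adjoint is proved exactly as for $L$.
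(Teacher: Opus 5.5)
Your proof is correct and follows essentially the paper's route, written out in full. The paper cites \cite[Lemma 5.6(iii)]{Dup25} for the right transfer, with \cite[Lemma 4.2]{Koz} supplying the reflection of trivial objects that you get by splicing the Ollivier--Schneider resolution; it handles $L$ by the same direct-summand/retract argument; and it drops $C$ because $\He_C^\dagger=k[\widetilde{\Omega}]$ has finite global dimension. There it views $k[\widetilde{\Omega}]$ directly as an iterated skew Laurent extension of the semisimple $k[\widetilde{\Omega}_{\text{tor}}]$, whereas you use your finite-index subgroup $\widetilde{\Omega}_0$ and Maschke-type averaging; both work. One point in your step for $L$ needs stating precisely: for $M_F$ to be a left $\He_F^\dagger$-direct summand, the complement of $\He_F^\dagger$ in $\He$ must be a sub-bimodule, not merely a right-module complement. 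This does hold: take the span of those $T_w$ whose double coset lies outside $P_F^\dagger$. The paper records the resulting splitting via \cite[Proposition 4.21(ii)]{OS14}.
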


\begin{proof}
The proof that the model structure is right transferred through $\Delta$ was done in \cite[Lemma 5.6(iii)]{Dup25} for the affine Hecke algebra and the finite Hecke algebras $\He_F$, and it applies verbatim here (one only needs to appeal to \cite[Lemma 4.2]{Koz} in order to get reflection of trivial objects).

For the statement on the left adjoint, note that $L\left(M_F\right)_{F}=\bigoplus_F\LM^F_{\He}(M_F)$ by definition. Since weak equivalences in the product category $\prod_{F\in\mathcal{F}_C} \Mod(\He_F^\dagger)$ are defined termwise, and as each $\LM^F_{\He}$ preserves weak equivalences by our discussion above, it follows that $L$ preserves weak equivalences. Conversely, assume that $L(f_F)=\bigoplus_F\LM^F_{\He}(f_F)$ is a weak equivalence. Since weak equivalences are preserved under retracts and since each $\LM^F_{\He}$ reflects weak equivalences by the above discussion, it follows that each $\LM^F_{\He}(f_F)$ is a weak equivalence and hence so is $f_F$. Thus $L$ reflects weak equivalences as claimed.

Finally, we can omit $\mathcal{F}_{(d)}=\{C\}$ in the definition of $\Delta$ when $p\nmid |\Omega_{\text{tor}}|$ because the algebra $\He_C^\dagger=k[\widetilde{\Omega}]$ is an iterated skew Laurent polynomial algebra over the group algebra $k[\widetilde{\Omega}_{\text{tor}}]$ (cf.\ the proof of \cite[Lemma 3.4]{Koz}). When $p\nmid |\Omega_{\text{tor}}|$, the latter is semisimple and hence $\He_C^\dagger$ has then finite global dimension (cf.\ \cite[Theorem 7.5.3(ii)]{MCR01}).
\end{proof}

\begin{rem}
While we won't need this, the functor $\LM^F_{\He}$ in fact reflects cofibrant objects, i.e.\ Gorenstein projective modules. Indeed, this holds because the map $M\to \He\otimes_{\He_F^\dagger} M$, $m\mapsto 1\otimes m$, is a split monomorphism for any $M\in \Mod(\He_F^\dagger)$ by \cite[Proposition 4.21(ii)]{OS14}. Thus one deduces that the Gorenstein projective model structure on $\Mod(\He_F^\dagger)$ is the left transfer through $\LM_\He^F$ of the Gorenstein projective model structure on $\Mod(\He)$. Similarly, the product model structure on $\prod_{F\in\mathcal{F}_C} \Mod(\He_F^\dagger)$ is the left transfer through $L$ of the Gorenstein projective model structure on $\Mod(\He)$.
\end{rem}

We now apply the above result to our rank 1 setting. In the case that $\G=\GL_2$ or $\SL_2$, we note that $\Omega_{\text{tor}}=1$ and the condition on $p$ above is empty, and when $\G=\PGL_2$ we have $\Omega_{\text{tor}}=\Z/2\Z$ so that we are only excluding $p=2$.

\begin{cor}\label{Delta_equiv}
Suppose that $\G=\mathrm{SL_2}$, $\mathrm{PGL_2}$ or $\mathrm{GL_2}$, and assume that $p\neq 2$ when $\G=\mathrm{PGL_2}$. Then the functor $\Delta$ from \Cref{right_transf} is a Quillen equivalence.
\end{cor}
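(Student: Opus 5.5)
In rank one we have $d=1$, so $\mathcal{F}=\mathcal{F}_{(0)}=\{x_0,x_1\}$ when $\G=\GL_2$ or $\SL_2$. For these two groups $x_0$ and $x_1$ lie in distinct $G$-orbits: for $\SL_2$ the group preserves vertex types, and for $\GL_2$ the element $\omega$ swaps the two vertices but the two orbits are still distinguished by the Ollivier–Schneider setup. When $\G=\PGL_2$, $\omega$ maps $x_0$ to $x_1$, so $\mathcal{F}=\{x_0\}$. In either case $\Delta$ is a right adjoint functor $\Mod(\He)\to\prod_{F\in\mathcal{F}}\Mod(\He_F^\dagger)$ with left adjoint $L$. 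By \Cref{right_transf}, under our hypothesis on $p$ the model structure on $\Mod(\He)$ is right transferred along $\Delta$. Hence $\Delta$ preserves and reflects weak equivalences and fibrations, and in particular the adjunction $L\dashv\Delta$ is Quillen. \Cref{right_transf} also says that $L$ preserves and reflects weak equivalences.

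To get a Quillen equivalence I would use \Cref{Test_quillen_equiv} in its first form. Since $L$ preserves and reflects weak equivalences, it suffices to show that the counit $\varepsilon_M\colon L\Delta(M)=\bigoplus_{F\in\mathcal{F}}\He\otimes_{\He_F^\dagger}M\to M$ is a weak equivalence for every $M\in\Mod(\He)$; every object is fibrant because fibrations are the epimorphisms. This counit is exactly the augmentation $\varepsilon$ of the Ollivier–Schneider resolution \eqref{OllSch_res_gen}, up to the twist by the orientation characters $\epsilon_F$, which are trivial on vertices, so $\He(\epsilon_F)=\He$ for $F\in\mathcal{F}_{(0)}$. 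In rank one that resolution is the short exact sequence
$$0\to \He(\epsilon_C)\otimes_{\He_C^\dagger}M\to \bigoplus_{F\in\mathcal{F}_{(0)}}\He\otimes_{\He_F^\dagger}M\xrightarrow{\varepsilon} M\to 0.$$
So $\varepsilon_M$ is an epimorphism, that is, a fibration, and it is a weak equivalence precisely when its kernel $K=\He(\epsilon_C)\otimes_{\He_C^\dagger}M$ has finite projective dimension.

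The key step is therefore to show that $K$ is trivial. Since $\He$ is free over $\He_C^\dagger$ by \cite[Proposition 4.21]{OS14}, the twisted module $\He(\epsilon_C)$ is still projective as a right $\He_C^\dagger$-module, because twisting by a character is an autoequivalence. Hence $\He(\epsilon_C)\otimes_{\He_C^\dagger}(-)$ is exact and preserves projectives, and therefore carries modules of finite projective dimension to modules of finite projective dimension. It thus suffices that $\Res^{\He}_{\He_C^\dagger}M$ has finite projective dimension over $\He_C^\dagger$, which holds because $\He_C^\dagger$ has finite global dimension under our assumptions:
\begin{itemize}
\item for $\SL_2$, $\He_C^\dagger$ is semisimple;
\item for $\GL_2$, it is a skew-Laurent polynomial ring over a semisimple ring, so it has global dimension $1$;
\item for $\PGL_2$ with $p\neq2$, it is $k[T(\F_q)\rtimes\Z/2\Z]$, whose order is prime to $p$, so it is semisimple.
\end{itemize}
Thus $K$ is trivial, $\varepsilon_M$ is a trivial fibration, and \Cref{Test_quillen_equiv} gives the Quillen equivalence.

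The main obstacle is the bookkeeping in identifying the counit of $L\dashv\Delta$ with the Ollivier–Schneider augmentation. This requires that $\mathcal{F}_{(0)}$ consists exactly of the vertex representatives occurring in $\mathcal{F}$, with the correct count for $\PGL_2$ versus $\GL_2$, and that the twist on the vertex terms is trivial. I would verify this against \cite[Theorem 3.12]{OS14}, checking that the map there is indeed $h\otimes m\mapsto hm$ on each summand. A further point to check is that the twisted bimodule $\He(\epsilon_C)$ remains projective on the right over $\He_C^\dagger$.
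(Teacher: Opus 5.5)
Your proof is correct and follows the paper's argument: \Cref{right_transf} and \Cref{Test_quillen_equiv} reduce the claim to the counit, which is the Ollivier--Schneider augmentation in the short exact sequence \eqref{OllSch_res_gen}, and its kernel $\He(\epsilon_C)\otimes_{\He_C^\dagger}M$ is trivial because $\He_C^\dagger$ has finite global dimension. Your check that $\He(\epsilon_C)\otimes_{\He_C^\dagger}(-)$ preserves finite projective dimension just unpacks the paper's appeal to \cite[Lemma 5.8]{Hov2}. One correction: for $\GL_2$ the element $\omega$ lies in $G$ and sends $x_0$ to $x_1$, so there is a single vertex orbit and $\mathcal{F}=\{x_0\}$, as for $\PGL_2$; only $\SL_2$ has two vertex orbits. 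This does not affect your argument, since $L\Delta(M)$ and the resolution are indexed by the same set $\mathcal{F}=\mathcal{F}_{(0)}$ by definition.
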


\begin{proof}
By \Cref{right_transf} and \Cref{Test_quillen_equiv}, it suffices to show that the counit of the adjunction $L\dashv \Delta$ is a weak equivalence. But this counit is precisely the map $\varepsilon$ from the Ollivier-Schneider resolution \eqref{OllSch_res_gen}, which is now a short exact sequence, and is therefore a weak equivalence by \cite[Lemma 5.8]{Hov2} since our condition on $p$ ensures that $\He_C^\dagger$ has finite global dimension.
\end{proof}

Explicitly, \Cref{Delta_equiv} tells us that $\Ho(\Res^\He_{\He_{x_0}^\dagger}):\Ho(\He)\to \Ho(\He_{x_0}^\dagger)$ is an equivalence when $\G=\mathrm{PGL_2}$ or $\mathrm{GL_2}$ (and for $\mathrm{PGL_2}$ one has $\He_{x_0}^\dagger=\He_{x_0}$), and that $\Ho(\Delta):\Ho(\He_{\SL_2})\to \Ho(\He_{\SL_2,x_0})\times \Ho(\He_{\SL_2,x_1})$ is an equivalence.

\begin{rem}
The analogue of \Cref{Delta_equiv} will not hold in higher rank. Indeed, the induced functor
$$
\Ho(\Delta):\Ho(\He)\to\prod_{F\in\mathcal{F}}\Ho(\He_F^\dagger)
$$
will then not be full, since the image of $[M,N]_\He$ under it is certainly contained in the set of all tuples $(f_F)_{F\in\mathcal{F}}$ with $f_F\in[M,N]_{\He_F^\dagger}$ such that the $f_F$ are suitably compatible under restrictions. We mean by this that if $\omega\in\Omega$ such that $\omega F'\subseteq \overline{F}$, then we can pullback $f_{F'}$ to a morphism $\omega\cdot f_{F'}\in[M,N]_{\He_{\omega F'}^\dagger}$ and this should at least satisfy that $\Res^{\He_{\omega F'}^\dagger}_{\He_F}(\omega\cdot f_{F'})$ is homotopic to $\Res^{\He_F^\dagger}_{\He_F}(f_{F})$. While such compatibility conditions are empty in rank one since $\Ho(\He_C)=0$, they become non-trivial in higher rank.
\end{rem}

Recall from \cite[Proposition 4.10]{KD24} that the categories $\Rep_k^\infty(G)$ and $\Rep_k^\infty(P_F^\dagger)$ (for any $F\subseteq \overline{C}$) of smooth $k$-linear representations have a so called $I$-Gorenstein projective model structure, which is defined as the right transfer of the Gorenstein projective model structure along the functors of $I$-invariants
$$
(-)^I:\Rep_k^\infty(G)\to \Mod(\He)\quad\text{and}\quad (-)^I:\Rep_k^\infty(P_F^\dagger)\to \Mod(\He_F^\dagger).
$$
The left adjoint of these functors are given by $\XX\otimes_{\He}(-)$ and $\XX_F^\dagger\otimes_{\He_F^\dagger}(-)$ respectively, where $\XX\coloneqq \cind_I^G(\mathbf{1})$ and $\XX_F^\dagger\coloneqq \cind_I^{P_F^\dagger}(\mathbf{1})$. In rank one, we have an analogue of our Hecke result which reduces the study of the homotopy category $\Ho(\Rep_k^\infty(G))$ to studying $\Ho(P_x^\dagger)$ with $x\in\mathcal{F}$.

\begin{prop}\label{Delta_equiv_reps}
Suppose that $\G=\mathrm{SL_2}$, $\mathrm{PGL_2}$ or $\mathrm{GL_2}$, and assume that $p\neq 2$ when $\G=\mathrm{PGL_2}$. Then the functor
$$
\tilde{\Delta}\coloneqq \left(\Res^G_{P_x^\dagger}\right)_{x\in\mathcal{F}}:\Rep_k^\infty(G)\to\prod_{x\in\mathcal{F}}\Rep_k^\infty(P_x^\dagger)
$$
is a right Quillen equivalence with respect to the $I$-Gorenstein projective model structures.
\end{prop}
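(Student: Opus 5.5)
The plan is to deduce the statement from \Cref{Delta_equiv} by comparing the two squares of right adjoints
\[
\begin{tikzcd}
\Rep_k^\infty(G)\arrow{r}{\tilde\Delta}\arrow[swap]{d}{(-)^I} & \prod_{x\in\mathcal{F}}\Rep_k^\infty(P_x^\dagger)\arrow{d}{((-)^I)_x}\\
\Mod(\He)\arrow{r}{\Delta} & \prod_{x\in\mathcal{F}}\Mod(\He_x^\dagger)
\end{tikzcd}
\]
which commutes on the nose, since taking $I$-invariants does not depend on the ambient group.

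\textbf{Step 1: $\tilde\Delta$ is right Quillen.} By definition of the $I$-Gorenstein projective model structures as right transfers, a map $f$ in $\Rep_k^\infty(G)$ is a fibration or weak equivalence if and only if $f^I$ is one in $\Mod(\He)$. The same holds termwise on the right-hand side. \Cref{right_transf} shows that $\Delta$ preserves fibrations and trivial fibrations. Commutativity of the square then shows that $\tilde\Delta$ preserves them as well. The left adjoint of $\tilde\Delta$ is $(V_x)_x\mapsto\bigoplus_x\cind_{P_x^\dagger}^G V_x$.

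\textbf{Step 2: $\tilde\Delta$ preserves and reflects weak equivalences.} Again by commutativity of the square, $\tilde\Delta f$ is a weak equivalence if and only if $\Delta(f^I)$ is, which happens if and only if $f^I$ is, since $\Delta$ preserves and reflects weak equivalences (it is a right transfer by \Cref{right_transf}). This in turn holds if and only if $f$ is a weak equivalence. So the dual criterion in \Cref{Test_quillen_equiv} applies. It remains to check that the unit $\eta_N\colon N\to\tilde\Delta\tilde L(N)$ is a weak equivalence for every cofibrant $N=(V_x)_x$. Equivalently, $\eta_N^I$ must be a weak equivalence in $\prod_x\Mod(\He_x^\dagger)$.

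\textbf{Step 3 (main obstacle): identifying $\eta_N^I$.} Cofibrant objects for a right-transferred structure are retracts of cell complexes built from $\XX_x^\dagger\otimes_{\He_x^\dagger}(\text{cofibrations})$. I would therefore first reduce to $V_x=\XX_x^\dagger\otimes_{\He_x^\dagger}M_x$ with $M_x$ cofibrant. This reduction uses that weak equivalences are closed under retracts, and that the relevant functors commute with the colimits involved. For $\G$ of rank one the ones needed are filtered colimits and pushouts along monomorphisms, where the unit is compatible with the transfinite argument; one can alternatively use the generating trivial cofibrations.

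For such $V_x$ we have $\cind_{P_x^\dagger}^G(\XX_x^\dagger\otimes M_x)\cong\XX\otimes_{\He}(\He\otimes_{\He_x^\dagger}M_x)$. The key input is that $(\XX\otimes_\He Y)^I\cong Y$ for modules of the form $Y=\bigoplus_x\He\otimes_{\He_x^\dagger}M_x$. I would justify this by noting that $\XX\otimes_\He\He\otimes_{\He_x^\dagger}M_x=\XX\otimes_{\He_x^\dagger}M_x$ and that $\XX$ is free over $\He_x^\dagger$ compatibly with $I$-invariants (Ollivier--Schneider \cite[Proposition 4.21]{OS14} together with its representation-theoretic analogue, $\XX=\cind_{P_x^\dagger}^G\XX_x^\dagger$ decomposed over $G/P_x^\dagger$-double cosets). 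Similarly $(\XX_x^\dagger\otimes_{\He_x^\dagger}M_x)^I\cong M_x$, because $\XX_x^\dagger$ is a finite-group situation and $\He_x^\dagger$ is Frobenius-like.

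Under these identifications, $\eta_N^I$ becomes the unit $M\to\Delta L(M)$ of $L\dashv\Delta$ evaluated at the cofibrant $M=(M_x)_x$. This unit is a weak equivalence because $L\dashv\Delta$ is a Quillen equivalence (\Cref{Delta_equiv}) and $\Delta$ preserves and reflects weak equivalences, again by \Cref{Test_quillen_equiv}. This concludes the argument. I expect the verification that $I$-invariants commute with these compact inductions, that is, the identity $(\XX\otimes_{\He_x^\dagger}M)^I\cong\He\otimes_{\He_x^\dagger}M$, to be the technical heart. If it fails in general, I would instead compare derived units using that both $\XX\otimes_\He(-)$ and $\XX_x^\dagger\otimes(-)$ are fully faithful on homotopy categories by the right-transfer property.
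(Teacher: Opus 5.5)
Your Steps 1 and 2 agree with the paper. The commuting square shows that the $I$-Gorenstein projective structure on $\Rep_k^\infty(G)$ is right transferred along $\tilde\Delta$, and the dual criterion of \Cref{Test_quillen_equiv} reduces everything to the unit $\eta_N$ at cofibrant $N=(V_x)_x$. The gap is in Step 3, in three places.

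First, your reduction to $V_x=\XX_x^\dagger\otimes_{\He_x^\dagger}M_x$ with $M_x$ cofibrant is not justified. To carry ``$\eta_N$ is a weak equivalence'' through pushouts along generating cofibrations, you must control $I$-invariants of a pushout. But $(-)^I$ is only left exact and does not preserve pushouts. A gluing-lemma argument would need $\tilde\Delta\tilde L(N)$ to be cofibrant, or left properness, and neither is available since $\tilde\Delta$ is only right Quillen. The reduction would go through if $M\to(\XX_x^\dagger\otimes_{\He_x^\dagger}M)^I$ were an isomorphism for all $M$, since then $\XX_x^\dagger\otimes_{\He_x^\dagger}(-)$ is fully faithful and cell complexes have the required form. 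You do assert this, but your only justification is ``finite-group situation, Frobenius-like'', which is not an argument.

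Second, the $G$-level identity $(\XX\otimes_{\He_x^\dagger}M)^I\cong\He\otimes_{\He_x^\dagger}M$ is left open. Freeness of $\XX$ over $\He_x^\dagger$ gives no control of $I$-invariants, because $(-)^I$ does not commute with tensor products.

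Third, your fallback does not close the gap. Full faithfulness of $\XX\otimes_\He(-)$ on homotopy categories does not follow from the right-transfer property. Since the derived $(-)^I$ is conservative, it is equivalent to $(-)^I$ being a Quillen equivalence, which is a substantial claim in its own right.

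The paper avoids any reduction to induced objects. Every cofibrant $V_x$ is generated by $V_x^I$ \cite[Lemma 4.14]{KD24}. Hence $I_x$, which lies in $I$ and is normal in $P_x^\dagger$, acts trivially on $V_x$. This is exactly the hypothesis under which \cite[Proposition 4.17]{Koh} identifies $(\cind_{P_y^\dagger}^G V_y)^I$ with $\He\otimes_{\He_y^\dagger}V_y^I$. So $\eta_N^I$ is the unit of $L\dashv\Delta$ at $(V_x^I)_x$, for every cofibrant $N$.

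The second idea you are missing is that $V_x^I$ need not be Gorenstein projective, so \Cref{Delta_equiv} does not apply to it directly. The paper handles this as follows:
\begin{enumerate}
\item Take a cofibrant replacement $0\to(K_x)\to(M_x)\to(V_x^I)\to 0$ with each $K_x$ of finite projective dimension.
\item Apply the snake lemma to the units, which are injective, to get a short exact sequence of their cokernels.
\item Use that $L$ and $\Delta$ are exact and preserve trivial objects to conclude that $\coker(\eta_{(V_x^I)})$ is trivial.
\item Conclude that $\eta_{(V_x^I)}$ is a weak equivalence by \cite[Lemma 5.8]{Hov2}.
\end{enumerate}
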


\begin{proof}
We first claim that the aforementioned model structure on $\Rep_k^\infty(G)$ is the right transfer along $\tilde{\Delta}$ of the product model structure on $\prod_{x\in\mathcal{F}}\Rep_k^\infty(P_x^\dagger)$. To see this, first of all, the functor $\tilde{\Delta}$ has left adjoint given by $(V_x)_{x\in\mathcal{F}}\mapsto \bigoplus_{x\in\mathcal{F}}\cind_{P_x^\dagger}^G(V_x)$. Let now $f$ be a morphism in $\Rep_k^\infty(G)$. By definition of the model structure on $\Rep_k^\infty(G)$, we have that $f$ is a fibration (resp.\ weak equivalence) if and only if $f^I$ is a fibration (resp.\ weak equivalence) in $\Mod(\He)$. But by \Cref{right_transf}, the latter holds if and only if $\Delta(f^I)$ is a fibration (resp.\ weak equivalence) in $\prod_{x\in\mathcal{F}}\Mod(\He_x^\dagger)$. Since the square
\[
\begin{tikzcd}
\Rep_k^\infty(G) \arrow{r}{\tilde{\Delta}} \arrow[swap]{d}{(-)^I} & \prod_{x\in\mathcal{F}}\Rep_k^\infty(P_x^\dagger) \arrow{d}{(-)^I}\\
\Mod(\He) \arrow{r}{\Delta}& \prod_{x\in\mathcal{F}}\Mod(\He_x^\dagger)
\end{tikzcd}
\]
commutes, the definition of the model structure on $\prod_{x\in\mathcal{F}}\Rep_k^\infty(P_x^\dagger)$ gives that $\Delta(f^I)$ is a fibration (resp.\ weak equivalence) if and only if $\tilde{\Delta}(f)$ is a fibration (resp.\ weak equivalence). This shows our claim. By \Cref{Test_quillen_equiv}, we now see that it suffices to show that the unit
\begin{equation}\label{unit}
(V_x)_{x\in\mathcal{F}}\to \tilde{\Delta}\left(\bigoplus_{x\in\mathcal{F}}\cind_{P_x^\dagger}^G(V_x)\right)
\end{equation}
is a weak equivalence for all (termwise) cofibrant objects $(V_x)_{x\in\mathcal{F}}$.

We now fix such a collection $(V_x)_{x\in\mathcal{F}}$. For each $y\in \mathcal{F}$, the $y$-component of \eqref{unit} above is just the canonical $P_y^\dagger$-equivariant inclusion
\begin{equation}\label{component_unit}
V_y\hookrightarrow \cind_{P_y^\dagger}^G(V_y)\subseteq \bigoplus_{x\in\mathcal{F}}\cind_{P_x^\dagger}^G(V_x)
\end{equation}
Since each $V_x$ is cofibrant, one has in particular that $V_x=k[P_x^\dagger]\cdot V_x^I$ is generated by its $I$-invariants (cf.\ \cite[Lemma 4.14]{KD24}). Note therefore that since $I_x\leq I$ is normal in $P_x^\dagger$, it must then acts trivially on $V_x$. Applying \cite[Proposition 4.17]{Koh}, we now get that taking $I$-invariants in \eqref{component_unit} yields the canonical map
$$
V_y^I\to \He\otimes_{\He_y^\dagger}V_y^I\subseteq \bigoplus_{x\in\mathcal{F}}\He\otimes_{\He_x^\dagger}V_x^I,
$$
which is just the $y$-component of the unit $\eta_{(V_x^I)_{x}}$ of the adjunction $L\vdash \Delta$ from \Cref{right_transf}. The statement that \eqref{unit} is a weak equivalence therefore amounts to showing that $\eta_{(V_x^I)_{x}}$ is a weak equivalence.

Putting together \Cref{Delta_equiv} and \Cref{Test_quillen_equiv}, we see that $\eta_{(M_x)_x}$ is a weak equivalence whenever each $M_x\in\Mod(\He_x^\dagger)$ is Gorenstein projective. Choose such a collection $(M_x)_{x\in\mathcal{F}}$ which is a cofibrant replacement of $(V_x^I)_{x\in\mathcal{F}}$, i.e.\ such that there is a short exact sequence $0\to (K_x)\to (M_x)\to (V_x^I)\to 0$ where each $K_x\in\Mod(\He_x^\dagger)$ has finite projective dimension. By exactness of $\Delta$ and $L$, and by naturality of $\eta$, we have a commutative diagram
\[
\begin{tikzcd}
0\arrow{r} & (K_x) \arrow{r}\arrow{d}{\eta_{(K_x)}} & (M_x) \arrow{r}\arrow{d}{\eta_{(M_x)}} & (V_x^I) \arrow{r}\arrow{d}{\eta_{(V_x^I)}} & 0\\
0\arrow{r} & \Delta L(K_x) \arrow{r} & \Delta L(M_x) \arrow{r} & \Delta L(V_x^I) \arrow{r} & 0
\end{tikzcd}
\]
and applying the snake lemma now gives a short exact sequence
$$
0\longrightarrow \coker(\eta_{(K_x)})\longrightarrow \coker(\eta_{(M_x)})\longrightarrow \coker(\eta_{(V_x^I)})\longrightarrow 0\,.
$$
Since $\Delta$ and $L$ both preserve trivial objects (cf.\ \Cref{right_transf}), it follows that $\coker(\eta_{(K_x)})$ is a trivial object, i.e.\ termwise of finite projective dimension. Further, the fact that $\eta_{(M_x)}$ is a weak equivalence implies that $\coker(\eta_{(M_x)})$ is also trivial (cf.\ \cite[Lemma 5.8]{Hov2}). Hence, a long exact sequence argument gives that $\coker(\eta_{(V_x^I)})$ is termwise of finite projective dimension and another application of \textit{loc.\ cit}.\ gives that $\eta_{(V_x^I)}$ is a weak equivalence as required.
\end{proof}

For example in the case of $G=\GL_2(\Ff)$, a heuristic for why the above result should hold is that a smooth $G$-representation $\pi$ over $k$ is uniquely determined by the corresponding constant diagram $(\pi|_{P_{x_0}^\dagger}, \pi|_{P_{C}^\dagger}, \id_\pi)$ in the sense \cite[\S 5.5]{Pas04}. Since $\Ho(\Rep_k^\infty(P_C^\dagger))=0$, it follows that this diagram shrinks to the singleton $\pi|_{P_{x_0}^\dagger}$ in our homotopy language, and so that $\pi$ should be determined up to isomorphism in $\Ho(\Rep_k^\infty(G))$ by its restriction to $P_{x_0}^\dagger=\GL_2(\OF)\Ff^\times$.

\subsection{The block decomposition of $\Ho(\He)$ and spherical modules} In order to describe $\Ho(\He)$ more explicitly, it will be useful to use a certain decomposition by idempotents of the Hecke algebra. Recall that for each character $\xi:T(\F_q)\to k^\times$ there is an associated idempotent
\[
e_\xi\coloneqq \frac{1}{|T(\F_q)|}\sum_{t\in T(\F_q)} \xi(t^{-1})T_t\in k[T(\F_q)]= \He_C\,.
\]
Following the terminology of Vignéras in \cite[\S 2-3]{Vign_GL2}, we call the character $\xi$ \emph{regular} when $\xi\neq\xi^{s_0}$ and \emph{non-regular} otherwise. The set $T(\F_q)^\vee/W_0$ of $W_0$-orbits of $k$-valued characters of $T(\F_q)$ naturally decomposes as the disjoint union $(T(\F_q)^\vee/W_0)_{\text{non-reg}}\bigsqcup (T(\F_q)^\vee/W_0)_{\reg}$ of the orbits of non-regular and regular characters respectively. Given $\gamma\in T(\F_q)^\vee/W_0$, we will then write
$$
e_\gamma\coloneqq \begin{cases}
e_\xi & \text{if $\gamma=\{\xi\}\in (T(\F_q)^\vee/W_0)_{\text{non-reg}}$}\\
e_\xi+e_{\xi^{s_0}} & \text{if $\gamma=\{\xi, \xi^{s_0}\}\in (T(\F_q)^\vee/W_0)_{\text{reg}}$}
\end{cases}.
$$
Note that the braid relations imply that $e_\gamma$ is central in $\He$ for all $\gamma\in T(\F_q)^\vee/W_0$. We thus have product decompositions
\begin{equation}\label{decomp_H}
\He= \prod_{\gamma\in T(\F_q)^\vee/W_0} e_\gamma\He\quad \text{and} \quad \Hea= \prod_{\gamma\in T(\F_q)^\vee/W_0} e_\gamma\Hea\,.
\end{equation}
We point out that this decomposition of $\He$ is compatible with the Ollivier-Schneider resolutions, in the sense that for any $\gamma\in T(\F_q)^\vee/W_0$ and any $M\in\Mod(e_\gamma\He)$ the resolution \eqref{OllSch_res_gen} induces and exact sequence
\begin{equation}\label{OllSch_res_comp}
0\longrightarrow e_\gamma\He(\epsilon_C)\otimes_{e_\gamma\He_C^\dagger}M\longrightarrow\bigoplus_{x\in\mathcal{F}_{(0)}} e_\gamma\He\otimes_{e_\gamma\He_x^\dagger}M\stackrel{\varepsilon}{\longrightarrow}M\longrightarrow 0\,.
\end{equation}
Next, the decomposition \eqref{decomp_H} implies that the homotopy categories decompose into products
\begin{equation}\label{decomp_Ho_H}
\Ho(\He)=\prod_{\gamma\in T(\F_q)^\vee/W_0} \Ho(e_\gamma\He)\quad \text{and} \quad \Ho(\Hea)=\prod_{\gamma\in T(\F_q)^\vee/W_0} \Ho(e_\gamma\Hea)
\end{equation}
as well, cf.\ \eqref{idem_decomp}. Similarly, for $i=1,2$ we have decompositions
\[
\He_{x_i}\coloneqq \prod_{\gamma\in T(\F_q)^\vee/W_0} e_\gamma\He_{x_i}\quad \text{and} \quad \He_{x_i}^\dagger\coloneqq \prod_{\gamma\in T(\F_q)^\vee/W_0} e_\gamma\He_{x_i}^\dagger\,,
\]
and correspondingly again
\begin{equation}\label{decomp_Ho_Hx}
\Ho(\He_{x_i})=\prod_{\gamma\in T(\F_q)^\vee/W_0} \Ho(e_\gamma\He_{x_i}) \quad \text{and} \quad\Ho(\He_{x_i}^\dagger)=\prod_{\gamma\in T(\F_q)^\vee/W_0} \Ho(e_\gamma\He_{x_i}^\dagger)\,.
\end{equation}
We can however ignore some of the factors above by the following result.

\begin{lem}\label{non_reg_is_reg}
Suppose that $\xi|_{\alpha^\vee(\F_q^\times)}=1$. Then $\xi$ is non-regular and the algebras $e_\xi\He_{x_i}$, $e_\xi\He_{x_i}^\dagger$ (for $i=1,2$) and $e_\xi \Hea$ have finite global dimension. Moreover, if $p\nmid |\Omega_{\text{tor}}|$ then $e_\xi\He$ also has finite global dimension.
\end{lem}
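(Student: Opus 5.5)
The argument has two parts. First I check directly that $\xi$ is non-regular and that the local algebras $e_\xi\He_{x_i}$, $e_\xi\He_{x_i}^\dagger$ have finite global dimension. Then I deduce the statements for $e_\xi\Hea$ and $e_\xi\He$ from the Ollivier-Schneider resolution \eqref{OllSch_res_comp}, which expresses every module through modules induced from these local algebras.

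\emph{Non-regularity.} Since $\G$ has rank one, conjugation by $s_0$ acts on $T(\F_q)$ by $t\mapsto t\cdot\alpha^\vee(\alpha(t))^{-1}$. Hence
\[
\xi^{s_0}(t)=\xi(t)\,\xi(\alpha^\vee(\alpha(t)))^{-1}.
\]
If $\xi$ is trivial on $\alpha^\vee(\F_q^\times)$, this gives $\xi^{s_0}=\xi$, so $\xi$ is non-regular.

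\emph{The local algebras.} The algebra $e_\xi\He_{x_i}$ is spanned by $e_\xi$ and $e_\xi T_{s_i}$. Because $\xi$ is trivial on $\alpha^\vee(\F_q^\times)$, we have
\[
\sum_{t\in\alpha^\vee(\F_q^\times)}T_te_\xi=|\alpha^\vee(\F_q^\times)|\,e_\xi,
\]
and $|\mu_\alpha|\cdot|\alpha^\vee(\F_q^\times)|=q-1=-1$ in $k$. The quadratic relation therefore becomes $T_{s_i}^2=-T_{s_i}$ in $e_\xi\He_{x_i}$. Consequently
\[
e_\xi\He_{x_i}\cong k[T]/(T^2+T)\cong k\times k,
\]
which is semisimple. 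For the normaliser algebras:
\begin{itemize}
\item for $\SL_2$ and $\PGL_2$ we have $e_\xi\He_{x_i}^\dagger=e_\xi\He_{x_i}$, so there is nothing more to do;
\item for $\GL_2$, $e_\xi\He_{x_i}^\dagger\cong e_\xi\He_{x_i}[Z^{\pm1}]$ is a Laurent polynomial ring over a semisimple ring, so it has global dimension at most $1$ by \cite[Theorem 7.5.3]{MCR01}.
\end{itemize}

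\emph{The affine and full algebras.} Let $M$ be an $e_\xi\Hea$-module. I would use the affine Ollivier-Schneider resolution \cite{OS14} (used in this form in \cite[Lemma 5.6]{Dup25}), cut down by $e_\xi$ as in \eqref{OllSch_res_comp}. This gives a short exact sequence
\[
0\to e_\xi\Hea(\epsilon_C)\otimes_{e_\xi\He_C}M\to\bigoplus_{i=0,1}e_\xi\Hea\otimes_{e_\xi\He_{x_i}}M\to M\to 0 .
\]
We argue as follows.
\begin{enumerate}
\item The algebras $e_\xi\He_C$ and $e_\xi\He_{x_i}$ are semisimple, so $M$ is projective over each of them.
\item $\Hea$ is free over these subalgebras \cite[Proposition 4.21]{OS14}, so induction preserves projectives and both left-hand terms are projective.
\item Hence $\pd M\leq 1$, and $e_\xi\Hea$ has global dimension at most $1$.
\end{enumerate}

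For $e_\xi\He$ with $p\nmid|\Omega_{\text{tor}}|$, the same argument applies using \eqref{OllSch_res_comp}. We now need every local algebra to have finite global dimension:
\begin{itemize}
\item $e_\xi\He_{x}^\dagger$ does by the previous step;
\item $e_\xi\He_C^\dagger$ does as in the proof of \Cref{right_transf}, since it is a skew Laurent extension of a semisimple group algebra.
\end{itemize}
Since $\He$ is free over each $\He_F^\dagger$, induction preserves projective dimension bounds. The resolution then bounds $\pd M$ uniformly, by one plus the maximum of these global dimensions. For $\PGL_2$ with $p\neq 2$, note that $\He_C^\dagger=k[T(\F_q)\rtimes\Z/2]$ is semisimple.

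\emph{Main obstacle.} The only real point is checking that the Ollivier-Schneider resolution is available, with the stated induced terms, for the affine algebra $\Hea$ (and for $\SL_2$, where $\He=\Hea$) and is compatible with the idempotent $e_\xi$. I would cite \cite[Theorem 3.12]{OS14} applied to the affine setting and the centrality of $e_\xi$, as was done for \eqref{OllSch_res_comp}. If that reference is awkward, a fallback for $e_\xi\Hea$ is a direct presentation: $e_\xi\Hea$ is the free product over $k$ of two copies of $k\times k$, generated by the idempotents $-T_{s_0}$ and $-T_{s_1}$, and free products of semisimple algebras are hereditary.
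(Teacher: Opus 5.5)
Your proposal is correct and follows essentially the paper's route: the same check of non-regularity via the action of $s_0$ on $T(\F_q)$, the same computation $e_\xi\He_{x_i}\cong k[X]/(X^2+X)\cong k^2$, and then a reduction of $e_\xi\Hea$ and $e_\xi\He$ to the local algebras through the Ollivier--Schneider resolution. The only difference is packaging. The paper cites \cite[Lemma 4.2]{Koz} together with the Gorenstein property of $\Hea$ to pass from ``every module has finite projective dimension'' to finite global dimension. You instead read off explicit uniform bounds directly from the short exact sequence, and your free-product fallback for $e_\xi\Hea$ is a valid independent argument.
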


\begin{proof}
The non-regularity of $\xi$ is follows directly from the identity
\begin{equation}\label{eq_xi}
\xi\begin{pmatrix}
x & 0\\
0 & y
\end{pmatrix}=\xi (\alpha^\vee(xy^{-1}))\cdot \xi^{s_0}\begin{pmatrix}
x & 0\\
0 & y
\end{pmatrix}\,,
\end{equation}
which holds for any $x, y\in\F_q^\times$ and any character $\xi$. Next, to show $e_\xi\Hea$ has finite global dimension, we appeal to \cite[Lemma 4.2]{Koz} and the Gorenstein property of $\Hea$ (cf.\ the paragraph following \cite[Proposition 5.1]{Dup25}) to see that it suffices to show that each $e_\xi\He_{x_i}$ has finite global dimension. But each $e_\xi\He_{x_i}$ is isomorphic to $k[X]/(X^2+X)\cong k^2$ and so even has global dimension zero. It remains to show that $e_\xi\He$ has finite global dimension under our assumption on $p$. We may argue just as for $\Hea$ (appealing to \Cref{right_transf}) to see that we only need to show the result for $e_\xi\He_{x_i}^\dagger$. But we have $e_\xi\He_{x_i}^\dagger=e_\xi\He_{x_i}$ for $\G=\SL_2$ or $\PGL_2$ and $e_\xi\He_{x_i}^\dagger=e_\xi\He_{x_i}[T_{\omega^2}^{\pm 1}]$ for $\G=\GL_2$ (for $i=1,2$). In either case, it follows that $e_\xi\He_{x_i}^\dagger$ has finite global dimension from the above and we are done.
\end{proof}

\begin{rem}\label{non-reg}
We deduce from \eqref{eq_xi} that $\xi|_{\alpha^\vee(\F_q^\times)}=1$ if and only if $\xi$ is non-regular when $\G=\PGL_2$ or $\GL_2$. When $\G=\SL_2$, \eqref{eq_xi} implies instead that $\xi$ is non-regular if and only if $\xi|_{\alpha^\vee(\F_q^\times)^2}=1$. For $p=2$, the latter is equivalent to $\xi$ being trivial. For $p>2$, there is a unique non-trivial and non-regular character, namely the `sign' character $\sigma$ which sends any fixed generator of $\F_q^\times\cong  T(\F_q)$ to $-1$.
\end{rem}

Our next goal is to relate $\Ho(\He)$ to the centre $Z(\He)$ of $\He$ via the so-called spherical module. This will play an important role in \Cref{section_sing}. As a preliminary observation, we have from \eqref{decomp_H} that $Z(\He)$ also decomposes as
$$
Z(\He)=\prod_{\gamma\in T(\F_q)^\vee/W_0} Z(e_\gamma\He)\,,
$$
so that we will just need to work component-wise. We first give a description of the components $e_\gamma\He$ in terms of their centre for each of our groups.

\subsubsection{The case $\G=\GL_2$} The components $Z(e_\gamma\He_{\GL_2})$ were all determined by Vignéras in \cite[\S 2.1]{Vign_GL2} and we recall the results. When $\gamma=\{\xi\}$ is non-regular then $e_\xi\He$ is the $k$-algebra with unit $e_\xi$, generators $e_\xi T_{s_0}$ and $e_\xi T_\omega^{\pm 1}$, and relations $e_\xi T_{s_0}^2=-e_\xi T_{s_0}$ and $e_\xi T_{\omega}^2 T_{s_0}=e_\xi T_{s_0}T_{\omega}^2$. It is then shown in section 1.2 of \textit{loc.\ cit}.\ that $k[X, Z^{\pm 1}]\cong Z(e_\xi\He)$ with $Z\mapsto e_\xi T_{\omega}^{2}$ and $X\mapsto e_\xi (T_{s_0}+1)T_\omega+e_\xi T_{\omega s_0}$. In fact, by \cite[Corollary 4.3.5]{PeSch23} there is an embedding of $k$-algebras $e_\xi\He\hookrightarrow M_2(k[X, Z^{\pm 1}])$ defined by
\begin{equation}\label{sph_mod_nonreg}
e_\xi T_\omega\mapsto\begin{pmatrix}
    X & X^2 -Z\\
    -1 & -X
\end{pmatrix}\quad\text{and}\quad e_\xi T_{s_0}\mapsto \begin{pmatrix}
    0 & 0\\
    0 & -1
\end{pmatrix}\,.
\end{equation}
The centre $Z(e_\xi\He)$ identifies with the scalar matrices under this embedding. Following the terminology of \textit{loc.\ cit}.\ we will call the induced action of $e_\gamma\He$ on $Z(e_\gamma\He)^2$ the \emph{spherical $e_\gamma\He$-module} and denote it by $\M_{\GL_2,\gamma}$.

The case when $\gamma$ is regular is of main interest for us, cf.\ \Cref{non_reg_is_reg} and \Cref{non-reg}, so that we give more details. In what follows we will set
$$
A\coloneqq k[X_1, X_2]/(X_1X_2)\quad\text{and}\quad B\coloneqq A[Z^{\pm 1}]=k[X_1, X_2, Z^{\pm 1}]/(X_1X_2)\,.
$$
Now let $\gamma=\{\xi, \xi^{s_0}\}$ be regular and write for simplicity $e_1=e_\xi$ and $e_2=e_{\xi^{s_0}}$. By the quadratic relations we have $e_\gamma T_{s_0}^2=0$. Consequently, we have that $e_1T_{s_0\omega}$ and $e_1T_{s_1\omega}=e_1T_{\omega s_0}$ have product in either order equal to zero.

We now let $D_0$ and $D_1$ denote the sets of elements of $W_\aff$ with reduced expression starting with $s_0$ and $s_1$ respectively. For each $n\geq 1$ and $i=0,1$, there is in particular a unique $w_{i,n}\in D_i$ of length $n$. One then straightforwardly computes that
\begin{equation}\label{span_A}
\left(e_1T_{s_i\omega}\right)^n=e_1 T_{w_{i,n}\omega^{ n}}
\end{equation}
for all $n\geq 1$ and $i=0,1$. If we let $\A$ denote the $k$-span of $e_1$, $(e_1T_{s_0\omega})^n$ and $(e_1T_{s_1\omega})^n$ (for all $n\geq 1$), then it follows that $\A$ is a $k$-algebra with unit $e_1$ under the multiplication in $\He$ and that there is an isomorphism $\A\cong A$ defined by $e_1T_{s_0\omega}\mapsto X_1$ and $e_1T_{s_1\omega}\mapsto X_2$. Similarly, $\B\coloneqq \bigoplus_{n\in\Z} \A\cdot T_{\omega^{2n}}\subseteq e_1\He$ is also a $k$-algebra with unit $e_1$, and the above extends to an isomorphism $\B\cong B=A[Z^{\pm 1}]$. Note in fact from \eqref{span_A} that $\B$ is the $k$-span of the $e_1T_{w\omega^n}$ such that $w\in W_\aff$ satisfies $\ell(w)\equiv n \mod{2}$. From the braid relations, we therefore have
\[
e_1\He=\B\oplus \B T_\omega \quad\text{and}\quad e_2\He=T_\omega\B\oplus T_\omega\B T_{\omega^{-1}}\,.
\]
From this one gets the following result (cf.\ \cite[Corollary 2.3]{Vign_GL2} or \cite[Proposition 3.3.3]{PeSch23}).

\begin{prop}\label{matrix_description_H}
Let $\G=\GL_2$ and $\gamma\in (T(\F_q)^\vee/W_0)_{\reg}$. There is a $k$-algebra isomorphism
$$
e_\gamma\He \stackrel{\cong}{\longrightarrow} M_2(B)
$$
to the $k$-algebra of $2\times 2$-matrices over $B$, defined by \(e_\gamma T_\omega\mapsto\begin{psmallmatrix}
0 & Z\\
1 & 0
\end{psmallmatrix}\) and \(h\mapsto \begin{psmallmatrix}
\varphi(h) & 0\\
0 & 0
\end{psmallmatrix}\) for all $h\in \B$, where $\varphi$ denotes the isomorphism $\B\cong B$ from the above. The centre $Z(e_\gamma\He)$ corresponds under this map to scalar matrices and thus $Z(e_\gamma\He)\cong k[X_1, X_2, Z^{\pm 1}]/(X_1X_2)$.
\end{prop}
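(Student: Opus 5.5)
We will realise $e_\gamma\He$ as the endomorphism ring of a free rank two $\B$-module, namely $e_\gamma\He\, e_1 = e_1\He\oplus e_2\He e_1$ viewed as a right $\B$-module. We first record the Peirce decomposition of $e_\gamma\He$ with respect to the orthogonal idempotents $e_1,e_2$, which sum to $e_\gamma$. The blocks are
$$
e_1\He e_1=\B,\quad e_1\He e_2=\B T_\omega,\quad e_2\He e_1=T_\omega\B,\quad e_2\He e_2=T_\omega\B T_{\omega^{-1}}.
$$
These follow from the decompositions $e_1\He=\B\oplus\B T_\omega$ and $e_2\He=T_\omega\B\oplus T_\omega\B T_{\omega^{-1}}$ stated above, together with $T_\omega e_1=e_2T_\omega$. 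The latter holds because $T_\omega T_tT_\omega^{-1}=T_{s_0\cdot t}$ and $\xi^{s_0}\neq\xi$. To justify the blocks we must check that $\B\subseteq e_1\He e_1$ and $\B T_\omega\subseteq e_1\He e_2$. This follows from $e_1T_{s_i\omega}=T_{s_i\omega}e_1$: indeed $T_{s_i}$ commutes with $e_1$ only up to $s_0$-twisting, and this twist is composed with the twist coming from $\omega$. We also need $e_1T_\omega^2=T_\omega^2e_1$, since $\omega^2$ is central.

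Next we define the map explicitly. Set $E_{11}\mapsto e_1$, $E_{21}\mapsto e_2T_\omega=T_\omega e_1$ and $E_{12}\mapsto e_1T_{\omega}^{-1}$. Because $T_\omega^{-1}=T_{\omega^{-2}}T_\omega$ and $T_{\omega^{-2}}\in\B$ corresponds to $Z^{-1}$, this is consistent with the claimed formula: $e_\gamma T_\omega=e_2T_\omega+e_1T_\omega$ maps to $E_{21}+ZE_{12}$, using $e_1T_\omega=T_{\omega^2}\cdot e_1T_\omega^{-1}$. The matrix units satisfy the usual relations: $E_{21}E_{12}=T_\omega e_1T_\omega^{-1}=e_2$ and $E_{12}E_{21}=e_1$. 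Moreover $b\in\B$ corresponds to $bE_{11}$. A standard fact says that an algebra $R$ with orthogonal idempotents $e_1+e_2=1$ and elements $u\in e_2Re_1$, $v\in e_1Re_2$ with $vu=e_1$ and $uv=e_2$ satisfies $R\cong M_2(e_1Re_1)$. The isomorphism sends $r$ to the matrix $(\epsilon_i r\epsilon_j')$, where $\epsilon_1=e_1$, $\epsilon_2=vE$, and so on. Applying this with $u=T_\omega e_1$ and $v=e_1T_\omega^{-1}$ gives an isomorphism $e_\gamma\He\cong M_2(\B)\cong M_2(B)$ via $\varphi$, with the prescribed images of $e_\gamma T_\omega$ and of $h\in\B$. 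Its inverse is the map in the statement, which is therefore a well-defined algebra isomorphism. We never need to verify relations separately: the map is determined on the generators $\B$ and $T_\omega$, and these together with the matrix units generate everything.

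Finally, the centre of $M_2(B)$ for commutative $B$ consists of the scalar matrices $B\cdot I$. Hence $Z(e_\gamma\He)\cong B=k[X_1,X_2,Z^{\pm1}]/(X_1X_2)$. The isomorphism $\B\cong B$ itself is taken from the discussion preceding the statement, through \eqref{span_A} and the multiplication rules $(e_1T_{s_0\omega})(e_1T_{s_1\omega})=0$.

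The main obstacle is the careful bookkeeping in the first step. We must check that $e_1$ commutes with $T_{s_i\omega}$ and $T_{\omega^2}$, so that $\B\subseteq e_1\He e_1$. We must also check that the two decompositions give exactly the Peirce blocks. The latter amounts to the equality $e_1\He e_2=\B T_\omega$ and requires comparing Iwahori–Matsumoto basis elements $T_{w\omega^n}$ by the parity of $\ell(w)-n$. I would handle this directly with the braid relations, using that $T_{w}T_{\omega^n}=T_{w\omega^n}$ since $\ell(\omega)=0$.
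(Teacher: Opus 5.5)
Your proposal is correct and is essentially the paper's argument. The paper obtains the isomorphism directly from the decompositions $e_1\He=\B\oplus\B T_\omega$ and $e_2\He=T_\omega\B\oplus T_\omega\B T_{\omega^{-1}}$ (citing Vign\'eras and P{\'e}pin-Schmidt for details), and your matrix units $e_1$, $u=T_\omega e_1$, $v=e_1T_\omega^{-1}$, $e_2$, together with $e_\gamma\He\cong M_2(e_1\He e_1)$, make that deduction explicit; your garbled coordinate formula should simply read $r\mapsto (E_{1i}\,r\,E_{j1})_{i,j}$ with $E_{12}=v$, $E_{21}=u$, which indeed gives $e_\gamma T_\omega\mapsto\begin{psmallmatrix}0&Z\\1&0\end{psmallmatrix}$ and $h\mapsto\begin{psmallmatrix}\varphi(h)&0\\0&0\end{psmallmatrix}$.
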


Concretely, the above isomorphism sends $e_1$ to \(\begin{psmallmatrix}
1 & 0\\
0 & 0
\end{psmallmatrix}\), $e_2$ to \(\begin{psmallmatrix}
0 & 0\\
0 & 1
\end{psmallmatrix}\) and $e_\gamma T_{s_0}$ to \(\begin{psmallmatrix}
0 & X_1\\
X_2Z^{-1} & 0
\end{psmallmatrix}\). Furthermore, we deduce that there is a natural $e_\gamma \He$-module structure on $Z(e_\gamma\He)^2$ via this map. We again call this module the \emph{spherical} $e_\gamma \He$-module and denote it by $\M_{\GL_2,\gamma}$. The \emph{spherical} $\He$-module $\M$ is the module which under \eqref{decomp_H} decomposes as $\M_{\GL_2}=\bigoplus_\gamma \M_{\GL_2,\gamma}$.

Both in the regular and non-regular case, the algebra $Z(e_\gamma \He)$ is Gorenstein by our explicit descriptions and hence so is $Z(\He)$. In particular, the category $Z(\He)$-modules has a Gorenstein projective model structure. Moreover, the algebra $Z(e_\gamma \He)$ is of infinite global dimension if and only if $\gamma$ is regular\footnote{It is an unfortunate clash of conventions that $Z(e_\gamma \He)$ is a regular commutative ring if and only if $\gamma$ is non-regular.}. Thus we have
$$
\Ho(Z(\He))\simeq \prod_{\text{$\gamma$ regular}} \Ho(Z(e_\gamma \He)),
$$
mirroring the situation for $\Ho(\He)$, cf. \Cref{non_reg_is_reg} and \Cref{non-reg}. In view of the above results, we now recall the following well-known fact.

\begin{lem}[{\cite[Theorem 17.20]{Lam99}}]\label{Morita_equiv} Let $R$ be a ring and $n\geq 1$, and view $R^n$ an as $(M_n(R), R)$-bimodule. Then the functor $\LM_{R^n}:\Mod(R)\to \Mod(M_n(R))$ is an exact equivalence of categories.
\end{lem}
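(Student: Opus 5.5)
This is the classical Morita theorem. The plan is to show directly that the adjunction $\LM_{R^n}\dashv\R_{R^n}$ consists of mutually inverse equivalences, by making both functors explicit. Exactness will then be automatic, since any equivalence of abelian categories is exact. Alternatively, $R^n$ is projective as a right $R$-module, hence flat, so $\LM_{R^n}$ is exact directly.

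First I will identify the two functors concretely. For $M\in\Mod(R)$, the module $R^n\otimes_R M\cong M^n$ is the module of column vectors with $M_n(R)$ acting by matrix multiplication. For $N\in\Mod(M_n(R))$, let $e_{ij}$ be the matrix units. Evaluating at the first standard basis vector gives an isomorphism
\[
\R_{R^n}(N)=\Hom_{M_n(R)}(R^n,N)\cong e_{11}N .
\]
Here $R^n\cong M_n(R)e_{11}$ is the first column, and $e_{11}N$ carries an $R$-module structure via $R\cong e_{11}M_n(R)e_{11}$. Note that $R^n=M_n(R)e_{11}$ is a direct summand of $M_n(R)$, so it is projective as a left $M_n(R)$-module; it is also free as a right $R$-module.

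Second, I will check the unit. The map $M\to e_{11}(M^n)$ sends $m$ to the column vector $(m,0,\dots,0)^{T}$, which is clearly an isomorphism of $R$-modules.

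Third, I will check the counit $\varepsilon_N\colon M_n(R)e_{11}\otimes_R e_{11}N\to N$, given by $a\otimes x\mapsto ax$. This is where the real content lies. For surjectivity, any $y\in N$ can be written as
\[
y=\sum_i e_{ii}y=\sum_i e_{i1}\,(e_{1i}y),
\]
and each $e_{1i}y$ lies in $e_{11}N$. For injectivity, I will use that the source is $(e_{11}N)^n$, with elements $\sum_i e_{i1}\otimes x_i$. If $\sum_i e_{i1}x_i=0$, then multiplying on the left by $e_{1j}$ gives $x_j=0$ for each $j$.

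Hence both the unit and counit are natural isomorphisms, so $\LM_{R^n}$ is an equivalence. The only potential obstacle is bookkeeping: keeping track of the left/right actions on the bimodule, and identifying $\Hom_{M_n(R)}(M_n(R)e_{11},N)$ with $e_{11}N$.
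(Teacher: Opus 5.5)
Your proposal is correct. Identifying the right adjoint with $N\mapsto e_{11}N$ and checking with matrix units that the unit and counit are isomorphisms is essentially the standard proof of the cited result (Lam, Theorem~17.20), which the paper invokes without reproducing; your exactness remark, via the equivalence or via $R^n$ being free as a right $R$-module, is also fine.
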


As an immediate consequence, we obtain the following.

\begin{cor}\label{Quillen_equiv_GL2} Let $G=\GL_2(\Ff)$ and $\M$ be the spherical $\He$-module. The functor
$$
M\mapsto \M\otimes_{Z(\He)} M
$$
defines both a left and right Quillen equivalence $\Mod(Z(\He))\to \Mod(\He)$.
\end{cor}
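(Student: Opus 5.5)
The plan is to work block by block using the decompositions \eqref{decomp_H} and $Z(\He)=\prod_\gamma Z(e_\gamma\He)$. The spherical module is $\M=\bigoplus_\gamma\M_{\GL_2,\gamma}$, so the functor $M\mapsto\M\otimes_{Z(\He)}M$ is the product over $\gamma$ of the functors $\LM_{\M_{\GL_2,\gamma}}:\Mod(Z(e_\gamma\He))\to\Mod(e_\gamma\He)$. Model structures, cofibrations, fibrations and weak equivalences on both sides are defined componentwise, and finite projective dimension of a module over a product of rings means finite projective dimension of each component. Hence it suffices to treat a single $\gamma$. (The statement involves only finitely many $\gamma$, but even for a product of Quillen equivalences the check is termwise.)

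Next, fix $\gamma$ and put $R=Z(e_\gamma\He)$. By \Cref{matrix_description_H} in the regular case, and by the embedding \eqref{sph_mod_nonreg} in the non-regular case, we have $e_\gamma\He\cong M_2(R)$ with $\M_{\GL_2,\gamma}=R^2$. In the non-regular case this embedding must be an isomorphism onto $M_2(k[X,Z^{\pm1}])$; I would cite \cite{PeSch23} for this, or check it directly. The images of $T_{s_0}$, of $T_\omega$ and of their products, together with central elements, should generate all matrix units. For instance, the image $\begin{psmallmatrix}0&0\\0&-1\end{psmallmatrix}$ of $T_{s_0}$ gives $E_{22}$ and hence $E_{11}=1-E_{22}$. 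Then $E_{22}T_\omega E_{11}=-E_{21}$, and $E_{11}T_\omega E_{22}=(X^2-Z)E_{12}$. Since this coefficient is not a unit, I would instead use $T_\omega^{-1}$: because $\det T_\omega=Z$ is a unit, $T_\omega^{-1}=Z^{-1}\begin{psmallmatrix}-X&Z-X^2\\1&X\end{psmallmatrix}$. This gives $E_{11}(ZT_\omega^{-1}+T_\omega)E_{22}=ZE_{12}$, so $E_{12}$ is obtained after multiplying by the central $Z^{-1}$. I expect this verification of surjectivity in the non-regular case to be the only real obstacle. It could alternatively be bypassed: \Cref{non_reg_is_reg} shows $e_\xi\He$ has finite global dimension, and $R=k[X,Z^{\pm1}]$ is regular, so both homotopy categories vanish on that block and any exact functor preserving projectives is trivially a Quillen equivalence there.

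Then, by \Cref{Morita_equiv}, the functor $\LM_{R^2}:\Mod(R)\to\Mod(M_2(R))$ is an exact equivalence of abelian categories. An equivalence of abelian categories preserves projectives, monomorphisms, epimorphisms, acyclic complexes of projectives or injectives, and hence Gorenstein projective and Gorenstein injective objects as well as modules of finite projective dimension. Therefore it carries the cofibrations, fibrations and weak equivalences of the Gorenstein projective model structure exactly onto those of the target. Its quasi-inverse $\Hom_{M_2(R)}(R^2,-)$ has the same properties. So $\LM_{R^2}$ is simultaneously left and right adjoint to a functor preserving all model-theoretic classes, and it is both left and right Quillen. The unit and counit are isomorphisms, hence weak equivalences, which makes it a Quillen equivalence in both senses; alternatively one can invoke \Cref{Test_quillen_equiv}. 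Assembling the blocks finishes the proof.
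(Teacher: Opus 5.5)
Your treatment of the regular blocks is the same as the paper's: \Cref{matrix_description_H} gives $e_\gamma\He\cong M_2(Z(e_\gamma\He))$ with $\M_{\GL_2,\gamma}$ the column module, and \Cref{Morita_equiv} finishes.

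Your primary argument for the non-regular blocks is false. The embedding \eqref{sph_mod_nonreg} is not surjective, so $e_\xi\He\not\cong M_2(R)$ with $R=k[X,Z^{\pm1}]$, and \Cref{Morita_equiv} cannot be applied there. Your matrix computation has a sign error. The image of $e_\xi T_\omega$ squares to $Z\cdot 1$ (its determinant is $-Z$), so $T_\omega^{-1}=Z^{-1}T_\omega$. Hence $E_{11}(ZT_\omega^{-1}+T_\omega)E_{22}=2(X^2-Z)E_{12}$, not $ZE_{12}$; with your formula for $T_\omega^{-1}$ the sum $ZT_\omega^{-1}+T_\omega$ is in fact $0$. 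More generally, the images of $e_\xi T_{s_0}$, $e_\xi T_\omega^{\pm1}$ and of the centre all lie in the subalgebra $\begin{psmallmatrix} R & (X^2-Z)R\\ R & R\end{psmallmatrix}$. This subalgebra is proper because $X^2-Z$ is not a unit of $R$. There is also a conceptual obstruction. The algebra $e_\xi\He$ has one-dimensional modules, e.g.\ $e_\xi T_{s_0}\mapsto 0$, $e_\xi T_\omega\mapsto\mu\in k^\times$. By contrast, every finite-dimensional $M_2(R)$-module $N\cong R^2\otimes_R E_{11}N$ has even $k$-dimension. So on these blocks the spherical module is not a Morita progenerator.

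Your fallback is therefore not optional; it is exactly the paper's argument. The paper reduces to regular $\gamma$ via \Cref{non_reg_is_reg} and \Cref{non-reg}. For $\GL_2$ one has $\Omega_{\text{tor}}=1$, so $e_\xi\He$ has finite global dimension, and $Z(e_\xi\He)\cong R$ is regular. Hence both homotopy categories on such a block vanish.

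To make the functor genuinely left Quillen on these blocks, you must still check that it preserves projectives; the paper also leaves this implicit. Concretely, $\M_{\GL_2,\xi}$ must be projective over $e_\xi\He$, since the cofibration $0\to R$ has to go to a monomorphism with projective cokernel. This holds. The image contains $E_{11}$ (the image of $e_\xi(1+T_{s_0})$), $E_{21}$ and the scalars $R$. So, by injectivity, $h\mapsto h\cdot\begin{psmallmatrix}1\\0\end{psmallmatrix}$ identifies $\M_{\GL_2,\xi}$ with $e_\xi\He\cdot e_\xi(1+T_{s_0})$.

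For the right Quillen part: $\M_{\GL_2,\xi}$ is free of rank two over $R$, so the tensor functor has a left adjoint. It also preserves epimorphisms, and on these blocks every epimorphism is a trivial fibration. Since both homotopy categories vanish, you get a left and right Quillen equivalence on the non-regular blocks, and the proof goes through.
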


\begin{proof}
From the above discussion and by \Cref{non_reg_is_reg} and \Cref{non-reg}, it suffices to show that
$$
\LM_{\M_\gamma}:\Mod(Z(e_\gamma\He))\to \Mod(e_\gamma\He)
$$
is both a left and a right Quillen equivalence when $\gamma$ is regular. This now follows immediately from \Cref{Morita_equiv}.
\end{proof}

\subsubsection{The case $\G=\PGL_2$} We can now use the above to investigate $\Ho(\He)$ in the case $\G=\PGL_2$. We first pick a character $\xi$ of $T_{\PGL_2}(\F_q)$. Via the natural surjection $\mathbf{T}_{\GL_2}\to\mathbf{T}_{\PGL_2}$ this lifts to a character of $T_{\GL_2}(\F_q)$ and by slight abuse of notation we denote the Weyl group orbits of both of these characters by $\gamma$. Then the canonical surjection $\He_{\GL_2}\to \He_{\PGL_2}$ induces a surjection $e_\gamma\He_{\GL_2}\to e_\gamma\He_{\PGL_2}$. Since $e_\gamma T_t=e_\gamma$ for all $t\in \F_q^\times\cdot \id$, we deduce that
\[
e_\gamma\He_{\PGL_2}\cong e_\gamma\He_{\GL_2}/(e_\gamma T_\omega^2-1)\cong e_\gamma\He_{\GL_2}\otimes_{Z(e_\gamma\He_{\GL_2})}Z(e_\gamma\He_{\GL_2})/(e_\gamma T_\omega^2-1)\,.
\]
For completeness, we next include a description of the spherical module.

\begin{prop}\label{matrix_description_H_PGL_2}
Let $\G=\PGL_2$ and $\gamma\in T(\F_q)^\vee/W_0$.
\begin{enumerate}
    \item If $\gamma=\{\xi\}$ is non-regular, there is an injective $k$-algebra homomorphism $e_\xi\He\to M_2(k[X])$ defined by
    \[
    e_\xi T_\omega\mapsto\begin{pmatrix}
    X & X^2 -1\\
    -1 & -X
\end{pmatrix}\quad\text{and}\quad e_\xi T_{s_0}\mapsto \begin{pmatrix}
    0 & 0\\
    0 & -1
\end{pmatrix}\,.
    \]
    This map induces an isomorphism between the centre and scalar matrices, and so $Z(e_\gamma\He)\cong k[X]$.
    \item If $\gamma$ is regular, there is an isomorphism $e_\gamma\He\to M_2(A)$ of $k$-algebras and thus $Z(e_\gamma\He)\cong A$.
\end{enumerate}
\end{prop}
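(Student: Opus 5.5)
The plan is to deduce both parts from the $\GL_2$ descriptions via the isomorphism
\[
e_\gamma\He_{\PGL_2}\cong e_\gamma\He_{\GL_2}\otimes_{Z(e_\gamma\He_{\GL_2})}Z(e_\gamma\He_{\GL_2})/(e_\gamma T_\omega^2-1)
\]
established just above the statement. Since $e_\gamma T_\omega^2$ is central, quotienting by the two-sided ideal it generates amounts to base-changing along the central quotient $Z\mapsto 1$. For any commutative ring $R$ and ideal $J$, we have $M_2(R)\otimes_R R/J\cong M_2(R/J)$. Moreover, the centre of $M_2(R/J)$ consists of the scalar matrices $R/J$.

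For (ii), let $\gamma$ be regular (as a $\GL_2$-orbit via the lift). By \Cref{matrix_description_H} we have $e_\gamma\He_{\GL_2}\cong M_2(B)$ with $B=A[Z^{\pm1}]$, and $e_\gamma T_\omega^2\mapsto \begin{psmallmatrix}0&Z\\1&0\end{psmallmatrix}^2=Z\cdot \id$. Hence
\[
e_\gamma\He_{\PGL_2}\cong M_2(B/(Z-1))=M_2(A).
\]
The centre is therefore $A$. One should check that the regularity of $\gamma$ for $\PGL_2$ matches regularity of the lifted character. This follows from \eqref{eq_xi}, since the lift is trivial on $\F_q^\times\cdot\id$.

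For (i), let $\gamma=\{\xi\}$ be non-regular. Compose the embedding \eqref{sph_mod_nonreg}, $e_\xi\He_{\GL_2}\hookrightarrow M_2(k[X,Z^{\pm1}])$, with reduction modulo $Z-1$. This gives a homomorphism $e_\xi\He_{\PGL_2}\to M_2(k[X])$ with the stated images of $e_\xi T_\omega$ and $e_\xi T_{s_0}$. Here we use that under \eqref{sph_mod_nonreg}, $e_\xi T_\omega^2$ maps to $Z\cdot\id$: squaring gives diagonal entries $X^2-(X^2-Z)=Z$ and off-diagonal entries $0$. The map is well defined because $e_\xi T_\omega^2-1$ maps into the kernel.

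The main obstacle is injectivity, since tensoring an injection with $k[X,Z^{\pm1}]/(Z-1)$ need not stay injective. I would handle it by a basis count, as follows.
\begin{itemize}
\item By Vignéras's description, $e_\xi\He_{\GL_2}$ is free over its centre $k[X,Z^{\pm1}]$ of rank $4$, say with basis $e_\xi, e_\xi T_{s_0}, e_\xi T_\omega, e_\xi T_{s_0}T_\omega$. Compare Iwahori–Matsumoto bases: $\He$ is spanned by $T_{w\omega^n}$, and modulo powers of $T_\omega^2$ and of $X$ this is consistent with rank $4$.
\item The images of these four elements in $M_2(k[X,Z^{\pm1}])$ are linearly independent over $\operatorname{Frac}$. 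Even better, they generate a submodule whose cokernel is torsion-free after reducing modulo $Z-1$. I would check this by computing the determinant of the $4\times4$ coefficient matrix and verifying it is a unit, or at least nonvanishing modulo $Z-1$.
\item With this in hand, $e_\xi\He_{\PGL_2}$ is free of rank $4$ over $k[X]$ with a basis mapping to $k[X]$-linearly independent matrices, so the map is injective.
\end{itemize}

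Finally, the centre maps into the centralizer of the image. The image contains $\begin{psmallmatrix}0&0\\0&-1\end{psmallmatrix}$ and the image of $T_\omega$, which already force scalar matrices. Conversely, the scalars $k[X]$ are hit, via the image of the central element $X\mapsto e_\xi(T_{s_0}+1)T_\omega+e_\xi T_{\omega s_0}$ from the $\GL_2$ case. Hence $Z(e_\gamma\He)\cong k[X]$.
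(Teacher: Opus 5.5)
Your proof is correct. Part (ii) and the construction of the map in (i) are exactly the paper's base-change argument, but your proof of injectivity in (i) takes a different route.

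\textbf{The paper's route.} It lists an explicit infinite $k$-basis of $e_\xi\He_{\PGL_2}$, namely $1$, $e_\xi(T_\omega T_{s_0})^n$, $e_\xi(T_{s_0}T_\omega)^n$, $e_\xi T_\omega(T_{s_0}T_\omega)^{n-1}$ and $e_\xi T_{s_0}(T_\omega T_{s_0})^{n-1}$ for $n\geq 1$. It computes every image in $M_2(k[X])$, checks $k$-linear independence by hand, and reads the centre off these formulae.

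\textbf{Your route.} Write $S=e_\xi T_{s_0}$ and $W=e_\xi T_\omega$. You use that $e_\xi\He_{\GL_2}$ is free of rank $4$ over its centre $k[X,Z^{\pm1}]$ on $1,S,W,SW$. Base change then makes $e_\xi\He_{\PGL_2}$ free of rank $4$ over $k[X]$, and injectivity reduces to one $4\times 4$ determinant. This is finite and conceptual, and it gives the extra fact that $e_\xi\He_{\PGL_2}$ is free of rank $4$ over its centre.

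\textbf{What needs tightening.}
\begin{enumerate}
\item The freeness needs a real argument, not ``consistent with rank $4$''. From $X=SW+W+WS$ you get $WS=X-W-SW$, $W\cdot SW=XW-ZS-Z$ and $W^{-1}=Z^{-1}W$. So the $k[X,Z^{\pm1}]$-span of $1,S,W,SW$ is a left ideal containing $1$, hence everything. Independence then follows from your determinant, since the centre maps to scalars.
\item That determinant equals $Z-X^2$, which is not a unit. It could not be one: $e_\xi\He_{\GL_2}$ has characters (e.g.\ $S\mapsto 0$, $W\mapsto 1$), so it is not $M_2$ of its centre. What actually does the work is your fallback, $1-X^2\neq 0$ in the domain $k[X]$.
\item You also need the map to be $k[X]$-linear. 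This holds because the central $X$ goes to $X\cdot\id$.
\item Your aside that the cokernel is torsion-free after reducing modulo $Z-1$ is false as stated, since that cokernel is killed by $1-X^2$. Nothing in the argument depends on it.
\end{enumerate}
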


\begin{proof}
By applying the functor $(-)\otimes_{Z(e_\gamma\He_{\GL_2})}Z(e_\gamma\He_{\GL_2})/(e_\gamma T_\omega^2-1)$ to the spherical representations in \eqref{sph_mod_nonreg} and \Cref{matrix_description_H}, we get a $k$-algebra homomorphism of the claimed form in both cases and we immediately deduce (ii) because of the isomorphism in \Cref{matrix_description_H} and using $M_2(B)/(Z-1)\cong M_2(A)$.

For (i), we need to show that the map is indeed injective. For this, we note that the elements $1$, $e_\xi(T_\omega T_{s_0})^n$, $e_\xi(T_{s_0}T_\omega)^n$, $e_\xi T_\omega (T_{s_0}T_\omega)^{n-1}$ and $e_\xi T_{s_0}(T_\omega T_{s_0})^{n-1}$ (with $n\geq 1$) form a $k$-basis of $e_\xi\He_{\PGL_2}$. A direct computation shows that they are sent to the following $k$-linearly independent matrices:
\begin{align*}
e_\xi(T_\omega T_{s_0})^n\mapsto \begin{pmatrix}
    0 & -X^{n+1}+X^{n-1}\\
    0 & X^n
\end{pmatrix}, &\quad e_\xi(T_{s_0}T_\omega)^n\mapsto \begin{pmatrix}
0 & 0\\
X^{n-1} & X^n
\end{pmatrix}\\
e_\xi T_{s_0}(T_\omega T_{s_0})^{n-1}\mapsto \begin{pmatrix}
0 & 0 \\
0 & -X^{n-1}
\end{pmatrix},\quad \text{and} \quad &e_\xi T_\omega (T_{s_0}T_\omega)^{n-1}\mapsto\begin{pmatrix}
X^{n}-(1-\delta_{n1})X^{n-2} & X^{n+1}-X^{n-1}\\
-X^{n-1}& -X^{n}
\end{pmatrix}
\end{align*}
for all $n\geq 1$, where $\delta_{n1}$ denotes the Kronecker delta. The map is thus injective. Finally, using that $T_{s_0}+1$ and $-T_{s_0}$ are sent to \(\begin{psmallmatrix}
1 & 0\\
0 & 0
\end{psmallmatrix}\) and \(\begin{psmallmatrix}
0 & 0\\
0 & 1
\end{psmallmatrix}\) respectively, one deduces from the above formulae that $Z(e_\gamma\He_{\PGL_2})$ once again identifies with the scalar matrices and is thus isomorphic to the polynomial algebra $k[X]$.
\end{proof}

From the above, the centre of $\He_{\PGL_2}$ is once again Gorenstein and its non-regular components have trivial homotopy category. In analogy with the $\GL_2$ situation, we let $\M_{\PGL_2,\gamma}=Z(e_\gamma\He_{\PGL_2})^2$ for any $\gamma$ (regular or not) with the natural left action of $e_\gamma\He_{\PGL_2}$ coming from the homomorphisms to $M_2(Z(e_\gamma\He_{\PGL_2}))$ constructed above and we put $\M_{\PGL_2}=\bigoplus_\gamma\M_{\PGL_2,\gamma}$. We now see that we may remove our condition on $p$ in \Cref{Delta_equiv} for the regular components and obtain an analogue of \Cref{Quillen_equiv_GL2}.

\begin{prop}\label{Quillen_equiv_PGL2} Let $\G=\PGL_2$ and $\gamma\in (T(\F_q)^\vee/W_0)_{\reg}$.
\begin{enumerate}
    \item The functor $\Res^{e_\gamma\He}_{e_\gamma \He_{x_0}}:\Mod(e_\gamma\He)\to\Mod(e_\gamma\He_{x_0})$ is a right Quillen equivalence.
    \item Assume that $p>2$. Then the functor
    $$
    M\mapsto \M_{\PGL_2}\otimes_{Z(\He)} M
    $$
    defines both a left and right Quillen equivalence $\Mod(Z(\He))\to \Mod(\He)$.
\end{enumerate}
\end{prop}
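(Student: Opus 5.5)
For (i), I would run the argument of \Cref{Delta_equiv} on the single block $e_\gamma\He$, where $\G=\PGL_2$ and $\gamma$ is regular. The step that needs care is removing the condition $p\neq 2$. In \Cref{Delta_equiv} that condition was used only to know that $\He_C^\dagger=k[T(\F_q)\rtimes\Z/2\Z]$ has finite global dimension. For the block I only need this for $e_\gamma\He_C^\dagger$. Since $\gamma=\{\xi,\xi^{s_0}\}$ with $\xi\neq\xi^{s_0}$ and $T_\omega$ swaps $e_\xi$ and $e_{\xi^{s_0}}$, the algebra $e_\gamma\He_C^\dagger$ is spanned by $e_\xi,e_{\xi^{s_0}},e_\xi T_\omega,e_{\xi^{s_0}}T_\omega$. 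With $T_\omega^2=1$ these form a system of $2\times 2$ matrix units, so $e_\gamma\He_C^\dagger\cong M_2(k)$, which is semisimple with no restriction on $p$.

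Given this, the argument has three steps.
\begin{enumerate}
\item Restricted to $e_\gamma$-components, \Cref{right_transf} shows that the Gorenstein projective model structure on $\Mod(e_\gamma\He)$ is right transferred along $\Res^{e_\gamma\He}_{e_\gamma\He_{x_0}}$, and that the left adjoint $e_\gamma\He\otimes_{e_\gamma\He_{x_0}}(-)$ preserves and reflects weak equivalences. For $\PGL_2$, $\mathcal{F}=\{x_0\}$ and $\He_{x_0}^\dagger=\He_{x_0}$. Its proof uses only that $\He$ is free over $\He_F^\dagger$ and Koziol's reflection lemma, so trivial objects are still reflected once $e_\gamma\He_C^\dagger$ has finite global dimension.
\item By \Cref{Test_quillen_equiv}, it then suffices to show that the counit is a weak equivalence.
\item The counit is the map $\varepsilon$ in \eqref{OllSch_res_comp}. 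It is surjective, and its kernel $e_\gamma\He\otimes_{e_\gamma\He_C^\dagger}M$ (up to the orientation twist) is induced from a semisimple algebra. That kernel is therefore projective over $e_\gamma\He$, because $\He$ is free over $\He_C^\dagger$. By \cite[Lemma 5.8]{Hov2}, $\varepsilon$ is a weak equivalence.
\end{enumerate}

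For (ii), the plan follows \Cref{Quillen_equiv_GL2}. Decompose by \eqref{decomp_H} and treat each $\gamma$ separately.
\begin{itemize}
\item For $\gamma$ non-regular, which under $p>2$ is equivalent to $\xi|_{\alpha^\vee(\F_q^\times)}=1$ by \Cref{non-reg}: \Cref{non_reg_is_reg} gives that $e_\gamma\He$ has finite global dimension, and \Cref{matrix_description_H_PGL_2}(i) gives $Z(e_\gamma\He)\cong k[X]$. Both homotopy categories vanish.
\item For $\gamma$ regular: \Cref{matrix_description_H_PGL_2}(ii) gives $e_\gamma\He\cong M_2(A)$ with $\M_{\PGL_2,\gamma}=A^2$. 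By \Cref{Morita_equiv}, $\LM_{\M_\gamma}$ is an exact equivalence of abelian categories. Such an equivalence preserves projectives, Gorenstein projectives and modules of finite projective dimension, so it is a left and right Quillen equivalence.
\end{itemize}

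For the non-regular components to assemble into a Quillen equivalence, I must check that the functor is Quillen there. Since all weak equivalences on both sides are maps between trivial objects, the whole issue is that $\M_{\PGL_2,\gamma}=k[X]^2$ is projective over $k[X]$ but possibly not projective as a left $e_\gamma\He$-module, so \Cref{tensor_hom_lemma} does not apply directly. Instead I would observe that every $Z(e_\gamma\He)$-module has finite projective dimension and $\LM$ is exact, so every image has finite projective dimension because $e_\gamma\He$ has finite global dimension. Hence cofibrations map to monomorphisms with trivial cokernel, which in $\Mod(e_\gamma\He)$ are trivial cofibrations because every module is trivial and therefore Gorenstein projectives are projective. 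The right adjoint is handled similarly. Both sides have zero homotopy category, so the equivalence follows.

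The main obstacle is this non-regular bookkeeping. The hypothesis $p>2$ is used only there, through \Cref{non_reg_is_reg}, to guarantee finite global dimension when $\Omega_{\text{tor}}=\Z/2\Z$.
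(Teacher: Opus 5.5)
Part (i) and the regular blocks of (ii) follow the paper's proof. For (i) the paper also reruns \Cref{Delta_equiv} on the block using \eqref{OllSch_res_comp}, and only needs $e_\gamma\He_C^\dagger\cong M_2(k)$. It reads this off inside $M_2(A)$; you use matrix units. For the regular blocks in (ii) it also uses \Cref{Morita_equiv}.

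The step that fails is your extra verification on the non-regular blocks. In the Gorenstein projective model structure a trivial cofibration is a monomorphism with \emph{projective} cokernel. Since $e_\gamma\He$ has finite global dimension, its cofibrations are exactly the monomorphisms with projective cokernel. A monomorphism with merely trivial cokernel is just an arbitrary monomorphism here, and need not be a cofibration: take $0\to N$ with $N$ non-projective. So showing that $\LM_{\M_{\PGL_2,\gamma}}$ sends cofibrations to monomorphisms with trivial cokernel does not make it left Quillen. What you need is that $\M_{\PGL_2,\gamma}\otimes_{k[X]}P$ is projective for projective $P$. Equivalently, $\Hom_{e_\gamma\He}(\M_{\PGL_2,\gamma},-)$ must preserve epimorphisms. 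Either way this is exactly the projectivity of $\M_{\PGL_2,\gamma}$ over $e_\gamma\He$, which you left in doubt. Your argument that $\LM_{\M_{\PGL_2,\gamma}}$ is \emph{right} Quillen on these blocks is fine, since it only uses exactness and that all $e_\gamma\He$-modules are trivial.

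The missing fact is true and easy to prove. Put $\epsilon\coloneqq e_\xi(1+T_{s_0})$. Under the embedding of \Cref{matrix_description_H_PGL_2}(i), $\epsilon$ maps to $\mathrm{diag}(1,0)$, so $\epsilon$ is an idempotent. Let $v\in k[X]^2$ be the first basis vector and define the $e_\xi\He$-linear map $e_\xi\He\,\epsilon\to\M_{\PGL_2,\gamma}$, $h\mapsto h\cdot v$.
\begin{itemize}
\item It is injective. For $h=h\epsilon$ the matrix of $h$ has zero second column, so it vanishes once its first column $h\cdot v$ does, and the embedding is injective.
\item It is surjective. The image contains $v$ and $e_\xi T_{s_0}T_\omega\cdot v$, which is the second basis vector. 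It is also stable under the central element acting as $X$.
\end{itemize}
Hence $\M_{\PGL_2,\gamma}$ is a direct summand of $e_\xi\He$, so \Cref{tensor_hom_lemma} applies. Since both homotopy categories vanish, the adjunction is then a Quillen equivalence on these blocks. The paper itself simply discards these factors because their homotopy categories are zero.
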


\begin{proof}
For (i), we may argue as in \Cref{Delta_equiv} using the resolution \eqref{OllSch_res_comp}. It once again suffices to show that $e_\gamma\He_C^\dagger$ has finite global dimension. But $e_\gamma\He_C^\dagger$ becomes identified via the isomorphism in \Cref{matrix_description_H_PGL_2}(ii) with the subalgebra of $M_2(A)$ generated by \(\begin{psmallmatrix}
1 & 0\\
0 & 0
\end{psmallmatrix}\), \(\begin{psmallmatrix}
0 & 0\\
0 & 1
\end{psmallmatrix}\) and \(\begin{psmallmatrix}
0 & 1\\
1 & 0
\end{psmallmatrix}\). The latter subalgebra is simply equal to $M_2(k)$, which is simple and thus of global dimension 0 as required.

For (ii), the assumption on $p$ ensures by \Cref{non_reg_is_reg} that the only non-zero factors of $\Ho(\He)$ in \eqref{decomp_Ho_H} are those corresponding to orbits of regular characters. Then the argument goes through identically as for $\GL_2$ in \Cref{Quillen_equiv_GL2}.
\end{proof}

\begin{rem}
Suppose that we have $p=2$. In light of the description of the centre in \Cref{matrix_description_H_PGL_2}, the functor
$$
M\mapsto \M_{\PGL_2}\otimes_{Z(\He_{\PGL_2})} M
$$
now only induces a fully faithful embedding $\Ho(Z(\He_{\PGL_2}))\to \Ho(\He_{\PGL_2})$ with image the product of the regular components. Furthermore, it follows from the proof of \Cref{Quillen_equiv_PGL2}(i) that the Gorenstein algebra $\He_{\PGL_2,C}^\dagger$ has non-zero homotopy category only because of its non-regular components. Indeed, when $\gamma$ is non-regular then the algebra $e_\gamma\He_{\PGL_2,C}^\dagger$ is isomorphic to $k[\Z/2\Z]$, which is of infinite global dimension when $p=2$.
\end{rem}

\subsubsection{The case $\G=\SL_2$} This case is different as the regular components of $\He_{\SL_2}$ are no longer matrix algebras over their centre. In order to describe our analogue of the spherical module, it will be more convenient to first work with the affine Hecke algebra $\He_{\GL_2,\aff}$. We fix a regular orbit $\gamma=\{\xi, \xi^{s_0}\}\in T_{\GL_2}(\F_q)^\vee/W_0$ and consider the algebra $e_\gamma\He_{\GL_2,\aff}$. Explicitly, it is given by the generators $T_0\coloneqq e_\gamma T_{s_0}$, $T_1\coloneqq e_\gamma T_{s_1}$, $e_1\coloneqq e_{\xi}$, $e_2\coloneqq e_{\xi^{s_0}}$ and relations $T_i^2=0$ ($i=0,1$), $e_je_l=\delta_{jl}$, $e_1+e_2=1$ and $e_j T_i=T_ie_{3-j}$ (for $j=1,2$ and $i=0,1$). We have a $k$-vector space decomposition $e_\gamma\He_{\GL_2,\aff}=e_1\He_{\GL_2,\aff}\oplus e_2\He_{\GL_2,\aff}$, and the set
\begin{equation}\label{basis_reg_block}
\{e_i(T_0T_1)^m, e_i(T_1T_0)^m, e_iT_0(T_1T_0)^{m}, e_iT_1(T_0T_1)^{m}\mid m\geq 0\}
\end{equation}
is a $k$-basis of $e_i\He_{\GL_2,\aff}$ for each $i=1,2$.

As a final piece of notation, we write $A_e$ for the $k$-span of $\{X_1^{2m}, X_2^{2m}\mid m\geq 0\}$ in $A$ and similarly $A_o$ for the $k$-span of $\{X_1^{2m+1}, X_2^{2m+1}\mid m\geq 0\}$ in $A$. Of course, $A_e$ is the $k$-subalgebra of $A$ generated by $X_1^2$ and $X_2^2$ and we have that $A=A_e\oplus A_o$ as $A_e$-modules.

\begin{prop}\label{matrix_descr_aff}
There is an injective $k$-algebra homomorphism $\rho:e_\gamma\He_{\GL_2,\aff}\to M_2(A)$ defined by
\[
\rho(e_1)=\begin{pmatrix}
1 & 0\\
0 & 0
\end{pmatrix}, \quad \rho(e_2)=\begin{pmatrix}
0 & 0\\
0 & 1
\end{pmatrix}, \quad \rho(T_0)=\begin{pmatrix}
0 & X_1\\
X_2 & 0
\end{pmatrix}\quad\text{and}\quad \quad \rho(T_1)=\begin{pmatrix}
0 & X_2\\
X_1 & 0
\end{pmatrix}\,.
\]
The map $\rho$ induces an isomorphism
\[
e_\gamma\He_{\GL_2,\aff}\cong \begin{pmatrix}
A_e & A_o\\
A_o & A_e
\end{pmatrix}
\]
of $k$-algebras, with the right hand side viewed as a subalgebra of $M_2(A)$, and also an isomorphism $Z(e_\gamma\He_{\GL_2,\aff})\cong A_e\cong A$.
\end{prop}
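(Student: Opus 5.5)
The plan is to check that $\rho$ is well defined on the presentation of $e_\gamma\He_{\GL_2,\aff}$, then read off injectivity and the image from the basis \eqref{basis_reg_block}, and finally compute the centre by hand.

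\emph{Well-definedness.} We verify the defining relations for the images under $\rho$:
\begin{itemize}
\item $\rho(e_1),\rho(e_2)$ are orthogonal idempotents summing to $1$.
\item $\rho(T_0)^2=\mathrm{diag}(X_1X_2,X_2X_1)=0$ in $A$, and likewise $\rho(T_1)^2=0$.
\item Since $\rho(T_i)$ is off-diagonal, $\rho(e_j)\rho(T_i)=\rho(T_i)\rho(e_{3-j})$.
\end{itemize}
Hence $\rho$ extends to a $k$-algebra homomorphism.

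\emph{Images of the basis.} A direct computation gives
\[
\rho(T_0T_1)=\mathrm{diag}(X_1^2,X_2^2),\qquad \rho(T_1T_0)=\mathrm{diag}(X_2^2,X_1^2),
\]
so $\rho((T_0T_1)^m)=\mathrm{diag}(X_1^{2m},X_2^{2m})$ and $\rho((T_1T_0)^m)=\mathrm{diag}(X_2^{2m},X_1^{2m})$. Multiplying by $\rho(e_i)$ picks out a single diagonal entry. For instance:
\begin{itemize}
\item $e_1(T_0T_1)^m\mapsto X_1^{2m}E_{11}$ and $e_1(T_1T_0)^m\mapsto X_2^{2m}E_{11}$;
\item $e_2(T_0T_1)^m\mapsto X_2^{2m}E_{22}$ and $e_2(T_1T_0)^m\mapsto X_1^{2m}E_{22}$.
\end{itemize}
For $m=0$ both elements equal $e_i$, so the basis is read with that identification. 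Similarly:
\begin{itemize}
\item $e_1T_0(T_1T_0)^m\mapsto X_1^{2m+1}E_{12}$ and $e_1T_1(T_0T_1)^m\mapsto X_2^{2m+1}E_{12}$;
\item $e_2T_0(T_1T_0)^m\mapsto X_2^{2m+1}E_{21}$ and $e_2T_1(T_0T_1)^m\mapsto X_1^{2m+1}E_{21}$.
\end{itemize}
These images are exactly the standard monomial basis of $\begin{psmallmatrix}A_e&A_o\\A_o&A_e\end{psmallmatrix}$, and distinct basis elements go to distinct basis vectors. So $\rho$ maps a basis bijectively onto a basis of that subalgebra, giving injectivity and the claimed isomorphism at once. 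The main point of care is the $m=0$ identification, so that the counts match.

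\emph{Centre.} Let $\begin{psmallmatrix}a&b\\c&d\end{psmallmatrix}$ be central.
\begin{itemize}
\item Commuting with $E_{11}$ forces $b=c=0$.
\item Commuting with $\rho(T_0)$ gives $aX_1=X_1d$ and $dX_2=X_2a$.
\item Commuting with $\rho(T_1)$ gives $aX_2=X_2d$ and $dX_1=X_1a$.
\end{itemize}
Write $a=\alpha_0+\sum_{m\ge1}(\alpha_mX_1^{2m}+\beta_mX_2^{2m})$. The condition $aX_1=dX_1$ kills the $X_2$-terms, so it says $a$ and $d$ have the same constant term and the same $X_1^{2m}$-coefficients. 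Likewise $aX_2=dX_2$ matches the $X_2^{2m}$-coefficients. Hence $a=d$, the centre is the scalar matrices $aI$ with $a\in A_e$, and $Z\cong A_e$.

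Finally, $A_e=k[X_1^2,X_2^2]\subseteq A$, and the map $k[Y_1,Y_2]/(Y_1Y_2)\to A_e$, $Y_i\mapsto X_i^2$, is surjective. It is also injective, since it sends the monomial basis $\{1,Y_1^m,Y_2^m\}$ to the linearly independent set $\{1,X_1^{2m},X_2^{2m}\}$. Thus $A_e\cong A$.
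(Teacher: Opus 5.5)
Your proposal is correct and follows the paper's own argument: verify the defining relations, compute the images of the basis $\{e_i(T_0T_1)^m, e_i(T_1T_0)^m, e_iT_0(T_1T_0)^m, e_iT_1(T_0T_1)^m\}$, and observe that they are exactly the monomial basis of $\begin{psmallmatrix}A_e&A_o\\A_o&A_e\end{psmallmatrix}$. The only difference is that you write out the centre computation and the isomorphism $A_e\cong A$, which the paper leaves as immediate.
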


\begin{proof}
One checks immediately that the relations between the generators of $e_\gamma\He_{\GL_2,\aff}$ are preserved under the map $\rho$. Furthermore, we have for any $m\geq 0$ that
\begin{align*}
\rho((T_0T_1)^m)=\begin{pmatrix}
X_1^{2m} & 0\\
0 & X_2^{2m}
\end{pmatrix},& \quad \rho((T_1T_0)^m)=\begin{pmatrix}
X_2^{2m} & 0\\
0 & X_1^{2m}
\end{pmatrix}\\
\rho(T_0(T_1T_0)^{m})=\begin{pmatrix}
0 & X_1^{2m+1}\\
X_2^{2m+1} & 0
\end{pmatrix}\quad &\text{and}\quad \rho(T_1(T_0T_1)^{m})=\begin{pmatrix}
0 & X_2^{2m+1}\\
X_1^{2m+1} & 0
\end{pmatrix}\,.
\end{align*}
The remaining statements follow immediately from the above and \eqref{basis_reg_block}.
\end{proof}

To now link the above to $\He_{\SL_2}$, we first fix a generator $\zeta\in\F_q^\times$ and consider an arbitrary character $\xi\colon T_{\SL_2}(\F_q)\to k^\times$. Since $\F_q\subseteq k$, we may view $\xi$ as an $\F_q$-valued character of $T_{\SL_2}(\F_q)$ and one then has \(\xi\begin{psmallmatrix}
\zeta & 0\\
0 & \zeta^{-1}
\end{psmallmatrix}=\zeta^n\) for some $0\leq n\leq q-2$. Note then that $\xi$ admits $q-1$ distinct lifts to $T_{\GL_2}(\F_q)$ given by \(\tilde{\xi}_j\colon\begin{psmallmatrix}
\zeta^a & 0\\
0 & \zeta^{b}
\end{psmallmatrix}\mapsto\zeta^{aj-b(n-j)}\) (with $0\leq j\leq q-2$). If $\tilde{\xi}'$ is any character of $T_{\GL_2}(\F_q)$ with $\xi'\coloneqq \tilde{\xi}'|_{T_{\SL_2}(\F_q)}$, then it follows from the definition of the idempotent $e_\xi$ that $e_\xi e_{\tilde{\xi}'}=e_\xi e_{\xi'}$ and hence that
\begin{equation}\label{idemp_restr}
e_\xi=\sum_{j=0}^{q-2}e_{\tilde{\xi}_j}
\end{equation}
in $k[T_{\GL_2}(\F_q)]$. We fix one of the above lifts of $\xi$ and denote it by $\tilde{\xi}$ for simplicity. Let $\gamma\in T_{\SL_2}(\F_q)^\vee/W_0$ and $\tilde{\gamma}\in T_{\GL_2}(\F_q)^\vee/W_0$ denote the Weyl group orbits of $\xi$ and $\tilde\xi$ respectively. There is a $k$-algebra homomorphism $\psi_\gamma: e_\gamma\He_{\SL_2}\to e_{\tilde\gamma}\He_{\GL_2, \aff}$ obtained by composing the inclusion $e_\gamma\He_{\SL_2}\subseteq \He_{\GL_2, \aff}$ with the projection onto the $e_{\tilde\gamma}$-component, which by \eqref{idemp_restr} is injective and equal to multiplication by $e_{\tilde{\gamma}}$ in $\He_{\GL_2, \aff}$.

\begin{prop}\label{matrix_descr_SL2}
Keeping with the above notation, when $\gamma$ is regular then $\psi_\gamma$ is an isomorphism, and there is an injective $k$-algebra homomorphism $\rho:e_\gamma\He_{\SL_2}\to M_2(A)$ with image \(\begin{psmallmatrix}
A_e & A_o\\
A_o & A_e
\end{psmallmatrix}\), whose definition is identical to the one given in \Cref{matrix_descr_aff}, under which $Z(e_\gamma\He_{\SL_2})$ identifies with $A_e$-scalar matrices.
\end{prop}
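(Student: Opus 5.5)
The plan is to show that $\psi_\gamma$ is surjective; injectivity has already been recorded just before the statement. We then set $\rho\coloneqq\rho_{\tilde\gamma}\circ\psi_\gamma$, where $\rho_{\tilde\gamma}$ is the map of \Cref{matrix_descr_aff} for the regular orbit $\tilde\gamma$. Before that, we must check that $\tilde\gamma$ is regular whenever $\gamma$ is. This is immediate: $\tilde\xi=\tilde\xi^{s_0}$ would force $\xi=\tilde\xi|_{T_{\SL_2}(\F_q)}=\xi^{s_0}$, since $s_0$ commutes with restriction to $T_{\SL_2}(\F_q)$.

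For surjectivity, we compare the generators of $e_{\tilde\gamma}\He_{\GL_2,\aff}$ listed before \Cref{matrix_descr_aff}, namely $T_0,T_1,e_1=e_{\tilde\xi}$ and $e_2=e_{\tilde\xi^{s_0}}$. Since $T_{s_0},T_{s_1}\in\He_{\SL_2}$, their images $e_{\tilde\gamma}T_{s_i}$ are the elements $T_i$, so these lie in the image. It remains to see that $e_{\tilde\xi}$ and $e_{\tilde\xi^{s_0}}$ lie in the image. For this we use that $e_\xi$ and $e_{\xi^{s_0}}$ belong to $e_\gamma\He_{\SL_2}$. By \eqref{idemp_restr}, $\psi_\gamma(e_\xi)=e_{\tilde\gamma}e_\xi$ is the sum of those $e_{\tilde\xi_j}$ lying in $\tilde\gamma$ whose restriction is $\xi$.

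The key check, and the main obstacle, is the following. Among the two characters $\tilde\xi,\tilde\xi^{s_0}$ of $\tilde\gamma$, exactly $\tilde\xi$ restricts to $\xi$, because $\tilde\xi^{s_0}$ restricts to $\xi^{s_0}\neq\xi$. Hence $\psi_\gamma(e_\xi)=e_1$, and likewise $\psi_\gamma(e_{\xi^{s_0}})=e_2$. So $\psi_\gamma$ hits all generators and is an isomorphism. Alternatively, one can compare bases: \eqref{basis_reg_block} and the basis $\{e_\xi T_w,e_{\xi^{s_0}}T_w\}$ of $e_\gamma\He_{\SL_2}$ correspond termwise under $\psi_\gamma$.

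Finally, \Cref{matrix_descr_aff} gives that $\rho_{\tilde\gamma}$ is injective with image $\begin{psmallmatrix}A_e&A_o\\A_o&A_e\end{psmallmatrix}$. It also gives that the centre corresponds to $A_e$-scalar matrices, since $Z(e_{\tilde\gamma}\He_{\GL_2,\aff})\cong A_e$ via scalars. Transporting these statements along the isomorphism $\psi_\gamma$ yields all the claims. Moreover, $\rho$ sends $e_\xi,e_{\xi^{s_0}},e_\gamma T_{s_0},e_\gamma T_{s_1}$ to exactly the matrices of \Cref{matrix_descr_aff}, so its definition is identical, as claimed.
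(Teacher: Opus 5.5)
Your proof is correct and follows essentially the same route as the paper: you show that $\psi_\gamma$ is an isomorphism and then transport \Cref{matrix_descr_aff} along it. The paper only asserts that the regular blocks have the same presentation and Iwahori--Matsumoto basis, whereas you make explicit the one point where regularity of $\gamma$ is used, namely $\psi_\gamma(e_\xi)=e_{\tilde\xi}$ and $\psi_\gamma(e_{\xi^{s_0}})=e_{\tilde\xi^{s_0}}$, since only $\tilde\xi$ among $\tilde\xi,\tilde\xi^{s_0}$ restricts to $\xi$.
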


\begin{proof}
Since the regular components of $\He_{\GL_2, \aff}=k[T_{\GL_2(\F_q)}]\otimes_{k[T_{\SL_2}(\F_q)]}\He_{\SL_2}$ have exactly the same description by generators and relations using the Iwahori-Matsumoto basis as those of $\He_{\SL_2}$, the map $\psi_\gamma$ is an isomorphism by construction. The rest follows from \Cref{matrix_descr_aff}.
\end{proof}

\begin{rem}\leavevmode
\begin{enumerate}
    \item The algebra $e_\gamma\He_{\SL_2}$ was described in \cite[\S 6.3.6]{AbThese} as an algebra of $2\times 2$-matrices with entries in certain morphism spaces with explicit generators and relations. The above may be viewed as a more concrete rewriting of that description.
    \item Explicitly, we obtain from the above that $Z(e_\gamma\He_{\SL_2})$ is generated by $e_1T_0T_1+e_2T_1T_0$ and $e_2T_0T_1+e_1T_1T_0$, which was also proved in \cite[Remark 3.5]{OS18}. The fact that we obtain an isomorphism to $A_e$ via the map $\rho$ may be thought of as keeping track of the fact that these two generators are built out of elements of the form $T_w$ with $\ell(w)=2$. By contrast, for $\GL_2$ the generators of the centre were built out of elements of the form $T_w$ with $\ell(w)=0$ or $1$.
\end{enumerate}
\end{rem}

We can now define the right analogue for us of the spherical module. Recall from \Cref{non-reg} that when $p>2$ there is a unique non-trivial and non-regular character $\sigma\in T_{\SL_2}(\F_q)$, which sends any fixed generator to $-1$. Continuing with our earlier notation, when $p>2$ and $\xi=\sigma$ we note that any choice of lift $\tilde \sigma$ to $T_{\GL_2}(\F_q)$ is now \emph{regular}, since non-regular characters of $T_{\GL_2}(\F_q)$ are always trivial on the $\SL_2$-torus (cf.\ \Cref{non-reg}). Thus $\tilde{\gamma}=\{\tilde{\sigma}, \tilde{\sigma}^{s_0}\}$ has size two even though $\gamma$ was a singleton. We may view again $e_{\tilde\gamma}\He_{\GL_2, \aff}$ as the $k$-algebra generated by $T_0$, $T_1$ and two orthogonal idempotents $e_1$, $e_2$ summing to $1$. The map $\psi_\sigma$ then identifies $e_\sigma\He_{\SL_2}$ with the $k$-subalgebra generated by $T_0$, $T_1$ (without the idempotents).

\begin{defn}
Let $\xi\in T_{\SL_2}(\F_q)^\vee$ be non-trivial and $\gamma$ denote its $W_0$-orbit. The restriction of the $e_{\tilde\gamma}\He_{\GL_2, \aff}$-action on $A^2$ via $\rho$ to $e_\gamma \He_{\SL_2}$ under the map $\psi_\gamma$ defines an $e_\gamma\He_{\SL_2}$-module which we denote by $\M_{\SL_2,\gamma}$ and call the \emph{affine spherical $e_\gamma\He_{\SL_2}$-module}. We will refer to the direct sum $\M_{\SL_2}=\bigoplus_{\gamma\neq \mathbf{1}}\M_{\SL_2,\gamma}$ as the \emph{affine spherical $\He_{\SL_2}$-module}.
\end{defn}

We note that the above definition does not depend up to isomorphism on the choice of lift $\tilde{\gamma}$. When $\gamma$ is regular, the affine spherical module is isomorphic to the one obtained by considering directly the action of $e_\gamma\He_{\SL_2}$ on $A^2$ via the direct map from \Cref{matrix_descr_SL2}. 

\begin{rem}
By definition of $\rho$, when specialising $\M_{\SL_2,\gamma}$ (for $\gamma$ regular) at the character of $A$ given by $X_1, X_2\mapsto 0$, this recovers the direct sum $\chi_1\oplus\chi_2$ of the two supersingular characters of $e_\gamma\He_{\SL_2}$ defined by $\chi_i(T_0)=\chi_i(T_1)=0$ and $\chi_i(e_j)=\delta_{ij}$. When specialising instead at the two characters $X_1\mapsto 0$, $X_2\mapsto \lambda$ and $X_1\mapsto \lambda$, $X_2\mapsto 0$ with $\lambda\neq 0$, this gives the two dimensional non-supersingular simple $e_\gamma\He_{\SL_2}$-modules denoted by $M_{12}(\lambda^2)$ and $M_{21}(\lambda^2)$ in \cite[Théorème 6.3.39]{AbThese}. By \textit{loc.\ cit}.\ we thus obtain all simple $e_\gamma\He_{\SL_2}$-modules in this way, analogously to the $\GL_2$ situation (cf.\ \cite[Theorem 7.4.9]{PeSch23}). In a similar way, one recovers the non-supersingular simple $e_\sigma\He_{\SL_2}$-modules (classified in \cite[Théorème 6.3.45]{AbThese}) by specialising $\M_{\SL_2,\sigma}$ at the closed points of $\Spec(A)$ outside the origin, and specialising at the origin gives a direct sum of two copies of the (unique) supersingular character in that component.
\end{rem}

Our affine spherical module is not just a $(\He_{\SL_2}, Z(\He_{\SL_2}))$-bimodule but in fact involves a larger algebra which we describe next. We will adopt the convention, for each $\mathbf{1}\neq\xi\in T_{\SL_2}(\F_q)^\vee$ with \(\xi\begin{psmallmatrix}
\zeta & 0\\
0 & \zeta^{-1}
\end{psmallmatrix}=\zeta^n\), to fix our chosen lift $\tilde{\xi}$ to be equal to $\tilde{\xi}_{\lceil \frac{n}{2}\rceil}$. With this in place and keeping with our earlier notation, we let
\begin{equation}\label{defn_zq}
\Zq\coloneqq Z(e_{\mathbf{1}}\He_{\SL_2})\times\prod_{\mathbf{1}\neq\gamma\in T_{\SL_2}(\F_q)^\vee/W_0} Z(e_{\tilde{\gamma}}\He_{\GL_2, \aff})\,.
\end{equation}
The ring $\Zq$ is the product of $Z(e_{\mathbf{1}}\He_{\SL_2})\cong k[X]$ (cf.\ \cite[Remark 3.5]{OS18}) with a product of copies of $A$ and hence is Gorenstein. There is an embedding of the centre $Z(\He_{\SL_2})$ into $\Zq$ induced by the product of the maps $\psi_\gamma$, which is an isomorphism on each component except for the $\sigma$-component, when $p>2$, where it identifies with the injection $k[X]\hookrightarrow A$ sending $X$ to $X_1+X_2$. By construction, the module $\M_{\SL_2}$ is actually an $(\He_{\SL_2}, \Zq)$-bimodule (where we let $Z(e_{\mathbf{1}}\He_{\SL_2})\subseteq \Zq$ act by zero).

\begin{lem}\label{adjun_SL_2}
The affine spherical module $\M_{\SL_2}$ is projective and induces an adjunction
\[
\Ho(\LM_{\M_{\SL_2}}): \Ho(\Zq)\rightleftarrows \Ho(\He_{\SL_2}):\Ho(\R_{\M_{\SL_2}})\,.
\]
\end{lem}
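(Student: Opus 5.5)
The plan is to apply \Cref{tensor_hom_lemma} with $R=\Zq$, $S=\He_{\SL_2}$ and $X=\M_{\SL_2}$. Two things must be checked: $\M_{\SL_2}$ is projective as a left $\He_{\SL_2}$-module, and it is flat as a right $\Zq$-module. Both rings are Gorenstein: $\Zq$ by the remark following \eqref{defn_zq}, and $\He_{\SL_2}$ by \cite{OS14}. The lemma then gives the Quillen adjunction, and the derived adjunction is computed without replacements, which is the claimed adjunction on homotopy categories. Everything decomposes along the central idempotents $e_\gamma$ of \eqref{decomp_H}. The factor $Z(e_{\mathbf 1}\He_{\SL_2})$ acts by zero and the $\mathbf{1}$-component of $\M_{\SL_2}$ is $0$, so that component is trivial. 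It therefore suffices to treat each non-trivial orbit $\gamma$ separately, with $\M_{\SL_2,\gamma}=A^2$ viewed as an $(e_\gamma\He_{\SL_2},A)$-bimodule via $\rho\circ\psi_\gamma$.

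Flatness on the right is immediate. As a right $A$-module, $\M_{\SL_2,\gamma}=A^2$ is free. Here $A\cong Z(e_{\tilde\gamma}\He_{\GL_2,\aff})$ acts by scalar matrices through \Cref{matrix_descr_aff}, so the right action is just the obvious one on $A^2$.

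For left projectivity, first take $\gamma$ regular. By \Cref{matrix_descr_SL2} we may identify $e_\gamma\He_{\SL_2}$ with $\Lambda\coloneqq\begin{psmallmatrix} A_e & A_o\\ A_o & A_e\end{psmallmatrix}$. The two columns give the indecomposable projectives $\Lambda\rho(e_1)=\begin{psmallmatrix}A_e\\A_o\end{psmallmatrix}$ and $\Lambda\rho(e_2)=\begin{psmallmatrix}A_o\\A_e\end{psmallmatrix}$. Using $A=A_e\oplus A_o$, I would decompose
$$A^2=\begin{pmatrix}A_e\\A_o\end{pmatrix}\oplus\begin{pmatrix}A_o\\A_e\end{pmatrix}$$
as left $\Lambda$-modules. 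The check is that the parity bookkeeping is respected, namely $A_eA_e\subseteq A_e$, $A_oA_o\subseteq A_e$ and $A_eA_o\subseteq A_o$. This exhibits $\M_{\SL_2,\gamma}\cong e_\gamma\He_{\SL_2}$ as a left module, which is free of rank one.

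The case $\gamma=\sigma$ (with $p>2$) is the step I expect to be the main obstacle, since $\psi_\sigma$ identifies $e_\sigma\He_{\SL_2}$ only with the subalgebra generated by $T_0,T_1$, with no idempotents. By \eqref{basis_reg_block} (summed over $i$), its image under $\rho$ has basis $1$, $\rho((T_0T_1)^m)$, $\rho((T_1T_0)^m)$, $\rho(T_0(T_1T_0)^m)$, $\rho(T_1(T_0T_1)^m)$. Using the formulas in the proof of \Cref{matrix_descr_aff}, I would compute the orbit of the vector $v_1=(1,0)^T$: it gives $(X_1^{2m},0)$, $(X_2^{2m},0)$, $(0,X_2^{2m+1})$ and $(0,X_1^{2m+1})$. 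These are linearly independent and span $\begin{psmallmatrix}A_e\\A_o\end{psmallmatrix}$, so $h\mapsto hv_1$ is an isomorphism from $e_\sigma\He_{\SL_2}$ onto this summand. Independence needs care at $m=0$, where $1$ gives the single vector $(1,0)$ and the two lists overlap. The dimension count still works in each degree: in degree $0$ one basis element and one vector, in each even degree $2m>0$ two and two, in each odd degree two and two. Similarly, $h\mapsto hv_2$ with $v_2=(0,1)^T$ gives an isomorphism onto $\begin{psmallmatrix}A_o\\A_e\end{psmallmatrix}$. Hence $\M_{\SL_2,\sigma}\cong (e_\sigma\He_{\SL_2})^2$ is free.

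Summing over the components, $\M_{\SL_2}$ is projective over $\He_{\SL_2}$ (free on each non-trivial block, and a direct summand in general), and \Cref{tensor_hom_lemma} then yields the statement.
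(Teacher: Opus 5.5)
Your argument is correct in substance, but it proves projectivity by a different route from the paper. The paper never computes inside $A^2$. It observes three things:
\begin{enumerate}
\item $A^2\cong M_2(A)\begin{psmallmatrix}1&0\\0&0\end{psmallmatrix}$ is projective over $M_2(A)$;
\item $M_2(A)=\Lambda\oplus\Lambda\begin{psmallmatrix}0&1\\1&0\end{psmallmatrix}$ is free over $\Lambda=\begin{psmallmatrix}A_e&A_o\\A_o&A_e\end{psmallmatrix}\cong e_{\tilde\gamma}\He_{\GL_2,\aff}$;
\item for $\sigma$, $e_{\tilde\gamma}\He_{\GL_2,\aff}=\psi_\sigma(e_\sigma\He_{\SL_2})e_{\tilde\sigma}\oplus\psi_\sigma(e_\sigma\He_{\SL_2})e_{\tilde\sigma^{s_0}}$ is free over $e_\sigma\He_{\SL_2}$.
\end{enumerate}
Projectivity then follows by transitivity, with no basis bookkeeping. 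You instead split $A^2$ by parity into the two column modules $\Lambda\rho(e_1)\oplus\Lambda\rho(e_2)$. For $\sigma$ you track the orbits of $v_1,v_2$ on the Iwahori--Matsumoto basis. This costs a little more checking but gives sharper information: $\M_{\SL_2,\gamma}$ is free of rank one for regular $\gamma$ and of rank two for $\sigma$, with explicit generators. That is essentially the computation the paper later redoes by hand in \Cref{Quillen_equiv_SL2}.

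One step needs correcting. The paper's proof does not address right flatness, and you are right that \Cref{tensor_hom_lemma} requires it. However, your justification is wrong as written. Through $\rho$, the centre $Z(e_{\tilde\gamma}\He_{\GL_2,\aff})$ acts on $A^2$ by $A_e$-scalars, and the identification $A_e\cong A$ sends $X_i^2\mapsto X_i$. If the factor $A$ of $\Zq$ acted through the centre in this way, then as a right module $A^2=A_e^2\oplus A_o^2$, with $A_o=X_1A_e\oplus X_2A_e\cong A/(X_2)\oplus A/(X_1)$. This is not flat, since it is finitely presented of infinite projective dimension by \Cref{princ_example}. So $\LM_{\M_{\SL_2}}$ would not even be exact.

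The fix is to take the right action of each factor $A$ of $\Zq$ to be ordinary scalar multiplication on $A^2$. This commutes with the left action because $\rho$ lands in $M_2(A)$ and $A$ is commutative. It is also the structure the paper actually uses in \Cref{Quillen_equiv_SL2}, where $\LM_{\M_{\SL_2,\gamma}}(M)=M^{\oplus 2}$. With that convention $A^2$ is free over $A$, so $\M_{\SL_2}$ is projective, hence flat, over $\Zq$, and the rest of your proof goes through.
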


\begin{proof}
The second part follows from the first by \Cref{tensor_hom_lemma}. For the first part, we have to show that $\M_{\SL_2, \gamma}$ is projective over $e_\gamma\He_{\SL_2}$ for each $\gamma\neq \mathbf{1}$. To see this, we first observe that \(A^2\cong M_2(A)\cdot\begin{psmallmatrix}
1 & 0\\
0 & 0
\end{psmallmatrix}\) is projective over $M_2(A)$. Next, we observe that $M_2(A)$ is free as a left $e_{\tilde{\gamma}}\He_{\GL_2,\aff}$-module by \Cref{matrix_descr_aff}, since we have
\[
M_2(A)=\begin{pmatrix}
A_e & A_o\\
A_o & A_e
\end{pmatrix}\oplus \begin{pmatrix}
A_e & A_o\\
A_o & A_e
\end{pmatrix}\begin{pmatrix}
0 & 1\\
1 & 0
\end{pmatrix}\,.
\]
This proves our claim whenever $\gamma$ is regular by \Cref{matrix_descr_SL2}. For $p>2$ and the $\sigma$-component, the claim follows because $e_{\tilde{\gamma}}\He_{\GL_2,\aff}= \psi_\sigma(e_{\sigma}\He_{\SL_2})e_{\tilde{\sigma}}\oplus \psi_\sigma(e_{\sigma}\He_{\SL_2})e_{\tilde{\sigma}^{s_0}}$ and so is free over $e_{\sigma}\He_{\SL_2}$.
\end{proof}

Unlike the other two groups, the above adjunction is not an equivalence. We will describe it explicitly in the next subsection, see \Cref{Quillen_equiv_SL2}. We may however describe $\Ho(\He_{\SL_2})$ without it. Indeed, using that there is an natural isomorphism $\He_{x_0}\cong \He_{x_1}$ (for all three choices of $\G$) induced by conjugation by $\omega\in\GL_2(\Ff)$, we see from \Cref{Delta_equiv} that it will suffice to describe $\Ho(\He_{\SL_2,x_0})$ in order to determine $\Ho(\He_{\SL_2})$. For the regular components, this follows rather quickly from our earlier analysis.

\begin{lem}\label{Quillen_equiv_finite}
Let $\G\in\{\SL_2, \GL_2, \PGL_2\}$. For any regular $\gamma$ we have an equivalence
\[
\Ho(e_\gamma\He_{x_0})\simeq\Ho(A)\,.
\]
\end{lem}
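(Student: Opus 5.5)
The plan is to reduce to the regular blocks of $\PGL_2$, where \Cref{Quillen_equiv_PGL2}(i) and \Cref{matrix_description_H_PGL_2}(ii) already do most of the work.

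\emph{Step 1: $e_\gamma\He_{x_0}$ does not depend on $\G$.} For regular $\gamma=\{\xi,\xi^{s_0}\}$, the algebra $e_\gamma\He_{x_0}$ is generated by $e_1=e_\xi$, $e_2=e_{\xi^{s_0}}$ and $T_0=e_\gamma T_{s_0}$. The relations are $e_ie_j=\delta_{ij}e_i$, $e_1+e_2=1$, $e_jT_0=T_0e_{3-j}$ and $T_0^2=0$; the last one comes from the quadratic relation, since $e_\gamma$ kills the sum over $\alpha^\vee(\F_q^\times)$. A basis is $\{e_1,e_2,e_1T_0,e_2T_0\}$. So for each of $\SL_2$, $\GL_2$, $\PGL_2$ this is the same $4$-dimensional algebra $\Lambda$: the radical-square-zero path algebra of the cyclic quiver with two vertices. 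The map $\rho$ of \Cref{matrix_descr_aff} embeds $\Lambda$ into $M_2(A)$ via $T_0\mapsto\begin{psmallmatrix}0&X_1\\X_2&0\end{psmallmatrix}$. This is compatible with the isomorphism $e_\gamma\He_{\PGL_2}\cong M_2(A)$ of \Cref{matrix_description_H_PGL_2}(ii), up to an automorphism of $A$ which I will track.

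\emph{Step 2: comparison with $M_2(A)$.} Let $\omega=\begin{psmallmatrix}0&1\\1&0\end{psmallmatrix}$. Inside $M_2(A)$, set $\Lambda'=\omega\Lambda\omega$; this is the image of $T_1$ together with the idempotents. The subalgebra generated by the idempotents and $\omega$ is $M_2(k)$. I will show that restriction along $\Lambda\hookrightarrow M_2(A)$ is a right Quillen equivalence by copying the argument of \Cref{Quillen_equiv_PGL2}(i) (which rests on \Cref{Delta_equiv}). The ingredients are as follows.
\begin{enumerate}
\item $M_2(A)$ is free over $\Lambda$, as in the proof of \Cref{adjun_SL_2}. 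Hence \Cref{tensor_hom_lemma} applies, and the induction functor preserves and reflects weak equivalences.
\item For every $M_2(A)$-module $M$ there is an exact sequence
$$0\to M_2(A)\otimes_{M_2(k)}M\to M_2(A)\otimes_\Lambda M\oplus M_2(A)\otimes_{\Lambda'}M\to M\to 0.$$
\item $M_2(k)$ is semisimple, so the left-hand term is projective.
\end{enumerate}
Since conjugation by $\omega$ identifies $\Lambda$ with $\Lambda'$, \Cref{Test_quillen_equiv} then gives that $\Ho(M_2(A))\to\Ho(\Lambda)$ is an equivalence. Composing with the Morita equivalence of \Cref{Morita_equiv} gives $\Ho(A)\simeq\Ho(M_2(A))\simeq\Ho(\Lambda)=\Ho(e_\gamma\He_{x_0})$. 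By Step 1 this holds for all three groups at once.

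\emph{Main obstacle: exactness of the sequence in Step 2.} When a regular $\PGL_2$-orbit exists, this is just the Ollivier--Schneider resolution \eqref{OllSch_res_comp} for $e_\gamma\He_{\PGL_2}\cong M_2(A)$. However, regular characters of $T_{\PGL_2}(\F_q)\cong\F_q^\times$ exist only when $q>3$. For $q\le 3$ I would therefore verify exactness by hand. Since tensor products commute with the Morita functor, it suffices to check the sequence for the regular bimodule $M=M_2(A)$. There I would use the explicit bases \eqref{basis_reg_block}: elements of $W_\aff$ split by their first letter $s_0$ or $s_1$, exactly as in the Bruhat--Tits tree for the apartment, and this gives the middle term modulo the amalgamated part.

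\emph{Fallback.} If this bookkeeping gets messy, I would instead compare the two triangulated categories directly. On one side, $\Ho(\Lambda)$ is the stable module category: its non-projective indecomposables are the simples $S_1,S_2$, with $\Omega S_1=S_2$, $\Omega S_2=S_1$ and stable endomorphism rings $k$. On the other side, by \Cref{princ_example}, $A/X_1A$ and $A/X_2A$ are syzygies of each other with stable endomorphism ring $k$, and they are the only non-projective indecomposable Gorenstein projectives over the node $A$. The functor $\Lambda\otimes$-style comparison $A^2\otimes_A(-)$ followed by restriction should send $A/X_iA$ to $S_i$, and I would check full faithfulness on these generators.
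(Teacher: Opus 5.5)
Your overall route is the paper's. You identify $e_\gamma\He_{x_0}$ with the abstract algebra $R=\Lambda$ and pass to a regular block of $\PGL_2$, where $e_\gamma\He\cong M_2(A)$. You then show that restriction to $\Lambda$ is a right Quillen equivalence via the Ollivier--Schneider resolution and \Cref{Test_quillen_equiv}, and finish with Morita.

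However, the exact sequence in your Step 2(ii), on which the whole argument rests, is misstated and, as written, false. For $\PGL_2$ the element $\omega$ lies in $G$ and swaps $x_0$ and $x_1$. Hence $\mathcal{F}_{(0)}=\{x_0\}$, and \eqref{OllSch_res_comp} has a single vertex term:
\[
0\to M_2(A)(\epsilon_C)\otimes_{M_2(k)}M\to M_2(A)\otimes_\Lambda M\xrightarrow{\varepsilon}M\to 0 .
\]
Two vertex terms only occur when the edge algebra is $k^2$ without $\omega$ (as for $\SL_2$ or $\Hea$); you have mixed the two situations.

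Your sequence cannot be exact for all $M$. The map $h\otimes m\mapsto h\omega\otimes\omega m$ is an isomorphism $M_2(A)\otimes_{\Lambda'}M\cong M_2(A)\otimes_\Lambda M$, and the latter is weakly equivalent to $M$ by the correct resolution. So exactness of your sequence, with its projective left term, would give $M\oplus M\cong M$ in $\Ho(M_2(A))$. This fails for $M=A^2\otimes_A A/X_1A$. Via Morita, its endomorphism ring in $\Ho$ is $\Ext^2_A(A/X_1A,A/X_1A)\cong k$, by \Cref{princ_example} and \eqref{Hom_Ho3}.

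Relatedly, the adjunction whose counit is your displayed map has target $\Mod(\Lambda)\times\Mod(\Lambda')$, not $\Mod(\Lambda)$. So the observation that conjugation by $\omega$ identifies $\Lambda$ with $\Lambda'$ does not let you conclude. Deleting the $\Lambda'$ summand repairs everything. The one-vertex sequence is exactly the counit of $M_2(A)\otimes_\Lambda(-)\dashv\Res$, and its kernel is projective because $M_2(k)$ is semisimple. You then recover \Cref{Quillen_equiv_PGL2}(i), which is what the paper uses.

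Your remark that $\PGL_2$ has no regular orbit when $q\le 3$ is correct. The paper's step ``we might as well assume $\G=\PGL_2$'' tacitly needs such an orbit, and for $\GL_2$ with $q=3$ it does not exist over the same field. No hand computation is needed to handle this: the pair $\Lambda\subset M_2(A)$ depends only on $k$. You can therefore realise it as $e_\gamma\He_{x_0}\subset e_\gamma\He$ for $\PGL_2$ over an unramified extension of $\Ff$ with residue field of size at least $4$ (same $p$, hence same $k$), where the resolution is available.

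Your fallback is a genuinely different route, via the finite Cohen--Macaulay type of the node, as in the references of \Cref{support_rmk}. As sketched, it would still need the fact that $A/X_1A\oplus A/X_2A$ compactly generates $\Ho(A)$. That is exactly the kind of external input the paper's Hecke-theoretic argument avoids.
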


\begin{proof}
As we will see in the next subection, the algebra $e_\gamma\He_{x_0}$ does not depend on $\G$ nor on the specific choice of regular $\gamma$, cf.\ the relations \eqref{rel_R} below. We therefore might as well assume that $\G=\PGL_2$. We then obtain the claimed equivalence by combining \Cref{Quillen_equiv_PGL2}(i), \Cref{Morita_equiv} and \Cref{matrix_description_H_PGL_2}(ii).
\end{proof}

From \Cref{Delta_equiv} and \Cref{Quillen_equiv_finite}, $\Ho(e_\gamma\He_{\SL_2})$ is equivalent to $\Ho(A)\times\Ho(A)$ while $\Ho(Z(e_\gamma\He_{\SL_2}))$ is equivalent to $\Ho(A)$ by \Cref{matrix_descr_SL2}. Furthermore, by \cite[Remark 3.5]{OS18} the centre $Z(e_\sigma\He_{\SL_2})$ is a polynomial algebra and hence has homotopy category equal to zero, while $\Ho(e_\sigma\He_{\SL_2})\neq 0$ (cf.\ \Cref{Ho_non_reg_case}). Therefore, the centre does not `see' the full homotopy category anymore. Nevertheless, we have that the regular part of $\Ho(\He_{\SL_2})$ is again a product of copies of $\Ho(A)$, similarly as for $\PGL_2$. In the next subsection, we will compute the non-regular component for $p>2$ as well as describe $\Ho(A)$ explicitly.

\subsection{The homotopy category of $\He_{x_0}$}\label{Section_R} In this section, we give an explicit description of the category $\Ho(\He_{x_0})$ for any of our three choices of $\G$ and prove \Cref{thmA}.

We first fix a regular orbit $\gamma$ and a representative $\xi$ of it. Then $e_\gamma\He_{x_0}$ identifies with the $k$-algebra $R$ generated by two primitive idempotents $e_1$ and $e_2$ and an element $T$, satisfying the relations
\begin{equation}\label{rel_R}
e_1+e_2=1, \quad e_ie_j=\delta_{ij}e_i,\quad e_iT=Te_{3-i}, \quad\text{and}\quad T^2=0
\end{equation}
for $1\leq i, j\leq 2$, by identifying $e_1$, $e_2$ and $T$ with $e_\xi$, $e_{\xi^{s_0}}$ and $e_\gamma T_{s_0}$ respectively, cf.\ the proof of \cite[Lemma 7.5]{Koz}. The ring $R$ has dimension 4 over $k$ with basis $e_1$, $e_2$, $Te_1$ and $Te_2$.

Next, there are exactly two $k$-valued characters $\chi_1$, $\chi_2$ of $R$ defined by
\begin{equation}\label{defn_chi}
\chi_i(T)=0\quad\text{and}\quad \chi_i(e_j)=\delta_{ij}
\end{equation}
for $1\leq i, j\leq 2$. Furthermore, we have a decomposition $R=Re_1\oplus Re_2$ of left $R$-modules, and for $i=1, 2$ we have a non-split short exact sequence
\begin{equation}\label{extn_chi_R}
0\to \chi_{3-i}\to Re_i\to \chi_i\to 0.
\end{equation}
The modules $\chi_i$ and $Re_i$ ($i=1,2$) are indecomposable. In fact, they are the only ones.

\begin{lem}\label{classification_indec}
Suppose that $M\in\Mod(R)$ is a non-zero indecomposable module. Then $M$ is isomorphic to one of $\chi_1, \chi_2$, $Re_1$ or $Re_2$.
\end{lem}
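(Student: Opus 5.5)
We classify indecomposables over the four-dimensional algebra $R$ by exploiting that $R$ is self-injective and that the $Re_i$ are its indecomposable projective-injectives. Note $M$ may be infinite-dimensional, so we avoid Krull--Schmidt for finite length.

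\emph{Step 1: Loewy length.} Set $J=RT=TR$, the span of $Te_1,Te_2$. Since $T^2=0$ we have $J^2=0$, and $R/J\cong k\times k$ is semisimple, so $J$ is the Jacobson radical. Hence for any $M$ the quotient $M/TM$ and the submodule $TM$ are semisimple, and $TM\subseteq \ker(T|_M)$.

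\emph{Step 2: split off a free summand.} Choose a $k$-basis $\{\bar m_\lambda\}$ of $e_i(M/\ker T)$ for $i=1,2$, and lift to elements $m_\lambda\in e_iM$. Define $\varphi\colon F=\bigoplus_\lambda Re_{i(\lambda)}\to M$ by $e_{i(\lambda)}\mapsto m_\lambda$. We claim $\varphi$ is injective. Its kernel is a submodule of $F$; if nonzero it meets the socle $TF$, because every nonzero element $x$ of $F$ has either $x\in TF$ or $Tx\neq0$, using $\ker(T|_F)=TF$ since $F$ is free. But $\varphi$ restricted to $TF$ sends $Te_{i(\lambda)}\mapsto Tm_\lambda$. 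The $Tm_\lambda$ are linearly independent, because a relation $\sum c_\lambda Tm_\lambda=0$ (split by idempotents, as $T$ swaps $e_1M,e_2M$) means $\sum c_\lambda m_\lambda\in\ker T$, forcing $c_\lambda=0$. So $\varphi$ is injective. Since $R$ is self-injective and Noetherian, arbitrary direct sums of injectives are injective, so $F$ is injective and $\varphi(F)$ is a direct summand of $M$.

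\emph{Step 3: conclude.} Write $M=F\oplus M'$. If $F\neq0$, indecomposability gives $M\cong F$, and then $F$ must be a single $Re_i$. Otherwise $T$ kills $M$: indeed $\ker T=M$, since any $m\notin\ker T$ would contribute a basis element in Step 2. Then $M$ is a module over $R/J\cong k\times k$, hence a direct sum of copies of $\chi_1,\chi_2$, and indecomposability forces $M\cong\chi_1$ or $\chi_2$. (Here $\chi_i$ is the simple module on which $e_i$ acts as $1$ and $T$ as $0$.)

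The main obstacle is Step 2 in the infinite-dimensional case, specifically the injectivity of $\varphi$. This relies on the essential-socle argument for the free module $F$, and the summand statement relies on the Bass--Papp fact that direct sums of injectives are injective over a Noetherian ring. Both are routine once $J^2=0$ is established.
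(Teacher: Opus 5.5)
Your proof is correct, but it takes a different route from the paper's.

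\textbf{The paper's route.} The paper works with a single element. If some $m\in M$ has $Tm\neq 0$, then $Te_im\neq 0$ for some $i$. Since $T^2=0$, the vectors $e_im$ and $Te_im$ are linearly independent, so the map $Re_i\to M$, $re_i\mapsto re_im$, is injective. It splits because $Re_i$ is injective, and indecomposability then makes it an isomorphism. If no such $m$ exists, then $TM=0$ and the idempotents split $M$.

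\textbf{Your route.} You split off a whole direct sum $F$ of copies of $Re_1,Re_2$ at once, by lifting a basis of $M/\ker T$. You prove $\varphi$ is injective via the essential-socle argument. You then need Bass--Papp (or the fact that over the self-injective Artinian ring $R$ every projective module is injective) to know that $F$ is injective.

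\textbf{Comparison.} For the lemma as stated this is more machinery than needed, since one element with $Tm\neq 0$ suffices. But your argument buys more. Write $M=\varphi(F)\oplus M'$. The $m_\lambda$ already span $M/\ker T$ and lie in $\varphi(F)$, so $M'/\ker(T|_{M'})=0$, i.e.\ $TM'=0$. Hence your argument yields the full decomposition of \Cref{classification_all} for arbitrary $M$ directly, without the paper's Zorn's lemma argument.

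\textbf{A minor slip.} $F$ is projective, not free. The identity $\ker(T|_F)=TF$ should be checked summand by summand on $Re_i=ke_i\oplus kTe_i$, where it is immediate; it also holds for genuinely free modules, so nothing breaks.
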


\begin{proof}
First, assume that there exists $m\in M$ such that $Tm\neq 0$. Then there is some $i=1,2$ such that $Te_im\neq 0$, and in fact $Te_im\notin k\cdot e_im$ since $T^2=0$. Thus the $R$-linear map $\varphi: Re_i\to M$, $re_i\mapsto re_im$, is injective. Since $R$ is self-injective, the classes of projective and injective modules coincide so that $Re_i$ is injective. It follows that $\varphi$ splits and thus must be an isomorphism by indecomposability of $M$.

We are therefore only left with the case where $TM=0$. In that case, $M=e_1M\oplus e_2M$ is an $R$-linear decomposition and $M=e_iM$ for some $i=1,2$ by indecomposability. Hence $M$ is a direct sum of copies of $\chi_i$ and so must be isomorphic to $\chi_i$, again by indecomposability.
\end{proof}

\begin{prop}\label{classification_all}
For any $M\in\Mod(R)$, there is an isomorphism
$$
M\cong \chi_1^{\oplus\mathcal{I}_1}\oplus \chi_2^{\oplus\mathcal{I}_2}\oplus Re_1^{\oplus\mathcal{J}_1}\oplus Re_2^{\oplus\mathcal{J}_2}
$$
for some indexing sets $\mathcal{I}_i$ and $\mathcal{J}_i$ ($i=1, 2$).
\end{prop}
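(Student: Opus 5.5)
The plan is to split off all copies of the projective-injective modules $Re_i$ first, and then observe that what remains is killed by $T$, hence semisimple. The key input is that $R$ is self-injective and Noetherian. Over a Noetherian ring arbitrary direct sums of injectives are injective, so any direct sum of copies of $Re_1$ and $Re_2$ is injective.

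Fix $M\in\Mod(R)$. Consider the $k$-subspace $TM$. Since $T=Te_1+Te_2$ and $e_iT=Te_{3-i}$, we get $TM=e_1TM\oplus e_2TM$, where $e_{3-i}TM=Te_iM$. For each $i$, choose a subset $S_i\subseteq e_iM$ such that the vectors $\{Tm\mid m\in S_i\}$ form a $k$-basis of $Te_iM$. Then define
$$\varphi\colon \bigoplus_{i=1,2}Re_i^{\oplus S_i}\to M,$$
which sends the generator $e_i$ of the copy indexed by $m\in S_i$ to $m$.

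The main step is to show that $\varphi$ is injective. Each summand $Re_i$ has $k$-basis $e_i, Te_i$, and $R$-socle $kTe_i$, which is isomorphic to $\chi_{3-i}$ by \eqref{extn_chi_R}. The kernel of $\varphi$ is a submodule of a module with essential socle $\bigoplus kTe_i$. Here the socle is essential because any nonzero element either lies in the $T$-span or has nonzero $T$-multiple after projecting with $e_j$. So it suffices to check that $\varphi$ is injective on the socle. That holds because the images $Tm$ ($m\in S_1\sqcup S_2$) are linearly independent: they were chosen as bases of $Te_1M$ and $Te_2M$, and these lie in the complementary summands $e_2M$ and $e_1M$. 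This essential-socle argument is the step I expect to need the most care, especially the claim that the socle is essential for infinite sums. I would verify it directly via elements with finite support, using the $k$-basis $\{e_i,Te_i\}$.

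Since the source of $\varphi$ is injective, $\varphi$ splits, giving $M\cong \bigoplus_i Re_i^{\oplus S_i}\oplus N$. Set $\mathcal{J}_i=S_i$. It remains to show $TN=0$. Suppose $n\in N$ with $Tn\neq0$, and say $Te_in\ne0$. Then $Te_in\in Te_iM$ is a combination of $Tm$ with $m\in S_i$, so $Te_in=T\varphi(x)$ for some $x$ in the image. Since $TM=T\varphi(\cdot)\oplus TN$ under the splitting, this contradicts $Te_in\in TN\setminus\{0\}$. More cleanly, one can compare $TM=T(\operatorname{im}\varphi)\oplus TN$ with $T(\operatorname{im}\varphi)=TM$ by construction, which forces $TN=0$.

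Finally, $N=e_1N\oplus e_2N$ as $R$-modules, each $e_iN$ being a $k$-vector space on which $R$ acts through $\chi_i$. Choosing bases gives $e_iN\cong\chi_i^{\oplus\mathcal{I}_i}$, which completes the decomposition. Alternatively, \Cref{classification_indec} identifies which indecomposables can appear, but the argument above avoids needing a Krull–Schmidt-type decomposition for infinitely generated modules.
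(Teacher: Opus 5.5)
Your proof is correct, but it is organised differently from the paper's.

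\textbf{The paper's route.} It uses Zorn's lemma to choose a maximal family of indecomposable projective submodules of $M$ whose sum is direct, and splits that sum off using injectivity of direct sums of the $Re_i$. By maximality the complement contains no copy of $Re_i$. So, by \Cref{classification_indec}, every finite-dimensional submodule of the complement is a sum of copies of $\chi_1,\chi_2$, and the complement is therefore semisimple.

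\textbf{Your route.} You construct the projective part directly, indexing the copies of $Re_i$ by a basis of $Te_iM$. You prove injectivity of $\varphi$ from two facts. First, $TP=\{x\in P: Tx=0\}$ is essential in $P$. Second, $\varphi$ is injective on $TP$, because the vectors $Tm$ for $m\in S_1$ and $m\in S_2$ lie in the complementary pieces $e_2M$ and $e_1M$. The step you flagged as delicate is immediate: an element of $P$ killed by $T$ has zero coefficient on the basis vector $e_i$ of every summand, hence lies in $TP$. The equality $T\varphi(P)=TM$ then forces $TN=0$ exactly as you say.

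\textbf{Comparison.} Your argument avoids \Cref{classification_indec} and any Zorn argument on families of submodules; only bases of vector spaces are chosen. It also identifies the multiplicities: $\mathcal{J}_i$ indexes a basis of $Te_iM$, and $\mathcal{I}_i$ indexes a basis of $e_i\bigl(\{m\in M: Tm=0\}/TM\bigr)$. The paper's argument is less explicit, but it is what underlies its subsequent remark extending the result to any finite-dimensional self-injective algebra whose indecomposable modules are all projective or simple. Yours instead relies on the specific fact that $\{x\in Re_i: Tx=0\}=TRe_i$.

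Your closing comparison is slightly off. The paper's proof does not need a Krull--Schmidt decomposition for infinitely generated modules either; it only uses that a module which is a sum of simple submodules is semisimple.
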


\begin{proof}
By \Cref{classification_indec}, the result holds for any finite dimensional $R$-module. Let $\{M_i\}_{i\in \mathcal{K}}$ denote the collection of all indecomposable projective submodules of $M$ (i.e.\ isomorphic to one of $Re_1$ and $Re_2$). We then let
$$
X=\left\{\mathcal{J}\subseteq \mathcal{K}\;\middle|\;\sum_{j\in \mathcal{J}}M_j \text{ is direct}\right\},
$$
ordered by inclusion. By Zorn's lemma, there is a maximal element $\mathcal{J}\in X$. Let $N=\sum_{j\in \mathcal{J}} M_j$, which is by definition a projective submodule of $M$ of the form $Re_1^{\oplus\mathcal{J}_1}\oplus Re_2^{\oplus\mathcal{J}_2}$.

As $R$ is self-injective, the module $N$ is injective and thus there is a submodule $N'$ such that $M=N\oplus N'$. Further, by maximality of $\mathcal{J}$ there is no $i\in\mathcal{K}$ such that $M_i\subseteq N'$ and hence every indecomposable submodule of $N'$ is isomorphic to either $\chi_1$ or $\chi_2$ by \Cref{classification_indec}. Since $N'$ is the sum of its finite dimensional submodules, it therefore follows that $N'$ is semisimple and thus is isomorphic to a module of the form $\chi_1^{\oplus\mathcal{I}_1}\oplus \chi_2^{\oplus\mathcal{I}_2}$ as required.
\end{proof}

\begin{rem}
The above argument shows more generally that if $\mathscr{R}$ denotes any finite dimensional, self-injective $k$-algebra with the property that any indecomposable $\mathscr{R}$-module is either projective or simple, then any $\mathscr{R}$-module is a direct sum of indecomposables. Note that this decomposition is in fact unique up to auto-bijection of the indexing set for the decomposition by the Azumaya-Krull-Remak-Schmidt theorem, cf.\ \cite[Theorem 2.12]{FacBook}.
\end{rem}

Using the above, we can give a very elementary description of $\Ho(R)$ as a triangulated category. Below, we will call a triangle in a triangulated category \emph{trivial} if it is isomorphic to a shift of a triangle of the form $X\xrightarrow{\id}X\to 0\to \Sigma X$. We note that finite direct sums of trivial triangles are always distinguished, cf.\ \cite[Remark 1.2.2]{Neeman}. We will need two distinct but elementary triangulated structures on semisimple abelian categories, which we define now.

\begin{defn}\label{triangle_modk2} Let $\mathscr{A}$ be a semisimple abelian category.
\begin{enumerate}
    \item We will call the \emph{trivial triangulated structure on $\mathscr{A}$} the one whose shift functor is the identity and with distinguished triangles the finite direct sums of the trivial triangles. We will write $\mathscr{A}_\triv$ for $\mathscr{A}$ equipped with this triangulated structure.
    \item Suppose $\mathscr{A}=\mathscr{A}_1\times \mathscr{A}_2$ is a product of two semisimple abelian categories. We define $\Sigma:\mathscr{A}\to \mathscr{A}$ to be the autoequivalence given by $(X, Y)\mapsto (Y, X)$. Moreover, we define a distinguished triangle in $\mathscr{A}$ to be a sequence
    $$
    X\xrightarrow{f}Y\xrightarrow{g}Z\xrightarrow{h}\Sigma X
    $$
    which is exact at $Y$ and $Z$ and which satisfies $\im(h)=\Sigma \Ker(f)$. We will write $\mathscr{A}_{2,\triv}$ for $\mathscr{A}$ equipped with this triangulated structure.
\end{enumerate}
\end{defn}

We point out that any distinguished triangle as in $\mathscr{A}_{2,\triv}$ is also a finite direct sum of trivial triangles by semisimplicity of $\mathscr{A}$. It was shown in \cite[Section 8.1]{F14} that $\mathscr{A}_\triv$ is a well-defined triangulated structure on $\mathscr{A}$, and a completely analogous argument shows that $\mathscr{A}_{2,\triv}$ is also well-defined. In what follows, we will apply the construction of (ii) to $\Mod(k^2)$, identified with the product category $\Mod(k)\times\Mod(k)$. It was pointed out to us by Martin Kalck that the triangulated structure this induces on the compact objects may also be defined as the orbit category $D^b(\Mod_{\mathrm{f.g.}}(k))/[2]$ in the sense of \cite{Kel05}.

\begin{cor}\label{htpy_cat_Hx}
The functor $\Mod(k^2)_{2,\triv}\to \Ho(R)$ defined by
$$
(V_1, V_2)\mapsto (\chi_1\otimes_k V_1)\oplus (\chi_2\otimes_k V_2)
$$
is a triangle equivalence.
\end{cor}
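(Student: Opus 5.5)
Since $R$ is self-injective (finite dimensional, Frobenius), every $R$-module is Gorenstein projective and trivial modules are exactly the projectives. Hence $\Ho(R)$ is the stable module category $\underline{\Mod}(R)$: by \eqref{Hom_Ho1}, morphisms are $R$-linear maps modulo those factoring through a projective. We will compute objects, morphisms, the shift and the triangles in turn.

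\emph{Essential surjectivity.} By \Cref{classification_all}, every $M$ is a direct sum of copies of $\chi_1$, $\chi_2$, $Re_1$, $Re_2$. The $Re_i$ are projective and become zero in $\Ho(R)$, so $M\cong (\chi_1\otimes_k V_1)\oplus(\chi_2\otimes_k V_2)$ for vector spaces $V_i$ of dimensions $|\mathcal{I}_i|$. Here we use that direct sums in $\Ho(R)$ are computed in $\Mod(R)$.

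\emph{Full faithfulness.} Since $\chi_i$ is simple with $\End_R(\chi_i)=k$ and $\Hom_R(\chi_1,\chi_2)=0$, we have
\[
\Hom_R\big(\textstyle\bigoplus\chi_1\otimes V_1\oplus\chi_2\otimes V_2,\ \chi_1\otimes W_1\oplus\chi_2\otimes W_2\big)=\Hom_k(V_1,W_1)\times\Hom_k(V_2,W_2).
\]
This uses that $\Hom$ out of a direct sum is a product and that a map from $\chi_i$ lands in the $\chi_i$-isotypic socle part. It remains to show that no nonzero such map factors through a projective. Suppose $\chi_i\to P\to\chi_j$ with $P$ projective; $P$ is a sum of $Re_l$'s by \Cref{classification_all}. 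The image of $\chi_i$ lies in $\mathrm{soc}(P)=TP$, which is the radical of $P$ since $Re_l$ has simple socle equal to its radical $\chi_{3-l}$. Any map $P\to\chi_j$ kills the radical. Hence the composite is zero, and this extends to arbitrary sums because maps from a semisimple module land in $\mathrm{soc}(P)\subseteq\mathrm{rad}(P)$. So the functor is fully faithful and additive.

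\emph{Shift.} The sequences \eqref{extn_chi_R} are injections of $\chi_{3-i}$ into the projective $Re_i$ with cokernel $\chi_i$. So $\Sigma\chi_{3-i}\cong\chi_i$: the shift swaps $\chi_1$ and $\chi_2$, matching $\Sigma(V_1,V_2)=(V_2,V_1)$. To make this a natural isomorphism of functors, we tensor these sequences with $V$, which is natural in $V$.

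\emph{Triangles.} This is the main obstacle. The plan is to show that every distinguished triangle in $\Ho(R)$ is a direct sum of trivial triangles, and conversely that such sums are distinguished. The converse holds in any triangulated category by \cite[Remark 1.2.2]{Neeman}. For the forward direction, we transport a triangle along the equivalence. Since $\Mod(k^2)$ is semisimple, any morphism $f\colon X\to Y$ decomposes as
\[
X=\Ker f\oplus X',\qquad Y=X'\oplus C,
\]
with $f$ an isomorphism on $X'$. The triangle on $f$ is then the direct sum of $\Ker f\to 0\to\Sigma\Ker f\xrightarrow{\id}\Sigma\Ker f$, of $X'\xrightarrow{\id}X'\to0\to\Sigma X'$, and of $0\to C\xrightarrow{\id} C\to 0$. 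By uniqueness of cones up to isomorphism, any distinguished triangle on $f$ is isomorphic to this one. Such triangles are exactly the sequences exact at $Y$ and $Z$ with $\im(h)=\Sigma\Ker f$. Conversely, any triangle of the form in \Cref{triangle_modk2}(ii) decomposes in the same way. So the distinguished triangles correspond, giving a triangle equivalence, with well-definedness of $\Mod(k^2)_{2,\triv}$ as recorded above.

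One subtlety to check is that the canonical natural isomorphism $\Sigma\circ F\cong F\circ\Sigma$ is compatible with the trivial triangles. This should be automatic, because the trivial triangles only involve identities and zero maps.
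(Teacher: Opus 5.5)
Your proof is correct, and its skeleton matches the paper's. Both get essential surjectivity from \Cref{classification_all}, the shift from \eqref{extn_chi_R}, and exactness from the fact that finite direct sums of trivial triangles are distinguished.

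The genuine difference is in how you compute morphisms. The paper computes $\Ext^i_R(\chi_j,\chi_j)$ from the $2$-periodic projective resolution $\cdots\to Re_{3-j}\to Re_j\to\chi_j\to 0$. It then uses that $\Sigma$ swaps $\chi_1$ and $\chi_2$, together with \eqref{Hom_Ho3}, to read off $[\chi_j,\chi_j]_R\cong\Ext^2_R(\chi_j,\chi_j)=k$ and $[\chi_j,\chi_{3-j}]_R\cong\Ext^1_R(\chi_j,\chi_j)=0$. You instead work directly in the stable category. A map between $T$-annihilated modules that factors through a projective $P$ lands in $\Ker(T|_P)=TP=\mathrm{rad}(P)$, so the second leg kills it. Hence $\Hom_R\to[-,-]_R$ is bijective on such modules.

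What each route buys:
\begin{itemize}
\item Your argument is more elementary and avoids \eqref{Hom_Ho3} entirely. It also treats arbitrary (infinite) direct sums of the $\chi_i$ in one stroke. The paper's computation on the generators tacitly needs that $\chi_j$ is compact in $\Ho(R)$ to pass from $\chi_j$ to arbitrary sums.
\item The paper's argument produces the Ext groups themselves, in particular that $\chi_j$ has infinite projective dimension. That computation is reused later, e.g.\ for \eqref{ext_calc} over $S=R[Z^{\pm1}]$ and in \Cref{sing_cat_comp}. Your socle/radical argument does not supply this.
\end{itemize}

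Finally, you check explicitly, via uniqueness of cones, that every distinguished triangle of $\Ho(R)$ is isomorphic to the image of one in $\Mod(k^2)_{2,\triv}$. This makes explicit the standard fact that the quasi-inverse is exact, which the paper leaves implicit.
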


\begin{proof}
By \Cref{classification_all}, the objects of $\Ho(R)$ are all of the form $\chi_1^{\oplus\mathcal{I}_1}\oplus \chi_2^{\oplus\mathcal{I}_2}$ up to isomorphism. Fix an arbitrary $j=1, 2$. Using \eqref{extn_chi_R}, we obtain a projective resolution
$$
\cdots \xrightarrow{\cdot T}Re_j\xrightarrow{\cdot T}Re_{3-j}\xrightarrow{\cdot T}Re_j\to\chi_j\to 0
$$
from which we compute
$$
\Ext_R^i(\chi_j, \chi_j)=\begin{cases}
    k & \text{if $i$ is even}\\
    0 & \text{otherwise}
\end{cases}\,,
$$
cf.\ also \cite[Example 7.2]{Koz}. This implies in particular that $\chi_j\neq 0$ in $\Ho(R)$. Moreover, the sequence \eqref{extn_chi_R} shows that the shift functor on $\Ho(R)$ swaps $\chi_1$ and $\chi_2$. By \eqref{Hom_Ho3} we may now deduce that
$$
[\chi_j, \chi_j]_R=k\quad\text{and}\quad [\chi_j, \chi_{3-j}]_R=0\,.
$$
From these observations, we obtain that the functor $(V_1, V_2)\mapsto (\chi_1\otimes_k V_1)\oplus (\chi_2\otimes_k V_2)$ is essentially surjective and fully faithful, i.e.\ an equivalence. By the above we also see that this functor commutes with the shifts. Since the finite direct sums of trivial triangles are always distinguished, our equivalence is therefore exact and we are done.
\end{proof}

By \Cref{Quillen_equiv_finite}, we now immediately deduce from the above that $\Ho(A)\simeq \Mod(k^2)$. We can make such an equivalence explicit as follows. Recall from \Cref{princ_example} that the $A$-modules $\M_1\coloneqq X_1A\cong A/X_2A$ and $\M_2\coloneqq X_2A\cong A/X_1A$ are Gorenstein projective of infinite projective dimension.

\begin{cor}\label{sing_cat_comp}
The functor $\Mod(k^2)_{2,\triv}\to\Ho(A)$ defined by
\[
(V_1,V_2)\longmapsto (\M_1\otimes_k V_1)\oplus (\M_2\otimes_k V_2)
\]
is a triangle equivalence.
\end{cor}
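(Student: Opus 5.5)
The plan is to argue directly in $\Ho(A)$, in parallel with the proof of \Cref{htpy_cat_Hx}, and to use \Cref{Quillen_equiv_finite} (combined with \Cref{htpy_cat_Hx}) only to know that $\Ho(A)\simeq\Mod(k^2)$ has the expected shape. The key steps are: compute morphisms and shifts of $\M_1,\M_2$ in $\Ho(A)$; deduce that the functor is fully faithful and commutes with shifts; get essential surjectivity; and conclude exactness.

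First, the complete resolution of \Cref{princ_example} gives short exact sequences $0\to X_1A\to A\to X_2A\to 0$ and $0\to X_2A\to A\to X_1A\to 0$. Since $A$ is projective, hence trivial, the description of shifts shows $\Sigma\M_1\cong\M_2$ and $\Sigma\M_2\cong\M_1$ in $\Ho(A)$. Both modules are Gorenstein projective, so \eqref{Hom_Ho3} together with the Ext computation of \Cref{princ_example} gives $[\M_j,\M_j]_A=[\M_j,\Sigma^2\M_j]_A\cong\Ext^2_A(\M_j,\M_j)\cong k$. Similarly, $[\M_j,\M_{3-j}]_A\cong[\M_j,\Sigma^{3}\M_j]_A\cong \Ext^3_A(\M_j,\M_j)=0$ since $3$ is odd. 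In particular each $\M_j$ is nonzero in $\Ho(A)$.

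Next, $\M_j$ is finitely presented, so $[\M_j,-]_A$ commutes with arbitrary direct sums: direct sums in $\Ho(A)$ are computed in $\Mod(A)$, and the quotient by maps factoring through projectives is compatible with sums for finitely generated sources. It follows that the functor $(V_1,V_2)\mapsto(\M_1\otimes_k V_1)\oplus(\M_2\otimes_kV_2)$ is fully faithful on arbitrary vector spaces: a morphism is a pair of matrices over $[\M_j,\M_j]_A=k$, with no cross terms. By the first step it also intertwines the swap functor $\Sigma$ of $\Mod(k^2)_{2,\triv}$ with the shift of $\Ho(A)$.

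For essential surjectivity I would use \Cref{Quillen_equiv_finite} and \Cref{htpy_cat_Hx}. These identify $\Ho(A)$ with $\Mod(k^2)$ as a triangulated category, so every object of $\Ho(A)$ is a direct sum of copies of two indecomposable objects $P_1,P_2=\Sigma P_1$, each with endomorphism ring $k$ and with $[P_1,P_2]=0$. The object $\M_1$ is nonzero, and its endomorphism ring $k$ has no nontrivial idempotents, so it is indecomposable. Its image must therefore be one of $P_1,P_2$, and then $\M_2=\Sigma\M_1$ is the other one. Hence every object lies in the essential image. Finally, exactness follows exactly as in \Cref{htpy_cat_Hx}: distinguished triangles of $\Mod(k^2)_{2,\triv}$ are finite direct sums of trivial triangles, and these are sent to distinguished triangles because the functor commutes with shifts.

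The main obstacle is the essential surjectivity step, namely transporting the classification of objects of $\Ho(R)$ to $\Ho(A)$ through the chain of equivalences in \Cref{Quillen_equiv_finite}. This is only needed at the level of ``every object is a sum of two indecomposables'', which equivalences preserve, so the transport should be harmless. If one wanted to avoid it, the alternative is to use that $A$ is a hypersurface: every Gorenstein projective module is maximal Cohen--Macaulay, and one can classify these over $k[X_1,X_2]/(X_1X_2)$. This route is harder for infinitely generated modules, which is why I would go via the equivalences.
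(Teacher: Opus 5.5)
Your proposal is correct and takes essentially the same route as the paper. Both combine the abstract equivalence $\Ho(A)\simeq\Mod(k^2)$ from \Cref{Quillen_equiv_finite} and \Cref{htpy_cat_Hx} with the shift and Ext computations of \Cref{princ_example}, which pin down $\M_1$ and $\M_2=\Sigma\M_1$ as the two indecomposable generators. Your version spells out what the paper leaves implicit: compactness of $\M_j$ gives full faithfulness on infinite direct sums, and the abstract equivalence is then needed only for essential surjectivity.
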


\begin{proof}
From \Cref{Quillen_equiv_finite} and \Cref{htpy_cat_Hx}, we deduce that there is a triangle equivalence $\Ho(A)\simeq \Mod(k^2)$. But the complete resolution and the Ext computation in \Cref{princ_example} together show that $\M_2\cong \Sigma\M_1$ and that
\[
[\M_i,\M_j]_A=\begin{cases}
k & \text{if $i=j$}\\
0 & \text{otherwise}
\end{cases}
\]
by \eqref{Hom_Ho3}. Our equivalence can therefore only identify $\M_1$ and $\M_2$ with the two generators of $\Mod(k^2)$, and up to postcomposing by $\Sigma$ it is therefore of the claimed form.
\end{proof}

\begin{rem}\label{Quillen_equiv1}
The equivalence of \Cref{sing_cat_comp} can also be constructed from a Quillen adjunction between $\Mod(A)$ and $\Mod(R)$. Indeed, under the isomorphism $e_\gamma\He_{\PGL_2}\cong M_2(A)$ from \Cref{matrix_description_H_PGL_2}, we may view $R$ as a subring of $M_2(A)$. Explicitly, this is given via \(e_1\mapsto \begin{psmallmatrix}
1 & 0\\
0 & 0
\end{psmallmatrix}\), \(e_2\mapsto \begin{psmallmatrix}
0 & 0\\
0 & 1
\end{psmallmatrix}\) and \(T\mapsto \begin{psmallmatrix}
0 & X_1\\
X_2 & 0
\end{psmallmatrix}\). Note that $M_2(A)$ is free over $R$ because $e_\gamma\He_{\PGL_2}$ is free over $e_\gamma\He_{\PGL_2, x_0}$. We consider $X=A^2$ as an $(A,R)$-bimodule, with right $R$-action given by the usual right action of matrices on row vectors. Using that $X\cong e_1M_2(A)\cong e_2M_2(A)$ as right $R$-modules, we deduce that $X$ is projective over $R$. We may thus apply \Cref{tensor_hom_lemma} and get a left Quillen functor $\LM_X:\Mod(R)\to\Mod(A)$. One checks directly that its right adjoint is given by the composite
$$
\Mod(A)\xrightarrow{A^2\otimes_{A}(-)}\Mod(M_2(A))\simeq\Mod(e_\gamma \He_{\PGL_2})\xrightarrow{\Res}\Mod(e_\gamma \He_{\PGL_2,x_0})\simeq\Mod(R)\,,
$$
where the first functor is the equivalence from \Cref{Morita_equiv}. \Cref{Quillen_equiv_PGL2}(i) then implies that $\LM_X$ is a left Quillen equivalence. A direct calculation (which we omit) shows that $\LM_X(\chi_i)=\M_i$ for $i=1,2$.
\end{rem}

We are left to investigate $\Ho(e_\sigma\He_{\SL_2, x_0})$. In that case, there is an isomorphism $e_\sigma\He_{\SL_2,x_0}\cong k[T]/(T^2)$. Note that $k[T]/(T^2)$ has a unique character $\chi$ sending $T$ to zero.

\begin{prop}\label{Ho_non_reg_case} \leavevmode
\begin{enumerate}
    \item For $M\in\Mod(k[T]/(T^2))$, there are indexing sets $\mathcal{I}$ and $\mathcal{J}$ such that $M\cong \chi^{\oplus\mathcal{I}}\oplus (k[T]/(T^2))^{\oplus\mathcal{J}}$.
    \item The functor $\Mod(k)_{\triv}\to\Ho(k[T]/(T^2))$, $V\mapsto \chi\otimes_k V$, is a triangle equivalence.
\end{enumerate}
\end{prop}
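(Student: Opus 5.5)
The plan is to follow exactly the pattern used for $R$ in \Cref{classification_indec}, \Cref{classification_all} and \Cref{htpy_cat_Hx}, with $S\coloneqq k[T]/(T^2)$ in place of $R$.

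For (i), I first classify indecomposables. Let $M\neq 0$ be an indecomposable $S$-module. If some $m\in M$ satisfies $Tm\neq 0$, then $m$ and $Tm$ are linearly independent, because $T^2=0$. Hence the map $S\to M$, $s\mapsto sm$, is injective. The algebra $S$ is self-injective (it is a Frobenius algebra, as are all the $\He_F$), so this injection splits, and indecomposability forces $M\cong S$. Otherwise $TM=0$, so $M$ is a $k$-vector space with trivial $T$-action and hence $M\cong\chi$. For arbitrary $M$, I then repeat the Zorn's lemma argument of \Cref{classification_all} verbatim. Take a maximal direct sum $N$ of submodules isomorphic to $S$. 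Since $N$ is projective and hence injective, it splits off as $M=N\oplus N'$. By maximality, every finite dimensional submodule of $N'$ is killed by $T$, so $N'$ is semisimple, i.e.\ $N'\cong\chi^{\oplus\mathcal{I}}$. Alternatively, one can argue directly: choose a complement $V$ of $\Ker T$ in $M$; then $M=SV\oplus C$, where $C$ is a complement of $TV$ in $\Ker T$, and $SV\cong S\otimes_k V$ is free.

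For (ii), by (i) every object of $\Ho(S)$ is isomorphic to $\chi^{\oplus\mathcal{I}}\cong\chi\otimes_k V$, since projective summands vanish. The short exact sequence $0\to\chi\to S\to\chi\to 0$ shows $\Sigma\chi\cong\chi$. It also gives the periodic projective resolution $\cdots\xrightarrow{\cdot T}S\xrightarrow{\cdot T}S\to\chi\to 0$, from which $\Ext^i_S(\chi,\chi)=k$ for all $i\ge 0$. Since every module is Gorenstein projective, \eqref{Hom_Ho3} together with $\Sigma^2\chi\cong\chi$ gives $[\chi,\chi]_S\cong\Ext^2_S(\chi,\chi)=k$. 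Thus $\chi\neq 0$ in $\Ho(S)$ and $\End_{\Ho(S)}(\chi)=k$.

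It remains to get fully faithfulness on infinite sums, namely $[\chi\otimes V,\chi\otimes W]_S=\Hom_k(V,W)$. For this I will use \eqref{Hom_Ho3} with $\Ext^2$ computed from the resolution $S\otimes V$. It gives $\Hom_S(S\otimes V,\chi\otimes W)=\Hom_k(V,W)$ with zero differentials, so the answer is $\Hom_k(V,W)$, compatible with the functor. This step is the one that needs care, specifically identifying the induced map with $f\mapsto \id_\chi\otimes f$, but it is routine.

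The functor is therefore essentially surjective and fully faithful. It commutes with shifts, since $\Sigma$ is the identity on $\Mod(k)_\triv$ and $\Sigma(\chi\otimes V)\cong\chi\otimes V$ naturally via the tensored sequence. Exactness then follows as in \Cref{htpy_cat_Hx}: distinguished triangles in $\Mod(k)_\triv$ are finite sums of trivial triangles, and those are distinguished in any triangulated category.
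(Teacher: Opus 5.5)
Your proposal is correct and follows the same route as the paper, whose proof only says to argue as in \Cref{classification_all} and \Cref{htpy_cat_Hx}; your splitting $M=SV\oplus C$ and your direct computation of $[\chi\otimes_k V,\chi\otimes_k W]_S$ simply spell that argument out, including the infinite-sum case. (The step you flag as needing care is easier via \eqref{Hom_Ho1}: any map from a module killed by $T$ into a free module $P$ lands in $T\cdot P$, and $T\cdot P$ maps to $0$ in $\chi\otimes_k W$, so no nonzero map factors through a projective and $[\chi\otimes_k V,\chi\otimes_k W]_S=\Hom_k(V,W)$ on the nose.)
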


\begin{proof}
This is done by arguing completely analogously to \Cref{classification_all} and \Cref{htpy_cat_Hx}.
\end{proof}

We finally get an explicit description of $\Ho(\He)$ for $\G=\PGL_2$ or $\SL_2$.

\begin{thm}\label{description_PGL2_SL2}
There is an equivalence
\[
\Ho(\He_{\SL_2})\simeq \C\times \prod_{\text{$\gamma$ regular}} \left(\Mod(k^2)_{2,\triv}\times \Mod(k^2)_{2,\triv}\right),
\]
of triangulated categories, where $\C=0$ if $p=2$ and $\C=\Mod(k)_\triv\times \Mod(k)_\triv$ if $p>2$. Assuming $p>2$, there is also an equivalence
\[
\Ho(\He_{\PGL_2})\simeq \prod_{\text{$\gamma$ regular}} \Mod(k^2)_{2,\triv}
\]
of triangulated categories.
\end{thm}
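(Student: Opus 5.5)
The plan is to assemble the theorem from the reduction to vertex Hecke algebras and the computations of the previous subsection, working one block $e_\gamma$ at a time. By \eqref{decomp_Ho_H} we have $\Ho(\He)=\prod_\gamma\Ho(e_\gamma\He)$, and each factor is a triangulated category in its own right. We first discard the factors that vanish. Take a non-regular $\gamma=\{\xi\}$ with $\xi|_{\alpha^\vee(\F_q^\times)}=1$. Then $e_\xi\He$ has finite global dimension by \Cref{non_reg_is_reg}; for $\PGL_2$ this needs $p>2$, which is exactly the hypothesis in the statement. Consequently $\Ho(e_\xi\He)=0$.

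By \Cref{non-reg}, this condition covers every non-regular orbit when $\G=\PGL_2$. For $\SL_2$ it covers every non-regular orbit except $\sigma$ when $p>2$. When $p=2$ the only non-regular character is the trivial one, so the $\sigma$-block does not occur and $\C=0$ in that case.

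Next we handle the remaining blocks. Since the idempotents $e_\gamma$ lie in $\He_C\subseteq\He_x^\dagger$, the equivalence $\Ho(\Delta)$ of \Cref{Delta_equiv} respects the block decomposition. For $\PGL_2$ with $p>2$ it gives $\Ho(e_\gamma\He)\simeq\Ho(e_\gamma\He_{x_0})$; for regular $\gamma$ one can also invoke \Cref{Quillen_equiv_PGL2}(i) directly. For $\SL_2$ it gives $\Ho(e_\gamma\He)\simeq\Ho(e_\gamma\He_{x_0})\times\Ho(e_\gamma\He_{x_1})$. Conjugation by $\omega$ identifies $\He_{x_1}$ with $\He_{x_0}$ and sends $e_\xi$ to $e_{\xi^{s_0}}$, so it preserves each $W_0$-orbit $\gamma$. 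Hence $\Ho(e_\gamma\He_{\SL_2,x_1})\simeq\Ho(e_\gamma\He_{\SL_2,x_0})$ as triangulated categories, because an algebra isomorphism induces an equivalence of model categories.

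For regular $\gamma$, the block $e_\gamma\He_{x_0}$ is the algebra $R$ defined by \eqref{rel_R}, and \Cref{htpy_cat_Hx} identifies $\Ho(R)$ with $\Mod(k^2)_{2,\triv}$. For $\gamma=\sigma$ with $p>2$, we have $e_\sigma\He_{\SL_2,x_0}\cong k[T]/(T^2)$, and \Cref{Ho_non_reg_case}(ii) gives $\Mod(k)_\triv$. Combining these yields $\C=\Mod(k)_\triv\times\Mod(k)_\triv$ and a factor $\Mod(k^2)_{2,\triv}\times\Mod(k^2)_{2,\triv}$ for each regular $\gamma$ in the $\SL_2$ case. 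In the $\PGL_2$ case we get a single factor $\Mod(k^2)_{2,\triv}$ for each regular $\gamma$.

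The main obstacle is bookkeeping for triangulated structures. Each equivalence used has to be triangulated, and products of triangle equivalences have to be triangle equivalences. The equivalences $\Ho(\Delta)$ and those induced by algebra isomorphisms are derived functors of Quillen equivalences between stable (abelian) model categories, so they are exact. A product of triangulated categories carries the componentwise structure, and each decomposition \eqref{decomp_Ho_H}, \eqref{decomp_Ho_Hx} is compatible with it, since shifts and short exact sequences split along central idempotents. The remaining point to check is the identification of $e_\sigma\He_{\SL_2,x_0}$ with $k[T]/(T^2)$. Here $e_\sigma T_{s_0}^2=e_\sigma T_{s_0}\cdot\sum_t T_t$, and this vanishes because $\sigma$ is non-trivial on $\alpha^\vee(\F_q^\times)$. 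Since the paper states this isomorphism just before \Cref{Ho_non_reg_case}, we will cite it there rather than re-derive it.
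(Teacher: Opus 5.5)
Your proposal is correct and is essentially the paper's argument. The paper likewise combines \Cref{Delta_equiv}, the block decompositions, \Cref{non-reg} (with \Cref{non_reg_is_reg}), \Cref{htpy_cat_Hx} and \Cref{Ho_non_reg_case}; the only difference is that you discard the non-regular blocks of $\Ho(\He)$ before applying $\Ho(\Delta)$, whereas the paper discards them after decomposing $\Ho(\He_{x_i})$ via \eqref{decomp_Ho_Hx}.
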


\begin{proof}
This follows immediately by putting together \Cref{Delta_equiv}, \Cref{non-reg}, \eqref{decomp_Ho_Hx}, \Cref{htpy_cat_Hx} and \Cref{Ho_non_reg_case}.
\end{proof}

Recall the definition of the ring $\Zq$ from \eqref{defn_zq}. By \Cref{sing_cat_comp}, we have
\begin{equation}\label{HoZ}
\Ho(\Zq)\simeq \Mod(k^2)_{2,\triv}\times \prod_{\text{$\gamma$ regular}} \Mod(k^2)_{2,\triv}\,.
\end{equation}
Using the above, we can describe explicitly the adjunction from \Cref{adjun_SL_2}. For this, we need to make the equivalence of \Cref{description_PGL2_SL2} fully explicit in the regular case. Recall that we fixed a representative $\xi$ of each regular orbit $\gamma\in T_{\SL_2}(\F_q)$ and wrote $e_1=e_\xi$ and $e_2=e_{\xi^{s_0}}$ for the corresponding idempotent in $e_\gamma \He_{\SL_2}$. Working inside $\He_{\GL_2}$ we have an equality $e_\gamma\He_{\SL_2,x_1}=T_\omega (e_\gamma\He_{\SL_2,x_0})T_{\omega}^{-1}$ for $\gamma$ regular, and we write $\chi_i^\omega$ ($i=1,2$) for the characters of $e_\gamma\He_{\SL_2,x_1}$ given by $\chi_i^\omega(h)=\chi_i(T_{\omega}^{-1}hT_{\omega})$. Note that now $\chi_i^\omega(e_i)=0$. Then just as in \Cref{htpy_cat_Hx} we have an equivalence $\Mod(k^2)\to \Ho(e_\gamma\He_{\SL_2,x_1})$, $(V_1, V_2)\mapsto (\chi_1^\omega\otimes_k V_1)\oplus (\chi_2^\omega\otimes_k V_2)$. We fix the explicit equivalence
\[
\Ho(e_\gamma\He_{\SL_2})\simeq \Ho(e_\gamma\He_{\SL_2,x_0})\times \Ho(e_\gamma\He_{\SL_2,x_1})\simeq \Mod(k^2)_{2,\triv}\times\Mod(k^2)_{2,\triv}
\]
obtained from these.

\begin{lem}\label{Quillen_equiv_SL2}
Under \eqref{HoZ} and the above equivalences, the functor $\Ho(\R_{\M_{\SL_2}}):\Ho(\He_{\SL_2})\to \Ho(\Zq)$ has the following description:
\begin{enumerate}
    \item for $\gamma$ regular, it coincides with $\Mod(k^2)_{2,\triv}\times \Mod(k^2)_{2,\triv}\to \Mod(k^2)_{2,\triv}$, $(X,Y)\mapsto X\oplus Y$; and
    \item on the $\sigma$-component for $p>2$, it coincides with $\Mod(k)_{\triv}\times \Mod(k)_{\triv}\to \Mod(k^2)_{2, \triv}$, $(V,W)\mapsto (V\oplus W, V\oplus W)$.
\end{enumerate}
\end{lem}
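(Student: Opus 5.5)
Since $\R_{\M_{\SL_2}}$ is exact and computed without replacements (\Cref{adjun_SL_2}, \Cref{tensor_hom_lemma}), and both sides are semisimple with objects being direct sums of the generating characters, I will just compute $\R_{\M_{\SL_2}}$ on the generators of $\Ho(e_\gamma\He_{\SL_2})$. These generators are the objects corresponding under \Cref{Delta_equiv} to $\chi_i\in\Mod(e_\gamma\He_{\SL_2,x_0})$ and $\chi_i^\omega\in\Mod(e_\gamma\He_{\SL_2,x_1})$. Concretely, via the left adjoint $L$ these correspond to $\He\otimes_{\He_{x_0}}\chi_i$ and $\He\otimes_{\He_{x_1}}\chi_i^\omega$. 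Equivalently, I will pick simple $e_\gamma\He_{\SL_2}$-modules whose restrictions to $\He_{x_0}$ and $\He_{x_1}$ are known. The natural choices are the supersingular characters $\chi$ of $e_\gamma\He_{\SL_2}$ with $\chi(T_0)=\chi(T_1)=0$ and $\chi(e_j)=\delta_{ij}$. Such a character restricts to $\chi_i$ on $x_0$ and to $\chi_{3-i}^\omega$ (or $\chi_i^\omega$, to be checked) on $x_1$, so it corresponds to a pair $((\delta),(\delta'))$ of unit vectors in $\Mod(k^2)\times\Mod(k^2)$. I also need modules trivial at one vertex. For these I will use characters with $\chi(T_0)=-1$ or $\chi(T_1)=-1$ when they exist; in the regular case the quadratic relation $T_i^2=0$ forces $\chi(T_i)=0$, so they do not exist. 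So instead I will use the induced modules $\He\otimes_{\He_{x_0}}\chi_i$, whose restriction to $x_1$ is trivial (the Ollivier–Schneider argument shows $L$ and $\Delta$ are quasi-inverse).

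Next I compute $\R_{\M}(N)=\Hom_{e_\gamma\He_{\SL_2}}(A^2,N)$, for regular $\gamma$. For $N=\He\otimes_{\He_{x_0}}\chi_i$, adjunction gives $\Hom_{\He_{x_0}}(\Res A^2,\chi_i)$ as an $A$-module. I restrict $A^2$ to $\He_{x_0}=R$ via $T_0\mapsto\begin{psmallmatrix}0&X_1\\X_2&0\end{psmallmatrix}$. By \Cref{Quillen_equiv1} this restriction is $R$-free of the right shape, and the resulting $A$-module $\Hom_R(A^2,\chi_i)$ is essentially $\Hom_R(R\otimes\ldots)$. I expect it to come out as a module isomorphic in $\Ho(A)$ to $\M_i$ (or $\M_{3-i}$). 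This amounts to the ``direct calculation'' in \Cref{Quillen_equiv1}, transposed to the right adjoint: the right adjoint $\R_X$ of $\LM_X$ is an equivalence inverse, so $\chi_i\mapsto\M_i$ up to labelling. The same computation at $x_1$, using $T_1\mapsto\begin{psmallmatrix}0&X_2\\X_1&0\end{psmallmatrix}$, swaps $X_1,X_2$. Combined with the convention $\chi_i^\omega(e_i)=0$, this again gives $\chi_i^\omega\mapsto\M_i$. Additivity then yields $(X,Y)\mapsto X\oplus Y$, where a triangle functor between these semisimple categories is determined by its values on generators and on Hom spaces. For Hom spaces, all relevant Hom spaces are $k$ or $0$, so I only need nonvanishing.

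For the $\sigma$-component, $e_\sigma\He_{\SL_2,x_i}\cong k[T]/(T^2)$ with unique character $\chi$. Here $\M_{\SL_2,\sigma}=A^2$ restricted along $T_0\mapsto\begin{psmallmatrix}0&X_1\\X_2&0\end{psmallmatrix}$, without idempotents. As a $k[T]/T^2$-module this is the direct sum of the two $R$-structures. Hence $\Hom(A^2,\chi)$ picks up both $\M_1$ and $\M_2$, giving $V\mapsto(V,V)$ from the $x_0$-factor, and similarly from $x_1$. So $(V,W)\mapsto(V\oplus W,V\oplus W)$.

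The main obstacle is the bookkeeping of which of $\M_1,\M_2$ each generator lands on, since the shift swaps them. In particular I must check that the $x_0$- and $x_1$-contributions land in the same index rather than being shifted relative to each other; this is the difference between $X\oplus Y$ and $X\oplus\Sigma Y$. I would settle it by explicitly computing $\Hom_R(A^2,\chi_1)$ as a quotient of $A$ and identifying it with $A/X_2A$ or $A/X_1A$, and doing the same after the $X_1\leftrightarrow X_2$ swap together with the twist $\chi_i^\omega$.
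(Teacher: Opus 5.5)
Your route differs from the paper's. You compute $\Ho(\R_{\M_{\SL_2}})$ directly on the objects $L(\chi_i,0)$ and $L(0,\chi_i^\omega)$. The paper instead computes the left adjoint on $\M_1,\M_2$, finds the diagonal functor, and passes to adjoints. Your route can be made to work, but as written it has two gaps.

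First, the step ``adjunction gives $\Hom_{\He_{x_0}}(\Res A^2,\chi_i)$'' is not an adjunction. The functor $\He\otimes_{\He_{x_0}}(-)$ is \emph{left} adjoint to restriction, so Hom--tensor adjunction describes maps \emph{out of} $\He\otimes_{\He_{x_0}}\chi_i$, not into it. Since $\He$ has infinite rank over $\He_{x_0}$, the two modules really differ. $\Hom_{\He}(A^2,\He\otimes_{\He_{x_0}}\chi_1)$ is countable-dimensional, because $A^2$ is finitely generated and $\He$ is countable-dimensional. By contrast, $\Hom_R(\Res_R A^2,\chi_1)$ has uncountable dimension. The identification holds only in $\Ho(A)$, and it needs an argument:
\begin{itemize}
\item Because $\Ho(\Delta)$ is an equivalence (\Cref{Delta_equiv}), $\chi\mapsto L(\chi,0)$ is also \emph{right} adjoint to $\Ho(\Res_R)$.
\item Hence $\Ho(\R_{\M_{\SL_2,\gamma}})(L(\chi,0))$ is the value at $\chi$ of the right adjoint of $\Ho(\Res_R\circ\LM_{\M_{\SL_2,\gamma}})$.
\item That right adjoint is $\Hom_R(\Res_R A^2,-)$, computed without replacements by \Cref{tensor_hom_lemma}, since $\Res_R A^2$ is $R$-projective and $A$-free.
\end{itemize}

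Second, and more seriously, you leave the labelling open, and that labelling is essentially the content of the lemma: it distinguishes $X\oplus Y$ from $\Sigma(X\oplus Y)$ or $X\oplus\Sigma Y$. Your plan for settling it misjudges the object. One finds $N\coloneqq\Hom_R(\Res_R A^2,\chi_1)\cong\Hom_k(A/X_1A,k)$, the full $k$-linear dual of $k[X_2]$. This is not a quotient of $A$. Although $X_1$ kills it, it represents $\M_1=A/X_2A$ in $\Ho(A)$, not $A/X_1A$. Indeed, the $2$-periodic resolutions give $\Ext^2_A(\M_1,N)=\ker(X_2|_N)=k$ and $\Ext^2_A(\M_2,N)=N/X_2N=0$. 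Equivalently, modulo summands of finite projective dimension, $N$ is $k[X_2^{\pm1}]/k[X_2]\cong\Sigma\M_2$. Reading off the label from the line on which the module lives would therefore give the shifted functor.

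Your appeal to \Cref{Quillen_equiv1} is also reversed. There, $\Res_R\circ\LM_{A^2}$ is the \emph{right} adjoint $\R_X$. So $\Hom_R(\Res_R A^2,-)$ is right adjoint to $\R_X$, and on $\Ho$ it agrees with the inverse $\LM_X$. Together with the check $A^2_{\mathrm{row}}\otimes_R\chi_1\cong A/X_2A$, this pins down $(\chi_i,0)\mapsto\M_i$. Your conclusions for the $x_1$-factor (where the swaps $X_1\leftrightarrow X_2$ and $e_1\leftrightarrow e_2$ cancel) and for the $\sigma$-component (where both lines appear) are then correct.

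The paper avoids these issues by computing the left adjoint. There, $\LM_{\M_{\SL_2,\gamma}}(\M_1)=\M_1^{\oplus 2}$ visibly decomposes as $\chi_1\oplus\bigoplus Re_2$ over $R$, and as $\chi_1^\omega$ plus projectives over $R^\omega$. So the left adjoint is the diagonal $X\mapsto(X,X)$, whose right adjoint is $(X,Y)\mapsto X\oplus Y$.
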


\begin{proof}
We will compute the left adjoint $\Ho(\LM_{\M_{\SL_2}})$ as it is easier. We fix a regular component $\gamma$ and must compute the image of the $A$-modules $\M_1=A/X_2A$ and $\M_2=A/X_1A$ under the composite
\[
\Mod(A)\xrightarrow{\LM_{\M_{\SL_2,\gamma}}}\Mod(e_\gamma\He_{\SL_2})\xrightarrow{\Delta}\Mod(e_\gamma\He_{\SL_2, x_0})\times \Mod(e_\gamma\He_{\SL_2, x_1})\,.
\]
Let $M$ be any $A$-module. Then, by the explicit map in \Cref{matrix_descr_aff}, the action of $e_\gamma\He_{\SL_2}$ on $\LM_{\M_{\SL_2,\gamma}}(M)=M^{\oplus 2}$ is given as follows:
\begin{align*}
e_1\cdot (f,g)=(f,0)\,&,\quad e_2\cdot (f,g)=(0,g)\\
e_\gamma T_{s_0}\cdot (f,g)=(X_1g, X_2f)\quad&\text{and}\quad e_\gamma T_{s_1}\cdot (f,g)=(X_2g, X_1f)\,.
\end{align*}
We keep the notation $R=e_\gamma\He_{x_0}$ and write $R^\omega$ for $e_\gamma\He_{x_1}$. From the above and identifying $\M_1\cong k[X_1]$, we deduce the decomposition
\[
\Res^{e_\gamma\He_{\SL_2}}_R\LM_{\M_{\SL_2,\gamma}}(\M_1)=k\cdot(1,0)\oplus\bigoplus_{n\geq 0}\left(k\cdot(0,X_1^{n})\oplus k\cdot(X_1^{n+1},0)\right)\cong \chi_1\oplus \bigoplus_{n\geq 0} Re_2\,. 
\]
The latter module is isomorphic to $\chi$ in $\Ho(R)$. Similarly, one obtains in $\Ho(R^\omega)$ an isomorphism $\chi_1^\omega\cong \Res^{e_\gamma\He_{\SL_2}}_{R^\omega}\LM_{\M_{\SL_2,\gamma}}(\M_1)$ and, analogously, that
\[
\Delta(\LM_{\M_{\SL_2,\gamma}}(\M_2))\cong (\chi_2, \chi_2^\omega)
\]
in $\Ho(R)\times\Ho(R^\omega)$. Hence we have obtained under the equivalence of \Cref{sing_cat_comp} that $\Ho(\LM_{\M_{\SL_2,\gamma}})$ identifies with the diagonal functor $\Mod(k^2)\to\Mod(k^2)\times\Mod(k^2)$, $X\mapsto (X,X)$. One checks immediately that this is left adjoint to the functor in (i). The computation for (ii) is completely analogous.
\end{proof}

\section{The singularity category of semisimple Galois representations}\label{section_sing}

In this section, we give a Galois theoretic interpretation of the category $\Ho(\He)$. More precisely, we relate the homotopy categories $\Ho(\He)$ in semisimple rank one to the singularity categories of certain schemes parametrising the Galois representations occuring on the other side of the mod-$p$ Langlands program.

\subsection{Singularity categories} We first recall some elementary notions related to singularity categories. Following Krause \cite{Kra05}, our definition of singularity categories is as follows.

\begin{defn}\label{defn_sing_cat} Let $X$ be a separated Noetherian scheme over $k$. The \emph{singularity category} of $X$ is defined to be $\Sing(X)\coloneqq \mathbf{K}_\ac(\Inj(X))$, the category of acyclic complexes of injective quasi-coherent sheaves on $X$ up to chain homotopy.
\end{defn}

When $X=\Spec(S)$ is affine we will often also write $\Sing(S)$ for $\Sing(X)$.

\begin{rem}\label{geom_Lang}\leavevmode
\begin{enumerate}
    \item The singularity category of a Gorenstein ring $S$ was first investigated by Buchweitz \cite{Buch87}, who defined it to be the Verdier quotient $D^b(\mathrm{Coh}(S))/\mathrm{Perf(S)}$ of the bounded derived category of finitely generated $S$-modules by the thick subcategory of perfect complexes. This definition was taken up and later studied systematically for a large family of schemes by Orlov \cite{Orl04}. In general, one has that $\Sing(X)$ as we defined it is compactly generated and its compact objects are equivalent to the idempotent completion of $D^b(\mathrm{Coh}(X))/\mathrm{Perf(X)}$ in a natural way, cf.\ \cite[Theorem 1.1]{Kra05}. In particular, one deduces that $\Sing(X)=0$ if and only if $X$ is regular.
    \item Singularity categories also feature in the geometric Langlands program, see for example \cite[Appendix H]{ArGa15}. In that context, these categories are defined as the quotient $\Ind\Coh(X)/\QCoh(X)$ of the ind-completion of the (derived infinity) category of coherent sheaves on $X$ by the quasi-coherent sheaves, and $X$ itself is allowed to be a more general stack. The quotient category $\Ind\Coh(X)/\QCoh(X)$ is known to be compactly generated by $\Coh(X)/\mathrm{Perf}(X)$ in relatively high generality, cf.\ Remark H.1.5 of \textit{loc.\ cit}., and hence agrees with our definition for separated Noetherian schemes by the above. Finally, we note that there is a 6-functor formalism on $\Ind\Coh(X)$, see \cite[\S 8.7]{Sch6FF}, and we expect that the pushforward/pullback functors between singularity categories which we use later in this section can be recovered from that far more general construction. In practice however, we will only work with affine schemes or chains of projective lines in this paper, so that \Cref{defn_sing_cat} and the constructions we introduce later will suffice for our purposes.
\end{enumerate}
\end{rem}

Let $S$ be a commutative Noetherian ring. We recall that Becker showed that $\Sing(S)$ can be constructed as the homotopy category of the so-called \emph{singular injective} model structure on on the category $\Ch(S)$ of unbounded chain complexes of $S$-modules, cf.\ \cite[Proposition 2.2.1]{Beck14}. When $S$ is Gorenstein, the functor $\iota_0:\Mod(S)\to\Ch(S)$ sending an $S$-module to the corresponding chain complex concentrated in degree $0$ is a left Quillen equivalence for the Gorenstein injective model structure on $\Mod(S)$ and the singular injective model structure on $\Ch(S)$, cf.\ the dual of Proposition 3.1.3 in \textit{loc.\ cit}. There is also a projective analogue of this singular model structure with $\mathbf{K}_\ac(\Proj(S))$ as its homotopy category, and by \textit{loc.\ cit}.\ the functor $\iota_0$ is then a right Quillen equivalence for the Gorenstein projective model structure. Recalling that the Gorenstein projective and injective model structures have naturally the same homotopy categories (cf.\ \cite[Remark 2.12]{Dup25}), we therefore obtain the following.

\begin{prop}\label{GPmodel_sing}
Let $S$ be a commutative Gorenstein ring. Then $\Ho(S)$ is canonically equivalent to $\Sing(S)$.
\end{prop}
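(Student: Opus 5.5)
The plan is to pass through Becker's model structures on $\Ch(S)$, combining the two Quillen equivalences recalled just before the statement. First, since $S$ is commutative Gorenstein, Hovey's Gorenstein projective and Gorenstein injective model structures on $\Mod(S)$ exist and share the same weak equivalences. Hence they have literally the same homotopy category $\Ho(S)$, obtained by inverting the same class of morphisms, as recalled in the preliminaries.

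Second, I use the dual of \cite[Proposition 3.1.3]{Beck14}: the functor $\iota_0\colon\Mod(S)\to\Ch(S)$ is a left Quillen equivalence from the Gorenstein injective model structure to Becker's singular injective model structure. Its total left derived functor is therefore an equivalence $\Ho(S)\simeq\Ho(\Ch(S)_{\text{sing-inj}})$. By \cite[Proposition 2.2.1]{Beck14}, the latter homotopy category is $\mathbf{K}_\ac(\Inj(S))$. This is $\Sing(S)$ by \Cref{defn_sing_cat}, once we note that injective quasi-coherent sheaves on $\Spec(S)$ correspond to injective $S$-modules.

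Concretely, the bifibrant objects of the singular injective structure are the acyclic complexes of injectives. The derived functor sends a Gorenstein injective module $M$ to a fibrant replacement of $M[0]$, which is a complete injective resolution of $M$. This description makes the equivalence canonical: it is independent of choices up to natural isomorphism, because derived functors are unique up to unique natural isomorphism.

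The main point needing care is that the two descriptions of $\Ho(S)$ really are identified compatibly, i.e.\ that the "canonical" equivalence does not depend on whether one uses the projective or the injective side. For this I would check that composing the Quillen equivalence $\iota_0$ with the projective analogue gives the standard equivalence $\mathbf{K}_\ac(\Proj(S))\simeq\mathbf{K}_\ac(\Inj(S))$ induced by tensoring with a dualizing complex. I expect this holds, but it is not strictly needed for the statement. I would also briefly verify that the equivalence is triangulated: $\iota_0$ is exact, so it preserves the short exact sequences that generate distinguished triangles on both sides.
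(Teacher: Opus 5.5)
Your proposal is correct and follows essentially the same route as the paper. Both arguments pass through Becker's singular injective model structure on $\Ch(S)$: the dual of \cite[Proposition 3.1.3]{Beck14} makes $\iota_0$ a left Quillen equivalence from the Gorenstein injective structure, \cite[Proposition 2.2.1]{Beck14} identifies the resulting homotopy category with $\mathbf{K}_\ac(\Inj(S))$, and the Gorenstein projective and injective structures share their weak equivalences, hence their homotopy category. Your extra compatibility check with the projective side via a dualising complex is, as you note yourself, not needed; the paper only mentions the projective analogue in passing.
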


Identifying $\Ho(S)$ with $\mathrm{GInj}(S)/\Inj(S)$, the functor giving the aforementioned equivalence maps the homotopy class of an acyclic complex of injectives $\M_\bullet$ to the homotopy class of its zero-th cycles $Z_0(\M_\bullet))$.

\begin{rem}\label{support_rmk}\leavevmode
\begin{enumerate}
    \item Using \Cref{GPmodel_sing}, we can reinterpret \Cref{sing_cat_comp} as saying that the singularity category of two affine lines meeting at the origin is equivalent to $\Mod(k^2)$. This result is well-known to the experts, see e.g.\ \cite[Propositions 5.4.13-14]{Sym22} or \cite[Example 3.4]{Kal15}, however the arguments there are very different and typically rely on techniques from the representation theory of quivers. Our work therefore gives a completely new proof of the computation of this singularity category using Hecke algebras. Meanwhile, \Cref{Ho_non_reg_case} is a special case of the computation of the singularity categories of the infinitesimal thickenings of a point, which was done in \cite[\S 3.3]{Orl04}.
    \item We are now switching from a projective language (Gorenstein projective modules and acyclic complexes of projectives) to an injective language (Gorenstein injective modules and acyclic complexes of injectives). We point out that the passage between those is natural, since Gorenstein rings have a dualising complex and tensoring with it provides an equivalence between the homotopy categories of complexes of projectives and of injectives, cf.\ \cite[Theorem I]{IyKr06}.
\end{enumerate}
\end{rem}

Next, we recall that given a morphism $f:X\to Y$ of separated Noetherian schemes the pushforward functor $f_*$ induces a functor $\Sing(f_*):\Sing(X)\to \Sing(Y)$, cf.\ \cite[Theorem 1.5]{Kra05}. It is in fact shown in the proof of \textit{loc.\ cit}.\ that $f_*$ applied termwise sends acyclic complexes of injectives to acyclic complexes, and that $\Sing(f_*)$ has a right adjoint $\Sing(f^!)$. We will be primarily interested in the case where $f$ is flat, in which case $f^*$ is exact and $\Sing(f_*)$ is therefore computed by applying $f_*$ termwise to a complex. In that case, the exactness of $f^*$ also implies the existence of an induced functor $\Sing(f^*):\Sing(Y)\to\Sing(X)$ that is left adjoint to $\Sing(f_*)$, cf.\ Theorem 6.6 \& Remark 6.7 in \textit{loc.\ cit}. When $X=\Spec(A)$ and $Y=\Spec(B)$ are affine Gorenstein schemes, the adjoint pair of functors $\Sing(f^*)\vdash\Sing(f_*)$ then identifies under \Cref{GPmodel_sing} with the adjoint pair $\mathbf{L}\LM_A\vdash \mathbf{R}\R_A$ from \Cref{tensor_hom_lemma}.

In the special case of an open embedding $j:U\hookrightarrow X$, the isomorphism $j^*\circ j_*\cong \id$ implies that $\Sing(j_*)$ is fully faithful. Under a natural condition on the singularities, one even gets an equivalence as was originally shown by Orlov (cf.\ \cite[Proposition 1.14]{Orl04}).

\begin{prop}[{\cite[Theorem 1.6]{Kra05}}]\label{Orlov_equiv} Let $X$ be a separated Noetherian scheme of finite Krull dimension and suppose that $j:U\to X$ is a Zariski open embedding such that the open subscheme $U$ contains all the singular points of $X$. Then $\Sing(j_*)$ is an equivalence.
\end{prop}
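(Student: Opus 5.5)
We already know that $\Sing(j_*)$ is fully faithful, because $j^*j_*\cong\id$ and $j$ is flat, so $\Sing(j^*)$ is left adjoint to $\Sing(j_*)$. It therefore remains to prove essential surjectivity. Equivalently, it suffices to show that the counit $\Sing(j_*)\Sing(j^*)\to\id$ is an isomorphism on $\Sing(X)$. Since $\Sing(j_*)$ is fully faithful, the unit $\id\to \Sing(j^*)\Sing(j_*)$ is already an isomorphism, so the counit statement is the right target.

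Fix an acyclic complex $I_\bullet$ of injective quasi-coherent sheaves on $X$. Let $Z=X\setminus U$ with its reduced structure; by hypothesis $Z$ lies in the regular locus of $X$. Injective quasi-coherent sheaves on a Noetherian scheme decompose as direct sums of $i_{x,*}E(x)$, where $E(x)$ is the injective hull of $k(x)$ at points $x\in X$. Hence each $I_n$ splits canonically (Gabber/Hartshorne) as $\Gamma_Z(I_n)\oplus J_n$, where $\Gamma_Z(I_n)$ collects the summands supported at points of $Z$ and $J_n\cong j_*j^*I_n$. This gives a degreewise split short exact sequence of complexes
\[
0\to \Gamma_Z(I_\bullet)\to I_\bullet\to j_*j^*I_\bullet\to 0 .
\]
Its third term is exactly the counit target, since $j_*$ preserves injectives. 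Therefore it suffices to show that $\Gamma_Z(I_\bullet)$ is contractible, or equivalently zero in $\Sing(X)$.

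The key point is that $\Gamma_Z(I_\bullet)$ is a complex of injectives supported on regular points. First one checks that it is acyclic. This follows from the long exact sequence in homology, because $I_\bullet$ is acyclic and $j^*I_\bullet$ is acyclic, $j^*$ being exact. Next one argues locally: contractibility of a complex of injective quasi-coherent sheaves can be tested on an affine open cover together with stalks, and on $\Spec$ of a local ring $\mathcal{O}_{X,x}$ with $x\in Z$ the ring is regular of finite Krull dimension $d$. Over a regular local ring of dimension $d$, every module has injective dimension at most $d$. Consequently an acyclic complex of injectives has injective cycle modules: each cycle module $Z^n$ has an injective coresolution of length at most $d$ given by the tail of the complex, and dimension shifting shows that $Z^n$ is itself injective. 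The complex therefore splits and is contractible. The hypothesis of finite Krull dimension of $X$ is what gives a uniform bound on these injective dimensions.

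The main obstacle is the passage from local contractibility to global contractibility for the complex $\Gamma_Z(I_\bullet)$, which is supported on the possibly non-affine closed set $Z$. I would handle this with the same dimension-shifting argument run globally. Every point of $Z$ is regular and $\dim X<\infty$, so every quasi-coherent sheaf with support in $Z$ has finite injective dimension in $\QCoh(X)$, uniformly bounded by $\dim X$; injective dimension can be computed stalkwise through the Matlis-type decomposition of injectives. Consequently the cycle sheaves of the acyclic complex $\Gamma_Z(I_\bullet)$ are injective, so the complex splits and is zero in $\Sing(X)$. This yields the isomorphism $I_\bullet\cong j_*j^*I_\bullet$ in $\Sing(X)$, and hence essential surjectivity of $\Sing(j_*)$.
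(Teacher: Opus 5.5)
The paper does not prove this itself; it quotes \cite[Theorem 1.6]{Kra05}. Your argument is the standard one behind that result, and it fits the recollement $\Sing_Z(X)\to\Sing(X)\to\Sing(U)$ recalled shortly after the statement. You use the degreewise split sequence $0\to\Gamma_Z(I_\bullet)\to I_\bullet\to j_*j^*I_\bullet\to 0$, i.e.\ the triangle \eqref{trianle_support}, to reduce essential surjectivity to the vanishing of acyclic complexes of injectives supported on $Z$. You then kill those because their cycles have injective dimension at most $\dim X$ and are therefore injective. That final dimension-shifting step is correct.

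Two points need fixing.

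First, you have swapped unit and counit. For $\Sing(j^*)\dashv\Sing(j_*)$, the relevant map on $\Sing(X)$ is the unit $\id\to\Sing(j_*)\Sing(j^*)$, which is the map $I_\bullet\to j_*j^*I_\bullet$ you actually use. Full faithfulness of $\Sing(j_*)$ is invertibility of the counit $\Sing(j^*)\Sing(j_*)\to\id$.

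Second, and more substantively, your justification that $\Gamma_Z(I_\bullet)$ is acyclic does not work. The long exact sequence needs $j_*j^*I_\bullet$ to be acyclic, but exactness of $j^*$ only gives acyclicity of $j^*I_\bullet$, and $j_*$ is merely left exact. The missing input is that $j_*$ sends acyclic complexes of injectives to acyclic complexes. This is exactly what makes $\Sing(j_*)$ well defined, and the paper recalls it from the proof of \cite[Theorem 1.5]{Kra05}. Concretely, let $C_n\coloneqq\Ker(j^*I_n\to j^*I_{n+1})$. Then dimension shifting gives $H_{n+1}(j_*j^*I_\bullet)\cong R^1j_*C_n\cong R^{m}j_*C_{n-m+1}$. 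This vanishes for $m$ large, since $R^ij_*$ vanishes on $\QCoh(U)$ in high degrees: use \v{C}ech cohomology for a finite affine cover of $U$ (affine intersections, as $X$ is separated), or Grothendieck vanishing.

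Finally, the local-to-global issue you flag is not a real obstacle. An acyclic complex of injectives is contractible if and only if all its cycles are injective. On a Noetherian scheme, injectivity of a quasi-coherent sheaf can be checked on stalks. That is also precisely the fact that justifies your claim that injective dimension in $\QCoh(X)$ can be computed stalkwise.
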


Finally, we recall the notion of support in the singularity category $\Sing(X)$. Suppose that $\M_\bullet$ is a chain complexes of injective quasi-coherent sheaves on $X$. We define its \emph{support} to be
$$
\Supp(\M_\bullet)\coloneqq \bigcup_{n\in\Z} \Supp(\M_n)\,.
$$
This definition is not invariant under chain homotopy and thus does not naively descend to a notion of support on $\Sing(X)$. However, by \cite[Appendix B]{Kra05}, each homotopy class of complexes of injective quasi-coherent sheaves on $X$ is represented by a unique (up to chain isomorphism) so-called \emph{homotopically minimal} complex. Here, the homotopically minimal complexes are the complexes $\M_\bullet$ of injectives satisfying the property that any homotopy equivalence $\M_\bullet\to\M_\bullet$ is an isomorphism. We thus define the support of a chain homotopy class in $\Sing(X)$ to be the support of a homotopically minimal complex representing that class. We now describe how this notion translates under the equivalence of \Cref{GPmodel_sing}.

\begin{lem}\label{support_lem}
Let $R$ be a commutative Gorenstein ring and $M\in\Mod(R)$. Write $\underline{M}$ for the image of $M$ in $\Sing(R)$ under \Cref{GPmodel_sing}. Then
\begin{equation}\label{support_eq}
\Supp(\underline{M})=\left\{\mathfrak{p}\in\Spec(R)\mid \mathrm{injdim}_{R_\mathfrak{p}}(M_\mathfrak{p})=\infty\right\}\,.
\end{equation}
In particular, $\Supp(\underline{M})\subseteq \Supp(M)$ and we have equality when $M\cong Z_0(\M_\bullet)$ for some homotopically minimal acyclic complex of injectives $\M_\bullet$.
\end{lem}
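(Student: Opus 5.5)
The plan is to build an explicit representative of $\underline{M}$ and use Matlis theory to read off its support. First replace $M$ by a Gorenstein injective module: take a short exact sequence $0\to M\to G\to K\to 0$ with $G$ Gorenstein injective and $K$ of finite injective dimension, which exists by the Gorenstein injective model structure (factor $M\to 0$ as a cofibration followed by a trivial fibration). Since $R$ is commutative and Gorenstein, localisation is exact and sends modules of finite injective dimension to modules of finite injective dimension. Hence $\mathrm{injdim}_{R_\mathfrak{p}}(M_\mathfrak{p})=\infty$ if and only if $\mathrm{injdim}_{R_\mathfrak{p}}(G_\mathfrak{p})=\infty$, and the right-hand side of \eqref{support_eq} is invariant under weak equivalence. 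We may therefore assume $M=Z_0(\M_\bullet)$ with $\M_\bullet$ a homotopically minimal acyclic complex of injectives; such a representative exists by \cite[Appendix B]{Kra05}, after choosing a complete resolution and passing to its minimal model. The minimal complex has the same $Z_0$ up to a weak equivalence, since the two complexes differ by contractible summands.

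Next, use Matlis' structure theorem. Each $\M_n$ is a direct sum of indecomposable injectives $E(R/\mathfrak{q})$, so $\Supp(\M_n)$ is the set of $\mathfrak{p}$ lying above some $\mathfrak{q}$ appearing in $\M_n$; here $\Supp$ means the specialisation closure of the associated primes, in line with the sheaf-theoretic support. The key claim is that for a homotopically minimal complex, $\mathfrak{p}$ lies in $\Supp(\M_\bullet)$ if and only if the localised complex $(\M_\bullet)_\mathfrak{p}$ is not contractible. Localisation kills exactly the summands $E(R/\mathfrak{q})$ with $\mathfrak{q}\not\subseteq\mathfrak{p}$, and minimality should be preserved under localisation. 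One argues that minimality of an acyclic complex of injectives is equivalent to each $Z_n\to\M_n$ being an injective envelope; this is the standard criterion, which I would cite from Krause or Krause--Iyengar. Injective envelopes localise to injective envelopes over the Noetherian ring $R$. A minimal complex is contractible exactly when it is zero. Hence $(\M_\bullet)_\mathfrak{p}\neq 0$ if and only if some $E(R/\mathfrak{q})$ with $\mathfrak{q}\subseteq\mathfrak{p}$ occurs, which is exactly the condition $\mathfrak{p}\in\Supp(\M_\bullet)$.

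Finally, over the Gorenstein local ring $R_\mathfrak{p}$, the localisation $(\M_\bullet)_\mathfrak{p}$ is a complete injective resolution of $M_\mathfrak{p}$. Therefore $M_\mathfrak{p}$ has finite injective dimension, i.e.\ is trivial, if and only if $M_\mathfrak{p}$ is zero in $\Ho(R_\mathfrak{p})\simeq\Sing(R_\mathfrak{p})$ by \Cref{GPmodel_sing}. That in turn holds if and only if $(\M_\bullet)_\mathfrak{p}$ is contractible, using that Gorenstein injectives are trivial exactly when injective \cite[Corollary 8.5]{Hov2}. Combining this with the previous paragraph gives \eqref{support_eq}.

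For the last assertions: if $M_\mathfrak{p}=0$ then its injective dimension is finite, so $\Supp(\underline{M})\subseteq\Supp(M)$. When $M=Z_0(\M_\bullet)$ with $\M_\bullet$ minimal, $M\hookrightarrow\M_0$ is an essential extension, so $\mathrm{Ass}(M)=\mathrm{Ass}(\M_0)$ and $\Supp(M)=\Supp(\M_0)\subseteq\Supp(\underline{M})$. The main obstacle is the minimality step: showing that homotopic minimality is equivalent to the envelope condition and is preserved by localisation, so that non-contractibility really detects the support.
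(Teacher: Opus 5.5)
Your proposal is correct and follows essentially the paper's route. Both arguments reduce to $Z_0$ of a homotopically minimal acyclic complex of injectives, use Krause's envelope criterion together with Bass's result that injective envelopes localise to keep $(\M_\bullet)_\mathfrak{p}$ minimal, and detect $(\M_\bullet)_\mathfrak{p}=0$ through finite injective dimension of $M_\mathfrak{p}$. The paper does that last detection by a direct envelope argument rather than via $\Ho(R_\mathfrak{p})\simeq\Sing(R_\mathfrak{p})$, and your Matlis decomposition is not needed, since $\Supp(\M_n)=\{\mathfrak{p} : (\M_n)_\mathfrak{p}\neq 0\}$ by definition.

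One small slip: the sequence $0\to M\to G\to K\to 0$ comes from factoring $M\to 0$ as a trivial cofibration followed by a fibration in the Gorenstein injective model structure. Factoring it as a cofibration followed by a trivial fibration would only embed $M$ into an injective module.
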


\begin{proof}
We first recall that an acyclic complex of injectives $\mathcal{M}_\bullet$ is homotopically minimal if and only if the inclusion $Z_i(\mathcal{M}_\bullet)\subseteq \mathcal{M}_i$ of the $i$-th cycles is an injective envelope for each $i\in\Z$, cf.\ \cite[Lemma B.1]{Kra05}. Assuming that holds, we first claim that $\M_\bullet=0$ if and only if $M_i\coloneqq Z_i(\mathcal{M}_\bullet)$ is injective for some (or, equivalently, every) $i\in \Z$. Indeed, if $M_i$ is injective then it must equal its injective envelope $\M_i$. By exactness this forces $M_{i+1}=0$ and thus the injective envelope $\M_{i+1}$ is zero as well. Exactness now forces $M_{i+2}=0$ and iterating one obtains that $\M_j=0$ for all $j>i$. Meanwhile, since $M_{i+1}=0$ it follows that each $M_j$ with $j<i$ has finite injective dimension. Being Gorenstein injective, each such $M_j$ must therefore be injective and hence $M_j=\M_j$. It follows that every differential in $\M_\bullet$ is zero and thus $\M_\bullet=0$ by exactness, as required. In particular, we have show that $M_i=0$ if and only if $M_i$ is injective (or, equivalently, has finite injective dimension) for any $i\in\Z$. The final statement will therefore follow from \eqref{support_eq}.

To show \eqref{support_eq}, we fix $Q_f(M)$ a Gorenstein injective replacement of $M$, i.e.\ such that there is an injection $\iota: M\to Q_f(M)$ with $\mathrm{injdim}_R(\coker(\iota))<\infty$, and a homotopically minimal complex $\M_\bullet$ representing $\underline{M}$. We then have $M\cong Z_0(\M_\bullet)$ in $\Ho(R)$, which by \cite[Proposition 9.2]{Hov2} implies that there are injective $R$-modules $E$ and $E'$ such that $Q_f(M)\oplus E\cong Z_0(\M_\bullet)\oplus E'$. Fix $\mathfrak{p}\in\Spec(R)$ and recall that $R_\mathfrak{p}$ is Gorenstein (cf.\ \Cref{Laurent_Gorenstein}). We then have that $\mathrm{injdim}_{R_\mathfrak{p}}(M_\mathfrak{p})<\infty$ if and only if $Q_f(M)_\mathfrak{p}$ is injective, if and only if $Z_0(\M_\bullet)$ is injective. Since $(\mathcal{M}_\bullet)_\mathfrak{p}$ is a homotopically minimal acyclic complex of injective $R_\mathfrak{p}$-modules by \cite[Corollary 1.3]{Ba62}, these are equivalent to $(\mathcal{M}_\bullet)_\mathfrak{p}=0$ by the above and we are done.
\end{proof}

Now let $j:U\to X$ be once again an open embedding and let $Z=X\setminus U$. One then always has a functorial triangle
\begin{equation}\label{trianle_support}
\M_{Z,\bullet}\to \M_\bullet\to (j_*\circ j^*)\M_\bullet\xrightarrow{+1}
\end{equation}
where $\Supp(\M_{Z,\bullet})\subseteq Z$. In fact, let $\Sing_Z(X)\subseteq\Sing(X)$ denote the localising subcategory consisting of those (homotopically minimal) complexes whose support is contained in $Z$. One has alternatively that $\Sing_Z(X)=\Ker(\Sing(j^*))$, cf.\ \cite[Lemma 6.8]{Kra05}. The triangulated categories $\Sing(U)$, $\Sing(X)$ and $\Sing_Z(X)$ in fact fit together in a recollement as explained in Proposition 6.9 and its following paragraph in \textit{loc.\ cit}. Using the above, we now prove that the singularity category behaves well under nice Zariski covers.

\begin{prop}\label{sheaf_property} Let $X$ be a separated Noetherian scheme. Suppose that $(U_i)_{i=1}^n$ is an open cover such that $U_i\cap U_j$ has no singular points for each $i\neq j$. If $f:\bigsqcup_{i=1}^n U_i\twoheadrightarrow X$ is the corresponding covering morphism, then
$$
\Sing(f_*):\prod_{i=1}^n\Sing(U_i)\to \Sing(X)
$$
is an equivalence.
\end{prop}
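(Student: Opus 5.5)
We reduce to a statement about supports, using the recollement and the triangle \eqref{trianle_support}. First, $\Sing(X)$ decomposes according to the singular locus. Write $\Sigma_X$ for the set of singular points of $X$ and $\Sigma_i=\Sigma_X\cap U_i$. The hypothesis that $U_i\cap U_j$ contains no singular points for $i\neq j$ says that the sets $\Sigma_i$ are pairwise disjoint. Moreover, each $\Sigma_i=\Sigma_X\setminus\bigcup_{j\neq i}U_j$ is closed in $X$.

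The singularity category only sees the singular locus. If $\M_\bullet$ is homotopically minimal, then localising at a regular point kills it: after localisation it is a minimal acyclic complex of injectives over a regular local ring, hence zero, as in the proof of \Cref{support_lem} using \cite[Corollary 1.3]{Ba62}. So $\Supp(\M_\bullet)\subseteq\Sigma_X$. Applying \Cref{Orlov_equiv} to $j:V_i\coloneqq X\setminus\bigcup_{j\neq i}\Sigma_j\hookrightarrow X$ and to $V_i\cap U_i\hookrightarrow U_i$, we will compare the two sides.

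The plan is to show directly that $\Sing(f_*)=\bigoplus_i\Sing((f_i)_*)$ is fully faithful and essentially surjective, where $f_i:U_i\hookrightarrow X$.

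\textbf{Fully faithful.} Each $\Sing((f_i)_*)$ is fully faithful by the remark on open embeddings preceding \Cref{Orlov_equiv}. For $i\neq j$ we need
\[
\Hom_{\Sing(X)}\bigl((f_i)_*\M,(f_j)_*\N\bigr)=0 .
\]
By adjunction this equals $\Hom_{\Sing(U_i)}\bigl(\M,\Sing(f_i^*)(f_j)_*\N\bigr)$. The restriction $f_i^*(f_j)_*\N$ is supported on $\Supp(\N)\cap U_i\subseteq\Sigma_j\cap U_i=\emptyset$. Here one checks that $(f_j)_*$ of a minimal complex supported in $\Sigma_j$ remains supported in the closure of $\Sigma_j$, which is $\Sigma_j$ itself since $\Sigma_j$ is closed. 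Hence this restriction is zero in $\Sing(U_i)$.

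\textbf{Essentially surjective.} Take $\M_\bullet\in\Sing(X)$ homotopically minimal. Injective quasi-coherent sheaves on a Noetherian scheme decompose as direct sums of $\mathcal{J}(x)$ supported on the closures of points $x$, and the differentials preserve the subcomplex $\M_{i,\bullet}$ supported on $\Sigma_i$. Since the $\Sigma_i$ are disjoint closed sets covering $\Supp(\M_\bullet)$, we get $\M_\bullet=\bigoplus_i\M_{i,\bullet}$. Each summand is acyclic because it is a direct summand of an acyclic complex.

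Then $\M_{i,\bullet}\cong(f_i)_*f_i^*\M_{i,\bullet}$: the unit is an isomorphism on sheaves supported in the closed set $\Sigma_i\subseteq U_i$, and $\Sigma_i$ is closed in $X$, so such sheaves are determined by their restriction to $U_i$. Alternatively, one can use the triangle \eqref{trianle_support} for $U_i$ and $Z=X\setminus U_i$: the cone term is supported in $Z\cap\Sigma_i=\emptyset$, so it vanishes.

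\textbf{Main obstacle.} The delicate step is the splitting $\M_\bullet=\bigoplus_i\M_{i,\bullet}$ compatibly with the differentials. The key point is that there are no nonzero maps $\mathcal{J}(x)\to\mathcal{J}(y)$ when $y$ is not in the closure of $\{x\}$. Since the $\Sigma_i$ are closed and disjoint, no point of $\Sigma_i$ lies in the closure of a point of $\Sigma_j$ for $j\neq i$, so the differentials cannot mix the summands. I would verify this locally on affines via the Matlis decomposition.
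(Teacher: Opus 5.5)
Your proposal is correct in substance, but it follows a genuinely different route from the paper. The paper never opens up a complex. It inducts down to two opens $U,V$ and uses $\Sing(U\cap V)=0$ together with the triangle \eqref{trianle_support} to identify the essential image of $\Sing(j_*)$ with $\Sing_{Z'}(X)=\Ker(\Sing(j'^*))$. It then gets orthogonality of the two images from the adjunction $\Sing(j'^*)\dashv\Sing(j'_*)$, and proves essential surjectivity by comparing triangles and applying the triangulated five lemma.

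You instead work with homotopically minimal representatives and the structure of injective quasi-coherent sheaves. Minimal acyclic complexes vanish at regular points, so their terms are supported in the singular locus, and the cover partitions that locus. You then split the complex itself and recover each piece as $(f_i)_*$ of its restriction. This handles all $n$ at once and gives an explicit inverse, $\M_\bullet\mapsto (f_i^*\Gamma_{Z_i}\M_\bullet)_i$, which is an isomorphism of complexes rather than only in the homotopy category. The price is extra input: Krause's minimal models, Bass's result that localisation preserves injective envelopes, vanishing of minimal acyclic complexes of injectives over regular local rings, and the Matlis--Hartshorne description of injectives. The paper outsources all of this to Krause's recollement and \cite[Lemma 6.8]{Kra05}.

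Three points need repair, none serious.
\begin{enumerate}
\item The singular locus of an arbitrary Noetherian scheme need not be closed (it is closed for excellent schemes, not in general), so your $\Sigma_i$ need not be closed. Replace it by $Z_i\coloneqq X\setminus\bigcup_{j\neq i}U_j$. This set is closed, contained in $U_i$, disjoint from $Z_j$ for $j\neq i$ because the $U_j$ cover $X$, and contains $\Sigma_X\cap U_i$. Any quasi-coherent $\mathcal{F}$ supported in $\Sigma_X$ then satisfies $\mathcal{F}=\bigoplus_i\Gamma_{Z_i}\mathcal{F}$ functorially. This disposes of your ``main obstacle'': the differentials respect the splitting because $\Gamma_{Z_i}$ is a functor, so no analysis of maps $\mathcal{J}(x)\to\mathcal{J}(y)$ is needed.
\item In the full faithfulness step your adjunction runs the wrong way. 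The adjunction is $\Sing(f_i^*)\dashv\Sing((f_i)_*)$, so the correct identification is $\Hom_{\Sing(U_j)}(\Sing(f_j^*)(f_i)_*\M,\N)$. This vanishes by your own support argument with $i$ and $j$ swapped. More simply, $(f_i)_*\M$ and $(f_j)_*\N$ are termwise supported in the disjoint closed sets $Z_i$ and $Z_j$. Every sheaf map between their terms is therefore zero, so the whole Hom complex vanishes.
\item Drop the sentence applying \Cref{Orlov_equiv} to $V_i\hookrightarrow X$. The open $V_i$ omits the singular points in $\Sigma_j$ for $j\neq i$, so that proposition does not apply; fortunately you never use it.
\end{enumerate}
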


\begin{proof}
By taking $U=\bigcup_{i=1}^{n-1}U_i$ and $V=U_n$ and arguing by induction, we may assume that $n=2$ so that $U=U_1$ and $V=U_2$. We also let $Z=X\setminus U$ and $Z'=X\setminus V$, and write $j:U\hookrightarrow X$ and $j':V\hookrightarrow X$ for the inclusions. By assumption $U\cap V$ has no singularities, hence $\Sing(U\cap V)=0$ and we deduce from \eqref{trianle_support} applied to $U\cap V\subseteq U$ that $\Sing(U)=\Sing_{Z'}(U)$. It follows that $\Sing(j_*)$ has essential image contained in $\Sing_{Z'}(X)$, and we claim that this is actually an equality. Indeed, let $\M_\bullet\in \Sing_{Z'}(X)$ and note that also $(j_*\circ j^*)(\M_\bullet)\in \Sing_{Z'}(X)$. The distinguished triangle \eqref{trianle_support} now implies that $\M_{Z,\bullet}$ is supported on $Z'$ as well. But since $U$ and $V$ cover $X$, we have $Z\cap Z'=\emptyset$ and so $\M_{Z,\bullet}=0$, which implies by \eqref{trianle_support} that $\M_\bullet$ lies in the essential image of $j_*$ as required.

The above implies that the essential images of $\Sing(j_*)$ and $\Sing(j'_*)$ are equal to $\Ker(\Sing(j'^*))$ and $\Ker(\Sing(j^*))$, respectively, cf.\ \cite[Lemma 6.8]{Kra05}. Hence it follows by adjointness that $\Sing(j_*)$ and $\Sing(j'_*)$ have orthogonal essential images in $\Sing(X)$, in the sense there are no non-zero morphisms betweem them, and thus $\Sing(f_*)=\Sing(j_*)\oplus\Sing(j'_*)$ is fully faithful. It remains to show that it is essentially surjective. For this, we let $\M_\bullet\in\Sing(X)$ be arbitrary and apply the unit of the adjunction $\Sing(f^*)\vdash\Sing(f_*)$ to \eqref{trianle_support} to get a morphism of triangles
\[
\begin{tikzcd}
\M_{Z,\bullet}\arrow{r}\arrow{d} & \M_\bullet \arrow{r}\arrow{d} & j_*\circ j^*(\M_\bullet)\arrow{r}{+1} \arrow{d}{=} & \phantom{a}\\
(j'_*\circ j'^*)(\M_\bullet)\arrow{r} & (j'_*\circ j'^*)(\M_\bullet)\oplus (j_*\circ j^*)(\M_\bullet)\arrow{r} & (j_*\circ j^*)(\M_\bullet)\arrow{r}{+1} & \phantom{a}
\end{tikzcd}
\]
where the outer vertical arrows are isomorphisms by the above. Thus so is the middle vertical arrow by \cite[014A]{stack} and we are done.
\end{proof}

\subsection{The scheme of semisimple Galois representations} We recall from \cite[\S 4]{PeSch25_2} the definition of a scheme $X_{q, \GL_2}$ whose geometric points parametrise semisimple 2-dimensional Galois representations. In this section, we assume that $p>2$ and following \textit{loc.\ cit}.\ we set $N_q\coloneqq \{0,\ldots, q-2\}$.

\subsubsection{The chain of projective lines} First, fix an arbitrary $l\in\Z_{\geq 1}$ and write $C_i\coloneqq \Pro^1_k$ for each $0\leq i\leq l-1$. We then set
$$
C(l)\coloneqq C_0\bigcup_{\infty\; 0}C_1\bigcup_{\infty\; 0}\cdots \bigcup C_{l-2}\bigcup_{\infty\; 0}C_{l-1},
$$
meaning the $k$-scheme obtained by taking a chain of $l$ copies of $\Pro^1_k$ and identifying the point at infinity of $C_{i}$ with the origin of $C_{i+1}$ for each $0\leq i\leq l-2$.

\begin{defn}[{\cite[\S 4.4.1]{PeSch25_2}}]\label{chain_P1} We let
$$
X_{n,q}\coloneqq \begin{cases}
C(\frac{q-1}{2})\times\Gm & \text{if $n$ is even}\\
C(\frac{q+1}{2})\times\Gm & \text{if $n$ is odd}
\end{cases}
$$
for $n\in N_q$. Then the scheme $X_{q, \GL_2}\coloneqq \bigsqcup_{n\in N_q}X_{n,q}$ is called the \emph{$q$-scheme of semisimple two dimensional mod-$p$ Galois representations}.
\end{defn}

As a justification for the terminology, it was shown in Theorem 4.5.1 of \textit{loc.\ cit}.\ that $X_{q, \GL_2}(k)$ is canonically (up to a sign choice) in bijection with the set of isomorphism classes of semisimple representations $\rho:\Gal(\overline{\Ff}/\Ff)\to\GL_2(k)$.

We recall our notation $A=k[X_1, X_2]/(X_1X_2)$. We write $\Af^1\cup_0\Af^1\coloneqq \Spec\left(A\right)$ for two affine $k$-lines meeting at the origin. We will refer to the closed subscheme $\Spec(k[X_j])\subseteq \Af^1\cup_0 \Af^1$ as the $X_j$-copy of $\Af^1$ in $\Af^1\cup_0 \Af^1$ (for $j=1,2$). Then, given any $l\geq 1$ and $0\leq i< l-1$, there is a natural open embedding
\begin{equation}\label{std_embed}
j_i:\Af^1\cup_0\Af^1\hookrightarrow C(l)
\end{equation}
which identifies $X_1$-copy of $\Af^1$ with $C_i$ minus the origin and the $X_2$-copy of $\Af^1$ with $C_{i+1}$ minus the point at infinity.

\subsubsection{The quotient morphism from Hecke to Galois} Next, we fix $\G=\GL_2$ and recall the construction of the morphism $L:\Spec(Z(\He))\to X_{q, \GL_2}$ from \cite[Theorem 6.2 \& Definition 7.3]{PeSch25_2}. Recall the product decomposition
$$
Z(\He)\cong \prod_{\text{$\gamma$ non-regular}} k[X, Z^{\pm1}]\times \prod_{\text{$\gamma$ regular}} k[X_1, X_2, Z^{\pm1}]/(X_1X_2)
$$
where the factors range over the $W_0$-conjugacy classes of $k$-valued characters of $T(\F_q)$ and $Z$ refers to the image of $T_{\omega^2}$ in each factor. We now relabel the factors a bit differently. For this, we again fix a generator $\zeta$ of $\F_q^\times$ and a representative $\xi$ for each $\gamma$. Then there is a unique $n\in N_q$ such that $\xi(\zeta\cdot\id)=\zeta^n$, independent of the choice of representative. For a given such $n$, we will write
\begin{equation}\label{partition_characters}
(T(\F_q)^\vee/W_0)_n=\{\gamma\in T(\F_q)^\vee/W_0\;|\; \text{$\xi(\zeta\cdot\id)=\zeta^n$ for all $\xi\in\gamma$}\}\,.
\end{equation}
We then rewrite the above decomposition as $Z(\He)=\prod_{n\in N_q} Z(\He)_n$, where
\begin{equation}\label{decomp_by_n}
Z(\He)_n\coloneqq \prod_{\gamma\in (T(\F_q)^\vee/W_0)_n} Z(e_\gamma\He)\;.
\end{equation}
As $q$ is odd, note that $n$ is even whenever $\xi$ is non-regular. For $n$ odd, we therefore have that
$$
\Spec(Z(\He)_n)\cong \left(\left(\Af^1\cup_0 \Af^1 \right)\bigsqcup\cdots\bigsqcup\left(\Af^1\cup_0 \Af^1 \right)\right)\times\Gm
$$
is the disjoint of $\frac{q-1}{2}$ copies of $(\Af^1\cup_0 \Af^1)\times\Gm$, the components being in correspondence with the (necessarily regular) elements of $(T(\F_q)^\vee/W_0)_n$. Meanwhile, for $n$ even, we have
$$
\Spec(Z(\He)_n)\cong \left(\Af^1\bigsqcup \left(\Af^1\cup_0 \Af^1 \right)\bigsqcup\cdots\bigsqcup\left(\Af^1\cup_0 \Af^1 \right)\bigsqcup \Af^1\right)\times\Gm,
$$
with $\frac{q-3}{2}$ middle terms corresponding to the regular elements and two outer terms corresponding to the non-regular elements of $(T(\F_q)^\vee/W_0)_n$. The morphism $L$ is constructed as the coproduct of certain morphisms $L_n:\Spec(Z(\He)_n)\to X_{n,q}$, which we describe below. In fact, we have $L_n=L_n'\times\id_{\Gm}$ so that we only describe $L'_n$, i.e.\ we describe $L_n$ on any given fiber of the natural projections to $\Gm$.

When $n$ is even, we may write $n=2s$ with $0\leq s\leq \frac{q-1}{2}$. For each $0\leq i\leq \frac{q-1}{2}$ there is a unique $\gamma_i=[\xi_i]\in (T(\F_q)^\vee/W_0)_n$ such that \(\xi_i\begin{psmallmatrix}
\zeta & 0\\
0 & 1
\end{psmallmatrix}=\zeta^{s+i}\) and we then put $U_i=\Spec(Z(e_{\gamma_i}\He))$. We then have $\Spec(Z(\He)_n)=\left(U_0\bigsqcup\cdots\bigsqcup U_{\frac{q-1}{2}}\right)\times\Gm$. On each connected component $U_i$ with $1\leq i\leq \frac{q-3}{2}$, we let $L'_n$ be the open embedding $j_{i-1}$ from \eqref{std_embed}. Finally, $L'_n$ on the two outer $\Af^1$'s (corresponding to $U_0$ and $U_{\frac{q-1}{2}}$) is the Zariski open embedding which identifies the leftmost, resp.\ rightmost, $\Af^1$ with $C_0$ minus the point at infinity, resp.\ $C_{\frac{q-3}{2}}$ minus the origin. Therefore we see that $L_n$ in the even case is in fact a Zariski cover.

When $n$ is odd, we may write $n=2s-1$ with $1\leq s\leq \frac{q-1}{2}$. For $1\leq i\leq \frac{q-1}{2}$ there is a unique $\gamma_i=[\xi_i]\in (T(\F_q)^\vee/W_0)_n$ such that \(\xi_i\begin{psmallmatrix}
\zeta & 0\\
0 & 1
\end{psmallmatrix}=-\zeta^{s+i-1}\) and we then put $U_i=\Spec(Z(e_{\gamma_i}\He))$, so that $\Spec(Z(\He)_n)= \left(U_1\bigsqcup\cdots\bigsqcup U_{\frac{q-1}{2}}\right)\times\Gm$. The map $L'_n$ on a component $U_i$ with $1<i<\frac{q-1}{2}$ is defined as $j_{i-1}$, completely analogously to the even case. When $i=1$, we identify the $X_2$-copy of $\Af^1$ in $U_1$ with $C_1$ minus the point at infinity. Meanwhile, for the $X_1$-copy of $\Af^1$ in $U_1$ we consider the map $\varphi:\Af^1\to \Pro^1=\Proj(k[Y_0, Y_1])$ defined by
\begin{equation}\label{defn_phi}
\varphi(t)\coloneqq \begin{cases}
\infty & \text{if $t=0$}\\
[t+t^{-1}:1] & \text{if $t\neq 0$}
\end{cases}
\end{equation}
and identify $\Pro^1$ with $C_0$ here. These two maps glue to a morphism $L'_n|_{U_1}$ since they agree at the origin. Writing $\tilde{Z}\coloneqq Y_0/Y_1$, we note that $\varphi$ really is algebraic since it is given by gluing
$$
D(X_1)=\Spec(k[X_1^{\pm 1}])\xrightarrow{\tilde{Z}\mapsto X_1+X_1^{-1}} \Spec (k[\tilde{Z}])\subseteq C_0
$$
with
$$
D(X_1^2+1)=\Spec(k[X_1, \frac{1}{X_1^2+1}])\xrightarrow{\tilde{Z}^{-1}\mapsto \frac{X_1}{X_1^2+1}}\Spec(k[\tilde{Z}^{-1}])\subseteq C_0.
$$
We completely analogously define $L'_n|_{U_{\frac{q-1}{2}}}$ to be given by gluing the identification of the $X_1$-line with $C_{\frac{q-3}{2}}$ minus the origin with the map $\varphi':\Af^1\to\Pro^1$ from the $X_2$-line to $C_{\frac{q-1}{2}}$, defined  as
\begin{equation}\label{defn_phi'}
\varphi'(t)\coloneqq \begin{cases}
[0:1] & \text{if $t=0$}\\
[1:t+t^{-1}] & \text{if $t\neq 0$}
\end{cases}
\end{equation}
This now concludes the definition of $L_n$ in the odd case and thus of $L$.

We recall that Gro{\ss}e-Kl{\"o}nne constructed a functorial bijection $M\mapsto V(M)$ between irreducible supersingular $\He$-modules and irreducible $2$-dimensional Galois representations over $k$, cf.\ \cite[Corollary 5.5]{GK20}. In the result below, we identify $X_{q, \GL_2}(k)$ with the set of semisimple 2-dimensional $k$-linear Galois representations under the explicit bijection given in \cite[Theorem 4.5.1]{PeSch25_2}. We also recall the equivalence $\Ho(\He)\simeq \Sing(Z(\He))$ from \Cref{Quillen_equiv_GL2}. Our main result is the following.

\begin{thm}\label{Langlands_singularities}
Let $\G=\GL_2$. Then the map $L:\Spec(Z(\He))\to X_{q, \GL_2}$ constructed above induces an equivalence
$$
\mathscr{L}_{\GL_2,*}:\Ho(\He)\simeq \Sing(Z(\He))\xrightarrow{\Sing(L_*)}\Sing (X_{q, \GL_2})
$$
of singularity categories. On objects, the map $M\mapsto \Supp(\mathscr{L}_{\GL_2,*}(M))$ gives rise to a well-defined injection
\[
\left\{\substack{\text{simple supersingular} \\ \text{$\He$-modules of} \\ \text{infinite projective dimension}}\right\}_{/\cong} \hookrightarrow \left\{ \substack{\text{irreducible}\\ \rho:\Gal(\overline{\Ff}/\Ff)\to \GL_2(k)}\right\}_{/\cong}
\]
which agrees with Gro{\ss}e-Kl{\"o}nne's map when $\Ff$ is a finite extension of $\mathbb{Q}_p$.
\end{thm}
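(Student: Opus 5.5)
The plan is to prove the equivalence component by component along the decomposition $X_{q,\GL_2}=\bigsqcup_{n\in N_q}X_{n,q}$. Both $\Sing$ and $L_*$ are compatible with finite disjoint unions, so it suffices to show that each $\Sing(L_{n,*}):\Sing(Z(\He)_n)\to\Sing(X_{n,q})$ is an equivalence. First I will determine the singular loci. In $X_{n,q}=C(l)\times\Gm$ the singular points are exactly the nodes $\{\infty_{C_i}=0_{C_{i+1}}\}\times\Gm$. In $\Spec(Z(\He)_n)$ the only singular locus is $\{X_1=X_2=0\}\times\Gm$ in each regular component $U_i\times\Gm$; the non-regular components $\Af^1\times\Gm$ are regular.

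\textbf{Case $n$ even.} Here $L_n$ is a disjoint union of Zariski open embeddings $U_i\times\Gm\hookrightarrow X_{n,q}$ covering $X_{n,q}$. Take the opens to be the images $V_i$. Two distinct $V_i,V_{i'}$ meet only in open subsets of the smooth parts of the $C_j\times\Gm$, because each node lies in exactly one $j_{i-1}$-image. Hence \Cref{sheaf_property} applies directly and gives $\prod_i\Sing(V_i)\simeq\Sing(X_{n,q})$. Since $L_n$ is an isomorphism onto each $V_i$, this is exactly $\Sing(L_{n,*})$.

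\textbf{Case $n$ odd.} Now $L_n$ is no longer an open embedding on $U_1$ and $U_{(q-1)/2}$, because $\varphi$ is a degree $2$ map $\Af^1\to\Pro^1$. I will first restrict the components $U_1$ and $U_{(q-1)/2}$ to open neighbourhoods of their node, namely $U_1'\coloneqq U_1\setminus\{X_1^2+1=0\}$ and similarly for $U_{(q-1)/2}$. These opens contain all singular points. By \Cref{Orlov_equiv}, the restriction $\Sing(U_1\times\Gm)\to\Sing(U_1'\times\Gm)$ is an equivalence, compatible with pushforwards along the inclusion. Next I will check that on $U_1'$ the map $L_n$ is an open embedding near the node: $\varphi$ restricted to $D(X_1^2+1)$ is given by $\tilde Z^{-1}\mapsto X_1/(X_1^2+1)$, which is étale at $X_1=0$ with $\varphi^{-1}(\infty)=\{0\}$.

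If $L_n$ on $U_1'$ is not injective globally, I shrink further to a neighbourhood $W$ of the node on which $L_n$ is an open immersion. Such a $W$ should be obtained by removing the finitely many other points of the relevant fibres, keeping $\varphi$ injective on $W$. The point here is that every functor involved factors through the open embeddings: $\Sing(L_{n,*})$ restricted to $U_1$ is identified with $\Sing$ of the pushforward from $W$, via \Cref{Orlov_equiv} on both source and target. Then \Cref{sheaf_property} again gives the equivalence, using an open cover of $X_{n,q}$ with pairwise regular intersections. One subtlety is that $L$ need not be flat on all of $U_1$. The statement only uses $\Sing(L_*)$ from \cite[Theorem 1.5]{Kra05}, and compatibility of $\Sing(-_*)$ with composition of morphisms lets me reduce to the open immersions. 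I expect this odd case, making the identification of $\Sing(L_{n,*})$ with the composite of Orlov-type equivalences rigorous, to be the main obstacle.

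\textbf{Supports.} For a simple supersingular $M$ of infinite projective dimension, \Cref{Quillen_equiv_GL2} sends $M$ to $N$ with $\M\otimes_{Z(\He)}N\cong M$. Since $M$ is a two-dimensional module on which $Z(e_\gamma\He)$ acts through the character $X_1=X_2=0$, $Z=\lambda$, $N$ is the residue field $k$ at the point $(0,0,\lambda)$ of a regular component. By \Cref{support_lem}, its support is this single singular point, because injective dimension is infinite there. Pushforward along open embeddings preserves homotopically minimal complexes, so the support of $\mathscr{L}_{\GL_2,*}(M)$ is $L(0,0,\lambda)$, a node of $X_{q,\GL_2}$. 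The map is then well defined on isomorphism classes and injective, since $L$ is injective on the nodes of distinct $(\gamma,\lambda)$. Finally, \cite[Theorem 4.5.1]{PeSch25_2} shows that nodes correspond to irreducible representations, and \cite{PeSch25_2} compares $L$ on these points with Gro{\ss}e-Kl{\"o}nne's $V$; this gives the last assertion.
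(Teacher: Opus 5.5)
Your even case and your general strategy for the odd case (use \Cref{Orlov_equiv} on source and target to isolate the nodes, then \Cref{sheaf_property}) follow the paper. But the key step in the odd case fails. You propose to shrink $U_1$ to a Zariski open neighbourhood $W$ of the node on which $L_n$ is an open immersion, ``by removing the finitely many other points of the relevant fibres''. No such $W$ exists. Since $\varphi(t)=\varphi(t^{-1})$, the map $\varphi$ is generically two-to-one: it induces the degree-$2$ extension $k(t)/k(t+t^{-1})$ of function fields, so it is not birational. Any open $W$ containing the node meets the $X_1$-line${}\times\Gm$ in a dense open subset. Intersecting that subset with its image under $(t,z)\mapsto(t^{-1},z)$ gives a nonempty open set, and it contains pairs $(t,z)\neq(t^{-1},z)$ with the same image. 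So $L_n|_W$ is never injective, hence never an immersion. The non-injectivity is generic, not confined to finitely many fibres. Étaleness of $\varphi$ at $0$ together with $\varphi^{-1}(\infty)=\{0\}$ only gives an étale neighbourhood, and neither \Cref{Orlov_equiv} nor \Cref{sheaf_property} applies to that. So $F_1$ and $F_{(q-1)/2}$ cannot be reduced to open immersions.

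The missing ingredient is a direct proof that this flat but non-open morphism induces an equivalence of singularity categories. This is \Cref{localisation_lemma}. After passing to the basic open $D(t^2+1)$ (your $U_1'$), the map near the node is $\Spec(\psi)$ for $\psi\colon B\to B_f$, which sends the relevant coordinate $t$ to $t/(t^2+1)$. The paper proceeds as follows:
\begin{enumerate}
\item Show that $\phi=\psi|_A$ is flat, as a base change of $k[t]\to k[t,(t^2+1)^{-1}]$, which is torsion-free over a PID. Then \Cref{tensor_hom_lemma} gives a Quillen adjunction whose two functors preserve weak equivalences.
\item Note that $\Ho(A_f)\simeq\Mod(k^2)$ with generators $A_f/(X_i)$, by \Cref{Orlov_equiv} and \Cref{sing_cat_comp}.
\item Check that $\phi^*(A/(X_i))\cong A_f/(X_i)$, since $t^2+1$ is a unit. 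Thus $\mathbf{L}\phi^*$ matches generators and commutes with sums, so it is an equivalence.
\item Pass from $A$ to $B=A[Z^{\pm1}]$ via \Cref{Laurent_extn}.
\end{enumerate}
With this lemma in place, your outline goes through.

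Relatedly, your support argument relies on ``pushforward along open embeddings preserves homotopically minimal complexes''. That does not cover the components $U_1$ and $U_{(q-1)/2}$. There you should instead use that $L_n$ is flat (it is flat everywhere, so your flatness worry is unfounded), that $L_n$ maps the node bijectively onto a node, and that the pushforward of a nonzero complex supported at the node is nonzero and supported at its image.
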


We first prove a preparatory lemma. Recall our notation $B=A[Z^{\pm 1}]$.

\begin{lem}\label{localisation_lemma}
Let $f=X_2^2+1\in B$. Consider the injective map of $k$-algebras $\psi:B\to B_f\coloneqq B[\frac{1}{X_2^2+1}]$ defined by
$$
\psi(Z)=Z,\quad \psi(X_1)=X_1\quad\text{and}\quad \psi(X_2)=\frac{X_2}{X_2^2+1}.
$$
Letting $X\coloneqq B_f$ as a $(B_f,B)$-bimodule, where the $B$-action is the one coming from the map $\psi$, the adjunction
$$
\LM_X:\Mod(B)\rightleftarrows\Mod(B_f):\R_X
$$
is then a Quillen equivalence for the Gorenstein projective model structures.
\end{lem}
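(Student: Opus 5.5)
Geometrically, $\psi$ corresponds to the map $\Spec(B_f)\to\Spec(B)$ which is the identity on $Z$ and on the $X_1$-line, and is $t\mapsto t/(t^2+1)$ on the $X_2$-line. This map is étale near the origin, since the derivative there is $1$, and it fixes the origin. The plan is to show that $\LM_X$ is an equivalence on singularity categories because both sides are concentrated at the singular locus $\{X_1=X_2=0\}\times\Gm$, where $\psi$ is a local isomorphism.

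\emph{Quillen adjunction.} I first check that $B_f$ is flat over $B$ via $\psi$, so that \Cref{tensor_hom_lemma} applies. Since $X$ is free of rank one as a left $B_f$-module, only flatness of $\psi$ needs proof. Write $X_2'=X_2$ in the target, so that $\psi(X_2)=u X_2'$ with $u=(X_2'^2+1)^{-1}$ a unit. Then
\[
B_f=k[X_1,X_2',Z^{\pm1}]_f/(X_1X_2')=k[X_1,X_2',Z^{\pm1}]_f/(X_1\cdot uX_2').
\]
The ring $P:=k[X_1,X_2,Z^{\pm1}]$ maps to $P':=k[X_1,X_2',Z^{\pm1}]_f$ via $X_2\mapsto uX_2'$. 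This is a quasi-finite map of smooth varieties of the same dimension, hence flat by miracle flatness. Base change along $P\to B=P/(X_1X_2)$ gives $B\otimes_P P'=P'/(X_1uX_2')=B_f$, so $B_f$ is flat over $B$. Hence $\LM_X$ is exact and left Quillen, and $\LM_X$, $\R_X$ preserve weak equivalences.

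\emph{Quillen equivalence.} Under \Cref{GPmodel_sing} this adjunction is $\Sing(g^*)\dashv\Sing(g_*)$ for the flat morphism $g:\Spec B_f\to\Spec B$, so it suffices to show that $\Sing(g_*)$ is an equivalence. Let $W=D(X_2^2+1)\subseteq\Spec B$, which contains the whole singular locus $\{X_1=X_2=0\}\times\Gm$. By \Cref{Orlov_equiv}, $\Sing(\Spec B)\simeq\Sing(W)$, and likewise $\Sing(\Spec B_f)$ is equivalent to the singularity category of any open subset containing its singular locus $V(X_1,X_2')$.

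Next I need an open neighbourhood $V'$ of $V(X_1,X_2')$ in $\Spec B_f$ that $g$ maps isomorphically onto an open neighbourhood of the singular locus of $\Spec B$. On the $X_2$-branch, the map $t\mapsto t/(t^2+1)$ restricted to a suitable Zariski open containing $0$ must be an open immersion. This is the main obstacle: the map has degree $2$, so it is only étale and not injective near $0$, and no Zariski open neighbourhood of $0$ maps isomorphically. The fallback is to avoid working open by open and instead compare the singular loci through their completions, as follows.

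Decompose both sides by support. The singular locus on each side is $\Gm$, and $g$ restricts to an isomorphism between them. Use the recollement from \eqref{trianle_support} with $Z$ the singular locus, so that $\Sing=\Sing_Z$ on each side. Supported objects are determined by the formal completion along $Z$, and by the same argument as \Cref{support_lem} localised there. Since $g$ is étale along $Z$ and $g^{-1}(Z)\cap\{\text{singular locus}\}=Z$, the map $g$ induces an isomorphism of completions $\widehat{B}_Z\cong\widehat{(B_f)}_{Z}$.

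Concretely, I will apply \Cref{Test_quillen_equiv}. For this I first show that $\LM_X$ reflects trivial objects: a module over the Gorenstein ring $B$ is trivial if and only if it has finite injective dimension at every point, which by \Cref{support_lem} is automatic off $Z$. At points of $Z$, faithful flatness of $B_{\mathfrak p}\to(B_f)_{\mathfrak q}$ for $\mathfrak q$ over $\mathfrak p$ lets one test finiteness after base change. Finally I check that the counit $\LM_X\R_X N\to N$ has trivial kernel and cokernel. Again it suffices to check this locally at points of $Z$. There, étaleness together with the fact that the fibre of $g$ over each such point meets the singular locus in exactly one point makes $N_{\mathfrak q}\otimes B_{\mathfrak p}$-restriction and base change mutually inverse up to summands supported on the regular locus, which are trivial.
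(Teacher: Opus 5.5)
Your flatness argument is fine; miracle flatness works as well as the paper's PID-plus-base-change argument. Reflection of trivial objects via the faithfully flat local maps $B_{\mathfrak p}\to(B_f)_{\mathfrak q}$ at singular points is also fine. The completion paragraph does no work in your concrete argument and can be dropped.

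The gap is the counit step, which carries the whole proof and is only asserted. Since $\mu\colon B_f\otimes_B N\to N$ is always surjective, you must show that $K=\Ker(\mu)$ has finite projective dimension over $B_f$. Your reason is that the discrepancy consists of summands ``supported on the regular locus''. For module supports this is false. Take $N=B_f$ and $\mathfrak q=(X_1,X_2,Z-\lambda)$. At the generic point of the line $\{X_1=0,\,Z=\lambda\}$ the map $t\mapsto t/(t^2+1)$ is a separable quadratic extension. So $B_f\otimes_B B_f$ has off-diagonal primes there, and these specialise into $\mathfrak q$. Hence $K_{\mathfrak q}\neq 0$, with $\mathfrak q$ singular. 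Read instead as a statement about singular support (in the sense of \Cref{support_lem}), it is exactly what has to be proved.

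The missing argument runs as follows.
\begin{enumerate}
\item The map $\psi$ ramifies only along $X_2=\pm1$ in $\Spec B_f$, since $\Omega_{B_f/B}$ is killed exactly by a unit multiple of $X_2^2-1$. So $\psi$ is unramified at every generisation of each $\mathfrak q\in V(X_1,X_2)$.
\item Therefore the graph of $\Spec(B_f)_{\mathfrak q}\to\Spec B_f$ over $\Spec B$ is open and closed. This gives $(B_f)_{\mathfrak q}\otimes_B B_f\cong(B_f)_{\mathfrak q}\times T$. The localised counit becomes the projection $N_{\mathfrak q}\oplus(T\otimes_{B_f}N)\to N_{\mathfrak q}$, so $K_{\mathfrak q}\cong T\otimes_{B_f}N$.
\item The fibre of $g$ over every $\mathfrak p\in V(X_1,X_2)$ is $\Spec k(\mathfrak p)$, so no point of $\Spec T$ lies over the singular locus.
\item Since $T$ is essentially \'etale over $B_f$ via the second factor, it is regular of finite Krull dimension. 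Hence $T\otimes_{B_f}N$ has finite projective dimension over $T$.
\item $T$ is flat over $(B_f)_{\mathfrak q}$, so $T\otimes_{B_f}N$ has finite flat, and therefore finite projective, dimension over the Gorenstein ring $(B_f)_{\mathfrak q}$. Only now does your local criterion give that $K$ is trivial.
\end{enumerate}

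With this filled in, your route is genuinely different from the paper's and more general, since it needs no knowledge of either homotopy category. The paper instead reduces to $\phi\colon A\to A_f$ via \Cref{Laurent_extn}. It then uses \Cref{Orlov_equiv} and \Cref{sing_cat_comp}: both $\Ho(A)$ and $\Ho(A_f)$ are $\Mod(k^2)$, generated by $A/(X_i)$ and $A_f/(X_i)$ respectively, and $\LM_{A_f}(A/(X_i))\cong A_f/(X_i)$. This sidesteps the non-injectivity of $t\mapsto t/(t^2+1)$ that you ran into.
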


\begin{proof}
We have $B_f=A_f[Z^{\pm1}]$ and our aim will be to use \Cref{Laurent_extn} to prove the result. To that end, we first consider the map $\phi=\psi|_A: A\to A_f$. We claim that this map is flat. Indeed, the map $\Spec(k[X_2, \frac{1}{X_2^2+1}])\to \Spec(k[X_2])$ induced by $X_2\mapsto \frac{X_2}{X_2^2+1}$ is flat since $k[X_2]$ is a PID and thus flat modules coincide with torsion-free modules (cf.\ \cite[0AUW]{stack}). The map $\Spec(\phi)$ is obtained from it by base changing along the map $\Spec(A)\to \Spec(k[X_2])$ induced by the obvious inclusion of rings, hence is also flat (cf.\ \cite[01U9]{stack}).

We are therefore in the setting of \Cref{tensor_hom_lemma} and thus have a Quillen adjunction
$$
\phi^*=\LM_{A_f}:\Mod(A)\rightleftarrows \Mod(A_f):\R_{A_f}=\phi_*
$$
where both functors preserve weak equivalences, and we claim that it is a Quillen equivalence. For this, we first remark that the open embedding $\Spec(A_f)\subseteq \Spec(A)$ induces an equivalence of singularity categories by \Cref{Orlov_equiv}. In our language and by applying \Cref{sing_cat_comp}, this says that $\Ho(A_f)\simeq \Mod(k^2)$ with generators $A_f/(X_1)$ and $A_f/(X_2)$. But we have $\LM_{A_f}(A/(X_i))\cong A_f/(X_i)$ for $i=1$, $2$ and hence the functor $\mathbf{L}\LM_{A_f}$ is an equivalence by the explicit description of the two homotopy categories and since it commutes with arbitrary direct sums. By \Cref{Laurent_extn} we are now done.
\end{proof}

\begin{proof}[Proof of \Cref{Langlands_singularities}] We have $\Sing(L_*)=\prod_n \Sing((L_n)_*)$ so that we just need to show for the first part that $\Sing((L_n)_*):\Sing (Z(\He)_n)\to \Sing(X_{n,q})$ is an equivalence for each $n$. When $n$ is even, that follows immediately from \Cref{sheaf_property}. When $n$ is odd, we use the same notation as before and write $\Spec(Z(\He)_n)= \left(U_1\bigsqcup\cdots\bigsqcup U_{\frac{q-1}{2}}\right)\times\Gm$ with $U_i=\Af^1\cup_0\Af^1$ for each $i$. Viewing each $U_i\times\Gm$ as a Zariski open of $X_{n,q}$ via the embedding $j_{i-1}$ from \eqref{std_embed}, we see by combining \Cref{Orlov_equiv} and \Cref{sheaf_property} that
$$
\Sing(X_{n,q})\simeq\prod_{i=1}^{\frac{q-1}{2}}\Sing(U_i\times\Gm)\,.
$$
Under this equivalence, the functor $\Sing((L_n)_*)$ identifies with the endofunctor $\prod_iF_i$ of the product $\prod_{i}\Sing(U_i\times\Gm)$, defined by setting $F_i$ to be the identity for $1<i<\frac{q-1}{2}$ and, for $i=1$ or $\frac{q-1}{2}$, by setting $F_i$ to be the composite
$$
\Sing(U_i\times\Gm)\xrightarrow{\Sing(((L_n)|_{U_i\times\Gm})_*)}\Sing((\Pro^1\bigcup_{\infty\;0}\Pro^1)\times\Gm)\xrightarrow{\simeq}\Sing(U_i\times\Gm)
$$
where the second arrow is the equivalence from \Cref{sheaf_property}. We are left to show that $F_i$ is an equivalence when $i=1$ or $\frac{q-1}{2}$. For this, we consider the basic open affine
$$
V_i=\begin{cases}
D(X_2^2+1) & \text{if $i=1$}\\
D(X_1^2+1) & \text{if $i=\frac{q-1}{2}$}
\end{cases}
$$
of $U_i\times\Gm$ and note it suffices to show that $\Sing(V_i)\simeq \Sing(U_i\times\Gm)\xrightarrow{F_i}\Sing(U_i\times\Gm)$ is an equivalence, cf.\ \Cref{Orlov_equiv}. But by definition of $L_n$ on these components, this functor identifies with the equivalence $\Sing(\psi_*)$ from \Cref{localisation_lemma} for $i=1$ or its twist by swapping $X_1$ and $X_2$ for $i=\frac{q-1}{2}$, and so we are done with the proof that $\Sing(L_*)$ is an equivalence.

Finally, for the last part, we have that a supersingular simple $M$ of infinite projective dimension is sent to its central character under the equivalence of \Cref{Quillen_equiv_GL2}, and thus its image in $\Sing(Z(\He))$ is supported on the corresponding closed point by \Cref{support_lem}, and this point varies with $M$. As the morphisms $L_n$ induce bijections between the singular loci, it follows that $\mathscr{L}_*(M)$ is also supported on a single point in $X_{q, \GL_2}$ and so that the map $M\mapsto \Supp(\mathscr{L}_*(M))$ in the statement is well-defined and injective. That it agrees with Gro{\ss}e-Kl{\"o}nne's map for $\Ff$ an extension of $\mathbb{Q}_p$ is now a restatement of \cite[Theorem 8.9]{PeSch25_2}.
\end{proof}

\subsubsection{The case $\G=\PGL_2$} We may also use the map $L$ to give a Galois theoretic interpretation of $\Ho(\He_{\PGL_2})$. For this, we first recall that, under the assumption that $p>2$, the homotopy category $\Ho(\He_G)$ is by our earlier results equivalent to $\Ho(Z(\He_G))$ for $\G=\GL_2$ or $\PGL_2$. Furthermore, note that the surjection $\GL_2\to\PGL_2$ induces a natural bijection between $T_{\PGL_2}(\F_q)^\vee/W_0$ and the subset $(T_{\GL_2}(\F_q)^\vee/W_0)_0$ from \eqref{partition_characters}. From \eqref{matrix_description_H_PGL_2} we then have an isomorphism $Z(\He_{\PGL_2})\cong Z(\He_{\GL_2})_0/(T_{\omega^2}-1)$, cf.\ \eqref{decomp_by_n}. In what follows, we will consider the induced natural closed immersion
\[
i_\He:\Spec(Z(\He_{\PGL_2}))\hookrightarrow\Spec(Z(\He_{\GL_2}))
\]
and recall that it induces a pushforward functor $\Sing(i_{\He,*})$ between singularity categories. For simplicity of notation, we will also denote by $\Sing(i_{\He,*})$ the functor $\Ho(\He_{\PGL_2})\to\Ho(\He_{\GL_2})$ induced by the equivalences to the centres of \Cref{Quillen_equiv_GL2} and \Cref{Quillen_equiv_PGL2}.

Next, we note that by construction the simple supersingular $\He_{\PGL_2}$-modules of infinite projective dimension identify precisely, after inflating along the surjection $\He_{\GL_2}\to\He_{\PGL_2}$, with the simple supersingular $\He_{\GL_2}$-modules of infinite projective dimension occurring in a block $e_\gamma\He_{\GL_2}$ given by some $\gamma\in (T_{\GL_2}(\F_q)^\vee/W_0)_{0,\reg}$. We may therefore consider the Gro{\ss}e-Kl{\"o}nne functor $M\mapsto V(M)$ for the supersingular $\He_{\PGL_2}$-modules via this identification.

Finally, we consider the projection $\pr_2:X_{0,q}\to \mathbb{G}_m$ and let $X_{q, \PGL_2}\coloneqq\pr_2^{-1}(1)$. By definition this is a closed subscheme and we write $i_\Gal:X_{q, \PGL_2}\hookrightarrow X_{0,q}$ for the corresponding closed immersion. As a scheme, we see from \Cref{chain_P1} that $X_{q, \PGL_2}$ identifies with $C(\frac{q-1}{2})$, and note that by definition $L_0'$ defines a morphism $\Spec(Z(\He_{\PGL_2}))\to X_{q, \PGL_2}$. Further, under the bijection in \cite[Theorem 4.5.1]{PeSch25_2} the $k$-points of $X_{q, \PGL_2}$ identify with those semisimple 2-dimensional Galois representations with determinant given by Serre's fundamental character $\omega_f$, cf.\ \S 2.1 of \textit{loc.\ cit} for its definition.

\begin{prop}\label{Langlands_PGL2}
The morphism $L_0'$ induces an equivalence of categories
\[
\mathscr{L}_{\PGL_2,*}:\Ho(\He_{\PGL_2})\xrightarrow{\simeq}\Sing(Z(\He_{\PGL_2}))\xrightarrow{\Sing(L_0')_*} \Sing(X_{q, \PGL_2})\,.   
\]
On objects, the map $M\mapsto \Supp(\mathscr{L}_{\PGL_2,*}(M))$ gives rise to a well-defined injection
\[
\left\{\substack{\text{simple supersingular} \\ \text{$\He_{\PGL_2}$-modules of} \\ \text{infinite projective dimension}}\right\}_{/\cong} \hookrightarrow \left\{ \substack{\text{irreducible}\\ \rho:\Gal(\overline{\Ff}/\Ff)\to \GL_2(k)\\
\text{with $\det(\rho)= \omega_f$}}\right\}_{/\cong}
\]
which agrees with Gro{\ss}e-Kl{\"o}nne's map when $\Ff$ is a finite extension of $\mathbb{Q}_p$.
\end{prop}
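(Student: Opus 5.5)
The plan is to mimic the proof of \Cref{Langlands_singularities} in the even case $n=0$, specialised to the fibre $Z=1$. By \Cref{Quillen_equiv_PGL2}(ii) (here $p>2$), $\Ho(\He_{\PGL_2})\simeq\Ho(Z(\He_{\PGL_2}))$, and by \Cref{GPmodel_sing} this is $\Sing(Z(\He_{\PGL_2}))$. So the first step is to describe $L_0'$ explicitly. The isomorphism $Z(\He_{\PGL_2})\cong Z(\He_{\GL_2})_0/(T_{\omega^2}-1)$ and \Cref{matrix_description_H_PGL_2} give
\[
\Spec(Z(\He_{\PGL_2}))=U_0\sqcup U_1\sqcup\cdots\sqcup U_{\frac{q-1}{2}},
\]
where $U_0,U_{\frac{q-1}{2}}\cong\Af^1$ come from the non-regular orbits and $U_i\cong\Af^1\cup_0\Af^1$ for $1\leq i\leq\frac{q-3}{2}$. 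On these pieces, $L_0'$ is by construction the family of Zariski open embeddings into $C(\frac{q-1}{2})=X_{q,\PGL_2}$: $j_{i-1}$ on the middle pieces, and the two outer lines mapped onto $C_0\setminus\{\infty\}$ and $C_{\frac{q-3}{2}}\setminus\{0\}$. These images form an open cover of $X_{q,\PGL_2}$.

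The second step is to apply \Cref{sheaf_property} to this cover. I must check that pairwise intersections of the images contain no singular points. The singular points of $C(\frac{q-1}{2})$ are exactly the junction points $\infty_i=0_{i+1}$, and each of these lies only in the image of $U_{i+1}$ (the $j_i$-chart). Every other intersection is contained in the smooth locus, being an open subset of a single $\Pro^1$ minus the nodes. Hence $\Sing((L_0')_*)=\Sing(f_*)$ is an equivalence. The non-regular pieces $U_0,U_{\frac{q-1}{2}}$ are smooth and contribute $0$, consistent with their homotopy categories vanishing. This gives the equivalence $\mathscr{L}_{\PGL_2,*}$.

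For the statement on supports, I follow the end of the proof of \Cref{Langlands_singularities}.
\begin{itemize}
\item A simple supersingular $M$ of infinite projective dimension lies in a regular block $\gamma$. Under the Morita equivalence of \Cref{Quillen_equiv_PGL2}, it corresponds to the $A$-module $k$ at the origin, given by its central character.
\item Since $M$ has infinite projective dimension, it is nonzero in $\Ho$, and \Cref{support_lem} gives that its support is exactly the origin of $U_\gamma$.
\item $L_0'$ is an open embedding on each piece and bijective on singular loci, so $\Supp(\mathscr{L}_{\PGL_2,*}(M))$ is a single singular point. It is distinct for distinct $M$, which gives well-definedness and injectivity.
\end{itemize}
Note that distinct $M$ in the same block have different central characters only after accounting for both supersingulars; this is fine because there is one singular point per regular $\gamma$ and exactly one simple supersingular of infinite projective dimension per regular block in $\PGL_2$ ($\lambda=1$, with $\chi$ and $\chi^\omega$ giving the same module).

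The last step is compatibility with Gro{\ss}e-Kl{\"o}nne. I will use naturality with respect to the closed immersions $i_\He$ and $i_\Gal$: $L\circ i_\He=i_\Gal\circ L_0'$ by construction, as maps to $X_{0,q}$. Inflation sends $M$ to the $\He_{\GL_2}$-supersingular with the same central character and $Z=1$. Its point in $X_{q,\GL_2}$ is therefore $i_\Gal$ of the point for $M$, and by \Cref{Langlands_singularities} that point is $V(M)$. The condition $\det=\omega_f$ follows from lying in $\pr_2^{-1}(1)\subseteq X_{0,q}$, via the description in \cite[Theorem 4.5.1]{PeSch25_2}.

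The main obstacle I expect is this bookkeeping: verifying that the support point computed in $X_{q,\PGL_2}$ maps under $i_\Gal$ to the one computed for $\GL_2$. I would do this directly on $k$-points, which only uses the commuting square and that supports are closed points.
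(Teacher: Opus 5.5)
Your proposal is correct and follows essentially the same route as the paper: the equivalence comes from applying \Cref{sheaf_property} to the Zariski cover given by $L_0'$ (you additionally spell out why pairwise intersections of the charts avoid the nodes), and agreement with Gro{\ss}e-Kl{\"o}nne's map, together with the condition $\det(\rho)=\omega_f$, is deduced from the commuting square $L\circ i_\He=i_\Gal\circ L_0'$ and \Cref{Langlands_singularities}. The only minor difference is that you check well-definedness and injectivity directly, via \Cref{support_lem} and the Morita equivalence (one singular point and one such supersingular module per regular block), instead of reading them off from the $\GL_2$ case; this changes nothing of substance.
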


\begin{proof}
It follows immediately by construction of $L_0'$ that \Cref{sheaf_property} applies and so that $\mathscr{L}_{\PGL_2,*}$ is an equivalence. From the definitions of all the maps, it follows that we have a commuting square
\[
\begin{tikzcd}
\Ho(\He_{\PGL_2})\arrow{r}{\mathscr{L}_{\PGL_2,*}} \arrow[swap]{d}{\Sing(i_{\He,*})} & \Sing(X_{q, \PGL_2}) \arrow{d}{i_{\Gal,*}}\\
\Ho(\He_{\GL_2}) \arrow{r}{\mathscr{L}_{\GL_2,*}} & \Sing(X_{0,q})
\end{tikzcd}
\]
and the injection in the statement follows from the one in \Cref{Langlands_singularities}.
\end{proof}

\begin{rem}
Using the bijection in \cite[Theorem 4.5.2]{PeSch25_2}, we can describe explicitly the irreducible Galois representations that do not lie in the images of our injections in \Cref{Langlands_singularities} and \Cref{Langlands_PGL2}. We use the same notation as in \textit{loc.\ cit}. For $\G=\PGL_2$, the missing Galois representations are $\ind(\omega_{2f})$ and $\ind(\omega_{2f})\otimes \omega_f^{\frac{q-1}{2}}$. For each \emph{even} $n\in N_q$ and $\lambda\in k^\times$, we next consider the character $\eta_{\lambda,n}\coloneqq \mathrm{unr}(\lambda)\otimes\omega_f^{\frac{n}{2}}$. Then the missing Galois representations in the $\GL_2$ case are more generally the $\ind(\omega_{2f})\otimes \eta_{\lambda,n}$ and $\ind(\omega_{2f})\otimes \omega_f^{\frac{q-1}{2}}\otimes \eta_{\lambda,n}$ for all $n\in N_q$ even and all $\lambda\in k^\times$ (we note that for fixed $n$ and $\lambda$ these correspond to points in $\pr_2^{-1}(\lambda^2)\subseteq X_{n,q}$).
\end{rem}

\subsubsection{The case $\G=\SL_2$} We can also describe the relationship between $\Ho(\He_{\SL_2})$ and a scheme whose connected components are chains of projective lines. First, we fix a generator $\zeta$ for $\F_q^\times$ and consider the characters $\xi_n\colon T_{\SL_2}(\F_q)\to k^\times$ defined by \(\xi_n\begin{psmallmatrix}
\zeta & 0\\
0 & \zeta^{-1}
\end{psmallmatrix}=\zeta^n\) with $n\in N_q$. We will call $\xi_n$ \emph{even}, resp.\ \emph{odd}, if $n$ is even, resp.\ odd, and we note that this property is preserved under the $W_0$-action since we assume that $p>2$.

Recall that there are exactly $q-1$ lifts $\tilde{\xi}_{0,n}, \ldots, \tilde{\xi}_{q-2,n}$ of this character to $T_{\GL_2}(\F_q)$, defined by \(\tilde{\xi}_{j,n}\begin{psmallmatrix}
\zeta^a & 0\\
0 & \zeta^b
\end{psmallmatrix}=\zeta^{aj-b(n-j)}\) for each $0\leq j\leq q-2$. If $\tilde{\gamma}_{j,n}$ denotes the $W_0$-orbit of $\tilde{\xi}_{j,n}$, note then that $\tilde{\gamma}_{j,n}\in (T_{\GL_2}(\F_q)^\vee/W_0)_{2j-n}$ (with the subscript interpreted modulo $q-1$), cf.\ \eqref{partition_characters}. Therefore, the parity of the action of these lifts on the centre of $\GL_2$ is well-defined and matches the parity of $n$. Recall from the definition of the ring $\Zq$, cf.\ \eqref{defn_zq}, that we fixed a lift $\tilde{\xi}_n\coloneqq \tilde{\xi}_{\lceil \frac{n}{2}\rceil,n}$ and we write $\tilde{\gamma}_n$ for the corresponding orbit. We then always have $\tilde{\gamma}_n\in (T_{\GL_2}(\F_q)^\vee/W_0)_{0}$ if $n$ is even and $\tilde{\gamma}_n\in (T_{\GL_2}(\F_q)^\vee/W_0)_1$ if $n$ is odd. We will thus consider the partition
$$
T_{\SL_2}(\F_q)^\vee/W_0=\left(T_{\SL_2}(\F_q)^\vee/W_0\right)_{\text{even}}\bigsqcup \left(T_{\SL_2}(\F_q)^\vee/W_0\right)_{\text{odd}}
$$
and, analogously to \eqref{decomp_by_n}, we consider by our convention the corresponding decomposition $\Zq=\Zq_{0}\times \Zq_{1}$. We remark that the sign $\sigma$ is even, resp.\ odd, if and only if $q\equiv 1\pmod{4}$, resp.\ $q\equiv 3\pmod{4}$. We next define our chains of projective lines for $\SL_2$.

\begin{defn}
For $m=0, 1$, we let
\[
Y_{m,q}\coloneqq\begin{cases}
C(\lfloor\frac{q+3}{4}\rfloor) & \text{if $m=0$}\\
C(\lceil\frac{q+3}{4}\rceil) & \text{if $m=1$}
\end{cases}
\]
and put $X_{q, \SL_2}\coloneqq Y_{0,q}\sqcup Y_{1,q}$.
\end{defn}

We now describe the construction of a morphism $L_{\SL_2}=L_0\sqcup L_1: \Spec(\Zq)\to X_{q, \SL_2}$. For this, we put $U_0=\Spec(Z(e_{\mathbf{1}}\He_{\SL_2}))$ and, for $1\leq i\leq \lfloor \frac{q-1}{4}\rfloor$ and $n=2i$, we let $U_i=\Spec(Z(e_{\tilde{\gamma}_n}\He_{\GL_2, \aff}))$. Then we have
\[
\Spec(\Zq_0)=\bigsqcup_{i=0}^{\lfloor \frac{q-1}{4}\rfloor} U_i\cong\Af^1\sqcup\bigsqcup_{i=1}^{\lfloor \frac{q-1}{4}\rfloor}(\Af^1\cup_0 \Af^1)\,.
\]
The morphism $L_0:\Spec(\Zq_0)\to Y_{0,q}$ is defined similarly to the $\GL_2$ case. Namely, we define $L_0|_{U_i}=j_{i-1}$ as defined in \eqref{std_embed} for $1\leq i< \lfloor \frac{q-1}{4}\rfloor$, we let $L_0|_{U_0}$ be the open embedding of $\Af^1$ as $C_0$ minus the point at infinity, and finally $L_0|_{U_{\lfloor\frac{q-1}{4}\rfloor}}$ is glued from the embedding of the $X_1$-copy of $\Af^1$ as $C_{\lfloor\frac{q-1}{4}\rfloor}$ minus the origin with the map $\phi'$ from \eqref{defn_phi'} on the $X_2$-copy of $\Af^1$.

Similarly, for $1\leq i\leq \lceil \frac{q-1}{4}\rceil$ and $n=2i-1$, we let $U_i=\Spec(Z(e_{\tilde{\gamma}_n}\He_{\GL_2, \aff}))$. Then we have
\[
\Spec(\Zq_1)=\bigsqcup_{i=1}^{\lceil \frac{q-1}{4}\rceil} U_i\cong\bigsqcup_{i=1}^{\lceil \frac{q-1}{4}\rceil}(\Af^1\cup_0\Af^1)\,.
\]
The morphism $L_1:\Spec(\Zq_1)\to Y_{1,q}$ is again defined similarly to the $\GL_2$ case. The map $L_1$ on a component $U_i$ with $1< i< \lceil \frac{q-1}{4}\rceil$ is again defined to be the map $j_{i-1}$. On $U_{\lceil \frac{q-1}{4}\rceil}$ the map $L_1$ is defined completely analogously to the even case using $\phi'$, and on $U_1$ it is defined by gluing the embedding of the $X_2$-copy of $\Af^1$ as $C_0$ minus the point at infinity with the map $\phi$ from \eqref{defn_phi} on the $X_2$-line.

Finally, recall that the character $\xi_n$ extends uniquely to a supersingular character $\chi_n$ of $\He_{\SL_2}$ whenever $n\neq 0$, and that the supersingular characters of $\He_{\SL_2}$ of infinite projective dimension are precisely $\chi_1, \ldots, \chi_{q-2}$ by \cite[Theorem 7.7]{Koz}. By arguing exactly as in the proof of \Cref{Langlands_singularities} and using the explicit functor $\Ho(\R_{\M_{\SL_2}}):\Ho(\He_{\SL_2})\to \Ho(\Zq)\simeq \Sing(\Zq)$ from \Cref{Quillen_equiv_SL2}, we obtain the following result.

\begin{prop}\label{Langlands_SL2}
The functor $\Sing(L)_*:\Sing(\Zq)\to \Sing(X_{q, \SL_2})$ is an equivalence. If we consider the induced functor
\[
\mathscr{L}_{\SL_2,*}:\Ho(\He_{\SL_2})\xrightarrow{\Ho(\R_{\M_{\SL_2}})}\Sing(\Zq)\xrightarrow{\Sing(L)_*}\Sing(X_{q, \SL_2})\,,
\]
then the map $\chi\mapsto \Supp(\mathscr{L}_{\SL_2,*}(\chi))$ defines a surjection
\[
\left\{\substack{\text{simple supersingular} \\ \text{$\He_{\SL_2}$-modules of} \\ \text{infinite projective dimension}}\right\}_{/\cong}\twoheadrightarrow \left\{\substack{\text{singular $k$-points}\\\text{of $X_{q, \SL_2}$}}\right\}
\]
with fibers $\{\chi_{\frac{q-1}{2}}\}$ and $\{\chi_{i}, \chi_{q-1-i}\}$ (for $0<i<\frac{q-1}{2}$).
\end{prop}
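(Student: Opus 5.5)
The proof splits into two parts: first that $\Sing(L)_*$ is an equivalence, then the computation of supports of the images of supersingular characters.

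\emph{Step 1: $\Sing(L)_*$ is an equivalence.} We follow the proof of \Cref{Langlands_singularities} verbatim, component by component, for $L_0$ and $L_1$.

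On each component $U_i$ that maps by a standard embedding $j_{i-1}$ from \eqref{std_embed}, the map is an open immersion. On the $\Af^1$-component $U_0$ (which is regular, so $\Sing(U_0)=0$), it is also an open immersion. The singular locus of $Y_{m,q}$ consists of the nodes $C_i\cap C_{i+1}$, and each node is hit by exactly one $U_i$, via its origin. Using \Cref{Orlov_equiv} together with \Cref{sheaf_property}, we get
\[
\Sing(Y_{m,q})\simeq\prod_{i\geq 1}\Sing(U_i).
\]
Under this identification, $\Sing(L_m)_*$ becomes the identity on the middle factors. At the two end components, which involve the maps $\varphi$ or $\varphi'$, it becomes the functor of \Cref{localisation_lemma} without the $\Gm$-factor. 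That is, it is the flat map $\phi\colon A\to A_f$ already shown in that proof to induce an equivalence $\Ho(A)\simeq\Ho(A_f)$, precomposed with Orlov's equivalence for the basic open $D(X_j^2+1)$. Hence each factor is an equivalence.

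The only care needed is the bookkeeping of which copy of $\Af^1$ goes to which $\Pro^1$ at the ends of the chains. We also have to check that the number of nodes, $\lfloor\frac{q+3}{4}\rfloor-1$ and $\lceil\frac{q+3}{4}\rceil-1$, matches the number of $A$-components. In particular, in the even chain the $\Af^1$ component $U_0$ contributes no node.

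\emph{Step 2: supports of supersingular characters.} Here we use \Cref{Quillen_equiv_SL2}. For a regular orbit $\gamma=\{\xi_n,\xi_{q-1-n}\}$, the two supersingular characters $\chi_n,\chi_{q-1-n}$ lie in $e_\gamma\He_{\SL_2}$. Under \Cref{description_PGL2_SL2} they correspond to nonzero objects of $\Mod(k^2)\times\Mod(k^2)$: at $x_0$ one of them restricts to $\chi_1$ and the other to $\chi_2$ (they are exchanged by $W_0$), while at $x_1$ they restrict to the $\chi_i^\omega$. Applying $X\oplus Y$ from \Cref{Quillen_equiv_SL2}(i) gives a nonzero object of $\Ho(A)$, for instance $\M_1\oplus\M_2$ up to shift. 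By \Cref{support_lem}, its support is the origin of $\Spec(A)$, because $A$ is regular away from the origin.

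For $\sigma=\xi_{(q-1)/2}$, \Cref{Quillen_equiv_SL2}(ii) sends the supersingular character to a nonzero object supported at the origin of the $\sigma$-copy of $\Spec(A)$.

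Pushing forward along $L$ sends the origin of each $U_i$ to the corresponding node, and every node is the image of some origin. Therefore the map is surjective. Its fibres are exactly the supersingular characters of a single block $e_\gamma\He_{\SL_2}$, namely $\{\chi_i,\chi_{q-1-i}\}$, or $\{\chi_{\frac{q-1}{2}}\}$ for $\sigma$.

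The main obstacle is the step where one checks that the image of $\chi_n$ in $\Ho(\Zq)$ is nonzero. If the restrictions to $x_0$ and $x_1$ had cancelled, the support would be empty. This nonvanishing follows from $\chi_n\neq0$ in $\Ho$ and from the fact that the functor $(X,Y)\mapsto X\oplus Y$ kills no nonzero object. It also requires identifying which block each lift $\tilde\gamma_n$ belongs to, so that the labelling of the points matches.
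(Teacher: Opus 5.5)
Your proposal is correct and follows the same route as the paper, which just says to argue as in the proof of \Cref{Langlands_singularities} and to use the explicit description of $\Ho(\R_{\M_{\SL_2}})$ from \Cref{Quillen_equiv_SL2}: reduce via \Cref{Orlov_equiv} and \Cref{sheaf_property} to one factor per node, handle the $\varphi$/$\varphi'$ ends with the $A$-level equivalence proved inside \Cref{localisation_lemma}, and read off supports using \Cref{support_lem}. Your write-up supplies more detail than the paper, notably the node count matching the number of $A$-components and the observation that $(X,Y)\mapsto X\oplus Y$ kills no nonzero object, so the images of the supersingular characters have nonempty support.
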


\begin{rem}\leavevmode
\begin{enumerate}
    \item The fibers of the map $\chi\mapsto \Supp(\mathscr{L}_{\SL_2,*}(\chi))$ are precisely the $L$-packets of those supersingular characters that have infinite projective dimension, as defined in \cite[Definition 6.4]{Koz2}. We note that it is shown in Corollary 7.14 of \textit{loc.\ cit}.\ that the collection of $L$-packets of size two we consider inject in the set of isomorphism classes of irreducible 2-dimensional projective Galois representations, in a way that is compatible with Gro{\ss}e-Kl{\"o}nne's functor.
    \item The $\sigma$-component of $\Zq$ maps down in $X_{q, \SL_2}$ to a $\Pro^1$ at the rightmost end of its chain (regardless of the parity of $\frac{q-1}{2}$). If we let $\tilde{X}_{q, \SL_2}$ denote the closed subscheme of $X_{q, \SL_2}$ obtained by removing that $\Pro^1$, then $\tilde{X}_{q, \SL_2}$ coincides with the scheme constructed by Ardakov-Schneider in \cite[Theorem 4.5.23]{ArdSch24}. The labeling of our components in $\Zq$ differs in the odd case to the one we used for $\GL_2$ but is instead analogous to the one used by Ardakov-Schneider in \textit{loc.\ cit}.
\end{enumerate}
\end{rem}

\section{DGAs and endomorphism rings}

For the rest of this paper, we assume that $\G=\GL_2$. We do not expect the homotopy category $\Ho(\He)$ to admit such a simple description as in \Cref{thmA} because the singular locus of $X_{q, \GL_2}$ is now 1-dimensional. In this section, we attempt to give some description of this category, by exhibiting an explicit generator for $\Ho(\He)$ which we use to describe this category as the derived category of an explicit DGA. We also compute the endomorphism rings in $\Ho(\He)$ of the simple supersingular modules.

\subsection{$\Ho(\He)$ as the derived category of a DGA}

It follows from very general considerations, cf.\ \cite[Theorem 8.6]{GilBook}, that the homotopy category of any Gorenstein ring is a so-called \emph{algebraic} triangulated category, i.e.\ triangle equivalent to the stable category of a Frobenius exact category, and it is also known to be compactly generated, cf.\ \cite[Theorem 9.4]{Hov2}. It follows from a classical theorem of Keller that any such triangulated category is triangle equivalent to the derived category of a DG-category, cf.\ \cite[Theorem 4.3]{Kel94}. More generally, when $X$ is a Noetherian separated scheme then $\Sing(X)$ is compactly generated and algebraic, cf.\ \cite[Theorem 1.1]{Kra05} and \cite[Lemma 7.5]{Kra07}, and thus is also equivalent to the derived category of a DG-category. A special case of this result of Keller is the following.

\begin{thm}[{\cite[Theorem 3.3]{Kel98}}]\label{Keller} Let $\C$ be a $k$-linear Frobenius exact category with arbitrary direct sums and let $\T$ be its stable category. Assume that $\T$ has a single compact generator $X$. Let $P_\bullet$ be an acyclic, termwise projective chain complex over $\C$ with $Z_0(P_\bullet)\cong X$, and set $\widetilde{\A}\coloneqq \underline{\End}(P_\bullet)$ to be the corresponding endomorphism DGA. Then there is an equivalence $\T\xrightarrow{\simeq}D(\widetilde{\A})$ of triangulated categories which sends $X$ to $\widetilde{\A}$.
\end{thm}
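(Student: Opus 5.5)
This is a special case of Keller's theorem on compactly generated algebraic triangulated categories, so the plan is to reduce to that result rather than argue from scratch. I would realise the equivalence by the functor $\T\to D(\widetilde{\A})$, $Y\mapsto \underline{\Hom}(P_\bullet, Q_\bullet(Y))$, where $Q_\bullet(Y)$ is a complete resolution of $Y$: an acyclic termwise projective complex with $Z_0(Q_\bullet(Y))\cong Y$. Here $\underline{\Hom}$ is the total Hom complex over $\C$, and the right action of $\widetilde{\A}=\underline{\End}(P_\bullet)$ is by precomposition.

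The first step is to replace $\T$ by the homotopy category $\mathbf{K}_\ac(\Proj(\C))$ of acyclic complexes of projectives. In a Frobenius category, taking $Z_0$ gives a triangle equivalence $\mathbf{K}_\ac(\Proj(\C))\simeq\T$: stable morphisms between $Z_0$'s lift to chain maps, unique up to homotopy, and every object has a complete resolution obtained by splicing projective and injective (equal) resolutions. Under this equivalence $X$ corresponds to $P_\bullet$. Hence $H^i\underline{\Hom}(P_\bullet,Q_\bullet)\cong\Hom_{\mathbf{K}}(P_\bullet,Q_\bullet[i])\cong\T(X,\Sigma^iY)$. In particular $H^*(\widetilde{\A})\cong\T(X,\Sigma^*X)$, and the functor $F(Y)=\underline{\Hom}(P_\bullet,Q_\bullet(Y))$ is well defined, because $\underline{\Hom}$ is a DG functor preserving homotopies and cones. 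It is therefore triangulated, and it sends $X$ to $\widetilde{\A}$.

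The second step is the dévissage. The cohomology computation shows that $F$ is fully faithful on the pairs $(X,\Sigma^iX)$, since $\Hom_{D(\widetilde{\A})}(\widetilde{\A},\widetilde{\A}[i])=H^i\widetilde{\A}$. Next I would check that $F$ commutes with arbitrary direct sums. Because $X$ is compact, $\T(X,\bigoplus Y_j)=\bigoplus\T(X,Y_j)$, and this gives a quasi-isomorphism $\bigoplus F(Y_j)\to F(\bigoplus Y_j)$. Here one uses that direct sums of complete resolutions are complete resolutions, which holds because $\C$ has sums and projectives are closed under sums in the Frobenius structure. The full subcategory of objects $Y$ for which $F$ induces bijections $\T(\Sigma^iX,Y)\to\Hom(F\Sigma^iX,FY)$ is localising and contains $X$, so it is all of $\T$, since $X$ generates. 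Repeating the argument in the first variable shows that $F$ is fully faithful. Its essential image is then a localising subcategory containing the compact generator $\widetilde{\A}$ of $D(\widetilde{\A})$, so $F$ is essentially surjective.

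I expect the main obstacle to be the second step, namely showing that $F$ preserves infinite coproducts at the chain level and that it is genuinely triangulated, not just additive. If this becomes awkward, I would instead invoke Keller's general theorem \cite[Theorem 4.3]{Kel94} directly, with the DG enhancement of $\T$ given by acyclic complexes of projectives, and restrict the DG category it produces to the single object $P_\bullet$.
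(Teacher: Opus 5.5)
Your proposal is correct and follows essentially the route the paper indicates: the equivalence is $\underline{\Hom}(P_\bullet,-)$ on acyclic complexes of projective-injectives, precomposed with a quasi-inverse of the $Z_0$-equivalence onto $\T$, with full faithfulness and essential surjectivity obtained by d\'evissage along the compact generators $X$ and $\widetilde{\A}$. The one loose point is your claim that termwise sums of complete resolutions are complete resolutions: this needs coproducts of conflations in $\C$ to be conflations, not just sums of projectives to be projective, but you can avoid it entirely, because the canonical map $\bigoplus F(Y_j)\to F(\bigoplus Y_j)$ in $D(\widetilde{\A})$ induces on $H^i$ the map $\bigoplus\T(X,\Sigma^iY_j)\to\T(X,\Sigma^i\bigoplus Y_j)$, which is bijective since $X$ is compact.
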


The above equivalence is constructed more or less explicitly. Indeed, if $\T'$ denotes the category of chain complexes of injective-projective objects of $\C$ up to chain homotopy, then the proof of \textit{loc.\ cit}.\ shows that $\underline{\Hom}(P_\bullet,-)$ defines an equivalence $\T'\simeq D(\widetilde{A})$. The theorem follows because taking zero-th cycles induces and equivalence between $\T'$ and $\T$.

We now explain how to make the above DGA explicit in the case of $\Ho(\He)$, or equivalently for the singularity category of the scheme of semisimple 2-dimensional Galois representations. By \Cref{Delta_equiv}, \Cref{non_reg_is_reg} and \Cref{non-reg}, it suffices to describe $\Ho(e_\gamma\He_{x_0}^\dagger)$ as the derived category of a DGA for $\gamma\in T(\F_q)^\vee/W_0$ regular. We keep our notation from \Cref{Section_R} and identify $e_\gamma \He_{x_0}$ with the ring $R$, so that $S\coloneqq R[Z^{\pm 1}]\cong e_\gamma \He_{x_0}^\dagger$. We recall that $R$ has two characters $\chi_1$, $\chi_2$ (cf.\ \eqref{defn_chi}). This induces two Gorenstein projective $S$-modules $\chi_1[Z^{\pm 1}]\coloneqq S\otimes_R \chi_1$ and $\chi_2[Z^{\pm 1}]\coloneqq S\otimes_R \chi_2$ by tensoring. 

\begin{prop}\label{generator_Ho} Suppose that $M\in \Mod(S)$ satisfies
$$
\left[\chi_1[Z^{\pm 1}], M\right]_S=0=\left[\chi_2[Z^{\pm 1}], M\right]_S\,.
$$
Then $M$ has image 0 in $\Ho(S)$, i.e.\ has finite projective dimension. In particular, the module $\chi_1[Z^{\pm 1}]\oplus \chi_2[Z^{\pm 1}]$ is a compact generator for the triangulated category $\Ho(S)$.
\end{prop}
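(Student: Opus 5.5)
The plan is to use the extension--restriction adjunction for $S=R[Z^{\pm1}]$ over $R$ and reduce to the explicit description of $\Ho(R)$ from \Cref{htpy_cat_Hx}.

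\emph{Step 1.} Take $X=S$ as an $(S,R)$-bimodule. It is free on both sides, so \Cref{tensor_hom_lemma} and \Cref{trivial_res_lemma} give a derived adjunction
\[
\LM_S=S\otimes_R(-):\Ho(R)\rightleftarrows\Ho(S):\Res^S_R .
\]
Both functors are computed without replacements, and both preserve and reflect modules of finite projective dimension. The adjunction then yields
\[
[\chi_i[Z^{\pm1}],M]_S\cong[\chi_i,\Res^S_R M]_R \quad (i=1,2).
\]
So the hypothesis says that $\Res^S_R M$ receives no nonzero maps from $\chi_1$ or $\chi_2$ in $\Ho(R)$.

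\emph{Step 2.} By \Cref{classification_all} and \Cref{htpy_cat_Hx}, every object of $\Ho(R)$ is isomorphic to $\chi_1^{\oplus\mathcal I_1}\oplus\chi_2^{\oplus\mathcal I_2}$. We also have $[\chi_j,\chi_j]_R=k$ and $[\chi_j,\chi_{3-j}]_R=0$.

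\emph{Step 3.} Write $\Res^S_R M\cong\chi_1^{\oplus\mathcal I_1}\oplus\chi_2^{\oplus\mathcal I_2}$ in $\Ho(R)$. If some $\mathcal I_j$ were nonempty, the inclusion of one summand $\chi_j$ would give a nonzero morphism $\chi_j\to\Res^S_R M$. To see it is nonzero, compose with the projection back onto that summand: this returns $\id_{\chi_j}$, which is nonzero since $\chi_j\neq0$ in $\Ho(R)$. This contradicts Step 1, so $\Res^S_R M=0$ in $\Ho(R)$, i.e.\ it has finite projective dimension over $R$. Since restriction reflects trivial objects (\Cref{trivial_res_lemma}, via $\pd_S\le\pd_R+1$), $M$ has finite projective dimension over $S$.

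\emph{Step 4.} For the generator statement, the argument above shows that the right orthogonal of $G=\chi_1[Z^{\pm1}]\oplus\chi_2[Z^{\pm1}]$ is zero. Applying this to $\Sigma^{-n}M$ for every $n\in\Z$ gives that $[\Sigma^nG,M]_S=0$ for all $n$ forces $M=0$, so $G$ generates $\Ho(S)$.

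For compactness, by adjunction
\[
[G,\textstyle\bigoplus_\alpha M_\alpha]_S\cong[\chi_1\oplus\chi_2,\textstyle\bigoplus_\alpha\Res^S_R M_\alpha]_R ,
\]
because $\Res^S_R$ commutes with direct sums (these are computed in $\Mod$). It therefore suffices that $\chi_1\oplus\chi_2$ is compact in $\Ho(R)$. By \eqref{Hom_Ho1}, $[\chi_j,-]_R$ is $\Hom_R(\chi_j,-)$ modulo maps factoring through projectives. Since $\chi_j$ is finitely generated, $\Hom_R(\chi_j,-)$ commutes with direct sums, and so does the subspace of maps factoring through a projective, because any such map factors through a finitely generated free module. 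Hence the quotient commutes with direct sums.

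I expect the only delicate point to be this compactness check. If it proves awkward, the fallback is the explicit equivalence $\Ho(R)\simeq\Mod(k^2)$, in which the images of $\chi_1$ and $\chi_2$ are finite-dimensional objects, which are clearly compact.
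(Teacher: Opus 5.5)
Your proposal is correct and follows the paper's proof: the paper also uses the adjunction $\LM_S\dashv\Res^S_R$, together with the facts that $\chi_1\oplus\chi_2$ generates $\Ho(R)$ and that restriction is conservative by \Cref{trivial_res_lemma}; your Steps 1--3 just unpack this via the classification of $\Ho(R)$. The only difference is that the paper checks compactness directly in $\Ho(S)$, using \eqref{Hom_Ho1} and the finite generation of $\chi_i[Z^{\pm 1}]$, whereas you pass through the adjunction to $\Ho(R)$; both routes work.
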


\begin{proof} Since $\chi_1[Z^{\pm 1}]\oplus \chi_2[Z^{\pm 1}]$ is finitely generated Gorenstein projective, it must be compact in $\Ho(S)$ by \eqref{Hom_Ho1}. Next, we know from \Cref{htpy_cat_Hx} that $\chi_1\oplus\chi_2$ is a generator of $\Ho(R)$. Furthermore, we know from \Cref{trivial_res_lemma} that the two functors in the adjunction
\[
\Ho(\LM_S):\Ho(R)\rightleftarrows\Ho(S):\Ho(\R_S)
\]
are conservative, i.e.\ reflect isomorphisms. Since a left adjoint of a conservative functor always preserves a generator, the result follows as $\chi_1[Z^{\pm 1}]\oplus \chi_2[Z^{\pm 1}]=\LM_S(\chi_1\oplus\chi_2)$.
\end{proof}

Using the above, we can give a genuine $\He$-module which generates $\Ho(\He)$. In what follows, we write $\mathfrak{m}\coloneqq(X_1, X_2)\subseteq B$ for the maximal ideal. Then, for $\gamma$ regular we let $\M_\gamma^{\GP}\coloneqq\mathfrak{m}\M_\gamma$, where we identify $\M_\gamma$ with $\B^2$ via \Cref{matrix_description_H}, and call the direct sum $\M^{\GP}=\bigoplus_{\text{$\gamma$ regular}}\M_\gamma^{\GP}$ the \emph{Gorenstein projective spherical $\He$-module}. To see that this is really a Gorenstein projective module, we note that $\M_\gamma^\GP$ is the image of the Gorenstein projective module $\chi_1[Z^{\pm 1}]\oplus \chi_2[Z^{\pm 1}]\in \Mod(S)$ under the left Quillen equivalence
$$
\Mod(S)\xrightarrow{\mathcal{L}_{B^2}}\Mod(B)\xrightarrow{\simeq}\Mod(e_\gamma\He)
$$
for each $\gamma$, where the first arrow is the functor obtained from \Cref{Quillen_equiv1} after tensoring with $S$. This observation and \Cref{generator_Ho} immediately yield:

\begin{cor}
The category $\Ho(\He)$ is generated by $\M^{\GP}$.
\end{cor}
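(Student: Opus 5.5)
The plan is to reduce to a single regular block and then transport the generator of \Cref{generator_Ho} through the Quillen equivalences already constructed.

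First, by \eqref{decomp_Ho_H} we have $\Ho(\He)=\prod_\gamma \Ho(e_\gamma\He)$. For $\gamma$ non-regular, \Cref{non_reg_is_reg} and \Cref{non-reg} give that $e_\gamma\He$ has finite global dimension, since $\Omega_{\text{tor}}=1$ for $\GL_2$. Hence these factors vanish, and
\[
\Ho(\He)=\prod_{\text{$\gamma$ regular}}\Ho(e_\gamma\He).
\]
An object of a product of triangulated categories is killed by $[G,-]$ if and only if each component is killed by the corresponding component of $G$. So it suffices to show that $\M^{\GP}_\gamma$ generates $\Ho(e_\gamma\He)$ for each regular $\gamma$. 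Here I use that $\M^{\GP}$ has $\gamma$-component $\M^{\GP}_\gamma$, and that $[\M^{\GP},M]_\He=\prod_\gamma[\M^{\GP}_\gamma,e_\gamma M]_{e_\gamma\He}$ by \eqref{idem_decomp}. If compactness is also wanted, it follows because there are finitely many $\gamma$ and each $\M^{\GP}_\gamma$ is finitely generated Gorenstein projective, so \eqref{Hom_Ho1} applies.

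Next, fix a regular $\gamma$. The paper observes that $\M_\gamma^{\GP}$ is the image of $\chi_1[Z^{\pm1}]\oplus\chi_2[Z^{\pm1}]$ under the composite $F\colon\Mod(S)\to\Mod(B)\to\Mod(e_\gamma\He)$. I will justify that $F$ is a left Quillen equivalence as follows. The functor $\LM_{A^2}\colon\Mod(R)\to\Mod(A)$ from \Cref{Quillen_equiv1} is a left Quillen equivalence. Then \Cref{Laurent_extn} upgrades it to $\Mod(S)\to\Mod(B)$. Finally, \Cref{matrix_description_H} together with \Cref{Morita_equiv} gives the exact equivalence $\Mod(B)\simeq\Mod(M_2(B))\simeq\Mod(e_\gamma\He)$. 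The derived functor $\mathbf{L}F$ is therefore a triangle equivalence $\Ho(S)\simeq\Ho(e_\gamma\He)$. By \Cref{tensor_hom_lemma} it needs no cofibrant replacement, so it sends $\chi_1[Z^{\pm1}]\oplus\chi_2[Z^{\pm1}]$ to $\M^{\GP}_\gamma$.

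Since triangle equivalences preserve generators (and compact objects), \Cref{generator_Ho} transports to show that $\M^{\GP}_\gamma$ generates $\Ho(e_\gamma\He)$. Taking the product over regular $\gamma$ finishes the proof.

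The main point requiring care is the identification $F(\chi_1[Z^{\pm1}]\oplus\chi_2[Z^{\pm1}])\cong\mathfrak{m}\M_\gamma$. I would check it directly. Tensoring the identities $\LM_{A^2}(\chi_i)=\M_i$ from \Cref{Quillen_equiv1} with $S$ gives $\M_1[Z^{\pm1}]\oplus\M_2[Z^{\pm1}]=X_1B\oplus X_2B=\mathfrak{m}$ as a $B$-module, because $X_1B\cap X_2B=0$. The Morita equivalence then sends $\mathfrak{m}$ to $B^2\otimes_B\mathfrak{m}=\mathfrak{m}B^2=\mathfrak{m}\M_\gamma$.
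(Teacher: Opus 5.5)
Your proposal is correct and is essentially the paper's argument. You discard the non-regular blocks, then transport the generator $\chi_1[Z^{\pm 1}]\oplus\chi_2[Z^{\pm 1}]$ of \Cref{generator_Ho} through the left Quillen equivalence $\Mod(S)\to\Mod(B)\to\Mod(e_\gamma\He)$, whose image is $\M^{\GP}_\gamma$; you also correctly spell out two points the paper leaves implicit, namely that this composite is a Quillen equivalence (via \Cref{Quillen_equiv1} and \Cref{Laurent_extn}) and that it sends the generator to $\mathfrak{m}\M_\gamma$.
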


Going back to investigating a regular component $\Ho(e_\gamma\He_{x_0}^\dagger)$, we see by applying $S\otimes_R(-)$ to \eqref{extn_chi_R} that our generator $\chi_1[Z^{\pm 1}]\oplus \chi_2[Z^{\pm 1}]$ is the zero-th cycles of the complete resolution
\[
P_\bullet\coloneqq\cdots\xrightarrow{\cdot T} S \xrightarrow{\cdot T} S\xrightarrow{\cdot T}\cdots
\]
Consequently, we have by \Cref{Keller} that $\Ho(e_\gamma\He_{x_0}^\dagger)\simeq D(\E)$, where $\E\coloneqq\underline{\End}(P_\bullet)$. In order to describe this DGA explicitly, we first note that the complex $P_\bullet$ splits into a direct sum $P_{1,\bullet}\oplus P_{2, \bullet}$, where
\[
P_{1,\bullet}=\cdots\xrightarrow{\cdot T}Se_2\xrightarrow{\cdot T}Se_1\xrightarrow{\cdot T}Se_2\xrightarrow{\cdot T}\cdots 
\]
with $P_{1,0}=Se_1$ and $P_{2,\bullet}=P_{1,\bullet}[1]$. Writing $\E_{ij}\coloneqq\underline{\Hom}(P_{i,\bullet}, P_{j,\bullet})$, we thus have
\[
\E=\begin{pmatrix}
\E_{11} & \E_{12}\\
\E_{21} & \E_{22}
\end{pmatrix}\,.
\]
By the periodicity of $P_{i,\bullet}$, we actually have $\E_{11}=\E_{22}=\E_{12}[1]=\E_{21}[1]$ as complexes. In order to describe these, we consider the maps $Se_i\xrightarrow{\cdot T} Se_{3-i}$ for $i=1,2$ and by slight abuse of notation denote them both by $\tau$. We then have $\Hom_S(Se_i, Se_i)=k[Z^{\pm 1}]\cdot \id_{Se_i}$ and $\Hom_S(Se_i, Se_{3-i})=k[Z^{\pm 1}]\cdot \tau$ for $i=1, 2$. We may thus describe $\E_{11}$ as follows. We consider
\[
E_0\coloneqq\prod_{\Z}k[Z^{\pm1}]\quad\text{and} \quad E_1\coloneqq \prod_{\Z} k[Z^{\pm1}]\cdot \tau\,,
\]
viewed as subspaces of the algebra $\prod_{\Z} \left(k[\tau]/(\tau^2)\right)[Z^{\pm1}]$. By the definition of the internal Hom of chain complexes and using the above description of the morphisms between the terms of $P_{1,\bullet}$, we see that we have
\[
\E_{11}\cong \underline{E}\coloneqq \cdots\to E_1\to E_0\to E_1\to E_0\to \cdots\,,
\]
with even degree terms equal to $E_0$, and differential given by
\begin{equation}\label{differential}
d_n:(x_l)_{l\in \Z}\mapsto ((x_l-(-1)^nx_{l+1})\tau)_{l\in \Z}\,.
\end{equation}
In particular, we note that $d_n=0$ when $n$ is odd. We gather some facts about the DGA $\E$ below.

\begin{prop}\label{computation_DGA}
We have
\begin{equation}\label{descr_DGA}
\E \cong \begin{pmatrix}
\underline{E} & \underline{E}[1]\\
\underline{E}[1] & \underline{E}
\end{pmatrix}
\end{equation}
with $\underline{E}$ as described above. Furthermore, we have:
\begin{enumerate}
    \item there is a $k$-algebra isomorphism
    \[
    \E_0\cong \prod_{\Z} e_\gamma \He_{x_0}^\dagger\,\text{; and}
    \]
    \item if $H_\bullet$ denotes the cohomology algebra of $\E$ and $\mathscr{R}\coloneqq k[X^{\pm 1}, Z^{\pm 1}]$, then
    \[
    H_\bullet\cong \begin{pmatrix}
        \mathscr{R}_e & \mathscr{R}_o\\
        \mathscr{R}_o & \mathscr{R}_e
    \end{pmatrix}\subseteq M_2(\mathscr{R}),
    \]
    where $\mathscr{R}_e$, resp.\ $\mathscr{R}_o$, is the $k[Z^{\pm 1}]$-submodule of $\mathscr{R}$ generated by the even powers of $X$, resp.\ the odd powers of $X$, and where $Z^{\pm 1}$ has degree 0 and $X^n$ has degree $n$ (in any matrix position).
\end{enumerate}
\end{prop}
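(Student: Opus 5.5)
The matrix decomposition \eqref{descr_DGA} follows from the splitting $P_\bullet=P_{1,\bullet}\oplus P_{2,\bullet}$. Since $P_{2,\bullet}=P_{1,\bullet}[1]$, we get $\E_{12}=\underline{\Hom}(P_{1,\bullet},P_{1,\bullet}[1])\cong\E_{11}[1]$, and similarly $\E_{21}\cong \E_{11}[-1]$. We also have $\E_{22}\cong\E_{11}$. By the $2$-periodicity $P_{1,\bullet}[2]=P_{1,\bullet}$, the complexes $\underline{E}[-1]$ and $\underline{E}[1]$ coincide. Combined with the identification $\E_{11}\cong\underline{E}$ already established, this gives \eqref{descr_DGA}. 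Some sign conventions in the shift isomorphisms have to be tracked, but they do not affect the underlying graded pieces.

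For (i), I would compute $\E_0$ directly as the ring of degree-$0$ families of maps $(f_l\colon P_l\to P_l)_{l\in\Z}$, with no compatibility required. Hence $\E_0=\prod_{l\in\Z}\End_S(P_l)=\prod_{\Z}\End_S(S)$. Since $P_l=S$ for the unsplit complex $P_\bullet$, we have $\End_S(S)\cong S^{\mathrm{op}}$ via right multiplication. The key point is that $S^{\mathrm{op}}\cong S$: the ring $R$ has an anti-involution fixing $e_1,e_2$ and $T$, because the relations \eqref{rel_R} are symmetric ($e_iT=Te_{3-i}$ is sent to $Te_i=e_{3-i}T$, which holds). This anti-involution extends to $S=R[Z^{\pm1}]$ by fixing $Z$. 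Therefore $\E_0\cong\prod_\Z S\cong\prod_\Z e_\gamma\He^\dagger_{x_0}$.

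For (ii), I would first compute $H_\bullet(\underline{E})$ from \eqref{differential}. Odd degrees $E_1$ have zero outgoing differential. Into them comes $d$ from $E_0$ given by $(x_l)\mapsto((x_l\mp x_{l+1})\tau)$. This map is surjective onto $E_1$: one can solve $x_l\mp x_{l+1}=y_l$ recursively over $\Z$, which is possible since we work with products. So odd cohomology vanishes. In even degree the kernel consists of sequences with $x_l=\pm x_{l+1}$, i.e.\ constant or alternating sequences, each a copy of $k[Z^{\pm1}]$. The image coming from the odd degree is zero. Getting the signs right, cohomology in a given even degree should be one copy of $k[Z^{\pm1}]$, and in the other parity zero. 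This gives $H(\E_{11})=\mathscr{R}_e$, with $X^{2m}$ represented by the shift-by-$2m$ chain map. Dually $H(\E_{12})=\mathscr{R}_o$, with $X^{2m+1}$ the appropriate shifted identity $P_1\to P_2[\cdot]$. A cross-check via \eqref{Hom_Ho3} is that $\Ext^{n}_S(\chi_i[Z^{\pm1}],\chi_j[Z^{\pm1}])$ is $k[Z^{\pm1}]$ exactly when $n\equiv i-j \pmod 2$, matching these pieces in all integer degrees (using negative shifts via the complete resolution).

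Finally I would identify the multiplication. Composition of shift maps adds shifts, so $X^aX^b=X^{a+b}$, with $Z$ central of degree $0$. This yields the subalgebra $\begin{psmallmatrix}\mathscr{R}_e&\mathscr{R}_o\\\mathscr{R}_o&\mathscr{R}_e\end{psmallmatrix}$. The main obstacle is the sign and parity bookkeeping: checking which of constant versus alternating sequences survive in each degree, and that compositions of the chosen representatives multiply without stray signs. If signs appear, I would rescale the generators $X^n$ by $\pm1$ to absorb them.
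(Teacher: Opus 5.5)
Your proposal is correct and follows essentially the paper's route. The block form comes from $P_\bullet=P_{1,\bullet}\oplus P_{2,\bullet}$ with $P_{2,\bullet}=P_{1,\bullet}[1]$ and $2$-periodicity, and (ii) is the same kernel/image computation for the differential of $\underline{E}$ (one copy of $k[Z^{\pm1}]$ survives in each even degree, and surjectivity onto $E_1$ kills the odd degrees); the only differences are that for (i) you identify each factor $\End_S(S)\cong S^{\mathrm{op}}$ with $S$ via the anti-involution of $R$ fixing $e_1,e_2,T$, where the paper writes $\E_0$ in block form $\begin{psmallmatrix}E_0&E_1\\E_1&E_0\end{psmallmatrix}$ and matches generators directly, and that you treat the multiplicative structure and signs in (ii) more explicitly than the paper does.
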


\begin{proof}
We just need to show (i) and (ii). For (i), we have
\[
\E_0 \cong \begin{pmatrix}
E_0 & E_1\\
E_1 & E_0
\end{pmatrix}\cong \prod_\Z \begin{pmatrix}
k[Z]^{\pm{1}} & k[Z]^{\pm{1}}\cdot \tau\\
k[Z]^{\pm{1}}\cdot \tau & k[Z]^{\pm{1}}
\end{pmatrix}\,.
\]
The isomorphism is then obtained by sending in each factor \(e_\gamma T_{\omega^2}\mapsto\begin{psmallmatrix}
Z & 0\\
0 & Z
\end{psmallmatrix}\), \(e_1\mapsto\begin{psmallmatrix}
1 & 0\\
0 & 0
\end{psmallmatrix}\), \(e_2\mapsto\begin{psmallmatrix}
0 & 0\\
0 & 1
\end{psmallmatrix}\) and \(e_\gamma T_{s_0}\mapsto\begin{psmallmatrix}
0 & \tau\\
\tau & 0
\end{psmallmatrix}\). For (ii), we deduce from \eqref{differential} that in $\underline{E}$ one has
\[
\Ker(d_n)=\begin{cases}
E_1 & \text{if $n$ is odd}\\
k[Z^{\pm 1}]\cdot (\ldots, 1,1,1,\ldots) & \text{if $n$ is even}
\end{cases}
\]
and
\[
\im(d_n)=\begin{cases}
0 & \text{if $n$ is odd}\\
E_1 & \text{if $n$ is even}
\end{cases}\,.
\]
Therefore the cohomology of $\underline{E}$ is zero in odd terms and isomorphic to $k[Z^{\pm 1}]$ in the even terms. We write $\iota_n$ for the cohomology class of $(\ldots, 1,1,1,\ldots)$ in $H_n(\underline{E})$ if $n$ is even, resp.\ in $H_n(\underline{E}[1])$ if $n$ is odd. From \eqref{descr_DGA} one deduces that
\[
H_\bullet=\left(\bigoplus_{\text{$n$ even}}\begin{pmatrix}
k[Z^{\pm 1}]\cdot \iota_n & 0\\
0 & k[Z^{\pm 1}]\cdot \iota_n
\end{pmatrix}\right)\oplus\left(\bigoplus_{\text{$n$ odd}}\begin{pmatrix}
0 & k[Z^{\pm 1}]\cdot \iota_n\\
k[Z^{\pm 1}]\cdot \iota_n & 0
\end{pmatrix}\right)
\]
is of the claimed form, by identifying $X^n$ with $\iota_n$.
\end{proof}


\subsection{Endomorphism rings of supersingular modules} We conclude by computing the endomorphisms in $\Ho(\He)$ of the simple supersingular modules of infinite projective dimension. We fix a regular orbit $\gamma$ and consider the rings $R$ and $S$ as in the previous section. For $i=1,2$ and each $\lambda\in k^\times$, we will write $\chi_{i,\lambda}\coloneqq \chi_i[Z^{\pm 1}]/(Z-\lambda)$, i.e.\ $\chi_{i,\lambda}$ is the one-dimensional $S$-module whose restriction to $R$ is $\chi_i$ and on which $Z$ acts by multiplication by $\lambda$. Recall from \Cref{recollections_Hecke} the definition of the simple supersingular $\He$-modules. There is a unique such module factoring through $e_\gamma\He$ and such that $T_{\omega}^2$ acts as $\lambda\cdot \id$, which we denote by $M_{\gamma, \lambda}$. It is associated to a supersingular character $\chi$ of $\Hea$ such that $\Res^{\Hea}_{T(\F_q)}(\chi)\in\gamma$, and in particular we have that $\Res^{e_\gamma\He}_{e_\gamma\He_{x_0}^\dagger}(M_{\gamma, \lambda})\cong \chi_{1,\lambda}\oplus \chi_{2,\lambda}$. Our aim is to show the following.

\begin{thm}\label{endom_comp}
There is a $k$-algebra isomorphism $\End_{\Ho(\He)}(M_{\gamma, \lambda})\cong e_\gamma\He_{x_0}$.
\end{thm}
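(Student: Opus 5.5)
Our plan is to reduce the computation to the ring $S=R[Z^{\pm 1}]\cong e_\gamma\He_{x_0}^\dagger$ and then compute directly. By \Cref{Delta_equiv} (for $\GL_2$, $\Delta$ is just restriction to $\He_{x_0}^\dagger$), restriction induces an equivalence $\Ho(e_\gamma\He)\simeq\Ho(S)$. It therefore gives a $k$-algebra isomorphism
\[
\End_{\Ho(\He)}(M_{\gamma,\lambda})\cong\End_{\Ho(S)}(\chi_{1,\lambda}\oplus\chi_{2,\lambda}),
\]
using $\Res(M_{\gamma,\lambda})\cong\chi_{1,\lambda}\oplus\chi_{2,\lambda}$. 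It thus suffices to compute the $2\times 2$ matrix of groups $[\chi_{i,\lambda},\chi_{j,\lambda}]_S$ together with their composition.

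Each $\chi_{i,\lambda}$ sits in the short exact sequence $0\to\chi_i[Z^{\pm1}]\xrightarrow{Z-\lambda}\chi_i[Z^{\pm1}]\to\chi_{i,\lambda}\to0$, which gives a distinguished triangle in $\Ho(S)$. Apply $[\chi_{i}[Z^{\pm1}],-]_S$ and $[-,\chi_{j,\lambda}]_S$. Since $\chi_i[Z^{\pm1}]$ is Gorenstein projective, \eqref{Hom_Ho3} and the adjunction of \Cref{tensor_hom_lemma} give
\[
[\Sigma^{-n}\chi_i[Z^{\pm1}],\chi_{j,\lambda}]_S\cong[\Sigma^{-n}\chi_i,\chi_j]_R,
\]
computed via \Cref{htpy_cat_Hx}. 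This group is $k$ if $i\equiv j+n \pmod 2$ and $0$ otherwise, because $\Sigma$ swaps $\chi_1$ and $\chi_2$. The long exact sequence, in which multiplication by $Z-\lambda$ acts by zero on these groups since $Z$ acts by $\lambda$ on $\chi_{j,\lambda}$, then yields
\[
[\chi_{i,\lambda},\chi_{j,\lambda}]_S\cong [\chi_i[Z^{\pm1}],\chi_{j,\lambda}]_S\oplus[\Sigma\chi_i[Z^{\pm1}],\chi_{j,\lambda}]_S .
\]
This is $k$ for each pair $(i,j)$: the diagonal terms come from the first summand, and the off-diagonal terms from the shifted summand, an $\Ext^1$-type class. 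So the endomorphism ring is $4$-dimensional, matching $\dim_k R=4$.

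The key step, which we expect to be the main obstacle, is determining the multiplicative structure. The diagonal entries give the two idempotents $e_1,e_2$, the identities of the two summands. The off-diagonal classes $u\colon\chi_{1,\lambda}\to\chi_{2,\lambda}$ and $v\colon\chi_{2,\lambda}\to\chi_{1,\lambda}$ are the "new" morphisms coming from the connecting map of the triangle. To show $vu=0$ and $uv=0$, we would note that $u$ factors as $\chi_{1,\lambda}\to\Sigma\chi_1[Z^{\pm1}]\to\chi_{2,\lambda}$, where the first arrow is the connecting map and the second arrow is nonzero. Then $vu$ factors through the composite $\Sigma\chi_1[Z^{\pm1}]\to\chi_{2,\lambda}\to\Sigma\chi_2[Z^{\pm1}]$. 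This composite is a morphism coming from the triangle for $\chi_{2,\lambda}$, and we expect it to vanish: it is the composite of two consecutive maps in the triangle, after identifying $\Sigma\chi_1[Z^{\pm1}]\cong\chi_2[Z^{\pm1}]$, and those maps compose to zero. Alternatively, we could compute concretely using a complete resolution of $\chi_{i,\lambda}$ (a cone of $Z-\lambda$ on $P_\bullet$) and check that the degree-one chain maps square to a null-homotopic map. Granting $u v=vu=0$ together with $e_iu=ue_{3-i}$-type relations, the endomorphism ring is generated by $e_1,e_2$ and $T\coloneqq u+v$ with the relations \eqref{rel_R}. It is thus isomorphic to $R=e_\gamma\He_{x_0}$.
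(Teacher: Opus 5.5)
Your proposal is correct and follows essentially the same route as the paper. Both arguments reduce via \Cref{Delta_equiv} to $\End_{\Ho(S)}(\chi_{1,\lambda}\oplus\chi_{2,\lambda})$, use the triangle of \eqref{triangle} to show each $[\chi_{i,\lambda},\chi_{j,\lambda}]_S$ is one-dimensional with the off-diagonal classes factoring through the connecting map, show that the off-diagonal composites vanish, and then send $e_i$ to the projections and $T$ to $u+v$.

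The differences are minor. You compute the Hom groups by adjunction to $\Ho(R)$ rather than via \eqref{ext_calc}. Your hedged vanishing step is valid because $[\chi_2[Z^{\pm1}],\chi_{2,\lambda}]_S$ is one-dimensional and spanned by the projection $\chi_2[Z^{\pm1}]\to\chi_{2,\lambda}$. The paper's version of that step is quicker: composing $v$ with the second arrow of your factorisation of $u$ already lands in $[\Sigma\chi_1[Z^{\pm1}],\chi_{1,\lambda}]_S\cong[\chi_2,\chi_1]_R=0$.
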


By the equivalence from \Cref{Delta_equiv} we have
\[
\End_{\Ho(\He)}(M_{\gamma, \lambda})\cong\End_{\Ho(S)}(\chi_{1,\lambda}\oplus \chi_{2,\lambda})\cong\begin{pmatrix}
[\chi_{1,\lambda},\chi_{1,\lambda}]_S & [\chi_{1,\lambda},\chi_{2,\lambda}]_S\\
[\chi_{2,\lambda},\chi_{1,\lambda}]_S & [\chi_{2,\lambda},\chi_{2,\lambda}]_S
\end{pmatrix}\,,
\]
where we recall our notation $[-,-]_S\coloneqq \Hom_{\Ho(S)}(-,-)$. We first need to calculate each entry above. Fix $i\in\{1,2\}$. By definition, we have a short exact sequence
\begin{equation}\label{triangle}
0\to \chi_{i}[Z^{\pm 1}]\xrightarrow{(Z-\lambda)\cdot}\chi_{i}[Z^{\pm 1}]\xrightarrow{p_i} \chi_{i,\lambda}\to 0
\end{equation}
and it will be useful to first compute self-extensions of $\chi_{i}[Z^{\pm 1}]$. For this, we note that by tensoring \eqref{extn_chi_R} with $S$ we get a non-split extension
\begin{equation}\label{ext}
0\to\chi_{3-i}[Z^{\pm 1}]\to Se_i\to \chi_{i}[Z^{\pm 1}]\to 0\,.
\end{equation}
By arguing exactly as in the proof of \Cref{htpy_cat_Hx}, we obtain
\begin{equation}\label{ext_calc}
\Ext_S^j\left(\chi_{i}[Z^{\pm 1}],\chi_{i}[Z^{\pm 1}]\right)\cong\begin{cases}
k[Z^{\pm 1}] & \text{if $j$ is even}\\
0 & \text{otherwise}
\end{cases}\,.
\end{equation}

\begin{rem}
The projectivity of $Se_i$ implies by \eqref{ext} that $\chi_{i}[Z^{\pm 1}]\cong \Sigma\chi_{3-i}[Z^{\pm 1}]$ in $\Ho(S)$, and thus the above computation is equivalent to
\[
\left[\chi_{i}[Z^{\pm 1}], \chi_{j}[Z^{\pm 1}]\right]_S=\begin{cases}
k[Z^{\pm 1}] & \text{if $i=j$}\\
0 & \text{otherwise}
\end{cases}\,.
\]
In particular, the generator $\chi_1[Z^{\pm1}]\oplus \chi_2[Z^{\pm1}]$ has endomorphism ring $k[Z^{\pm 1}]^2$ in $\Ho(S)$.
\end{rem}

Using the above, we may now prove the following.

\begin{lem}\label{final_hom_dim}
We have
$$
\dim_k\left[\chi_{i,\lambda}, \chi_{j,\lambda}\right]_{S}=1
$$
for all $1\leq i, j\leq 2$ and any $\lambda\in k^\times$.
\end{lem}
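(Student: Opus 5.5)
We use the short exact sequence \eqref{triangle}, which yields a distinguished triangle
\[
\chi_i[Z^{\pm1}]\xrightarrow{Z-\lambda}\chi_i[Z^{\pm1}]\xrightarrow{p_i}\chi_{i,\lambda}\to\Sigma\chi_i[Z^{\pm1}]
\]
in $\Ho(S)$. The plan is to apply the homological functor $[-,\chi_{j,\lambda}]_S$ to this triangle. This reduces the computation to the groups $[\Sigma^m\chi_i[Z^{\pm1}],\chi_{j,\lambda}]_S$. Since $\chi_i[Z^{\pm1}]$ is Gorenstein projective, these are $\Ext$ groups by \eqref{Hom_Ho3}, and by \eqref{Hom_Ho1} in degree $0$.

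\textbf{Computing the auxiliary groups.} By the remark after \eqref{ext_calc} we have $\Sigma\chi_i[Z^{\pm1}]\cong\chi_{3-i}[Z^{\pm1}]$, so it suffices to compute $[\chi_l[Z^{\pm1}],\chi_{j,\lambda}]_S$ for $l=1,2$. For this I would use the complete resolution $P_{l,\bullet}$, the $2$-periodic complex $\cdots\to Se_{3-l}\xrightarrow{\cdot T}Se_l\xrightarrow{\cdot T}\cdots$ obtained from \eqref{ext}. Applying $\Hom_S(-,\chi_{j,\lambda})$ gives $\Hom_S(Se_m,\chi_{j,\lambda})=e_m\chi_{j,\lambda}=\delta_{mj}k$. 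The induced maps are all zero, since $T$ acts by zero on $\chi_{j,\lambda}$.

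The resulting complex has a copy of $k$ exactly in the degrees where the term is $Se_j$, so its cohomology alternates between $k$ and $0$. Hence
\[
[\chi_l[Z^{\pm1}],\chi_{j,\lambda}]_S\cong\delta_{lj}k .
\]
One must check that homotopy classes computed this way match the stable Hom. This holds by \eqref{Hom_Ho1}: a map $\chi_l[Z^{\pm1}]\to\chi_{j,\lambda}$ factors through a projective exactly when it extends along $\chi_l[Z^{\pm1}]\hookrightarrow Se_{3-l}$, which is the same cohomology computation.

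\textbf{The long exact sequence.} It reads
\[
[\Sigma\chi_i[Z^{\pm1}],\chi_{j,\lambda}]\xrightarrow{(Z-\lambda)^*}[\Sigma\chi_i[Z^{\pm1}],\chi_{j,\lambda}]\to[\chi_{i,\lambda},\chi_{j,\lambda}]\to[\chi_i[Z^{\pm1}],\chi_{j,\lambda}]\xrightarrow{(Z-\lambda)^*}[\chi_i[Z^{\pm1}],\chi_{j,\lambda}].
\]
The key observation is that every map $(Z-\lambda)^*$ is zero, because $Z$ acts by $\lambda$ on the target $\chi_{j,\lambda}$ and the maps are $S$-linear, hence $k[Z^{\pm1}]$-linear. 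Thus $[\chi_{i,\lambda},\chi_{j,\lambda}]_S$ is an extension of $[\chi_i[Z^{\pm1}],\chi_{j,\lambda}]\cong\delta_{ij}k$ by $[\Sigma\chi_i[Z^{\pm1}],\chi_{j,\lambda}]\cong[\chi_{3-i}[Z^{\pm1}],\chi_{j,\lambda}]\cong\delta_{3-i,j}k$.

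In every case $(i,j)$ exactly one of these two groups is one-dimensional and the other is zero. This gives dimension $1$ for all $i,j$ and any $\lambda\in k^\times$.

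\textbf{Main obstacle.} The delicate points are bookkeeping. First, one must check that the connecting maps really are induced by multiplication by $Z-\lambda$ on the source, which holds by naturality of the triangle coming from a short exact sequence. Second, one must track the signs and shift conventions identifying $[\Sigma\chi_i[Z^{\pm1}],-]$ with $[\chi_{3-i}[Z^{\pm1}],-]$. Neither affects the dimension count.
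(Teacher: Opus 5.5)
Your proposal is correct and follows essentially the paper's argument: apply $[-,\chi_{j,\lambda}]_S$ to the triangle coming from \eqref{triangle}, observe that the maps induced by $Z-\lambda$ vanish because $Z$ acts by $\lambda$ on the target, and feed in the values $[\chi_l[Z^{\pm1}],\chi_{j,\lambda}]_S\cong\delta_{lj}k$.

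The differences are minor. You compute these auxiliary groups directly as stable Homs from the complete resolution. The paper instead first proves $\chi_{i,\lambda}\cong\Sigma\chi_{3-i,\lambda}$ and then reads the groups off the $\Ext$ long exact sequence of \eqref{triangle}, using \eqref{ext_calc}. Also, the point to check in the shifted term is not about connecting maps. It is that $\Sigma$ of multiplication by $Z-\lambda$ is again multiplication by $Z-\lambda$ under $\Sigma\chi_i[Z^{\pm1}]\cong\chi_{3-i}[Z^{\pm1}]$, which is exactly the commutative square the paper uses.
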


\begin{proof}
We keep having $i\in\{1,2\}$ fixed. First note that $\chi_{i,\lambda}\cong \Sigma\chi_{3-i,\lambda}$ in $\Ho(S)$. Indeed, the short exact sequence \eqref{triangle} induces a distinguished triangle and we have a commuting diagram
\[
\begin{tikzcd}
 \chi_{3-i}[Z^{\pm 1}] \arrow{d}{\cong} \arrow{r}{(Z-\lambda)\cdot} & \chi_{3-i}[Z^{\pm 1}] \arrow{d}{\cong}\arrow{r}{} & \chi_{3-i,\lambda}\arrow{r}{+1} & \phantom{,}\\
 \Sigma \chi_i[Z^{\pm 1}] \arrow{r}{(Z-\lambda)\cdot} & \Sigma\chi_i[Z^{\pm 1}] \arrow{r}{} & \Sigma\chi_{i, \lambda}\arrow{r}{+1} &.
\end{tikzcd}
\]
By the axioms of triangulated categories and the triangulated 5-lemma (cf.\ \cite[014A]{stack}), the above can be extended to an isomorphism of triangles, which in particular gives us our claim. Using this, we deduce from \eqref{Hom_Ho3} that
$$
[\chi_i[Z^{\pm 1}], \chi_{3-i,\lambda}]_S\cong \Ext_S^1(\chi_i[Z^{\pm 1}], \chi_{i,\lambda})\quad\text{and}\quad [\chi_i[Z^{\pm 1}], \chi_{i,\lambda}]_S\cong \Ext_S^2(\chi_i[Z^{\pm 1}], \chi_{i,\lambda})
$$
for $i=1,2$. But we can compute this. Indeed, by considering the long exact sequence on $\Ext$ from \eqref{triangle} and by \eqref{ext_calc} we get an exact sequence
$$
0\to \Ext^1_S(\chi_i[Z^{\pm 1}],\chi_{i,\lambda} )\to k[Z^{\pm 1}]\xrightarrow{(Z-\lambda)\cdot} k[Z^{\pm 1}]\to \Ext^2_S(\chi_i[Z^{\pm 1}],\chi_{i,\lambda})\to 0.
$$
We deduce immediately that $[\chi_i[Z^{\pm 1}], \chi_{i,\lambda}]_S=k$ and $[\chi_i[Z^{\pm 1}], \chi_{3-i,\lambda}]_S=0$. From this we can now compute our Hom spaces by viewing \eqref{triangle} as a distinguished triangle in $\Ho(S)$ and applying the cohomological functor $[-,\chi_{3-i,\lambda}]_S$. Indeed, the associated long exact sequence now gives
$$
0\to [\chi_{3-i,\lambda},\chi_{3-i,\lambda}]_S\to [\chi_{3-i}[Z^{\pm 1}],\chi_{3-i,\lambda}]_S\xrightarrow{(-)\circ (Z-\lambda)\cdot } [\chi_{3-i}[Z^{\pm 1}],\chi_{3-i,\lambda}]_S\to [\chi_{i,\lambda}, \chi_{3-i,\lambda}]_S\to 0\,.
$$
The final observation we need to make is that the middle arrow $[\chi_{3-i}[Z^{\pm 1}],\chi_{3-i,\lambda}]_S\to [\chi_{3-i}[Z^{\pm 1}],\chi_{3-i,\lambda}]_S$ is the zero map. Indeed, both sides are just $S$-linear maps modulo homotopy (cf.\ \eqref{Hom_Ho1}) and any $S$-linear $\chi_{3-i}[Z^{\pm 1}]\to\chi_{3-i,\lambda}$ kills $(Z-\lambda)$ by definition of the character $\chi_{3-i,\lambda}$.  Thus we conclude from the above that there are isomorphisms
$$
[\chi_{3-i,\lambda},\chi_{3-i,\lambda}]_S\cong [\chi_{3-i}[Z^{\pm 1}],\chi_{3-i,\lambda}]_S\cong k \quad\text{and}\quad k\cong [\chi_{3-i}[Z^{\pm 1}],\chi_{3-i,\lambda}]_S\cong [\chi_{i,\lambda}, \chi_{3-i,\lambda}]_S
$$
and we are done.
\end{proof}

\begin{proof}[Proof of \Cref{endom_comp}]
We keep the notation from the proof of \Cref{final_hom_dim}. We saw there that the map $p_{3-i}$ from \eqref{triangle} induces an isomorphism $[\chi_{3-i, \lambda}, \chi_{3-i, \lambda}]_S\cong [\chi_{3-i}[Z^{\pm 1}], \chi_{3-i, \lambda}]_S$. It follows that $p_{3-i}$ is non-zero in $\Ho(S)$ and spans the one dimensional space $[\chi_{3-i}[Z^{\pm 1}], \chi_{3-i, \lambda}]_S$. Let $\partial: \Sigma^{-1}\chi_{3-i, \lambda}\to \chi_{3-i}[Z^{\pm 1}]$ be the connecting homomorphism in the triangle induced by \eqref{triangle}. We also saw that $\partial$ induces an isomorphism $[\chi_{3-i}[Z^{\pm 1}],\chi_{3-i, \lambda}]_S\cong [\chi_{i, \lambda}, \chi_{3-i, \lambda}]_S$, and thus any map in $[\chi_{i, \lambda}, \chi_{3-i, \lambda}]_S$ is a scalar multiple of the composite
\[
g_i:\chi_{i, \lambda}\cong \Sigma^{-1}\chi_{3-i, \lambda}\xrightarrow{\partial} \chi_{3-i, \lambda}[Z^{\pm 1}]\xrightarrow{p_{3-i}}\chi_{3-i, \lambda}\,.
\]
Since we have on the other hand $[\chi_{3-i}[Z^{\pm 1}],\chi_{i,\lambda}]_S=0$, it follows from the above that the composition map
$$
[\chi_{i,\lambda}, \chi_{3-i,\lambda}]_S\times [\chi_{3-i,\lambda}, \chi_{i,\lambda}]_S\to [\chi_{i,\lambda}, \chi_{i,\lambda}]_S
$$
is zero.

From \Cref{final_hom_dim}, we have that $\End_{\Ho(\He)}(M_{\gamma,\lambda})\cong \End_{\Ho(S)}(M_{\gamma,\lambda})$ is a $4$-dimensional $k$-algebra, with $k$-basis $\tilde{e}_1, \tilde{e}_2, \tilde{t}_1, \tilde{t}_2$ defined by
$$
\tilde{e}_i:\chi_{1,\lambda}\oplus\chi_{2,\lambda}\xrightarrow{\text{pr}_i}\chi_{i,\lambda}\xrightarrow{\text{incl}_i} \chi_{1,\lambda}\oplus\chi_{2,\lambda}
$$
and
$$
\tilde{t}_i:M=\chi_{1,\lambda}\oplus\chi_{2,\lambda}\xrightarrow{\text{pr}_i}\chi_{i,\lambda}\xrightarrow{g_i}\chi_{3-i, \lambda}\xrightarrow{\text{incl}_{3-i}}\chi_{1,\lambda}\oplus\chi_{2,\lambda}\,.
$$
Furthermore, by the above we see that these satisfy the relations
$$
\tilde{e}_1+\tilde{e}_2=\id, \quad \tilde{e}_i^2=\tilde{e}_i, \quad \tilde{e}_{3-i}\tilde{t}_i=\tilde{t}_i=\tilde{t}_i\tilde{e}_i,
$$
and
$$
\tilde{t}_i^2=\tilde{t}_i\tilde{t}_{3-i}=\tilde{e}_i\tilde{t}_i=\tilde{t}_i\tilde{e}_{3-i}=\tilde{e}_i\tilde{e}_{3-i}=0,
$$
for $i=1,2$. Thus the map $e_\gamma\He_{x_0}\cong R\to \End_{\Ho(\He)}(M_{\gamma,\lambda})$ defined by $e_i\mapsto \tilde{e}_i$ and $T\mapsto \tilde{t}_1+\tilde{t}_2$ is a well-defined algebra isomorphism.
\end{proof}

\bibliographystyle{abbrv}
\bibliography{homotopyendo}

\end{document}